\title{The formal shift operator on the Yangian double}
\author[C. Wendlandt]{Curtis Wendlandt}
\address{Department of Mathematics, The Ohio State University.}
\email{wendlandt.4@osu.edu}
\subjclass[2020]{Primary 17B37; Secondary 17B67, 81R10} 
\newtheorem{theorem}{Theorem}[section]
\newtheorem{proposition}[theorem]{Proposition}
\newtheorem{corollary}[theorem]{Corollary}
\newtheorem{lemma}[theorem]{Lemma}
\theoremstyle{definition}
\newtheorem{definition}[theorem]{Definition}
\newtheorem{remark}[theorem]{Remark}
\newcommand{\wh}{\widehat}
\newcommand{\End}{\mathrm{End}}
\newcommand{\Ker}{\mathrm{Ker}}
\newcommand{\iso}{\xrightarrow{\,\smash{\raisebox{-0.5ex}{\ensuremath{\scriptstyle\sim}}}\,}}
\newcommand{\into}{\hookrightarrow}
\newcommand{\onto}{\twoheadrightarrow}
\newcommand{\mfa}{\mathfrak{a}}
\newcommand{\mfb}{\mathfrak{b}}
\newcommand{\mfg}{\mathfrak{g}}
\newcommand{\mfh}{\mathfrak{h}}
\newcommand{\mfp}{\mathfrak{p}}
\newcommand{\mfs}{\mathfrak{s}}
\newcommand{\mft}{\mathfrak{t}}
\newcommand{\mfz}{\mathfrak{z}}
\newcommand{\mfgl}{\mathfrak{g}\mathfrak{l}}
\newcommand{\mfsl}{\mathfrak{s}\mathfrak{l}}
\newcommand{\mcA}{\mathcal{A}}
\newcommand{\mcH}{\mathcal{H}}
\newcommand{\mcJ}{\mathcal{J}}
\newcommand{\mcR}{\mathcal{R}}
\newcommand{\mcX}{\mathcal{X}}
\newcommand{\mcY}{\mathcal{Y}}
\newcommand{\mbA}{\mathbf{A}}
\newcommand{\mbI}{\mathbf{I}}
\newcommand{\mbt}{\mathbf{t}}
\newcommand{\C}{\mathbb{C}}
\newcommand{\Z}{\mathbb{Z}}
\newcommand{\msA}{\mathsf{A}}
\newcommand{\msB}{\mathsf{B}}
\newcommand{\msC}{\mathsf{C}}
\newcommand{\msD}{\mathsf{D}}
\newcommand{\veps}{\varepsilon}
\numberwithin{equation}{section}
\newcommand{\N}{\mathbb{N}}
\newcommand{\Yhg}{Y_\hbar\mfg}
\newcommand{\Yhdg}{Y_\hbar\dot\mfg}
\newcommand{\Yhgp}{\Yhg_{\scriptscriptstyle +}}
\newcommand{\Yhb}[1]{Y_\hbar(\mfb_{\scriptscriptstyle {#1}})}
\newcommand{\cYhg}{\widehat{Y_\hbar\mfg}}
\newcommand{\cYhgp}{\cYhg_{\scriptscriptstyle +}}
\newcommand{\eYhg}{\cYhg_{\scriptscriptstyle \times}}
\newcommand{\cYhgeq}[1]{\widehat{Y_\hbar\mfg}_{{\scriptscriptstyle\geq}{#1}}}
\newcommand{\cYg}[1]{\widehat{Y_{#1}\mfg}}
\newcommand{\Yhgz}{\cYhg_z}
\newcommand{\LzYhg}{\mathds{L}\cYhg_z}
\newcommand{\LzYgr}[1]{\mathds{L}\cYhg_{z,{#1}}}
\newcommand{\LzhYhg}{\mathrm{L}\cYhg_z}
\newcommand{\DYhg}{\mathrm{D}Y_\hbar\mfg}
\newcommand{\DYhdg}{\mathrm{D}Y_\hbar\dot\mfg}
\newcommand{\DYg}{\mathds{D}Y_{\hbar}\mfg}
\newcommand{\DYdg}{\mathds{D}Y_{\hbar}\dot\mfg}
\newcommand{\cDYhg}{\widehat{\mathrm{D}Y_\hbar\mfg}}
\newcommand{\mcJY}{\mathcal{J}_{\scriptscriptstyle{+}}}
\newcommand{\cUsh}{\widehat{U(\mfs_\mfh)}}
\newcommand{\cUs}{\widehat{U(\mfs)}}
\newcommand{\gfin}{\bar{\mfg}}
\newcommand{\uce}{\mathfrak{uce}}
\newcommand{\jD}{\jmath} % j: DYg-> DYhg
\newcommand{\iY}{\imath_{\mathrm{Y}}} %i: Y-> DYg
\newcommand{\iYh}{\imath} % i: Y->DYhg
\newcommand{\evg}{\mathrm{ev}_{\!\mfg}}
\newcommand{\evdg}{\dot{\mathrm{ev}}_{\!\mfg}}  % ev: U(s)->U(g)
\newcommand{\id}{\mathbf{1}}
\begin{document}

\begin{abstract}
Let $\mfg$ be a symmetrizable Kac--Moody algebra with associated Yangian $\Yhg$ and Yangian double $\DYhg$.  An elementary result of fundamental importance to the theory of Yangians is that, for each $c\in \C$, there is an automorphism $\tau_c$ of $\Yhg$ corresponding to the translation $t\mapsto t+c$ of the complex plane. 
Replacing $c$ by a formal parameter $z$ yields the so-called formal shift homomorphism $\tau_z$ from $\Yhg$ to the polynomial algebra $\Yhg[z]$. 

We prove that $\tau_z$ uniquely extends to an algebra homomorphism
$\Phi_z$  from the Yangian double $\DYhg$ into the $\hbar$-adic closure of the  algebra of Laurent series in $z^{-1}$ with coefficients in the Yangian $\Yhg$. This induces, via evaluation at any point $c\in \C^\times$, a homomorphism from $\DYhg$ into the completion of the Yangian with respect to its grading. We show that each such homomorphism gives rise to an isomorphism between completions of $\DYhg$ and $\Yhg$ and,  as a corollary, we find that the Yangian $\Yhg$ can be realized as a degeneration of the Yangian double $\DYhg$.  Using these results, we obtain a Poincar\'{e}--Birkhoff--Witt theorem for $\DYhg$ applicable when $\mfg$ is of finite type or of simply-laced affine type. 
\end{abstract}

\maketitle

{\setlength{\parskip}{0pt}
\setcounter{tocdepth}{1} 
\tableofcontents
}

%%%%%%%%%%%%%%%%%%%%%%%%%%%%%%%%%%%%%%%%%%%%%%%%%%%%%%%%%%%%%%%%%%%%%%%%
%%%%%%%%%%%%%%%%%%%%%%%%%%%%%%%%%%%%%%%%%%%%%%%%%%%%%%%%%%%%%%%%%%%%%%%%
% Section: Introduction
%%%%%%%%%%%%%%%%%%%%%%%%%%%%%%%%%%%%%%%%%%%%%%%%%%%%%%%%%%%%%%%%%%%%%%%%
%%%%%%%%%%%%%%%%%%%%%%%%%%%%%%%%%%%%%%%%%%%%%%%%%%%%%%%%%%%%%%%%%%%%%%%%
\section{Introduction}\label{sec:Intro}

\subsection{}

In this article, we study the Yangian double $\DYhg$ associated to a symmetrizable Kac--Moody algebra $\mfg$, after Khoroshkin and Tolstoy \cite{KT96}, by taking the approach that it should be characterized in terms of the underlying Yangian $\Yhg$. Our main results 
%draw inspiration from \cite{GTL1} and 
realize such a characterization by showing that $\DYhg$ can be viewed as both a dense subalgebra of the completion of the Yangian $\Yhg$ with respect to its $\N$-grading, and as the closure of a $\Z$-graded subalgebra of the space of formal Laurent series in $z^{-1}$ with coefficients in $\Yhg$. As a particular consequence of this description, we obtain a uniform Poincar\'{e}--Birkhoff--Witt theorem for the Yangian double $\DYhg$ of an arbitrary finite-dimensional or simply laced affine Kac--Moody algebra. 
These results are based on the construction of an extension $\Phi_z$ of the formal shift homomorphism $\tau_z$ on the Yangian to the Yangian double $\DYhg$. Here we recall that $\tau_z$ is a  graded algebra embedding 
\begin{equation*}
\tau_z:\Yhg\to \Yhg[z]
\end{equation*}
which gives rise to an action of the group of translations of the complex plane on $\Yhg$; see \eqref{shift-c} and \eqref{shift-z}. This action dates back to the foundational work of Drinfeld \cite{Dr} and
has become ubiquitous in the theory of Yangians.

\subsection{}
For the purpose of motivating our construction, let us first consider its classical counterpart with $\mfg$ taken to be a complex semisimple Lie algebra. Under this assumption, the Yangian $\Yhg$ and Yangian double $\DYhg$ are graded deformations of the enveloping algebras for the current algebra $\mfg[t]$ and loop algebra $\mfg[t,t^{-1}]$, respectively, and  $\tau_z$ provides a quantization of the embedding  
\begin{equation*}
\gamma_z:\mfg[t]\to \mfg[t,z]
\end{equation*}
sending any polynomial $f(t)$ to its translate $f(t+z)$. Note that this is a graded homomorphism, provided $t$ and $z$ are both given degree $1$. As $t+z$ is an invertible element in the ring $\C[t][z;z^{-1}]\!]$ of Laurent series in $z^{-1}$ with coefficients in $\C[t]$, $\gamma_z$ uniquely extends to a graded Lie algebra homomorphism  
\begin{equation*}
\Upsilon_z:\mfg[t,t^{-1}]\to \bigoplus_{n\in \Z}z^n \mfg[\![t/z]\!]\subset \mfg[t][z;z^{-1}]\!],
\end{equation*}
where $\mfg[\![t/z]\!]$ is the image of the embedding $\mfg[\![w]\!]\to \mfg[t][z;z^{-1}]\!]$ sending $f(w)$ to $f(t/z)$. This homomorphism is injective and possesses a number of properties which elucidate the intimate connection shared by $\mfg[t]$ and $\mfg[t,t^{-1}]$. For instance, the formal parameter $z$ may be evaluated to any $c\in \C^\times$ to yield a family of Lie algebra embeddings 
\begin{equation*}
\Upsilon_c: \mfg[t,t^{-1}]\to \mfg[\![t]\!].
\end{equation*}
Each member $\Upsilon_c$ of this family restricts to an automorphism of $\mfg[t]$ and uniquely extends to an isomorphism
\begin{equation*}
\wh{\Upsilon}_{c}: \wh{\mfg[t,t^{-1}]}\iso \mfg[\![t]\!]
\end{equation*}
when $\mfg[t,t^{-1}]$ is completed with respect to the descending filtration given by the lower central series for the evaluation ideal $\mathds{J}_c=(t-c)\mfg[t,t^{-1}]$. In addition, $\Upsilon_c$ induces an isomorphism of $\N$-graded Lie algebras 
\begin{equation*}
\mathrm{gr}(\Upsilon_c):\mathrm{gr}(\mfg[t,t^{-1}])\iso \bigoplus_{n\geq 0} t^n\mfg[\![t]\!]/t^{n+1}\mfg[\![t]\!] \cong \mfg[t]
\end{equation*}
which realizes $\mfg[t]$ as a degeneration of $\mfg[t,t^{-1}]$.

The results of the present paper provide a quantization $\Phi_z$ of $\Upsilon_z$ admitting counterparts to each of the above properties and satisfying the commutative diagram
\tikzcdset{every label/.append style = {font = \small}}
\begin{equation}\label{dia:Intro}
\begin{tikzcd}[column sep=6ex, row sep=7ex]
 \DYhg  \arrow{rr}{\Phi_z} && \LzhYhg\\
 & \Yhg \arrow{lu}[below left]{\iYh} \arrow{ru}[below right]{\tau_z} &       
\end{tikzcd}
\end{equation}
where $\iYh$ is a quantization of  the natural inclusion $\mfg[t]\subset \mfg[t,t^{-1}]$ and $\LzhYhg$ is the $\hbar$-adic completion of the $\Z$-graded subalgebra 
\begin{equation*}
\bigoplus_{n\in \Z}z^n \Yhgz \subset \Yhg[z;z^{-1}]\!],
\end{equation*}
where $\Yhgz=\prod_{k\in \N}\Yhg_kz^{-k}$ and $\Yhg_k$ is the $k$-th graded component of $\Yhg$. This completed algebra is described explicitly in Proposition \ref{P:whYgz} and plays the role of the graded Lie algebra $\bigoplus_{n\in \Z}z^n \mfg[\![t/z]\!]$ in the above classical picture. 

\subsection{} 
When $\mfg$ is an infinite-dimensional symmetrizable Kac--Moody algebra, the Yangian $\Yhg$ and Yangian double $\DYhg$ no longer deform the enveloping algebras of the respective Lie algebras $\mfg[t]$ and $\mfg[t,t^{-1}]$. They do, however, deform the enveloping algebras of semidirect products 
\begin{equation*}
\mfs\rtimes \ddot\mfh \quad \text{ and } \quad  \mft\rtimes \ddot\mfh,
\end{equation*}
where $\ddot\mfh$ is a finite-dimensional abelian subalgebra of $\mfg$, and $\mfs$ and $\mft$ are perfect Lie algebras which project onto the current algebra $\dot\mfg[t]$ and loop algebra $\dot\mfg[t,t^{-1}]$, respectively, of the derived subalgebra $\dot\mfg=[\mfg,\mfg]$. In general, the kernel of these projections is large and one cannot a priori put too much stock in the classical story outlined above.
Despite this fact, our construction of $\Phi_z$ remains completely valid, and we exploit it as an effective and simple algebraic tool for studying the Yangian double $\DYhg$ in full generality. In fact, one of the main goals of our work is to further develop the algebraic theory of $\DYhg$ when $\mfg$ is an untwisted affine Lie algebra. In this case, $\mfs$ and $\mft$ are non-trivial central extensions of $\dot\mfg[t]$ and $\dot\mfg[t,t^{-1}]$, respectively, which admit entirely concrete descriptions, and $\DYhg$ may be viewed as a  rational, level zero, analogue of the so-called quantum toroidal algebra associated to $\mfg$.

The associated affine Yangians were first studied in detail in the work of Guay \cites{Gu05,Gu07,Gu09} in type $\msA$, which in particular illuminated their connection to both  rational and trigonometric Cherednik algebras, as well as deformed double current algebras. They have since been afforded a more general treatment in what is now a rapidly growing body of literature. This includes, but is not limited to, the list of contributions \cites{YaGuPBW,YaGu3,YaGu1, GNW,GRWvrep,Kod18,Kod19,Kod19b,BerTsy19,Tsy17,Tsy17b}.

\subsection{}\label{I:Master} Let us now outline our main results in detail. Let $\mfg$ be a symmetrizable Kac--Moody algebra, and let $\cYhg$ denote the formal completion of the Yangian $\Yhg$ with respect to its $\N$-grading. In this article, we prove the following theorem. 
\begin{theorem}\label{T:Master1}
There is a unique algebra homomorphism  $\Phi_z:\DYhg\to \LzhYhg$
satisfying the commutative diagram \eqref{dia:Intro}. Moreover:
\begin{enumerate}[font=\upshape]
\item\label{Master:1} $\Phi_z$ is a $\Z$-graded algebra homomorphism.

\item\label{Master:2} $\Phi_z$ evaluates at any $z=c\in \C^{\times}$ to an algebra homomorphism 
\begin{equation*}
\Phi_c:\DYhg\to \cYhg 
\end{equation*}
which uniquely extends the shift automorphism $\tau_c$ of $\Yhg$.
\item\label{Master:3} Each specialization $\Phi_c$ of $\Phi_z$ determines an isomorphism
\begin{equation*}
\wh{\Phi}_c:\cDYhg_c\iso \cYhg,
\end{equation*}

where $\DYhg$ is completed with respect to its evaluation ideal at $t=c$. 

\item\label{Master:4} $\Phi_z$ induces an isomorphism of $\N$-graded algebras
\begin{equation*}
\mathrm{gr}(\DYhg)\iso \Yhg,
\end{equation*}
where $\DYhg$ is filtered by powers of its evaluation ideal at $t=1$. 

\end{enumerate}
\end{theorem}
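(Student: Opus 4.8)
The plan is to work throughout with the Drinfeld presentations of $\Yhg$ and $\DYhg$, under which $\iYh$ identifies $\Yhg$ with the subalgebra of $\DYhg$ generated by the nonnegative modes $x_{i,r}^{\pm},\,\xi_{i,r}$ $(r\geq 0)$ of the generating currents, and $\DYhg$ is obtained by adjoining the remaining, negative, modes. On $\iYh(\Yhg)$ the diagram \eqref{dia:Intro} forces $\Phi_z$ to coincide with $\tau_z$, so the existence half of the statement amounts to extending $\tau_z$ across the negative modes. I would do so by prescribing explicit images for the generating currents: their nonnegative halves go to $\tau_z$ of the corresponding Yangian currents, while their negative halves go to the Laurent expansions in $z^{-1}$ of the shifted currents, i.e. the literal quantization of $x\otimes t^{-n}\mapsto x\otimes(t+z)^{-n}$, with $(t+z)^{-n}=z^{-n}(1+t/z)^{-n}$ expanded as a $\Yhgz$-valued series. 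One then checks that these candidates actually lie in $\LzhYhg$ — controlling the $z^{-1}$- and $\hbar$-adic convergence one graded component at a time, via the explicit description of $\LzhYhg$ in Proposition \ref{P:whYgz} — and verifies the defining relations of $\DYhg$.

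I expect this last verification to be the principal obstacle. The relations among the nonnegative modes hold because $\tau_z$ is an algebra homomorphism, and those purely among the negative modes are the same identities transported through the expansion map; the genuine work is in the mixed relations, namely the $x^{+}$--$x^{-}$ bracket relation and the $\hbar$-deformed Serre relations linking the two halves. The guiding principle is that, after applying $\Phi_z$, each such relation becomes an identity of formal series in the current variables $u,v$ (and $z$) whose two sides are expansions, in complementary regions, of a single $\Yhg[z]$-valued rational function — the underlying identity of rational functions being exactly the corresponding Yangian relation pushed along $\tau_z$ — so the delicate part is tracking which expansion of $(u-v)^{-1}$, $(u-v\pm\hbar)^{-1}$, and so forth occurs on each side and then matching coefficients in $\LzhYhg$. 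Uniqueness is by contrast soft: by the earlier structural results $\DYhg$ is generated by $\iYh(\Yhg)$ together with the finitely many elements $x_{i,-1}^{\pm}$ $(i\in I)$, and since $\Phi_z\circ\iYh=\tau_z$ is forced while the defining relations let one solve for $\Phi_z(x_{i,-1}^{\pm})$ in terms of the image of $\Yhg$, there is at most one such $\Phi_z$. Part (1) is then immediate: giving $z$ degree $1$, each generating current of $\DYhg$ is sent to a homogeneous element of the matching degree.

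For part (2), observe that for $c\in\C^{\times}$ the substitution $z\mapsto c$ defines an algebra homomorphism $\mathrm{ev}_c\colon\LzhYhg\to\cYhg$: on $z^{n}\Yhgz$ it sends $\sum_{k}a_{k}z^{n-k}$ (with $a_{k}\in\Yhg_{k}$) to $c^{n}\sum_{k}a_{k}c^{-k}$, which converges in $\cYhg=\prod_{k}\Yhg_{k}$, and being $\C[\hbar]$-linear it extends over the $\hbar$-adic completion. Putting $\Phi_c:=\mathrm{ev}_c\circ\Phi_z$, we get $\Phi_c\circ\iYh=\mathrm{ev}_c\circ\tau_z=\tau_c$ (followed by $\Yhg\into\cYhg$), so $\Phi_c$ extends $\tau_c$; the same solving-for-the-negative-modes argument shows it is the only homomorphism $\DYhg\to\cYhg$ doing so.

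It remains to prove parts (3) and (4). Filter $\DYhg$ by powers of its evaluation ideal $\mathds{J}_c$ at $t=c$, and $\cYhg$ by the descending filtration $\cYhg_{\geq n}=\prod_{k\geq n}\Yhg_{k}$, whose associated graded is $\Yhg$. Checking on a set of ideal generators of $\mathds{J}_c$ gives $\Phi_c(\mathds{J}_c^{\,n})\subseteq\cYhg_{\geq n}$, hence an induced map $\mathrm{gr}(\Phi_c)\colon\mathrm{gr}_{\mathds{J}_c}(\DYhg)\to\Yhg$. This map is surjective essentially for free, since $\Phi_c\circ\iYh=\tau_c$ is an automorphism of $\Yhg$ and so $\Phi_c$ has dense image; what needs real input is that $\mathrm{gr}_{\mathds{J}_c}(\DYhg)$ is no larger than $\Yhg$. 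For this I would factor $\mathrm{gr}(\Phi_c)$ through $\mathrm{gr}(\iYh)$: modulo $\mathds{J}_c$ each negative mode $x_{i,-r}^{\pm}$ is congruent to a polynomial in the nonnegative modes — the quantum shadow of $t^{-1}\equiv 1\bmod(t-c)$, readable off the explicit formula for $\Phi_c(x_{i,-r}^{\pm})$ — so $\mathrm{gr}(\iYh)\colon\mathrm{gr}(\Yhg)\to\mathrm{gr}_{\mathds{J}_c}(\DYhg)$ is surjective, where $\Yhg$ carries the filtration pulled back along $\iYh$; since $\mathrm{gr}(\Phi_c)\circ\mathrm{gr}(\iYh)=\mathrm{gr}(\tau_c)$ is an isomorphism onto $\Yhg$ (this pulled-back filtration being carried by $\tau_c$ onto the grading filtration), all three maps are isomorphisms. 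Making this collapse of the negative modes in the associated graded precise is the second delicate point of the argument. Taking $c=1$ yields part (4). For part (3): both $\cDYhg_c$ and $\cYhg$ are complete and separated for the respective filtrations above and $\mathrm{gr}(\Phi_c)$ is an isomorphism, so the standard lifting criterion upgrades $\Phi_c$ to the asserted isomorphism $\wh{\Phi}_c\colon\cDYhg_c\iso\cYhg$.
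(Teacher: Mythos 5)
Your construction of $\Phi_z$ and $\Phi_c$ tracks the paper's Theorem \ref{T:Phi} closely: the images you prescribe for the negative modes (expansions of the shifted currents in $z^{-1}$) are exactly the paper's formula $\Phi_z(\mcX_{i,-n-1}^\pm)=(-1)^{n+1}\partial_z^{(n)}x_i^\pm(-z)$, the verification of the relations by matching expansions is a reasonable reorganization of the paper's delta-function computation, and $\Phi_c=\mathscr{Ev}_c\circ\Phi_z$ is the paper's definition. One soft spot there: your uniqueness argument rests on the claim that $\DYhg$ is topologically generated by $\iYh(\Yhg)\cup\{\mcX_{i,-1}^\pm\}_{i\in\mbI}$, which is not among the stated structural results (Proposition \ref{P:D-basic} only gives generation by all modes); it is true, but proving it requires an $\hbar$-adic inversion of an operator of the form $\id-\tfrac{\hbar}{2}\{\mcH_{i,-1},\cdot\}$, and the paper avoids the issue by determining $\Phi_z(\mcX_{i,-n-1}^\pm)$ for every $n$ at once, from $(z\pm\mathrm{ad}(\mathrm{T}_i))^{n+1}\Phi_z(\mcX_{i,-n-1}^\pm)=\mcX_{i0}^\pm$ and the invertibility of $z\pm\mathrm{ad}(\mathrm{T}_i)$ on $\LzhYhg$.

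The genuine gap is in parts \eqref{Master:3} and \eqref{Master:4}. Everything there hinges on the negative modes collapsing into $\iYh(\Yhg)$ modulo \emph{every} power $\mcJ_c^n$, i.e.\ on the surjectivity of $\mathrm{gr}(\iYh)$ onto $\mathrm{gr}_{\mcJ_c}(\DYhg)$ in all degrees, and your justification (``the quantum shadow of $t^{-1}\equiv 1\bmod (t-c)$'') only gives the degree-zero statement $\DYhg=\iYh(\Yhg)+\mcJ_c$. For degree $n$ you must show that every element of $\mcJ_c^n$ is congruent modulo $\mcJ_c^{n+1}$ to an element of $\iYh(\Yhg)\cap\mcJ_c^n$, so the error terms produced when a negative mode is rewritten in nonnegative modes must land in arbitrarily high powers of $\mcJ_c$; nothing in your sketch supplies this, and you yourself flag it as ``the second delicate point'' without resolving it. This is precisely where the paper's key mechanism enters: Lemma \ref{L:t} shows $(\id-\mbt_i^{\pm 1})\mcJ^n\subset\mcJ^{n+1}$, so the translation automorphisms $\mbt_i$ act unipotently on $\DYhg/\mcJ^n$ and $\mbt_{i,n}^{\mp 1}=\sum_{k=0}^{n-1}(\id-\mbt_{i,n}^{\pm 1})^k$; since $\mbt_i^{\pm 1}$ preserves $\iYh(\Yhb{\pm})$, this places $\mcX_{i,-k-1}^\pm=(\mbt_i^{\mp 1})^{k+1}\mcX_{i0}^\pm$ in $\iYh(\Yhg)+\mcJ^n$ for every $n$, which is what drives Theorem \ref{T:hatiso}. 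Relatedly, your ``surjectivity is essentially for free from dense image'' step is not valid as stated (dense image does not by itself give surjectivity of the associated graded map); the paper repairs both points simultaneously by exhibiting the explicit candidate inverse $\Gamma=\iYh\circ\tau_{-1}$ with $\Gamma(\Yhg_k)\subset\mcJ^k$ and $\Phi\circ\Gamma=\id_{\Yhg}$, and then proving surjectivity of the induced maps $\Gamma_n$ via the $\mbt_i$ trick; the general point $c$ is then handled, as in Corollary \ref{C:hatiso}, by conjugating with the gradation automorphism $\chi_c$ rather than redoing the estimates. Without some such mechanism, your claims that $\mathrm{gr}(\iYh)$ is surjective and $\mathrm{gr}(\Phi_c)$ is injective are unsupported, and parts \eqref{Master:3}--\eqref{Master:4} do not follow.
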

This theorem is the amalgamation of two of the three main results established in this paper. Our first main result, Theorem \ref{T:Phi},  outputs our main tool: a unique extension $\Phi_z$ of $\tau_z$ satisfying \eqref{Master:1} and \eqref{Master:2}. 
Our second main result, Theorem \ref{T:hatiso}, then proves that $\Phi=\Phi_1$ induces an isomorphism 
\begin{equation*}
\wh{\Phi}:\cDYhg\iso \cYhg, 
\end{equation*} 
where $\cDYhg$ is the completion of $\DYhg$ with respect to its evaluation ideal $\mcJ$ at $t=1$.
This is precisely the assertion of \eqref{Master:3} in the special case where $c=1$, and is generalized to an arbitrary evaluation point $c\in \C^\times$ in Corollary \ref{C:hatiso}, using that $\Phi$ may be transformed into $\Phi_c$ by conjugating by a gradation automorphism, as proven in Proposition \ref{P:gr-tw}.

Part \eqref{Master:4} of the above theorem provides the Yangian double analogue of Drinfeld's result \cite{DrQG}, proven by Guay and Ma in \cite{GM}, that the Yangian $\Yhg$ may be realized as a degeneration of the quantum loop algebra $U_\hbar(L\mfg)$. It is established in Corollary \ref{C:degen} as an application of Theorem \ref{T:hatiso}. 
As another byproduct of Theorem \ref{T:hatiso}, we find in Corollary \ref{C:YJ->DJ} that $\iYh$ extends to an isomorphism 
\begin{equation*}
\wh{\iYh}:\cYhg_{\scriptscriptstyle \times}\iso \cDYhg, 
\end{equation*}
where $\eYhg$ is the completion of $\Yhg$ with respect to its own evaluation ideal at $t=1$. As explained in Section \ref{ssec:ev-Yhg}, this affords the completed Yangian double a rather precise description.

Our third main result is provided by Theorem \ref{T:PBW}, which outputs the following Poincar\'{e}--Birkhoff--Witt theorem for $\DYhg$. 
\begin{theorem}\label{T:Master2}
Let $\mfg$ be a symmetrizable Kac--Moody algebra of finite type or of simply-laced affine type. Then:
\begin{enumerate}[font=\upshape]
\item $\Phi_z$ and $\Phi_c$ are injective for each $c\in \C^\times$.
\item\label{MasterPBW:2} $\DYhg$ is a flat deformation of $U(\mft\rtimes \ddot\mfh)$ over $\C[\![\hbar]\!]$. In particular, there is an isomorphism of $\C[\![\hbar]\!]$-modules 
$
\DYhg\cong U(\mft\rtimes\ddot\mfh)[\![\hbar]\!]
$.
\end{enumerate}
\end{theorem}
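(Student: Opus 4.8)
The plan is to deduce Theorem~\ref{T:Master2} from the already-established structural results, principally Theorem~\ref{T:Master1}\eqref{Master:3}--\eqref{Master:4}, together with the known Poincar\'{e}--Birkhoff--Witt theorem for the Yangian $\Yhg$ in the stated types. First I would record that for $\mfg$ of finite type or simply-laced affine type, the Yangian $\Yhg$ is a flat $\C[\![\hbar]\!]$-deformation of $U(\mfs\rtimes\ddot\mfh)$; this is the input PBW theorem (Guay--Regelskis--Wendlandt in the affine case, classical in finite type) and it gives $\cYhg$ a clean associated graded description, namely $\gr\,\cYhg \cong \gr\,\Yhg \cong U(\mfs\rtimes\ddot\mfh)[\![\hbar]\!]$ with respect to the grading/filtration in play. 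The goal is then to transport this flatness across $\wh{\Phi}$ and $\gr(\Phi_z)$ and to peel off the completions.

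For part~(1), injectivity of $\Phi_c$: I would argue that $\Phi_c$ is filtered with respect to the evaluation-ideal filtration on $\DYhg$ and the $\N$-grading filtration on $\cYhg$, and that by Theorem~\ref{T:Master1}\eqref{Master:4} the associated graded map $\gr(\Phi_1)$ is an isomorphism onto $\Yhg$ (and more generally $\gr(\Phi_c)$ is an isomorphism after the gradation twist of Proposition~\ref{P:gr-tw}). Since $\DYhg$ is separated for its evaluation-ideal filtration — this is the one genuinely delicate point and I would isolate it as the main obstacle; it should follow from the identification $\wh{\Phi}_c:\cDYhg_c\iso\cYhg$ of \eqref{Master:3}, because the completion map $\DYhg\to\cDYhg_c$ composed with $\wh{\Phi}_c$ is $\Phi_c$, and $\cYhg$ contains no $\hbar$-torsion issues obstructing separatedness, so $\bigcap_n \mcJ_c^{\,n}=0$ — a map with injective associated graded on a separated filtered module is injective. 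Then injectivity of $\Phi_z$ follows since $\Phi_z$ specializes to the injective $\Phi_c$ (e.g. at $c=1$), or directly because $\gr(\Phi_z)$ is an isomorphism by the $\Z$-graded refinement.

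For part~(2), the PBW/flatness statement: the strategy is to combine flatness of $\Yhg$ over $\C[\![\hbar]\!]$ with the isomorphisms of Theorem~\ref{T:Master1}. Concretely, $\gr(\DYhg)\cong\Yhg$ as $\N$-graded $\C[\![\hbar]\!]$-algebras by \eqref{Master:4}, so $\gr(\DYhg)$ is $\hbar$-torsion-free, hence $\hbar$-flat; a standard filtered-ring argument (the filtration on $\DYhg$ by powers of $\mcJ$ being exhaustive and separated, the latter from part~(1)) then lifts $\hbar$-flatness from $\gr$ to $\DYhg$ itself. Having flatness, the classical limit $\DYhg/\hbar\DYhg$ is computed from $\gr(\DYhg/\hbar\DYhg)\cong\gr(\DYhg)/\hbar\,\gr(\DYhg)\cong\Yhg/\hbar\Yhg\cong U(\mfs\rtimes\ddot\mfh)$; I would then need to check that the surjection $U(\mft\rtimes\ddot\mfh)\to\DYhg/\hbar\DYhg$ arising from the defining relations is an isomorphism, which reduces — via the analogue of the classical picture relating $\mft$ to $\mfs$ through the loop-to-current comparison underlying $\Upsilon_c$ — to the already-known finite/affine PBW input together with a dimension (graded-character) count on each graded piece. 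Finally, $\hbar$-flatness plus $\hbar$-adic completeness and the computation of the classical limit yield the $\C[\![\hbar]\!]$-module isomorphism $\DYhg\cong U(\mft\rtimes\ddot\mfh)[\![\hbar]\!]$ by the usual lifting-a-basis argument. The part I expect to require the most care is establishing separatedness of $\DYhg$ in its $\mcJ$-adic filtration cleanly enough to run the filtered-to-associated-graded transfer in both~(1) and~(2); everything downstream is then formal.
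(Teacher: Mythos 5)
There is a genuine gap, and it sits exactly at the point you flagged as delicate. Your whole argument for part (1) rests on showing that $\DYhg$ is separated for the $\mcJ_c$-adic filtration, i.e.\ $\bigcap_n \mcJ_c^{\,n}=0$, and you propose to deduce this from the isomorphism $\wh{\Phi}_c:\cDYhg_c\iso\cYhg$ of Theorem \ref{T:Master1}\eqref{Master:3}. This is circular: since $\Phi_c$ factors as $\DYhg\to\cDYhg_c\xrightarrow{\wh{\Phi}_c}\cYhg$ and $\wh{\Phi}_c$ is injective, one has $\Ker(\Phi_c)=\bigcap_n\mcJ_c^{\,n}$ \emph{identically}, so "the intersection vanishes" and "$\Phi_c$ is injective" are the same statement; the completed isomorphism gives no information about either, and the absence of $\hbar$-torsion in $\cYhg$ is irrelevant to it. The paper says this explicitly at the start of Section \ref{sec:PBW}: the injectivity of $\Phi$ is \emph{not} a consequence of Theorem \ref{T:hatiso}, because the kernel of $\DYhg\to\cDYhg$ is $\bigcap_n\mcJ^n$, which need not vanish a priori. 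For the same reason your route to part (2) (lifting $\hbar$-flatness from $\mathrm{gr}(\DYhg)\cong\Yhg$ through a filtration whose separatedness comes "from part (1)") inherits the gap. What is missing is an input independent of the $\mcJ$-adic completion picture.

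The paper's proof supplies precisely that input: it shows the classical limit $\bar\Phi:U(\mft\rtimes\ddot\mfh)\to\cUsh$ of $\Phi$ is injective, which by PBW for enveloping algebras reduces to injectivity of the Lie algebra map $\phi=\bar\Phi|_{\mft}:\mft\to\wh{\mfs}$. In finite type this is immediate ($\mft\cong\mfg[t^{\pm1}]$, $\mfs\cong\mfg[t]$), but in affine type it is a genuine computation: one identifies $\mfs$ and $\mft_\kappa$ with universal central extensions via Kassel's realization and the Moody--Rao--Yokonuma isomorphism, and checks that the induced map on centres has kernel exactly $\C c_w$ (this uses the linear independence of $\{(t+1)^s\}_{s\in\Z^\times}\cup\{\log(t+1)\}$ in $\C[\![t]\!]$), so that $\phi$ itself is injective. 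With $\bar\Phi$ injective and $\cYhg$ torsion free (which needs the torsion-freeness of $\Yhdg$, i.e.\ the known PBW theorem for the Yangian in these types -- the one external input you did correctly identify), the standard argument "write $x=\hbar^k y$ with $y\notin\hbar\DYhg$, reduce mod $\hbar$" gives injectivity of $\Phi$, hence of $\Phi_c$ and $\Phi_z$; and flatness in part (2) then follows directly because $\DYhg$ is separated and complete for the $\hbar$-adic topology by construction and is torsion free, being embedded by $\Phi$ in the torsion-free $\cYhg$ -- no transfer of flatness through the $\mcJ$-adic associated graded is needed. So your proposal, as written, cannot be completed without adding the classical-limit injectivity argument (or some equivalent independent control of $\bigcap_n\mcJ^n$), which is the actual mathematical content of this theorem in the affine case.
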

When $\mfg$ is finite-dimensional, the abelian Lie algebra $\ddot\mfh$ vanishes and $\mft$ coincides with the loop algebra $\mfg[t,t^{-1}]$. In this case, Part \eqref{MasterPBW:2} of this theorem improves upon \cite{Enr03}*{Thm.~1.5}, which established
that the positive part $\mathrm{D}Y_\hbar^{\scriptscriptstyle +}\!\mfg$ of the Yangian double $\DYhg$ is topologically free. It is also a close relative of \cite{EnQHA}*{Prop.~5.4} which, in the particular setting outlined in \cite{EnQHA}*{Rem.~8}, shows that a quantum algebra closely related to the so-called centrally extended Yangian double \cite{Khor95} has a similar flatness property. When $\mfg$ is taken to be a classical Lie algebra of type $\msB$, $\msC$ or $\msD$, Part \eqref{MasterPBW:2} of Theorem \ref{T:Master2} is in fact  a consequence of Theorems 3.4 and 6.2 from the recent article \cite{JiYaLi20}. 
%, which studied both the $R$-matrix and Drinfeld presentations for the corresponding centrally extended Yangian doubles. 
In the type $\msA$ setting, this should instead follow from the Poincar\'{e}--Birkhoff--Witt result established in Theorem 2.2 of \cite{JKMY18} (see also \cite{Naz20}*{Thm.~15.3}) for the Yangian double of $\mfgl_N$ in its $R$-matrix presentation, and the identification obtained in \cite{Io96}*{Cor.~3.5}. The proof given in the present paper does not rely on these results, and applies uniformly in all Dynkin types.

In the affine setting, there does not appear to be any counterpart to either part of Theorem \ref{T:Master2} which exists in the literature. 
Our proof applies the recent results of \cites{GRWvrep} and \cite{YaGuPBW}, and ultimately reduces to a 
detailed computation of the classical limit of $\Phi$, which we prove is injective under the more general hypothesis that $\mfg$ is of untwisted affine type with underlying simple Lie algebra $\bar\mfg\ncong\mfsl_2$. Our arguments exploit the fact that, for any such $\mfg$,  the Lie algebras $\mfs$ and $\mft$ admit perfectly tangible descriptions. Namely, due to a result of Moody, Rao and Yokonuma \cite{MRY90}, one has
\begin{equation*}
\mfs\cong \mathfrak{uce}(\dot\mfg[t])\cong \mathfrak{uce}(\bar\mfg[v^{\pm 1},t]) \quad \text{ and }\quad \mft_\kappa \cong \mathfrak{uce}(\dot\mfg[t,t^{-1}])\cong \mathfrak{uce}(\bar\mfg[v^{\pm 1},t^{\pm 1}]),
\end{equation*} 
where $\mft_\kappa$ is a one-dimensional central extension of $\mft$, and $\mathfrak{uce}(\mfa)$ denotes the universal central extension of a given perfect Lie algebra $\mfa$. Universal central extensions of this type were realized concretely in the work of Kassel \cite{Kas84}, and this description is recalled in the course of our proof of Theorem \ref{T:PBW}: see Sections \ref{ssec:aff-I}--\ref{ssec:aff-II}.

%%%%%%%%%%%%%%%%%%%%%%%%%%%%%%%%%%%%%%%%%%%%%%%%%%%%%%%
%Future work
%%%%%%%%%%%%%%%%%%%%%%%%%%%%%%%%%%%%%%%%%%%%%%%%%%%%%%%
\subsection{}

The results obtained in this paper, coupled with the findings of \cite{GTLW19}, lay the foundation for a uniform proof of a conjecture from the pioneering work \cite{KT96} of Khoroshkin and Tolstoy. This is the assertion that, when $\mfg$ is finite-dimensional, $\DYhg$ coincides with the restricted quantum double of the Yangian $\Yhg$. 

Our interest in this conjecture stems, in part, from a desire to understand the universal $R$-matrix  of the Yangian from a more familiar Hopf-theoretic point of view. This is a remarkable formal series $\mcR(z)\in (\Yhg\otimes \Yhg)[\![z^{-1}]\!]$, introduced by Drinfeld in \cite{Dr}, which has played a central role in many of the developments at the heart of the representation theory of Yangians. It is not, however, understood to be a universal $R$-matrix in the traditional sense and, in particular, has not been shown to arise as the canonical tensor associated to a Hopf pairing. On the other hand, the universal $R$-matrix $\mathds{R}$ associated to the restricted quantum double of the Yangian $\Yhg$ has these properties by construction.

In the sequel \cite{WRQD} to this paper, we will show that there is a unique Hopf algebra structure on $\DYhg$ preserved by $\Phi_z$ and that, when equipped with this structure, $\DYhg$ is isomorphic to the restricted quantum double of $\Yhg$, as conjectured in \cite{KT96}. Using this identification, we will 
establish that $\mathds{R}$ and $\mcR(z)$ are in fact one and the same.
%be able to conclude that the universal $R$-matrix $\mathscr{R}$ of $\DYhg\subset \LzhYhg$ is in fact equal to Drinfeld's universal $R$-matrix. 
More precisely, one has the equality
\begin{equation*}
(\Phi_v\otimes \Phi_z) \mathds{R}=\mcR(v-z)\in (\Yhg\otimes\Yhg)[v][\![z^{-1}]\!].
\end{equation*}
%
%

%%%%%%%%%%%%%%%%%%%%%%%%%%%%%%%%%%%%%%%%%%%%%%%%%%%%%%%
%  Representation categories
%%%%%%%%%%%%%%%%%%%%%%%%%%%%%%%%%%%%%%%%%%%%%%%%%%%%%%%
\subsection{} 
Let us now consider the situation in which $\hbar$ is replaced with a nonzero complex number $\mu\in \C^\times$. Let $\mfg$ be a finite-dimensional simple Lie algebra with associated Yangian $Y_\mu\mfg=\Yhg/(\hbar-\mu)\Yhg$. The Yangian double $\DYhg$ itself admits a $\C[\hbar]$-form $\DYg$ (see Definition \ref{D:DY}) which may be specialized to obtain a $\C$-algebra $\mathrm{D}Y_\mu(\mfg)=\DYg/(\hbar-\mu)\DYg$. Let 
$\mathrm{Rep}_{fd}(Y_\mu\mfg)$ and $\mathrm{Rep}_{fd}(\mathrm{D}Y_\mu\mfg)$ denote the categories of finite-dimensional representations of $Y_\mu\mfg$ and $\mathrm{D}Y_\mu\mfg$, respectively.

The results of this article have recently been applied in \cite{GWPoles}*{\S5} to construct, for each $c\in \C$, an equivalence of categories 
\begin{equation*}
\Theta_c:\mathrm{Rep}_{fd}^c(Y_\mu\mfg)\iso \mathrm{Rep}_{fd}(\mathrm{D}Y_\mu\mfg),
\end{equation*}
where $\mathrm{Rep}_{fd}^c(Y_\mu\mfg)$ is the full subcategory of $\mathrm{Rep}_{fd}(Y_\mu\mfg)$ consisting of all $V$ with the property that the commuting Cartan currents $\{h_i(u)\}_{i\in \mbI}\subset Y_\mu\mfg[\![u^{-1}]\!]$, defined in Proposition \ref{P:Y-op} below, have poles contained in the punctured complex plane $\C\setminus\{-c\}$ when viewed as $\End V$-valued rational functions\footnote{By \cite{GTL2}*{Prop.~3.6}, each $h_i(u)$ necessarily operates on $V$ as the Taylor expansion at $u=\infty$ of an operator valued rational function of $u$.} of $u$. 
%It was proven in \cite{GWPoles} that $\mathrm{Rep}_{fd}^c(Y_\mu\mfg)$ is a tensor-closed Serre subcategory of $\mathrm{Rep}_{fd}(Y_\mu\mfg)$. 
%

When $c\in \C^\times$, the functor $\Theta_c$ can be interpreted as the restriction of the pull-back functor $\Phi_c^\ast$ to $\mathrm{Rep}_{fd}^c(Y_\mu\mfg)$ upon specializing $\hbar$ to $\mu$. In more detail, the homomorphism $\Phi_z$ from Theorem \ref{T:Master1} admits a specialization 
\begin{equation*}
\Phi_z^\mu:\mathrm{D}Y_\mu\mfg\to Y_\mu\mfg[z;z^{-1}]\!]
\end{equation*}
and the $\mathrm{D}Y_\mu(\mfg)$-module $\Theta_c(V)$ is obtained from ${(\Phi_z^\mu)}^\ast(V[z;z^{-1}]\!])$ by evaluating $z$ at the point $c$. That such an evaluation is permitted is a consequence of the definition of $\mathrm{Rep}_{fd}^c(Y_\mu\mfg)$; we refer the reader to \cite{GWPoles}*{\S5} for complete details. 

%\begin{comment}
It is not difficult to generalize the construction of $\Theta_c$ from \cite{GWPoles} to the setting where $\mfg$ is an arbitrary symmetrizable Kac--Moody algebra. In this generality, $\mathrm{Rep}_{fd}(Y_\mu\mfg)$ and $\mathrm{Rep}_{fd}(\mathrm{D}Y_\mu\mfg)$ are replaced with the categories of $Y_\mu\mfg$ and $\mathrm{D}Y_\mu\mfg$ modules whose restrictions to $\mfg$ are integrable and in the category $\mathcal{O}$. In fact, one may even take the larger categories consisting of all $\mfh$-diagonalizable $Y_\mu\mfg$ and $\mathrm{D}Y_\mu\mfg$ modules with finite-dimensional weight spaces, where $\mfh\subset \mfg$ is a fixed Cartan subalgebra.
%\end{comment}

%%%%%%%%%%%%%%%%%%%%%%%%%%%%%%%%%%%%%%%%%%%%%%%%%%%%%%%
%  Quantum Loop Algebras
%%%%%%%%%%%%%%%%%%%%%%%%%%%%%%%%%%%%%%%%%%%%%%%%%%%%%%%
\subsection{} 

To conclude, it should be emphasized that the approach taken in this article both complements and draws inspiration from the innovative work \cite{GTL1} of Gautam and Toledano Laredo. Therein, the authors constructed a highly non-trivial algebra homomorphism 
\begin{equation*}
\Phi_{\scriptscriptstyle{\mathsf{GTL}}}:U_\hbar(L\mfg)\to \cYhg
\end{equation*}
which has several remarkable properties. In particular, when $\mfg$ is finite-dimensional, it induces isomorphisms
\begin{equation*}
\wh{\Phi}_{\scriptscriptstyle{\mathsf{GTL}}}:\wh{U_\hbar(L\mfg)}\iso \cYhg \quad \text{ and }\quad \mathrm{gr}(\Phi_{\scriptscriptstyle{\mathsf{GTL}}}):\mathrm{gr}(U_\hbar(L\mfg))\iso \Yhg,
\end{equation*}
where $U_\hbar(L\mfg)$ is both completed and filtered with respect to its evaluation ideal at $t=1$: see Theorem 6.2 and Proposition 6.5 of \cite{GTL1}. Combining Theorem \ref{T:Master1} with the results of \cite{GTL1}, we obtain an algebra homomorphism
\begin{equation*}
\Psi=\wh{\Phi}^{-1}\circ \Phi_{\scriptscriptstyle{\mathsf{GTL}}}:U_\hbar(L\mfg)\to \cDYhg
\end{equation*}
which extends to an isomorphism between the evaluation completions of $U_\hbar(L\mfg)$ and $\DYhg$. It may be viewed as a filtered map with associated graded map providing an isomorphism between $\mathrm{gr}(U_\hbar(L\mfg))$ and $\mathrm{gr}(\DYhg)$, both of which may be identified with the Yangian $\Yhg$.
As $U_\hbar(L\mfg)$ and $\DYhg$ both deform the enveloping algebra of the loop algebra $\mfg[t,t^{-1}]$, it is perhaps natural to speculate on whether or not this composition can be viewed as an isomorphism between $U_\hbar(L\mfg)$ and $\DYhg$, without any completions at play. Though we do not consider $\Psi$ in any detail in the present article, we note in passing that this is easily seen not to be the case, even after reducing modulo $\hbar$.

%%%%%%%%%%%%%%%%%%%%%%%%%%%%%%%%%%%%%%%%%%%%%%%%%%%%%%%
%Outline 
%%%%%%%%%%%%%%%%%%%%%%%%%%%%%%%%%%%%%%%%%%%%%%%%%%%%%%%
\subsection{Outline}
In Section \ref{sec:Def}, we review the definitions and basic properties of the Yangian $\Yhg$ and Yangian double $\DYhg$ associated to a symmetrizable Kac--Moody algebra $\mfg$. 
 Our preliminary overview continues in Section \ref{sec:Der-CL}, where we introduce the Yangian $\Yhdg$ and Yangian double $\DYhdg$ of $\dot\mfg=[\mfg,\mfg]$, in addition to the Lie algebras  $\mfs\rtimes \ddot\mfh$ and $\mft\rtimes \ddot\mfh$. 
In Section \ref{sec:Phi}, we construct the unique extension $\Phi_z$ of $\tau_z$ and its specialization $\Phi_c$ at any invertible complex number $c$. 
We then show in Section \ref{sec:Iso} that each homomorphism $\Phi_c$ induces an isomorphism between the evaluation completion of $\DYhg$ at the point $c$ and the completion of $\Yhg$ with respect to its natural $\N$-grading. In Section \ref{sec:PBW}, we prove our final main result, which simultaneously establishes the injectivity of $\Phi_z$ and $\Phi_c$, for any $c\in \C^\times$, and the Poincar\'{e}--Birkhoff--Witt theorem for $\DYhg$, when $\mfg$ is of finite type or simply-laced affine type. Finally, Appendix \ref{App:A} contains the proof of a technical result on grading completions used in the proof of Lemma  \ref{L:whYhg} of Section \ref{ssec:CY}.

%%%%%%%%%%%%%%%%%%%%%%%%%%%%%%%%%%%%%%%%%%%%%%%%%%%%%%%
%Acknowledgments
%%%%%%%%%%%%%%%%%%%%%%%%%%%%%%%%%%%%%%%%%%%%%%%%%%%%%%%

\subsection{Acknowledgments}
The author gratefully acknowledges the support of the Natural Sciences and Engineering Research Council of Canada (NSERC) provided via the postdoctoral fellowship (PDF) program. He would also like to thank Sachin Gautam for several helpful comments and insightful discussions.

%%%%%%%%%%%%%%%%%%%%%%%%%%%%%%%%%%%%%%%%%%%%%%%%%%%%%%%%%%%%%%%%%%%%%%%%
%%%%%%%%%%%%%%%%%%%%%%%%%%%%%%%%%%%%%%%%%%%%%%%%%%%%%%%%%%%%%%%%%%%%%%%%
% Section: Yangians and Yangian doubles
%%%%%%%%%%%%%%%%%%%%%%%%%%%%%%%%%%%%%%%%%%%%%%%%%%%%%%%%%%%%%%%%%%%%%%%%
%%%%%%%%%%%%%%%%%%%%%%%%%%%%%%%%%%%%%%%%%%%%%%%%%%%%%%%%%%%%%%%%%%%%%%%%
\section{Yangians and Yangian doubles}\label{sec:Def}

Let $\mfg$ be a symmetrizable Kac--Moody algebra with indecomposable Cartan matrix $\mbA=(a_{ij})_{i\in \mbI}$. 
We fix a realization $(\mfh,\{\alpha_i\}_{i\in \mbI},\{\alpha_i^\vee\}_{i\in \mbI})$ of $\mbA$ as in \cite{KacBook90}*{\S1.1}. That is, $\mfh$ is a Cartan subalgebra of $\mfg$, $\{\alpha_i\}_{i\in \mbI}\subset \mfh^*$ is the set of simple roots, and $\{\alpha_i^\vee\}_{i\in \mbI}\subset \mfh$ the set of simple coroots, so that $\alpha_j(\alpha_i^\vee)=a_{ij}$ for all $i,j\in \mbI$. Let $Q=\bigoplus_{i\in \mbI}\Z\alpha_i\subset \mfh^\ast$ be the associated root lattice, and let $(\, ,\,)$ be a standard invariant form on $\mfg$, as in \cite{KacBook90}*{\S2}. We will use the same notation for the induced bilinear form on $\mfh^*$. Set 
\begin{equation*}
 d_{ij}=\frac{(\alpha_i,\alpha_j)}{2} \;\text{ and }\; d_i=d_{ii} \quad \forall \; i,j\in \mbI. 
\end{equation*}
By \cite{KacBook90}*{\S2.3}, we may assume that $(\,,\,)$ is normalized so that $\{d_i\}_{i\in \mbI}$ are positive, relatively prime, integers. 

Let $\dot\mfg$ denote the derived subalgebra $[\mfg,\mfg]$. The notation $\N$ and $\N_+$ will be used to denote the sets of non-negative and strictly positive integers, respectively. All of this data shall remain fixed throughout the course of this paper, unless specified otherwise. 

%%%%%%%%%%%%%%%%%%%%%%%%%%%%%%%%%%%%%%%%%%%%%%%%%%%%%%%%%%%%%%%%%%%%%%%%
% Subsection: The Yangian Y_h(g)
%%%%%%%%%%%%%%%%%%%%%%%%%%%%%%%%%%%%%%%%%%%%%%%%%%%%%%%%%%%%%%%%%%%%%%%%
\subsection{The Yangian \texorpdfstring{$\Yhg$}{Yg}}

We begin by recalling the definition of the Yangian associated to $\mfg$. Let $S_m$ denote the symmetric group on $\{1,\ldots,m\}$. 
\begin{definition}\label{D:Y} The Yangian
 $\Yhg$ is the unital associative $\C[\hbar]$-algebra generated by $h\in \mfh$ and $\{x_{ir}^\pm, h_{ir}\}_{i\in \mbI,r\in \N}$, subject to the following relations for $i,j\in \mbI$, $r,s\in \N$ and $h,h'\in \mfh$:
\begin{gather}
 h_{i0}=d_i\alpha_i^\vee, \label{Y:hi=ai}\\
 [h_{ir},h_{js}]=0, \quad [h_{ir},h]=0,\quad [h,h']=0,\label{Y:hh}\\
 [h,x_{js}^\pm]=\pm \alpha_j(h)x_{js}^\pm, \label{Y:h0x}\\
 [x_{ir}^+,x_{js}^-]=\delta_{ij} h_{i,r+s}, \label{Y:xxh}\\
 [h_{i,r+1},x_{js}^\pm]-[h_{ir},x_{j,s+1}^\pm]=\pm \hbar d_{ij}(h_{ir}x_{js}^\pm+x_{js}^\pm h_{ir}),\label{Y:xh}\\
 [x_{i,r+1}^\pm,x_{js}^\pm]-[x_{ir}^\pm,x_{j,s+1}^\pm]=\pm \hbar d_{ij}(x_{ir}^\pm x_{js}^\pm+x_{js}^\pm x_{ir}^\pm ), \label{Y:xx}\\
 \sum_{\pi \in S_{m}} \left[x_{i,r_{\pi(1)}}^{\pm}, \left[x_{i,r_{\pi(2)}}^{\pm}, \cdots, \left[x_{i,r_{\pi(m)}}^{\pm},x_{js}^{\pm}\right] \cdots\right]\right] = 0, \label{Y:Serre}
\end{gather}
where in the last relation $i\neq j$, $m=1-a_{ij}$ and $r_1,\ldots,r_m\in \N$. 
\end{definition}
The Yangian $\Yhg$ is an $\N$-graded algebra with $\deg\hbar=1$, $\deg\mfh=0$, and 
\begin{equation*}
\deg x_{ir}^\pm=\deg h_{ir}=r \quad \forall \;i\in \mbI,\; r\in \N.
\end{equation*}
The $k$-th graded component of $\Yhg$ will be denoted $\Yhg_k$, so that
\begin{equation*}
\Yhg=\bigoplus_{k\in \N}\Yhg_k. 
\end{equation*}
As a $\C[\hbar]$-algebra, $\Yhg$ is generated by its degree zero and one subspaces. More precisely, we have the following standard result. 
\begin{lemma}\label{L:deg1}
$\Yhg$ is generated by $\mfh\cup\{x_{i0}^\pm,h_{i1}\}_{i\in \mbI}$. Explicitly, for $s>0$, $x_{is}^\pm$ and $h_{i,s+1}$ are determined by 
\begin{gather*}
 x_{is}^\pm= \pm\frac{1}{2d_i}\left[t_{i1},x_{i,s-1}^\pm\right],\quad \mathrm{ where }\quad t_{i1}=h_{i1}-\frac{\hbar}{2}h_{i0}^2,\label{rec:1}\\
 h_{i,s+1}=[x_{is}^+,x_{i1}^-]. \label{rec:2}
\end{gather*}
\end{lemma}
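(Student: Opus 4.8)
The plan is to verify the two displayed formulas directly from the defining relations in Definition \ref{D:Y}, and then obtain the generation statement by a routine induction on $s$. The identity $h_{i,s+1}=[x_{is}^+,x_{i1}^-]$ requires no computation: it follows immediately from relation \eqref{Y:xxh} with $j=i$.

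For the recursion $x_{is}^\pm=\pm\frac{1}{2d_i}[t_{i1},x_{i,s-1}^\pm]$, I would first record the consequence of relation \eqref{Y:h0x} that, since $h_{i0}=d_i\alpha_i^\vee$ and $\alpha_i(\alpha_i^\vee)=a_{ii}=2$, one has $[h_{i0},x_{ir}^\pm]=\pm 2d_i x_{ir}^\pm$ for every $r\in\N$. Specializing relation \eqref{Y:xh} at $j=i$, $r=0$, with its second free index replaced by $s-1$, then expresses $[h_{i1},x_{i,s-1}^\pm]$ as $\pm 2d_i x_{is}^\pm$ plus $\pm\hbar d_i$ times the anticommutator $h_{i0}x_{i,s-1}^\pm+x_{i,s-1}^\pm h_{i0}$. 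A short direct computation using $[AB,C]=A[B,C]+[A,C]B$ and the displayed action of $h_{i0}$ shows that $[\frac{\hbar}{2}h_{i0}^2,x_{i,s-1}^\pm]$ equals precisely this same anticommutator term; subtracting it off yields $[t_{i1},x_{i,s-1}^\pm]=\pm 2d_i x_{is}^\pm$, which rearranges into the claimed formula. The one point deserving care is this cancellation: the element $t_{i1}=h_{i1}-\frac{\hbar}{2}h_{i0}^2$ is engineered precisely so that the $\hbar$-dependent anticommutator terms disappear, and this is essentially the only nonobvious ingredient.

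To finish, let $\mcA\subseteq\Yhg$ be the $\C[\hbar]$-subalgebra generated by $\mfh\cup\{x_{i0}^\pm,h_{i1}\}_{i\in\mbI}$. Since $h_{i0}=d_i\alpha_i^\vee\in\mfh$, the element $t_{i1}$ lies in $\mcA$; inducting on $s\geq 0$ using the recursion above (the base case $s=0$ being a generator) shows $x_{is}^\pm\in\mcA$ for all $s$, and then $h_{i,s+1}=[x_{is}^+,x_{i1}^-]\in\mcA$ for all $s\geq 1$, while $h_{i0}\in\mfh\subseteq\mcA$. Hence $\mcA$ contains every generator of $\Yhg$, so $\mcA=\Yhg$. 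I do not expect any genuine obstacle; the middle step, involving the quadratic term $h_{i0}^2$, is the most delicate part but is entirely elementary.
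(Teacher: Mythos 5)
Your proof is correct, and since the paper states Lemma \ref{L:deg1} as a standard result without giving a proof, there is nothing to diverge from: your verification (the action of $h_{i0}$ via \eqref{Y:h0x} and \eqref{Y:hi=ai}, the specialization of \eqref{Y:xh} at $i=j$, $r=0$, the cancellation of the anticommutator by the $\frac{\hbar}{2}h_{i0}^2$ term in $t_{i1}$, the identity $h_{i,s+1}=[x_{is}^+,x_{i1}^-]$ from \eqref{Y:xxh}, and the induction on $s$) is exactly the standard argument the author had in mind.
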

Let $\{e_i,f_i\}_{i\in \mbI}$ denote the Chevalley generators of $\mfg$, as in \cite{KacBook90}*{\S1.3}, and set 
\begin{equation*}
h_i=d_i\alpha_i^\vee, \quad x_i^+=\sqrt{d_i}e_i, \quad x_i^-=\sqrt{d_i}f_i \quad \forall \; i\in \mbI. 
\end{equation*}
These normalized generators satisfy $(x_i^+,x_i^-)=1$ and $h_i=[x_i^+,x_i^-]$ for all $i\in \mbI$, 
\begin{comment}
%
\begin{equation*}
 (x_i^+,x_i^-)=1 \quad \text{ and }\quad h_i=[x_i^+,x_i^-] \quad \forall \; i\in \mbI,
\end{equation*}
%
\end{comment}
and the relations \eqref{Y:hi=ai}--\eqref{Y:Serre} imply that the assignment
\begin{equation*}%\label{g->Y}
 x_i^\pm \mapsto x_{i0}^\pm, \quad h_i\mapsto h_{i0},\quad h\mapsto h \quad \forall \; i\in \mbI\; \text{ and }\; h\in \mfh,
\end{equation*}
determines a $\C$-algebra homomorphism $U(\mfg)\to \Yhg$. 

%%%%%%%%%%%%%%%%%%%%%%%%%%%%%%%%%%%%%%%%%%%%%%%%%%%%%%%%%%%%%%%%%%%%%%%%
% Subsection: Generating series and shift automorphisms
%%%%%%%%%%%%%%%%%%%%%%%%%%%%%%%%%%%%%%%%%%%%%%%%%%%%%%%%%%%%%%%%%%%%%%%%
\subsection{Generating series and shift automorphisms}\label{ssec:gen-Yhg}

We now spell out a more efficient presentation of $\Yhg$, which can be deduced from \cite{GTL2}*{Prop.~2.3}.
\begin{proposition}\label{P:Y-op}
For each $i\in \mbI$, define $x_i^\pm(u),h_i(u)\in \Yhg[\![u^{-1}]\!]$ by
\begin{equation*}
 x_i^\pm(u)=\sum_{r\geq 0}x_{ir}^\pm u^{-r-1} \quad \text{ and }\quad h_i(u)=\sum_{r\geq 0}h_{ir}u^{-r-1}.
\end{equation*}
Then the defining relations \eqref{Y:hi=ai}--\eqref{Y:Serre} of $\Yhg$ are equivalent to the following relations for $i,j\in \mbI$ and $h,h'\in \mfh$:
\begin{gather}
 h_{i0}=d_i\alpha_i^\vee, \label{Y:hi=ai'}\\
 [h_i(u),h_j(v)]=0, \quad [h,h_j(u)]=0,\quad [h,h']=0,\label{Y:hh'}\\
 [h,x_j^\pm(u)]=\pm \alpha_j(h)x_j^\pm(u), \label{Y:h0x'}\\
 \begin{aligned}\label{Y:xh'}
 (u-v\mp \hbar &d_{ij})h_i(u)x_j^\pm(v)\\
  &=(u-v\pm \hbar d_{ij})x_j^\pm(v)h_i(u)\pm 2d_{ij} x_j^\pm (v)-[h_i(u),x_{j0}^\pm], 
 \end{aligned}\\
 \begin{aligned}\label{Y:xx'}
 (u-v\mp \hbar &d_{ij})x_i^\pm(u)x_j^\pm(v)\\
 &=(u-v\pm \hbar d_{ij})x_j^\pm(v)x_i^\pm(u)+[x_{i0}^\pm,x_j^\pm(v)]-[x_i^\pm(u),x_{j0}^\pm],  
 \end{aligned}\\
 (u-v)[x_i^+(u),x_j^-(v)]=\delta_{ij}(h_i(v)-h_i(u)), \label{Y:xxh'}\\
 \sum_{\pi \in S_{m}} \left[x_i^{\pm}(u_{\pi(1)}), \left[x_i^{\pm}(u_{\pi(2)}), \cdots, \left[x_i^{\pm}(u_{\pi(m)}),x_j^{\pm}(v)\right] \cdots\right]\right] = 0, \label{Y:Serre'}
\end{gather}
where in the last relation $i\neq j$ and $m=1-a_{ij}$.
\end{proposition}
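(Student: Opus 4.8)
The plan is to exploit that the two presentations share the same generating set --- the series $x_i^\pm(u)$ and $h_i(u)$ being simply generating functions for the $x_{ir}^\pm$ and $h_{ir}$ --- so that what must be checked is that \eqref{Y:hi=ai}--\eqref{Y:Serre} and \eqref{Y:hi=ai'}--\eqref{Y:Serre'} generate the same two-sided ideal in the free unital associative $\C[\hbar]$-algebra on the generators $h\in\mfh$ and $\{x_{ir}^\pm,h_{ir}\}_{i\in\mbI,r\in\N}$. I would prove this by extracting, from each series relation in \eqref{Y:hi=ai'}--\eqref{Y:Serre'}, the coefficient of a general monomial in the auxiliary variables and identifying the result with the relevant component relation, possibly after a reformulation or an induction; conversely, each component relation will be recovered as a specific coefficient. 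A number of cases are immediate: \eqref{Y:hi=ai'} is literally \eqref{Y:hi=ai}, and \eqref{Y:hh'} and \eqref{Y:h0x'} unwind coefficientwise to \eqref{Y:hh} and \eqref{Y:h0x}. For the Serre relations I would extract the coefficient of $u_1^{-r_1-1}\cdots u_m^{-r_m-1}v^{-s-1}$ from \eqref{Y:Serre'}; since the sum over $S_m$ merely permutes the variables $u_1,\dots,u_m$, this coefficient is exactly $\sum_{\pi\in S_m}[x_{i,r_{\pi(1)}}^\pm,[\cdots,[x_{i,r_{\pi(m)}}^\pm,x_{js}^\pm]\cdots]]$, which is \eqref{Y:Serre}, and summing \eqref{Y:Serre} against these monomials rebuilds \eqref{Y:Serre'}.

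The substance of the argument lies in the three mixed relations \eqref{Y:xh'}, \eqref{Y:xx'} and \eqref{Y:xxh'}, each carrying a factor of the form $u-v$ up to a shift by $\hbar d_{ij}$. I would take \eqref{Y:xh'} as the model case: after expanding $(u-v\mp\hbar d_{ij})h_i(u)x_j^\pm(v)$ and the right-hand side, I would separate the coefficient of $u^{-r-1}v^{-s-1}$ with $r,s\geq 0$, which returns precisely \eqref{Y:xh}, from the two ``boundary'' coefficients, namely those of $u^0v^{-s-1}$ and of $u^{-r-1}v^0$. The first of these reads $h_{i0}x_{js}^\pm=x_{js}^\pm h_{i0}\pm 2d_{ij}x_{js}^\pm$, which is exactly \eqref{Y:h0x} applied to $h=h_{i0}=d_i\alpha_i^\vee$, using $\alpha_j(\alpha_i^\vee)d_i=2d_{ij}$; the second becomes a tautology once the term $-[h_i(u),x_{j0}^\pm]$ on the right of \eqref{Y:xh'} is taken into account. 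Thus, modulo \eqref{Y:hi=ai'} and \eqref{Y:h0x'}, relation \eqref{Y:xh'} is equivalent to \eqref{Y:xh}. I expect \eqref{Y:xx'} to be handled in exactly the same fashion, both boundary coefficients now being tautologies, while for \eqref{Y:xxh'} the generic coefficient will yield $[x_{i,r+1}^+,x_{js}^-]=[x_{ir}^+,x_{j,s+1}^-]$, the $u^0v^{-s-1}$ coefficient will yield $[x_{i0}^+,x_{js}^-]=\delta_{ij}h_{is}$, and an induction on $r$ will then recover \eqref{Y:xxh} in full. Running these equivalences in both directions completes the proof.

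The step I expect to be the main obstacle is the bookkeeping around the boundary coefficients: one must check that the correction terms $\pm 2d_{ij}x_j^\pm(v)$, $-[h_i(u),x_{j0}^\pm]$, $[x_{i0}^\pm,x_j^\pm(v)]$ and $h_i(v)-h_i(u)$ appearing in \eqref{Y:xh'}--\eqref{Y:xxh'} are precisely what is needed to absorb the discrepancy, supported on the monomials with a zero power of $u$ or $v$, between the product of the series and the generating series one would assemble naively from the component relation --- no more, which would impose spurious constraints, and no less, which would fail to reproduce \eqref{Y:xh}--\eqref{Y:xxh}. As a shortcut, one may instead match \eqref{Y:hi=ai'}--\eqref{Y:Serre'} term by term against the presentation of \cite{GTL2}*{Prop.~2.3}, the two coinciding once the normalizations $x_i^\pm=\sqrt{d_i}\,e_i$, $\sqrt{d_i}\,f_i$ and $h_i=d_i\alpha_i^\vee$ fixed above are taken into account; this is the route suggested in the statement.
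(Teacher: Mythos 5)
Your argument is correct, and the delicate points are handled properly: the generic coefficient of $u^{-r-1}v^{-s-1}$ ($r,s\geq 0$) in \eqref{Y:xh'}, \eqref{Y:xx'}, \eqref{Y:xxh'} reproduces \eqref{Y:xh}, \eqref{Y:xx} and the recursion $[x_{i,r+1}^+,x_{js}^-]=[x_{ir}^+,x_{j,s+1}^-]$, while the boundary coefficients in $u^0$ or $v^0$ are exactly absorbed by the correction terms $\pm 2d_{ij}x_j^\pm(v)$, $-[h_i(u),x_{j0}^\pm]$, $[x_{i0}^\pm,x_j^\pm(v)]$ and $h_i(v)-h_i(u)$ (the last via the telescoping identity $(u-v)\sum_{r,s}h_{i,r+s}u^{-r-1}v^{-s-1}=h_i(v)-h_i(u)$), so that the only surviving boundary constraint is $[h_{i0},x_{js}^\pm]=\pm 2d_{ij}x_{js}^\pm$ and $[x_{i0}^+,x_{js}^-]=\delta_{ij}h_{is}$, both available on each side from \eqref{Y:hi=ai}, \eqref{Y:h0x} and \eqref{Y:xxh}, respectively their primed analogues. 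The paper itself offers no proof of this proposition: it simply records that the statement "can be deduced from" \cite{GTL2}*{Prop.~2.3}, which is the shortcut you mention at the end and which requires only reconciling the normalizations $x_i^\pm=\sqrt{d_i}\,e_i,\sqrt{d_i}\,f_i$, $h_i=d_i\alpha_i^\vee$ and $d_ia_{ij}=2d_{ij}$. So your main route is a self-contained coefficient-by-coefficient verification of what the paper delegates to a citation; it buys independence from the conventions of \cite{GTL2} at the cost of the bookkeeping you carried out, and both routes are sound.
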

\begin{remark}
Since $x_i^\pm(u),h_i(u)\in u^{-1}\Yhg[\![u^{-1}]\!]$, the relations \eqref{Y:hi=ai'}--\eqref{Y:xxh'} can (and will) be viewed as identities in the algebra $\Yhg[\![u^{-1},v^{-1}]\!]$. Similarly, the Serre relations \eqref{Y:Serre'} should be understood as identities in $\Yhg[\![u_1^{-1},\ldots,u_m^{-1},v^{-1}]\!]$. 
\end{remark}

The Yangian $\Yhg$ admits a family of automorphisms $\{\tau_c\}_{c\in \C}$ defined by
\begin{equation}\label{shift-c}
\begin{gathered}
 \tau_c(h)=h \quad \forall \;h\in \mfh, \\
\tau_c(x_i^\pm(u))=x_i^\pm(u-c), \quad \tau_c(h_i(u))=h_i(u-c) \quad \forall \; i\in \mbI.
\end{gathered}
\end{equation}
This is readily verified using the relations of Proposition \ref{P:Y-op}.
In terms of the generators $x_{ir}^\pm$ and $h_{ir}$, the above formulas read as
\begin{equation*}
 \tau_c(x_{ir}^\pm)=\sum_{k=0}^r \binom{r}{k}x_{ik}^\pm c^{r-k}, \quad \tau_c(h_{ir})=\sum_{k=0}^r \binom{r}{k}h_{ik}c^{r-k} \quad \forall \; i\in \mbI \; \text{ and }\; r\in \N. 
\end{equation*}
Each $\tau_c$ is called a \textit{shift} automorphism. Replacing $c$ by a formal variable $z$, we obtain the formal shift homomorphism
\begin{equation}\label{shift-z}
 \tau_z:\Yhg\into \Yhg[z]
\end{equation}
defined by \eqref{shift-c} with $c$ replaced by $z$. 

%%%%%%%%%%%%%%%%%%%%%%%%%%%%%%%%%%%%%%%%%%%%%%%%%%%%%%%%%%%%%%%%%%%%%%%%
% Subsection: The Yangian doubles DY_h(g)
%%%%%%%%%%%%%%%%%%%%%%%%%%%%%%%%%%%%%%%%%%%%%%%%%%%%%%%%%%%%%%%%%%%%%%%%
\subsection{The Yangian double \texorpdfstring{$\DYhg$}{DYg}}\label{ssec:DYhg}
We now turn to the Yangian double  associated to $\mfg$, as first considered in the work of Khoroshkin--Tolstoy \cite{KT96} in the case where $\mfg$ is finite-dimensional. 
Let $\delta(u)=\sum_{r\in \Z}u^r\in \C[\![u^{\pm1}]\!]$ denote the formal delta function, so that 
\begin{equation*}
 u^{-1}\delta(v/u)=\sum_{r\in \Z} v^r u^{-r-1}\in \C[\![u^{\pm 1},v^{\pm 1}]\!].
\end{equation*}
In what follows, we invoke the standard terminology for topological $\C[\![\hbar]\!]$-algebras; see \cite{KasBook95}*{Def.~XVII.2.2}, for instance. 
\begin{definition}\label{D:DY}
The Yangian double $\DYhg$ is the unital, associative $\C[\![\hbar]\!]$-algebra topologically generated by $h\in \mfh$ and the coefficients $\{\mcX_{ir}^\pm,\mcH_{ir}\}_{i \in \mbI,r\in \Z}$ of the series 
\begin{equation*}
 \mcX_i^\pm(u)=\sum_{r\in \Z}\mcX_{ir}^\pm u^{-r-1} \quad \text{ and }\quad \mcH_i(u)=\sum_{r\in \Z}\mcH_{ir}u^{-r-1},
\end{equation*}
subject to the following relations for all $i,j\in \mbI$ and $h,h'\in \mfh$:
 \begin{gather}
 \mcH_{i0}=d_i\alpha_i^\vee, \label{DY:hi=ai}\\
[\mcH_{i}(u),\mcH_j(v)] =0,\quad [h,\mcH_j(u)]=0,\quad [h,h']=0, \label{DY:hh}\\
[h,\mcX_j^\pm(u)]=\pm \alpha_j(h)\mcX_j^\pm(u), \label{DY:h0x}\\
\left(u-v\mp\hbar d_{ij}\right)\mcH_i(u)\mcX_j^{\pm}(v)=\left(u-v\pm\hbar d_{ij}\right)\mcX_j^{\pm}(v)\mcH_i(u), \label{DY:xh}\\
\left(u-v\mp\hbar d_{ij}\right)\mcX_{i}^{\pm}(u)\mcX_{j}^{\pm}(v) =\left(u-v\pm\hbar d_{ij}\right) \mcX_{j}^{\pm}(v)\mcX_{i}^{\pm}(u) ,  \label{DY:xx}\\
[\mcX_i^+(u),\mcX_j^-(v)] =\delta_{ij} u^{-1}\delta(v/u)\mcH_i(v), \label{DY:xxh}\\
\sum_{\pi \in S_{m}} \left[\mcX_i^{\pm}(u_{\pi(1)}), \left[\mcX_i^{\pm}(u_{\pi(2)}), \cdots, \left[\mcX_i^{\pm}(u_{\pi(m)}),\mcX_j^{\pm}(v)\right] \cdots\right]\right] = 0, \label{DY:Serre}
\end{gather}
where in the last relation $i\neq j$ and $m=1-a_{ij}$.  

The $\C[\hbar]$-form $\DYg$ of $\DYhg$ is defined to be the unital, associative $\C[\hbar]$-algebra generated by $h\in \mfh$ and $\{\mcX_{ir}^\pm,\mcH_{ir}\}_{i \in \mbI,r\in \Z}$, subject to relations \eqref{DY:hi=ai}--\eqref{DY:Serre}. 
\end{definition}
\begin{remark}\label{R:DY} 
\leavevmode
\begin{enumerate}
\item \label{R:DY-1}
The relations \eqref{DY:hh}--\eqref{DY:xxh} are understood to be expanded in the formal series space $\DYhg[\![u^{\pm 1},v^{\pm 1}]\!]$ to yield the corresponding relations for $\DYhg$. Similarly, \eqref{DY:Serre} is to be expanded in $\DYhg[\![u_1^{\pm 1},\ldots,u_m^{\pm 1},v^{\pm 1}]\!]$.
\item\label{R:DY-2}
The above relations are equivalent the relations \eqref{Y:hi=ai}--\eqref{Y:Serre} upon replacing all instances of $x_{ik}^\pm, x_{jk}^\pm, h_{ik}$ and $h_{jk}$ ($k\in \N$) by $\mcX_{ik}^\pm, \mcX_{jk}^\pm, \mcH_{ik}$ and $\mcH_{jk}$, respectively, and allowing $k$ to take arbitrary integer values. 
\end{enumerate}
\end{remark}
Let us now collect some facts about $\DYhg$ and $\DYg$ which follow readily from the above definition. Let $\jmath$ denote the natural $\C[\hbar]$-algebra homomorphism 
\begin{equation*}
\jD:\DYg\to \DYhg. 
\end{equation*}
\begin{proposition}\label{P:D-basic}
\leavevmode
\begin{enumerate}[font=\upshape]
\item\label{D-basic:1} $\jD$ induces an isomorphism of $\C[\![\hbar]\!]$-algebras 
\begin{equation*}
\varprojlim_{n}\left(\DYg/\hbar^n\DYg\right)\iso \DYhg. 
\end{equation*}
\item\label{D-basic:2} For each $i\in \mbI$, we have 
$
[\mcX_{i0}^+,\mcX_i^-(u)]=\mcH_i(u). 
$
Consequently, the set  
\begin{equation*}
\mfh\cup\{\mcX_{ik}^\pm\}_{i\in\mbI, k\in \Z} 
\end{equation*}
generates $\DYg$ as a $\C[\hbar]$-algebra and $\DYhg$ as a topological $\C[\![\hbar]\!]$-algebra. 

\item\label{D-basic:3} $\DYg$ is a $\Z$-graded algebra with $\deg \hbar=1$, $\deg\mfh=0$, and 
\begin{equation*} 
\deg \mcX_{ir}^\pm=\deg \mcH_{ir}=r \quad \forall \;i\in \mbI,\; r\in \Z.
\end{equation*}
\item\label{D-basic:4} The assignment 
\begin{equation*}
x_{ir}^\pm\mapsto \mcX_{ir}^\pm,\quad h_{ir}\mapsto \mcH_{ir}, \quad h\mapsto h\quad \forall \; i\in \mbI,\, r\in \N \; \text{ and }\; h\in \mfh, 
\end{equation*}
extends to a homomorphism of $\Z$-graded $\C[\hbar]$-algebras 
$
\iY:\Yhg\to \DYg. 
$
\end{enumerate}
\end{proposition}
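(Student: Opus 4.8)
The claims in Proposition~\ref{P:D-basic} are all formal consequences of Definition~\ref{D:DY}, so my plan is to treat them in the order \eqref{D-basic:2}, \eqref{D-basic:3}, \eqref{D-basic:4}, \eqref{D-basic:1}, since the later parts lean on the earlier ones. For \eqref{D-basic:2}, the identity $[\mcX_{i0}^+,\mcX_i^-(u)]=\mcH_i(u)$ is extracted from the commutator relation \eqref{DY:xxh} with $j=i$ by taking the coefficient of $u_1^{-1}$ in the variable attached to $\mcX_i^+$; more precisely, writing \eqref{DY:xxh} as the double series $[\mcX_i^+(w),\mcX_i^-(u)]=w^{-1}\delta(u/w)\mcH_i(u)=\sum_{r\in\Z}u^r w^{-r-1}\mcH_i(u)$ and reading off the coefficient of $w^{-1}$ yields $[\mcX_{i0}^+,\mcX_i^-(u)]=\mcH_i(u)$. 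Since the coefficients $\mcH_{ir}$ are thereby expressed as brackets of the $\mcX_{ik}^\pm$, and the $\mcH_{ir}$ were among the original topological generators, it follows that $\mfh\cup\{\mcX_{ik}^\pm\}$ topologically generates $\DYhg$ (and generates $\DYg$ as an honest algebra over $\C[\hbar]$).

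For \eqref{D-basic:3}, I would observe that every defining relation among \eqref{DY:hi=ai}--\eqref{DY:Serre} is homogeneous once one assigns $\deg\hbar=1$, $\deg h=0$ for $h\in\mfh$, and $\deg\mcX_{ir}^\pm=\deg\mcH_{ir}=r$: relation \eqref{DY:hi=ai} equates two degree-zero elements; \eqref{DY:hh}--\eqref{DY:h0x} are manifestly homogeneous; in \eqref{DY:xh} and \eqref{DY:xx}, after expanding, matching powers of $u^{-1}$ and $v^{-1}$ shows each homogeneous component is a separate relation of well-defined degree (the factor $u-v\mp\hbar d_{ij}$ shifts degree uniformly by $1$); \eqref{DY:xxh} is homogeneous because $u^{-1}\delta(v/u)\mcH_i(v)$ has the coefficient of $u^{-r-1}v^{-s-1}$ supported on $r+s=\deg\mcH_{i,\bullet}$ in the appropriate sense; and the Serre relations \eqref{DY:Serre} are homogeneous by inspection. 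Hence the two-sided ideal of relations is homogeneous, and $\DYg$ inherits the $\Z$-grading from the free algebra. This immediately gives \eqref{D-basic:4}: one checks that the images $\mcX_{ir}^\pm,\mcH_{ir}$ (for $r\in\N$) and $h$ inside $\DYg$ satisfy exactly the Yangian relations \eqref{Y:hi=ai}--\eqref{Y:Serre}, which by Remark~\ref{R:DY}\eqref{R:DY-2} are the $r\in\N$ specializations of \eqref{DY:hi=ai}--\eqref{DY:Serre}, so the assignment extends to an algebra homomorphism $\iY\colon\Yhg\to\DYg$; it is graded because it sends degree-$r$ generators to degree-$r$ elements.

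Part \eqref{D-basic:1} is the one genuinely topological statement and the place where I expect the only real subtlety. The map $\jD\colon\DYg\to\DYhg$ induces compatible maps $\DYg/\hbar^n\DYg\to\DYhg/\hbar^n\DYhg$, hence a map $\varprojlim_n(\DYg/\hbar^n\DYg)\to\varprojlim_n(\DYhg/\hbar^n\DYhg)$, and the target is canonically $\DYhg$ because $\DYhg$ is by construction $\hbar$-adically complete and separated (it is a topological $\C[\![\hbar]\!]$-algebra with the $\hbar$-adic topology, cf.\ \cite{KasBook95}*{Def.~XVII.2.2}). So the content is that each induced map $\DYg/\hbar^n\DYg\to\DYhg/\hbar^n\DYhg$ is an isomorphism. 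Surjectivity is clear since $\jD$ has dense image and the quotients are discrete. For injectivity one argues that $\DYhg/\hbar^n\DYhg$ has the presentation obtained from Definition~\ref{D:DY} by working over $\C[\hbar]/(\hbar^n)$ from the start --- i.e.\ $\DYhg/\hbar^n\DYhg\cong\DYg\otimes_{\C[\hbar]}\C[\hbar]/(\hbar^n)$ --- which amounts to saying that the completion does not introduce new relations modulo any fixed power of $\hbar$. The cleanest route is to note that $\DYhg$ is, by definition, the $\hbar$-adic completion of the free topological algebra on the generators modulo the closure of the ideal generated by \eqref{DY:hi=ai}--\eqref{DY:Serre}, and that for a finitely (topologically) generated ideal in a free algebra the closure of the ideal modulo $\hbar^n$ coincides with the image of the algebraic ideal; this is a standard fact about $\hbar$-adic completions (the ideal generated by the relations is already closed modulo each $\hbar^n$ because the relations themselves are $\hbar$-adically ``clean''). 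Granting this, $\jD$ induces isomorphisms on all $\hbar^n$-quotients, and passing to the inverse limit gives the claimed isomorphism $\varprojlim_n(\DYg/\hbar^n\DYg)\iso\DYhg$. I would present this last point carefully but briefly, as it is the crux; the remaining assertions are bookkeeping.
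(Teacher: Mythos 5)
Your proposal is correct, and it fills in exactly the routine verifications the paper leaves implicit: the paper states Proposition \ref{P:D-basic} with no proof beyond the remark that these facts ``follow readily from the above definition,'' and your checks — extracting $[\mcX_{i0}^+,\mcX_i^-(u)]=\mcH_i(u)$ from the coefficient of $w^{-1}$ in \eqref{DY:xxh}, homogeneity of the defining relations for the $\Z$-grading, Remark \ref{R:DY}\eqref{R:DY-2} for the existence of $\iY$, and the standard comparison of the topological presentation with the $\hbar$-adic completion of the $\C[\hbar]$-form for \eqref{D-basic:1} — are the intended ones.

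One small cleanup on the crux of \eqref{D-basic:1}: the appeal to ``finitely (topologically) generated ideals'' is neither available (the relations form an infinite family, indexed by $i,j\in\mbI$ and the Fourier modes of the series identities) nor needed. If $\widehat{F}$ is the $\hbar$-adically completed free algebra, $J\subset\widehat{F}$ the ideal generated by the relations and $\bar{J}$ its closure, then $J+\hbar^n\widehat{F}$ contains the open subgroup $\hbar^n\widehat{F}$, hence is open and therefore closed, so $\bar{J}\subset J+\hbar^n\widehat{F}$ automatically; thus $\DYhg/\hbar^n\DYhg=\widehat{F}/(\bar{J}+\hbar^n\widehat{F})=\widehat{F}/(J+\hbar^n\widehat{F})\cong\DYg/\hbar^n\DYg$ for every $n$, and passing to the inverse limit (using that $\DYhg$ is separated because $\bar{J}$ is closed, and complete by lifting coherent sequences in the complete algebra $\widehat{F}$) gives the isomorphism with no finiteness hypothesis.
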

We shall set 
\begin{equation*}
\iYh:=\jD\circ \iY: \Yhg\to \DYhg. 
\end{equation*}
It should be emphasized that, at this point, it is not clear that any of the maps  $\jD$, $\iY$ or $\iYh$ are injective. As a consequence of \eqref{D-basic:1} above, we have 
\begin{equation*}
\Ker(\jD)=\bigcap_{n\in \N}\hbar^n \DYg, 
\end{equation*}
and, as $\DYg$ is not necessarily separated, this ideal need not vanish. 
We will, however, see in Corollary \ref{C:Phi} that both $\iY$ and $\iYh$ are indeed injective. 

%%%%%%%%%%%%%%%%%%%%%%%%%%%%%%%%%%%%%%%%%%%%%%%%%%%%%%%%%%%%%%%%%%%%%%%%
% Subsection: Translation automorphisms
%%%%%%%%%%%%%%%%%%%%%%%%%%%%%%%%%%%%%%%%%%%%%%%%%%%%%%%%%%%%%%%%%%%%%%%%
\subsection{Translation automorphisms}
We now introduce the so-called \textit{translation automorphisms} of the Yangian double (see \cite{KT96}*{(5.12)}, for instance). These will play a particularly important role in the proof of Theorem \ref{T:hatiso} in Section \ref{sec:Iso}. 
\begin{proposition}\label{P:ti}
Fix $i\in \mbI$. Then the assignment $\mbt_i$ defined by
 \begin{equation*}
  \mbt_{i}(h)=h, \quad \mbt_i(\mcX_{jr}^\pm)=\mcX_{j,r\pm \delta_{ij}}^\pm, \quad \mbt_i(\mcH_{jr})=\mcH_{jr} \quad \forall \; j\in \mbI, \, r\in \Z \; \text{ and }\; h\in \mfh
 \end{equation*}
extends to an automorphism $\mbt_{i}$ of $\DYg$ and of $\DYhg$.
\end{proposition}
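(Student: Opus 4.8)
The plan is to verify that $\mbt_i$ preserves each of the defining relations \eqref{DY:hi=ai}--\eqref{DY:Serre}, working in the generating-series formalism of Definition \ref{D:DY}. The key observation is that the substitution $\mcX_{jr}^\pm\mapsto \mcX_{j,r\pm\delta_{ij}}^\pm$ translates into an exceedingly simple operation on series: since $\mbt_i(\mcX_i^\pm(u))=\sum_{r\in\Z}\mcX_{i,r\pm 1}^\pm u^{-r-1}=u^{\mp 1}\mcX_i^\pm(u)$ while $\mbt_i$ fixes $\mcX_j^\pm(u)$ for $j\neq i$ and fixes every $\mcH_j(u)$ and every $h\in\mfh$. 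So the whole problem reduces to checking that multiplying $\mcX_i^+(u)$ by $u^{-1}$ and $\mcX_i^-(u)$ by $u$ (and doing nothing to the other currents) is a symmetry of the relations. This is why the map is built from a \emph{scalar} shift in opposite directions on the $+$ and $-$ currents: the two factors $u^{\mp 1}$ are designed to cancel in precisely the places where both a raising and a lowering current occur.

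\textbf{Key steps, in order.} First, I would record the series identities $\mbt_i(\mcX_i^\pm(u))=u^{\mp 1}\mcX_i^\pm(u)$, $\mbt_i(\mcX_j^\pm(u))=\mcX_j^\pm(u)$ for $j\neq i$, $\mbt_i(\mcH_j(u))=\mcH_j(u)$, $\mbt_i(h)=h$; note also that $\mbt_i(\mcX_{i0}^\pm)=\mcX_{i,\pm 1}^\pm$, which appears in the inhomogeneous terms of no relation, since those only involve $\mcX_{i0}^\pm$ via $[\mcH_i(u),\mcX_{j0}^\pm]$-type brackets — but that is in the \emph{Yangian} relations, not the double; in Definition \ref{D:DY} the relations \eqref{DY:xh}--\eqref{DY:xx} are already homogeneous with no $\mcX_{i0}^\pm$ correction terms, which is exactly what makes this work. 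Then I would run through the relations one at a time:
\eqref{DY:hi=ai} involves only $\mcH_{i0}$, untouched;
\eqref{DY:hh} and \eqref{DY:h0x} involve no $\mcX_i^\pm(u)$ on which $\mbt_i$ acts non-trivially in a way that matters — for \eqref{DY:h0x} with $j=i$ both sides simply pick up the same factor $u^{-1}$;
for \eqref{DY:xh} with $j=i$, both sides acquire the factor $u^{\mp 1}$ from $\mcX_i^\pm(v)$ (recording that the free variable there is $v$), so the relation is preserved; for $j\neq i$ nothing changes;
for \eqref{DY:xx}, if both $i$-currents are the same sign-label as the index $i$, say both are $\mcX_i^\pm$, each side gains $u^{\mp1}v^{\mp1}$ — wait, here one must be careful: \eqref{DY:xx} is a relation among $\mcX_i^\pm$ and $\mcX_j^\pm$ with the \emph{same} superscript, so the relevant case is $j=i$ and both currents carrying the label $i$; then both sides of \eqref{DY:xx} are multiplied by $(uv)^{\mp1}$ and the identity persists;
\eqref{DY:xxh} is the crucial case: with $j=i$, the left side becomes $[u^{-1}\mcX_i^+(u),v\,\mcX_i^-(v)]=(v/u)[\mcX_i^+(u),\mcX_i^-(v)]$, and the right side $u^{-1}\delta(v/u)\mcH_i(v)$ is fixed by $\mbt_i$; since $(v/u)\cdot u^{-1}\delta(v/u)=u^{-1}\delta(v/u)$ (because $f(v/u)\delta(v/u)=f(1)\delta(v/u)$ for the formal delta, and here $f(x)=x$), the two sides agree — this is the heart of the computation, and it also confirms that the $\delta$-function makes sense of the otherwise "divergent" product; for $i\neq j$ both sides are zero after applying $\mbt_i$ to one current, consistent with $\delta_{ij}=0$;
finally \eqref{DY:Serre} with the inner index equal to $i$ gets multiplied through by $u_{\pi(1)}^{\mp1}\cdots u_{\pi(m)}^{\mp1}v^{\mp1}$ (or just by $v^{\mp1}$ if only the final current $\mcX_j^\pm(v)$ has label $i$, but in the Serre relation the repeated current has a fixed index $i$ and the last current has the \emph{different} index $j$), and in either configuration the relation is homogeneous under the rescaling, hence preserved. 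After verifying relations I would check invertibility: $\mbt_i$ has the obvious two-sided inverse $\mcX_{jr}^\pm\mapsto\mcX_{j,r\mp\delta_{ij}}^\pm$, $\mcH_{jr}\mapsto\mcH_{jr}$, $h\mapsto h$, which is well-defined by the same argument; on $\DYhg$ one observes additionally that $\mbt_i$ and its inverse are continuous for the $\hbar$-adic topology (they are $\C[\![\hbar]\!]$-linear and send topological generators to topological generators), hence extend from $\DYg$ to $\DYhg$ by Proposition \ref{P:D-basic}\eqref{D-basic:1}.

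\textbf{Main obstacle.} The only genuinely non-formal point is relation \eqref{DY:xxh}, where one must handle the product of a power of $v/u$ with the formal delta function $u^{-1}\delta(v/u)$ and use the substitution property $x^n\,\delta(x)=\delta(x)$ at $x=v/u$; everything else is bookkeeping of which variables occur and checking that the relations of Definition \ref{D:DY} are homogeneous with respect to the rescaling $\mcX_i^\pm(u)\mapsto u^{\mp1}\mcX_i^\pm(u)$. I expect the write-up to be short, with the delta-function identity being the one step worth stating explicitly, alongside a remark that the absence of the $\mcX_{i0}^\pm$-correction terms in the double (as opposed to the Yangian relations \eqref{Y:xh'}--\eqref{Y:xx'}) is precisely what permits these translation automorphisms to exist.
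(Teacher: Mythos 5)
Your proposal is correct and takes essentially the same route as the paper: one verifies the relations of Definition \ref{D:DY} in the series formalism, where $\mbt_i$ acts on the $i$-th currents by multiplication by a power of $u$, the only non-formal point being the delta-function identity $(u/v)^{n}\,u^{-1}\delta(v/u)=u^{-1}\delta(v/u)$ needed for \eqref{DY:xxh}; the paper packages the argument by checking at once that $\mcX_j^\pm(u)\mapsto u^{\pm n\delta_{ij}}\mcX_j^\pm(u)$ defines an endomorphism $\mbt_i^n$ for every $n\in\Z$ and observing $\mbt_i^n=(\mbt_i)^n$, so that $\mbt_i^{-1}$ supplies the inverse, exactly as you do. One small slip: with the paper's conventions the substitution $s=r\pm 1$ gives $\mbt_i(\mcX_i^\pm(u))=u^{\pm 1}\mcX_i^\pm(u)$, not $u^{\mp 1}$ as you write; this does not affect any of your checks, since every relation is homogeneous under either rescaling and $x^{k}\delta(x)=\delta(x)$ holds for all $k\in\Z$, but the formulas should be corrected in a write-up.
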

\begin{proof}
It suffices to prove the assertion for $\DYg$. 
For each $n\in \Z$, define an assignment $\mbt_i^n$ by
\begin{equation*}
 \mbt_i^n(h)=h,\quad \mbt_i^n(\mcH_j(u))=\mcH_j(u),\quad \mbt_i^n(\mcX_j^\pm(u))=u^{\pm n \delta_{ij}} \mcX_j^\pm(u) \quad \forall \; j\in \mbI,\; h\in \mfh.
\end{equation*}
It is straightforward to verify that $\mbt_i^n$ preserves the relations of Definition \ref{D:DY}. For instance, 
\begin{equation*}
[\mbt_k^n(\mcX_i^+(u)),\mbt_k^n(\mcX_j^-(v))]%=u^{n\delta_{ik}}v^{-n\delta_{jk}}[\mcX_i^+(u),\mcX_j^-(u)]
=(u/v)^{n\delta_{kj}}\delta_{ij} u^{-1}\delta(v/u)\mcH_{i}(v)=\delta_{ij}u^{-1}\delta(v/u)\mcH_{i}(v),
\end{equation*}
where we have used $(u/v)^n u^{-1}\delta(v/u)=u^{-1}\delta(v/u)$. It follows that $\mbt_i^n$ extends to a $\C[\hbar]$-algebra endomorphism of $\DYg$, which satisfies
$\mbt_i^n=(\mbt_i)^n$ for all $n\in \Z$.  In particular, $\mbt_i$ is an automorphism with inverse $\mbt_i^{-1}$. \qedhere

\end{proof}
%
%

%%%%%%%%%%%%%%%%%%%%%%%%%%%%%%%%%%%%%%%%%%%%%%%%%%%%%%%%%%%%%%%%%%%%%%%%
%%%%%%%%%%%%%%%%%%%%%%%%%%%%%%%%%%%%%%%%%%%%%%%%%%%%%%%%%%%%%%%%%%%%%%%%
% Section: Derived subalgebras and classical limits
%%%%%%%%%%%%%%%%%%%%%%%%%%%%%%%%%%%%%%%%%%%%%%%%%%%%%%%%%%%%%%%%%%%%%%%%
%%%%%%%%%%%%%%%%%%%%%%%%%%%%%%%%%%%%%%%%%%%%%%%%%%%%%%%%%%%%%%%%%%%%%%%%
\section{Derived subalgebras and classical limits}\label{sec:Der-CL}

%%%%%%%%%%%%%%%%%%%%%%%%%%%%%%%%%%%%%%%%%%%%%%%%%%%%%%%%%%%%%%%%%%%%%%%%
% Subsection: The derived algebras Yhdg and DYhdg
%%%%%%%%%%%%%%%%%%%%%%%%%%%%%%%%%%%%%%%%%%%%%%%%%%%%%%%%%%%%%%%%%%%%%%%%
\subsection{The algebras \texorpdfstring{$Y_\hbar\dot{\mathfrak{g}}$}{Ydg} and \texorpdfstring{$\mathrm{D}Y_\hbar\dot{\mathfrak{g}}$}{DYdg}}

In the current literature on Yangians of infinite-dimensional Kac--Moody algebras, both the full Yangian $\Yhg$ of Definition \ref{D:Y} and the Yangian 
$\Yhdg$ of the derived Lie subalgebra $\dot\mfg\subset \mfg$, defined below, have independently been considered; see \cites{GNW,GRWvrep,YaGuPBW}, for instance. 

The results of this paper, which are primarily stated for $\Yhg$ and $\DYhg$, are entirely valid for $\Yhdg$ and $\DYhdg$. In this subsection, we make this transparent by clarifying the precise relationship between $\Yhg$ and $\Yhdg$, and $\DYhg$ and $\DYhdg$. 
\begin{definition}
The Yangian $\Yhdg$ is the unital, associative $\C[\hbar]$-algebra generated by $\{x_{ir}^\pm,h_{ir}\}_{i\in\mbI,r\in \N}$, subject to the relations \eqref{Y:xxh} - \eqref{Y:Serre} of Definition \ref{D:Y}, in addition to 
\begin{equation*}%\label{Ydot:1}
[h_{ir},h_{js}]=0,\quad [h_{i0},x_{js}^\pm]=\pm 2d_{ij} x_{js}^\pm \quad \forall \; i,j\in \mbI,\; r,s\in \N.
\end{equation*}
\end{definition}
We first observe that $\Yhdg$ admits the structure of a $Q$-graded $\C[\hbar]$-algebra
\begin{equation*}
\Yhdg=\bigoplus_{\beta\in Q}\Yhdg_\beta,\\
\end{equation*}
determined by assigning $\deg h_{ir}=0$ and $\deg x_{ir}^\pm=\pm \alpha_i$ for all $i\in \mbI$ and $r\in \N$. Next, let us fix a decomposition of (abelian) Lie algebras
\begin{equation*}
\mfh=\dot\mfh\oplus\ddot\mfh, \quad \text{ where }\quad \dot\mfh=\bigoplus_{i\in \mbI}\C\alpha_i^\vee.
\end{equation*}
The Lie algebra $\ddot\mfh$ then acts on $\Yhdg$ by the commuting $\C[\hbar]$-linear derivations
uniquely determined by 
\begin{equation}\label{h-der}
h\cdot x_\beta =\beta(h)x_\beta \quad \forall \; h\in \ddot\mfh,\, x_\beta\in \Yhdg_\beta. 
\end{equation}
We can thus form the \textit{crossed product} (or smash product) algebra $\Yhdg\rtimes U(\ddot\mfh)$ over the complex numbers \cite{Mont}*{Def.~4.1.3}. As a vector space, we have 
\begin{equation*}
\Yhdg\rtimes U(\ddot\mfh)=\Yhdg\otimes_\C U(\ddot\mfh),
\end{equation*} 
with associative multiplication $\bullet$ defined on simple tensors by
\begin{equation*}
(x\otimes h)\bullet (y\otimes h^\prime)=x(h_1\cdot y)\otimes h_2 h^\prime \quad \forall\; x,y\in \Yhdg\; \text{ and }\; h,h^\prime\in U(\ddot\mfh),
\end{equation*}
where we have used the sumless Sweedler notation $\Delta(h)=h_1\otimes h_2$ for the standard coproduct on $U(\ddot\mfh)$. As the underlying action of $U(\ddot\mfh)$
is $\C[\hbar]$-linear, this defines a $\C[\hbar]$-algebra structure on $\Yhdg\rtimes U(\ddot\mfh)$. 
We then have the following result. 
\begin{proposition}\label{P:Yhdg-x-ddh}
The assignment 
\begin{equation*}
h\mapsto 1\otimes h, \quad x_{ir}^\pm \mapsto x_{ir}^\pm\otimes 1, \quad h_{ir}\mapsto h_{ir}\otimes 1,
%
%\quad \forall \; h\in \ddot\mfh,\, i\in \mbI\, \text{ and }\, r\in \N,
\end{equation*}
for $h\in \ddot\mfh$, $i\in \mbI$ and $r\in \N$, uniquely extends to an isomorphism of $\C[\hbar]$-algebras 
\begin{equation*}
\Yhg\iso \Yhdg \rtimes U(\ddot\mfh). 
\end{equation*}
\end{proposition}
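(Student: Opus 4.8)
The plan is to construct the isomorphism in both directions and check they are mutually inverse, with the bulk of the work being the verification that the proposed map $\Yhg\to\Yhdg\rtimes U(\ddot\mfh)$ respects the defining relations of Definition~\ref{D:Y}. First I would define a $\C[\hbar]$-algebra homomorphism $\Xi:\Yhg\to\Yhdg\rtimes U(\ddot\mfh)$ by the stated assignment on the generators $h\in\mfh$ (decomposed via $\mfh=\dot\mfh\oplus\ddot\mfh$, with $\alpha_i^\vee\mapsto\frac1{d_i}h_{i0}\otimes 1$ and $h\mapsto 1\otimes h$ for $h\in\ddot\mfh$) and $x_{ir}^\pm\mapsto x_{ir}^\pm\otimes 1$, $h_{ir}\mapsto h_{ir}\otimes 1$. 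To see this is well defined I must check that each relation \eqref{Y:hi=ai}--\eqref{Y:Serre} is preserved. Relations \eqref{Y:xxh}--\eqref{Y:Serre} involve only the $x_{ir}^\pm$ and $h_{ir}$, which sit inside the subalgebra $\Yhdg\otimes 1$, and hold there by definition of $\Yhdg$. Relation \eqref{Y:hi=ai} is built into the definition of $\Xi$ on $\dot\mfh$. For \eqref{Y:hh} one uses that $[h_{ir},h_{js}]=0$ in $\Yhdg$, that $\ddot\mfh$ is abelian, and that by \eqref{h-der} the element $h\in\ddot\mfh$ acts trivially on each $h_{js}$ (which has $Q$-degree $0$); hence $(1\otimes h)\bullet(h_{js}\otimes 1)=(h_{js}\otimes 1)\bullet(1\otimes h)$. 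For \eqref{Y:h0x} one splits $h=\dot h+\ddot h$: the $\dot h$ part reduces to $[h_{i0},x_{js}^\pm]=\pm2d_{ij}x_{js}^\pm$ in $\Yhdg$ together with the identity $\alpha_j(\dot h)=\frac1{d_i}\cdot\big(\text{the coefficient computation}\big)$ — concretely $\alpha_j(\alpha_i^\vee)=a_{ij}$ and $2d_{ij}=d_ia_{ij}$, so everything matches — while the $\ddot h$ part is exactly the crossed-product relation $(1\otimes\ddot h)\bullet(x_{js}^\pm\otimes 1)-(x_{js}^\pm\otimes 1)\bullet(1\otimes\ddot h)=\pm\alpha_j(\ddot h)\,x_{js}^\pm\otimes 1$ coming from the derivation \eqref{h-der}. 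Thus $\Xi$ is a well-defined $\C[\hbar]$-algebra homomorphism.

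Next I would construct the inverse. By the universal property of the crossed product, a $\C[\hbar]$-algebra map out of $\Yhdg\rtimes U(\ddot\mfh)$ is the same as a $\C[\hbar]$-algebra map $\eta:\Yhdg\to\Yhg$ together with a Lie algebra map $\ddot\mfh\to\Yhg$ whose image normalizes $\eta(\Yhdg)$ and implements the action \eqref{h-der} via inner derivations. Take $\eta$ to be the obvious map $x_{ir}^\pm\mapsto x_{ir}^\pm$, $h_{ir}\mapsto h_{ir}$, which is well defined since the defining relations of $\Yhdg$ are among those of $\Yhg$ (using $[h_{i0},x_{js}^\pm]=\pm2d_{ij}x_{js}^\pm$, which follows from \eqref{Y:h0x} and \eqref{Y:hi=ai}), and take the Lie algebra map $\ddot\mfh\hookrightarrow\mfh\to\Yhg$. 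Relation \eqref{Y:h0x} in $\Yhg$ says precisely that $\mathrm{ad}(h)$ for $h\in\ddot\mfh$ acts on $x_{js}^\pm$ by $\pm\alpha_j(h)$, hence on $\Yhdg_\beta$ by $\beta(h)$ (it suffices to check on generators since these elements generate $\Yhdg$ and $\mathrm{ad}(h)$ is a derivation), matching \eqref{h-der}; and \eqref{Y:hh} shows $\mathrm{ad}(h)$ kills each $h_{ir}$, consistent with $h_{ir}$ having $Q$-degree $0$. By the universal property this yields a $\C[\hbar]$-algebra homomorphism $\Theta:\Yhdg\rtimes U(\ddot\mfh)\to\Yhg$.

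Finally I would check $\Theta\circ\Xi=\mathrm{id}$ and $\Xi\circ\Theta=\mathrm{id}$ on generators, which is immediate: $\Theta\Xi$ fixes $x_{ir}^\pm$, $h_{ir}$ and every $h\in\mfh$ (checking separately on $\dot\mfh$, where $\alpha_i^\vee\mapsto\frac1{d_i}h_{i0}\mapsto\alpha_i^\vee$, and on $\ddot\mfh$), so it is the identity on $\Yhg$; and $\Xi\Theta$ fixes $x_{ir}^\pm\otimes1$, $h_{ir}\otimes1$ and $1\otimes h$ for $h\in\ddot\mfh$, which generate $\Yhdg\rtimes U(\ddot\mfh)$ as a $\C[\hbar]$-algebra, so it too is the identity. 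Uniqueness of the extension is automatic since the listed elements generate $\Yhg$. The only genuinely delicate point is bookkeeping the two halves of the Cartan subalgebra consistently — making sure the image of $\dot\mfh$ lands in $\Yhdg\otimes 1$ via $h_{i0}$, the image of $\ddot\mfh$ lands in $1\otimes U(\ddot\mfh)$, and that the commutation relation \eqref{Y:h0x} splits cleanly along $\mfh=\dot\mfh\oplus\ddot\mfh$ into the $\Yhdg$-internal relation and the crossed-product relation respectively; everything else is a routine relation-by-relation check.
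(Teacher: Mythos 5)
Your proposal is correct, and it is exactly the routine verification the paper has in mind: the paper omits the proof, noting only that it is analogous to the standard decomposition $U(\mfg)\cong U(\dot\mfg)\rtimes U(\ddot\mfh)$, and your relation-by-relation check of \eqref{Y:hi=ai}--\eqref{Y:Serre} (with the splitting $\mfh=\dot\mfh\oplus\ddot\mfh$ and the identity $2d_{ij}=d_ia_{ij}$), together with the inverse built from the universal property of the crossed product, is precisely that argument written out in full.
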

The proof of the proposition is entirely analogous to the argument that $U(\mfg)$ itself decomposes as 
$U(\mfg)\cong U(\dot\mfg)\rtimes U(\ddot\mfh)$, and is therefore omitted.  

A nearly identical story unfolds if $\Yhg$ is replaced by $\DYhg$. The only subtlety which arises is that the crossed product construction should be carried out in the category of topological $\C[\![\hbar]\!]$-modules. We summarize these results below, beginning with the definition 
of $\DYhdg$.
\begin{definition}\label{D:DYdotg}
The Yangian double $\DYhdg$ is the unital, associative $\C[\![\hbar]\!]$-algebra topologically generated by $\{\mcX_{ir}^\pm,\mcH_{ir}\}_{i \in \mbI,r\in \Z}$,
subject to the relations \eqref{DY:xh} - \eqref{DY:Serre} of Definition \ref{D:DY}, in addition to 
\begin{equation*}
[\mcH_{ir},\mcH_{js}]=0,\quad [\mcH_{i0},\mcX_{js}^\pm]=\pm 2 d_{ij} \mcX_{js}^\pm \quad \forall \; i,j\in \mbI,\; r,s\in \Z.
\end{equation*}

The $\C[\hbar]$-form $\DYdg$ of $\DYhdg$ is the unital, associative $\C[\hbar]$-algebra generated by 
$\{\mcX_{ir}^\pm,\mcH_{ir}\}_{i \in \mbI,r\in \Z}$, subject to the same set of relations. 
\end{definition}
The algebra $\DYdg$ is itself $Q$-graded with $\deg \mcH_{ir}=0$ and $\deg \mcX_{ir}^\pm=\pm \alpha_i$:
\begin{equation*}
\DYdg=\bigoplus_{\beta\in Q}\DYdg_\beta.
\end{equation*}
As in the Yangian case, we have an action of the Lie algebra $\ddot\mfh$ on $\DYdg$ by derivations, uniquely determined by \eqref{h-der}, where  
$x_\beta$ now takes values in $\DYdg_\beta$. 
Each such derivation is $\C[\hbar]$-linear, and therefore determines a $\C[\![\hbar]\!]$-linear derivation of the algebra
\begin{equation*}
\DYhdg\cong \varprojlim_n(\DYdg/\hbar^n\DYdg). 
\end{equation*}

We thus have an action of $\ddot\mfh$ on $\DYhdg$ by derivations, and may form the spaces 
\begin{equation*}
\DYdg\rtimes U(\ddot\mfh)\quad \text{ and }\quad \DYhdg\rtimes U(\ddot\mfh),
\end{equation*}
 which are naturally algebras over $\C[\hbar]$ and $\C[\![\hbar]\!]$, respectively. We then have the following analogue of Proposition \ref{P:Yhdg-x-ddh}.  
\begin{proposition}\label{P:Dhdg-x-ddh}
The assignment 
\begin{equation*}
h\mapsto 1\otimes h, \quad \mcX_{ir}^\pm \mapsto \mcX_{ir}^\pm\otimes 1, \quad \mcH_{ir}\mapsto \mcH_{ir}\otimes 1,
\end{equation*}
for $h\in \ddot\mfh$, $i\in \mbI$ and $r\in \Z$, uniquely extends to yield algebra isomorphisms 
\begin{equation*}
\DYg\iso \DYdg \rtimes U(\ddot\mfh) \quad \text{ and }\quad \DYhg\iso \DYhdg \rtimes_\hbar U(\ddot\mfh),
\end{equation*}
where $\DYhdg \rtimes_\hbar U(\ddot\mfh)$ is the $\hbar$-adic completion of  $\DYhdg \rtimes U(\ddot\mfh)$.
\end{proposition}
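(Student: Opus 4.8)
The plan is to follow the strategy used for Proposition \ref{P:Yhdg-x-ddh}, adapting it to the topological setting. First I would establish the isomorphism $\DYg\iso \DYdg\rtimes U(\ddot\mfh)$ at the level of $\C[\hbar]$-algebras, since this is the algebraic statement from which the topological one is deduced by $\hbar$-adic completion. To build the map $\DYg\to \DYdg\rtimes U(\ddot\mfh)$, one checks directly that the assignment $h\mapsto 1\otimes h$, $\mcX_{ir}^\pm\mapsto \mcX_{ir}^\pm\otimes 1$, $\mcH_{ir}\mapsto \mcH_{ir}\otimes 1$ respects all the defining relations \eqref{DY:hi=ai}--\eqref{DY:Serre} of $\DYg$ from Definition \ref{D:DY}. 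The relations among the $\mcX,\mcH$ generators alone hold because $\DYdg$ satisfies the corresponding relations \eqref{DY:xh}--\eqref{DY:Serre} by definition, and since $\ddot\mfh$ acts by derivations these are unchanged after tensoring with $U(\ddot\mfh)$; the commutator relations \eqref{DY:hh} between $h\in\mfh$ and $\mcH_j(u)$, and \eqref{DY:h0x} between $h$ and $\mcX_j^\pm(u)$, are precisely the encoding of the fact that $h$ acts on $\DYdg$ through the derivation \eqref{h-der} with $\beta=\pm\alpha_j$ or $\beta=0$; and \eqref{DY:hi=ai} together with the decomposition $\mfh=\dot\mfh\oplus\ddot\mfh$ and the identification $h_{i0}=d_i\alpha_i^\vee\in\dot\mfh$ inside the $\DYdg$ factor is what makes the two Cartan directions compatible.

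For the reverse direction, one constructs $\DYdg\rtimes U(\ddot\mfh)\to \DYg$ by sending $\mcX_{ir}^\pm\otimes 1$, $\mcH_{ir}\otimes 1$ to their namesakes in $\DYg$ and $1\otimes h$ to $h\in\ddot\mfh\subset\mfh\subset\DYg$; this is well defined because the $\DYdg$-relations are among the $\DYg$-relations, because the image of $U(\ddot\mfh)$ is a subalgebra of $\DYg$, and because the crossed-product multiplication rule $(x\otimes h)\bullet(y\otimes h')=x(h_1\cdot y)\otimes h_2h'$ is matched on the nose by the identity $h y = (h\cdot y)\, h$ (equivalently $hy-yh=h\cdot y$) holding in $\DYg$ for $y$ a product of generators, which follows inductively from \eqref{DY:h0x}, \eqref{DY:hh} and the Leibniz rule. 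The two maps are mutually inverse on generators, hence inverse isomorphisms. This argument is \emph{verbatim} the argument that $U(\mfg)\cong U(\dot\mfg)\rtimes U(\ddot\mfh)$, as the text already notes for the Yangian case, so it may reasonably be stated with this level of detail and the routine verifications suppressed.

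It then remains to pass to the completions. Applying $\varprojlim_n(-\,/\hbar^n(-))$ to the first isomorphism and using Proposition \ref{P:D-basic}\eqref{D-basic:1} on the left gives $\DYhg\iso \varprojlim_n\big((\DYdg\rtimes U(\ddot\mfh))/\hbar^n(\DYdg\rtimes U(\ddot\mfh))\big)$; since $\hbar$ is central and $\hbar^n(\DYdg\rtimes U(\ddot\mfh)) = (\hbar^n\DYdg)\otimes_\C U(\ddot\mfh)$ (as $U(\ddot\mfh)$ is $\hbar$-torsion-free and flat over $\C$), this inverse limit is by definition the $\hbar$-adic completion $\DYhdg\rtimes_\hbar U(\ddot\mfh)$, using again $\DYhdg\cong\varprojlim_n(\DYdg/\hbar^n\DYdg)$. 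The main obstacle, and the only point requiring genuine care rather than bookkeeping, is verifying that the crossed-product structure is compatible with $\hbar$-adic completion in the sense just described --- i.e. that completing the smash product $\DYdg\rtimes U(\ddot\mfh)$ yields the same algebra as forming the smash product of the completed $\DYhdg$ with $U(\ddot\mfh)$ --- which comes down to the commutation of $\varprojlim_n(-\,/\hbar^n)$ with $-\otimes_\C U(\ddot\mfh)$; this holds because each graded piece of $U(\ddot\mfh)$ is a free $\C$-module, but it is worth isolating explicitly.
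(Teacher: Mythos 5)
Your proposal is correct and takes essentially the same route the paper intends: the paper omits this proof, remarking that it is the verbatim analogue of $U(\mfg)\cong U(\dot\mfg)\rtimes U(\ddot\mfh)$ with the only subtlety being the passage to the topological ($\hbar$-adic) setting, and your write-up simply fills in those two steps. One small wording caution: what is actually needed is not that $\varprojlim_n(-/\hbar^n)$ commutes with $-\otimes_\C U(\ddot\mfh)$ (false when $U(\ddot\mfh)$ is infinite-dimensional), but only the level-$n$ identifications $(\DYdg\rtimes U(\ddot\mfh))/\hbar^n\cong(\DYdg/\hbar^n\DYdg)\otimes_\C U(\ddot\mfh)\cong(\DYhdg\rtimes U(\ddot\mfh))/\hbar^n$, which is exactly what your freeness/flatness argument supplies and which already identifies the two $\hbar$-adic completions.
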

%
%

%%%%%%%%%%%%%%%%%%%%%%%%%%%%%%%%%%%%%%%%%%%%%%%%%%%%%%%%%%%%%%%%%%%%%%%%
% Subsection: The derived algebras Yhdg and DYhdg
%%%%%%%%%%%%%%%%%%%%%%%%%%%%%%%%%%%%%%%%%%%%%%%%%%%%%%%%%%%%%%%%%%%%%%%%
\subsection{The classical limits \texorpdfstring{$\mfs$}{s} and \texorpdfstring{$\mft$}{t}}
Modulo the ideal generated by $\hbar$, the defining relations of $\Yhdg$ and $\DYhdg$ are of Lie type. It follows that $\Yhdg$ and $\DYhdg$ deform the enveloping algebras of certain infinite-dimensional complex Lie algebras $\mfs$ and $\mft$, respectively. 
%
\begin{comment}
When $\mfg$ is of finite type, these Lie algebras are well understood; $\mfs$ is precisely the current algebra $\mfg[t]$, while $\mft$ is the loop algebra $\mfg[t^{\pm 1}]$. This, however, does not generalize, though it is always the case that $\mfs$ and $\mft$ are extensions of $\dot\mfg[t]$ and $\dot\mfg[t^{\pm 1}]$, respectively. 
\end{comment}
%
In this section, we overview the abstract definitions of $\mfs$ and $\mft$, together with their $\Yhg$ and $\DYhg$ counterparts.

Henceforth, the symbol $\mfa$ is understood to take value $\mfs$ or $\mft$, and we set
\begin{equation*}
\Z_\mfs=\N \quad \text{ and }\quad \Z_\mft=\Z. 
\end{equation*}
\begin{definition}
The complex Lie algebra $\mfa$ is defined to be the quotient of the free Lie algebra on  $\{X_{ir}^\pm, H_{ir}\}_{i\in \mbI,r\in \Z_\mfa}$ by the ideal generated by the following relations, for $i,j\in \mbI$ and $r,s\in \Z_\mfa$: 
\begin{gather}
  [H_{ir},H_{js}]=0, \label{s-HH}\\
  [H_{ir},X_{js}^\pm]=\pm 2d_{ij}X_{j,r+s}^\pm, \label{s-HX}\\
  [X_{ir}^+,X_{js}^-]=\delta_{ij}H_{i,r+s}, \label{s-X+X-}\\
  [X_{i,r+1}^\pm,X_{js}^\pm]=[X_{ir}^\pm,X_{j,s+1}^\pm], \label{s-xx}\\
  \mathrm{ad} (X_{i0}^\pm)^{1-a_{ij}}(X_{js}^\pm)=0 \quad \text{ for }\; i\neq j. \label{s-serre}
\end{gather}
\end{definition}
We note that $\mfa$ is a $\Z_\mfa$-graded Lie algebra with 
$\deg X_{ir}^\pm=\deg H_{ir}=r$ for all  $i\in \mbI$ and  $r\in \Z_{\mfa}$.
Additionally, the assignment
\begin{equation*}
X_{ir}^\pm \mapsto x_i^\pm\otimes t^r,\quad  H_{ir}\mapsto h_i\otimes t^r \quad \forall \; i\in \mbI,\; r\in \Z_\mfa, 
\end{equation*}
extends to yield graded epimorphisms 
\begin{equation}\label{pi_a}
\pi_\mfs:\mfs\onto \dot\mfg[t] \quad \text{ and }\quad \pi_\mft:\mft\onto \dot\mfg[t^{\pm 1}], 
\end{equation}
which are isomorphisms when $\mfg$ is finite-dimensional. When $\mfg$ is an untwisted affine Lie algebra with underlying simple Lie algebra $\bar\mfg\ncong \mfsl_2$, one has instead
\begin{equation}\label{st-uce}
\mfs\cong\mathfrak{uce}(\dot\mfg[t]) \quad \text{ and }\quad \mft_\kappa\cong\mathfrak{uce}(\dot\mfg[t^{\pm 1}]),
\end{equation}
where $\mathfrak{uce}(\mfp)$ denotes the universal central extension of a perfect Lie algebra $\mfp$, 
and $\mft_\kappa$ is a one-dimensional central extension of $\mft$, defined for $\mfg$ of any type, constructed as follows. 
Define a linear map $\bar\kappa:\dot\mfg[t^{\pm 1}]\otimes \dot\mfg[t^{\pm 1}]\to \C$ by
\begin{equation*}
 \bar\kappa(f(t),g(t))=\mathrm{Res}_{t}\left(\partial_t(f(t)),g(t)\right) \quad \forall \; f(t),g(t)\in \dot\mfg[t^{\pm 1}],
\end{equation*}
where the invariant form $(\,,\,)|_{\dot\mfg\times\dot\mfg}$ has been naturally extended to a bilinear form on $\dot\mfg[t^{\pm 1}]\otimes \dot\mfg[t^{\pm 1}]$ with values in $\C[t^{\pm 1}]$, $\partial_t:\dot\mfg[t^{\pm 1}]\to \dot\mfg[t^{\pm 1}]$ is the formal derivative operator, and $\mathrm{Res}_t:\C[t^{\pm 1}]\to \C$ is the formal residue.  
One verifies as in \cite{KacBook90}*{\S7.2} that $\bar\kappa$ is a $\C$-valued $2$-cocycle on $\dot\mfg[t^{\pm 1}]$. It follows that 
\begin{equation*}
 \kappa= \bar\kappa\circ \pi_\mft^{\otimes 2}:\mft\otimes \mft\to \C
\end{equation*}
is a $\C$-valued $2$-cocycle on $\mft$. 
\begin{definition}\label{D:t-kappa}
The Lie algebra $\mft_\kappa$ is the central extension of $\mft$ by the cocycle $\kappa$.  That is,
$\mft_\kappa=\mft\oplus \C \mathrm{K}$ as a vector space, with Lie bracket given by $[\mft,\mathrm{K}]=0$ and 
\begin{equation*}
 [x,y]=[x,y]_{\mft}+\kappa(x,y)\mathrm{K} \quad \forall \; x,y\in \mft.
\end{equation*}
\end{definition}
The assertion of \eqref{st-uce} is non-trivial, and has been established in the work of Moody, Rao and Yokonuma \cite{MRY90}. These isomorphisms appear in the form stated above in \cite{GRWvrep}, where $\mft_\kappa$ is itself denoted $\mft$. A deeper analysis of these results will be given in the course of the proof of Theorem \ref{T:PBW} in Section \ref{ssec:aff-II}.  

Returning to our general discussion of $\mfa$, note that the assignment $\deg H_{ir}=0$ and $\deg X_{ir}^\pm=\pm \alpha_i$, for all $i\in \mbI$ and $r\in \Z_\mfa$, defines a $Q$-grading 
\begin{equation*}
\mfa=\bigoplus_{\beta\in Q}\mfa_\beta.
\end{equation*} 
The commutative Lie algebra $\ddot\mfh$ acts on $\mfa$ by the derivations uniquely determined by \eqref{h-der} with $x_\beta\in\mfa_\beta$. We may therefore take the semidirect product of Lie algebras
\begin{equation*}
\mfa\rtimes \ddot\mfh.
\end{equation*}
We then have the following result, where the notation $x_{ir}^{\mfa,\pm},h_{ir}^{\mfa}$ is used to denote $x_{ir}^\pm,h_{ir}\in \Yhdg$ if $\mfa=\mfs$, and $\mcX_{ir}^\pm,\mcH_{ir}\in \DYhdg$ if $\mfa=\mft$.
\begin{proposition}\label{P:red-h}
The assignment 
\begin{equation*}
X_{ir}^\pm \mapsto x_{ir}^{\mfa,\pm}\mod \hbar, \quad H_{ir}\mapsto h_{ir}^{\mfa}\mod \hbar \quad \forall \; i\in \mbI,\; r\in \Z_{\mfa}
\end{equation*}
 uniquely extends to isomorphisms of graded algebras
\begin{equation*}
U(\mfs)\iso\Yhdg/\hbar\Yhdg \quad \text{ and }\quad U(\mft)\iso \DYhdg/\hbar\DYhdg.
\end{equation*}
Tensoring with the identity $\id$ on $U(\ddot\mfh)$ yields isomorphisms
\begin{equation*}
U(\mfs\rtimes \ddot\mfh)\iso\Yhg/\hbar\Yhg \quad \text{ and }\quad U(\mft\rtimes \ddot\mfh)\iso \DYhg/\hbar\DYhg.
\end{equation*}
\end{proposition}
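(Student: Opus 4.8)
The plan is to prove Proposition \ref{P:red-h} by establishing the first pair of isomorphisms --- those for $\Yhdg/\hbar\Yhdg$ and $\DYhdg/\hbar\DYhdg$ --- and then deducing the semidirect-product statements by a straightforward application of the crossed-product isomorphisms of Propositions \ref{P:Yhdg-x-ddh} and \ref{P:Dhdg-x-ddh}. For the first isomorphism, observe that setting $\hbar=0$ in the defining relations \eqref{Y:xxh}--\eqref{Y:Serre}, together with the additional relations $[h_{ir},h_{js}]=0$ and $[h_{i0},x_{js}^\pm]=\pm 2d_{ij}x_{js}^\pm$, literally reproduces the Lie-algebra relations \eqref{s-HH}--\eqref{s-serre} defining $\mfs$ (for $\mfa=\mfs$, $\Z_\mfs=\N$): the relation \eqref{Y:xx} becomes \eqref{s-xx}, \eqref{Y:xh} becomes \eqref{s-HX} after reindexing via \eqref{Y:xxh}, and so on. Hence there is a canonical surjective Lie algebra map $\mfs\to \Yhdg/\hbar\Yhdg$ landing in the commutator-generated part, which by the universal property of the enveloping algebra induces a surjective algebra map $U(\mfs)\to \Yhdg/\hbar\Yhdg$; similarly one gets $U(\mft)\onto\DYhdg/\hbar\DYhdg$. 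These maps are manifestly graded.

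The substance of the proof is the injectivity of these two surjections, and this is where the main obstacle lies: one cannot simply invoke a PBW theorem for $\Yhdg$ or $\DYhdg$, since that is precisely the kind of deep flatness statement the paper is trying to establish (Theorem \ref{T:Master2}). The standard device to break this circularity is to construct an inverse algebra homomorphism $U(\mfs)\to \Yhdg/\hbar\Yhdg$ directly from the presentation. That is, one checks that the assignment $X_{ir}^\pm\mapsto x_{ir}^\pm\bmod\hbar$, $H_{ir}\mapsto h_{ir}\bmod\hbar$ extends to a Lie algebra homomorphism $\mfs\to \Yhdg/\hbar\Yhdg$ --- this only requires verifying that the images satisfy the Lie relations \eqref{s-HH}--\eqref{s-serre}, which is immediate from relations \eqref{Y:xxh}--\eqref{Y:Serre} read modulo $\hbar$ --- and hence to an algebra homomorphism $U(\mfs)\to \Yhdg/\hbar\Yhdg$. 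The two maps $U(\mfs)\rightleftarrows \Yhdg/\hbar\Yhdg$ are mutually inverse on generators, hence inverse to each other; this yields the isomorphism without any PBW input. The argument for $U(\mft)\iso\DYhdg/\hbar\DYhdg$ is identical, with the caveat noted after Proposition \ref{P:Dhdg-x-ddh} that one must work in the topological setting: $\DYhdg/\hbar\DYhdg$ is a discrete $\C$-algebra (no $\hbar$-adic completion survives), and the coefficients $\mcX_{ir}^\pm,\mcH_{ir}$ for $r\in\Z$ are honest elements, so the reduction of the topological generators mod $\hbar$ gives precisely the generating set $\{X_{ir}^\pm,H_{ir}\}_{i\in\mbI,r\in\Z}$ of $U(\mft)$.

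For the final assertions, I would argue as follows. By Proposition \ref{P:Yhdg-x-ddh} we have $\Yhg\cong\Yhdg\rtimes U(\ddot\mfh)=\Yhdg\otimes_\C U(\ddot\mfh)$ as $\C[\hbar]$-algebras; reducing modulo $\hbar$ and using that $\ddot\mfh$ and its action are defined over $\C$ (independent of $\hbar$), we get $\Yhg/\hbar\Yhg\cong(\Yhdg/\hbar\Yhdg)\rtimes U(\ddot\mfh)$. Composing with the isomorphism $U(\mfs)\iso\Yhdg/\hbar\Yhdg$ --- which is $\ddot\mfh$-equivariant because it is $Q$-graded and the $\ddot\mfh$-action on both sides is via the $Q$-grading \eqref{h-der} --- and identifying $U(\mfs)\rtimes U(\ddot\mfh)\cong U(\mfs\rtimes\ddot\mfh)$ (the standard smash-product description of the enveloping algebra of a semidirect product), we obtain $U(\mfs\rtimes\ddot\mfh)\iso\Yhg/\hbar\Yhg$, and it is graded since every map in the chain is. The Yangian double case runs the same way using Proposition \ref{P:Dhdg-x-ddh}: one has $\DYhg\cong\DYhdg\rtimes_\hbar U(\ddot\mfh)$, and since $\hbar$-adic completion does not affect the quotient by $\hbar$, reduction mod $\hbar$ gives $\DYhg/\hbar\DYhg\cong(\DYhdg/\hbar\DYhdg)\rtimes U(\ddot\mfh)\cong U(\mft)\rtimes U(\ddot\mfh)\cong U(\mft\rtimes\ddot\mfh)$. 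I expect the only point requiring care is bookkeeping around the topological structure in the double --- ensuring that ``$\rtimes_\hbar$ modulo $\hbar$'' really is the ordinary smash product of the reductions --- but this follows formally from $\DYhdg\cong\varprojlim_n(\DYdg/\hbar^n\DYdg)$ and the fact that $\hbar$-adic completion is an isomorphism modulo $\hbar$.
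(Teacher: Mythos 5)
Your handling of the second half coincides with the paper's: the paper likewise deduces the semidirect-product statements from the derived-subalgebra statements via Propositions \ref{P:Yhdg-x-ddh} and \ref{P:Dhdg-x-ddh} together with $U(\mfa\rtimes \ddot\mfh)\cong U(\mfa)\rtimes U(\ddot\mfh)$, and your remarks on the gradings and on reduction mod $\hbar$ commuting with $\hbar$-adic completion are fine. For the core isomorphisms $U(\mfs)\iso\Yhdg/\hbar\Yhdg$ and $U(\mft)\iso\DYhdg/\hbar\DYhdg$, however, the paper does not reprove anything: it cites Prop.~2.6 (and 3.6) of \cite{GRWvrep}. You attempt a direct presentation-matching proof instead, which would be a legitimate, more self-contained route, but as written it does not close.

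The gap is concrete: you never construct a map in the direction $\Yhdg/\hbar\Yhdg\to U(\mfs)$. Both maps you describe go from $U(\mfs)$ into $\Yhdg/\hbar\Yhdg$ (the ``inverse'' in your second paragraph is verbatim the surjection of your first paragraph), so the assertion that ``the two maps are mutually inverse on generators'' has nothing behind it. The genuine inverse must send $x_{ir}^\pm\bmod\hbar\mapsto X_{ir}^\pm$ and $h_{ir}\bmod\hbar\mapsto H_{ir}$, and its well-definedness requires that \emph{every} defining relation of $\Yhdg$ (resp.\ $\DYhdg$), reduced modulo $\hbar$, hold in $U(\mfs)$ (resp.\ $U(\mft)$). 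Most of these are immediate, but the symmetrized Serre relations \eqref{Y:Serre}, \eqref{DY:Serre} involve arbitrary indices $r_1,\dots,r_m\in\Z_\mfa$, whereas $\mfa$ only imposes \eqref{s-serre}, i.e.\ the Serre relation with every inner entry equal to $X_{i0}^\pm$. That the general symmetrized relations follow from \eqref{s-HH}--\eqref{s-serre} is true but not ``immediate'': one proves it by an induction, applying $\mathrm{ad}(H_{i,\pm1})$ to $\mathrm{ad}(X_{i0}^\pm)^{1-a_{ij}}(X_{js}^\pm)=0$ and using \eqref{s-HX} and \eqref{s-xx} to generate the partially symmetrized identities (and, for $\mft$, to reach negative indices). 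This verification is exactly the content of the cited result in \cite{GRWvrep} that the paper leans on; without it, or some substitute, your ``no PBW input'' argument does not yield injectivity. A smaller slip in the easy direction: \eqref{s-HX} for general $r$ is not literally among the reduced Yangian relations, since $\Yhdg$ only imposes $[h_{i0},x_{js}^\pm]=\pm2d_{ij}x_{js}^\pm$; it follows from this together with \eqref{Y:xh} mod $\hbar$ by an induction on $r$, which should be said.
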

The first assertion of the proposition, for $\Yhdg$, is precisely \cite{GRWvrep}*{Prop.~2.6}.
The $\DYhdg$ analogue of this result follows from an identical argument (see also \cite{GRWvrep}*{Prop.~3.6}).
The second part of the proposition is then a consequence of Propositions \ref{P:Yhdg-x-ddh} and \ref{P:Dhdg-x-ddh}, which imply there are algebra isomorphisms
\begin{gather*}
\Yhg/\hbar\Yhg
\cong 
\Yhdg/\hbar\Yhdg \rtimes U(\ddot\mfh)
\cong 
U(\mfs\rtimes\ddot\mfh),\\
\DYhg/\hbar\DYhg
\cong 
\DYhdg/\hbar\DYhdg \rtimes U(\ddot\mfh)
\cong 
U(\mft\rtimes\ddot\mfh),
\end{gather*}
where we have employed the fact that $U(\mfa\rtimes \ddot\mfh)\cong U(\mfa)\rtimes U(\ddot\mfh)$.

%%%%%%%%%%%%%%%%%%%%%%%%%%%%%%%%%%%%%%%%%%%%%%%%%%%%%%%%%%%%%%%%%%%%%%%%
%%%%%%%%%%%%%%%%%%%%%%%%%%%%%%%%%%%%%%%%%%%%%%%%%%%%%%%%%%%%%%%%%%%%%%%%
% Section: Extending the shift automorphism
%%%%%%%%%%%%%%%%%%%%%%%%%%%%%%%%%%%%%%%%%%%%%%%%%%%%%%%%%%%%%%%%%%%%%%%%
%%%%%%%%%%%%%%%%%%%%%%%%%%%%%%%%%%%%%%%%%%%%%%%%%%%%%%%%%%%%%%%%%%%%%%%%
\section{Extending the shift automorphism}\label{sec:Phi}

The primary goal of this section is to introduce the formal shift operator $\Phi_z$ together with its evaluation $\Phi_c$ at any invertible complex number $c\in \C^\times$. This will be achieved in Theorem \ref{T:Phi}, after first proving a collection of preliminary results on completed Yangians and formal series algebras. 
We will then conclude this section by spelling out a number of direct consequences to Theorem \ref{T:Phi} in Sections \ref{ssec:Phi-imp} and \ref{ssec:gr-tw}.

%%%%%%%%%%%%%%%%%%%%%%%%%%%%%%%%%%%%%%%%%%%%%%%%%%%%%%%%%%%%%%%%%%%%%%%%
% Subsection:
%%%%%%%%%%%%%%%%%%%%%%%%%%%%%%%%%%%%%%%%%%%%%%%%%%%%%%%%%%%%%%%%%%%%%%%%
\subsection{Completed Yangian}\label{ssec:CY}
Let $\cYhg$ denote the completion of $\Yhg$ with respect to its $\N$-grading:
\begin{equation*}
 \cYhg=\prod_{k\in \N}\Yhg_k. 
\end{equation*}
Since $\hbar$ has degree one, $\cYhg$ is a unital, associative $\C[\![\hbar]\!]$-algebra. 
Consider now the ideal $\Yhgp\subset \Yhg$ generated by elements of strictly positive degree: 
\begin{equation*}
\Yhgp=\bigoplus_{k>0}\Yhg_k.
\end{equation*}
\begin{lemma}\label{L:whYhg} $\cYhg$ admits the following properties. 
\leavevmode
\begin{enumerate}[font=\upshape]
\item\label{whYhg:1}  The canonical $\C[\hbar]$-algebra homomorphism 
$
\Yhg\to \varprojlim_{n}\!\left(\Yhg/\Yhgp^n \right)
$
extends to an isomorphism of $\C[\![\hbar]\!]$-algebras 
\begin{equation*}
\cYhg\iso \varprojlim_{n}\left(\Yhg/\Yhgp^n \right)\!.
\end{equation*}
\item\label{whYhg:2} $\cYhg$ separated and complete as a $\C[\![\hbar]\!]$-module,
\item\label{whYhg:3} $\cYhg$ is a torsion free $\C[\![\hbar]\!]$-module, provided $\Yhg$ is a torsion free $\C[\hbar]$-module.
\end{enumerate}

\end{lemma}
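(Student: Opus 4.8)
The plan is to establish the three assertions in order, reducing each to a general statement about $\N$-graded algebras $A=\bigoplus_{k\in\N}A_k$ over a base ring in which the degree-one element $\hbar$ behaves well; the real content is packaged in Appendix \ref{App:A}, which is cited in the proof of part \eqref{whYhg:1}.

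\emph{Part \eqref{whYhg:1}.} First I would observe that the grading filtration and the $\Yhgp$-adic filtration are cofinal in a graded sense: since $\Yhgp$ is generated by elements of positive degree, a product of $n$ such elements lands in $\bigoplus_{k\geq n}\Yhg_k$, so $\Yhgp^n\subseteq \bigoplus_{k\geq n}\Yhg_k$. Conversely one needs that every element of sufficiently high degree lies in a power of $\Yhgp$; this is where Lemma \ref{L:deg1} enters — $\Yhg$ is generated over $\C[\hbar]$ by $\mfh$ (degree $0$) together with the finitely many degree-one generators $x_{i0}^\pm, h_{i1}$ and $\hbar$, so $\Yhg_k\subseteq \Yhgp^{\lceil k/1\rceil}=\Yhgp^{k}$ for $k\geq 1$ (more carefully, any product of homogeneous generators of total degree $k$ uses at least one positive-degree generator per unit of degree). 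Hence for each $n$ we get sandwiching inclusions $\bigoplus_{k\geq n}\Yhg_k \subseteq \Yhgp^{\,?}$ and $\Yhgp^n\subseteq \bigoplus_{k\geq n}\Yhg_k$, so the two inverse systems $\{\Yhg/\bigoplus_{k\geq n}\Yhg_k\}$ and $\{\Yhg/\Yhgp^n\}$ are pro-isomorphic. Since $\varprojlim_n \Yhg/\bigoplus_{k\geq n}\Yhg_k = \prod_{k\in\N}\Yhg_k=\cYhg$ essentially by definition, passing to inverse limits yields the claimed isomorphism, and it is visibly a $\C[\![\hbar]\!]$-algebra map because $\hbar\in\Yhgp$. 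This cofinality argument, together with the verification that the induced limit map is an algebra isomorphism (not merely a module isomorphism), is exactly the "technical result on grading completions" deferred to Appendix \ref{App:A}; in the main text I would simply invoke it.

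\emph{Part \eqref{whYhg:2}.} For separatedness and completeness as a $\C[\![\hbar]\!]$-module, I would argue directly from $\cYhg=\prod_{k\in\N}\Yhg_k$ with $\deg\hbar=1$. An element of $\hbar^n\cYhg$ has all components in degrees $<n$ equal to zero, since multiplication by $\hbar^n$ raises degree by $n$ and each $\Yhg_k$ is a $\C$-module on which $\hbar$ acts injectively-or-not is irrelevant here — the point is $\hbar^n\cYhg\subseteq \prod_{k\geq n}\Yhg_k$. Therefore $\bigcap_n \hbar^n\cYhg\subseteq\bigcap_n\prod_{k\geq n}\Yhg_k=0$, giving separatedness. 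For completeness, given a Cauchy sequence (equivalently a coherent sequence of residues mod $\hbar^n$), one reconstructs the limit componentwise: the degree-$k$ component stabilizes once $n>k$ because $\hbar^{n}\cYhg$ has trivial degree-$k$ part, so the componentwise limits assemble to an element of $\prod_k\Yhg_k=\cYhg$ with the prescribed residues. Hence $\cYhg$ is $\hbar$-adically complete and separated.

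\emph{Part \eqref{whYhg:3}.} Assuming $\Yhg$ is $\C[\hbar]$-torsion free, i.e. each $\Yhg_k$ is torsion free over $\C[\hbar]$ (equivalently $\hbar$ acts injectively on $\Yhg$), I would show $\hbar$ acts injectively on $\cYhg=\prod_k\Yhg_k$. Suppose $\hbar\cdot(a_k)_{k\in\N}=0$ with $a_k\in\Yhg_k$. Comparing degree-$k$ components, $\hbar a_{k-1}=0$ in $\Yhg_k$ for every $k\geq 1$ (and the degree-$0$ component gives nothing), where $a_{k-1}\in\Yhg_{k-1}$; torsion-freeness of $\Yhg$ forces $a_{k-1}=0$ for all $k$, so $(a_k)=0$. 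Since $\C[\![\hbar]\!]$-torsion is generated by $\hbar$-torsion (a non-unit non-zero element of $\C[\![\hbar]\!]$ is $\hbar^m u$ with $u$ a unit), $\cYhg$ is torsion free over $\C[\![\hbar]\!]$.

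The step I expect to be the genuine obstacle is Part \eqref{whYhg:1} — specifically, verifying carefully that the $\Yhgp$-adic and grading filtrations are mutually cofinal and that the resulting limit map is an \emph{algebra} isomorphism compatible with the completed multiplication on $\prod_k\Yhg_k$. Parts \eqref{whYhg:2} and \eqref{whYhg:3} are formal once one works componentwise with $\deg\hbar=1$. Accordingly, in the write-up the bulk of Part \eqref{whYhg:1} is offloaded to Appendix \ref{App:A}, and here one only records the reduction and checks \eqref{whYhg:2}--\eqref{whYhg:3}.
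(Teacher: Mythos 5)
Your proposal takes essentially the same route as the paper: the entire lemma is made to rest on the identity $\Yhgp^n=\bigoplus_{k\geq n}\Yhg_k$ (equivalently, the mutual cofinality of the grading and $\Yhgp$-adic filtrations), which follows from Lemma \ref{L:deg1} because all generators have degree $0$ or $1$, and the remaining verifications are exactly the content of Proposition \ref{P:A-comp} of Appendix \ref{App:A}, which the paper invokes with $\mathrm{A}=\Yhg$ (parts \eqref{whYhg:2} and \eqref{whYhg:3} are alternatively handled by citing \cite{GTL1}). Your parts \eqref{whYhg:1} and \eqref{whYhg:3} are fine as written, modulo the harmless slip of calling $x_{i0}^\pm$ degree-one generators (they have degree $0$).

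One concrete caution on your part \eqref{whYhg:2}: completeness is not quite ``assemble the componentwise limits, which then have the prescribed residues.'' Degreewise stabilization only tells you that the assembled element $x$ satisfies $x-x_n\in\prod_{k\geq n}\Yhg_k$, and this product is in general strictly larger than $\hbar^n\cYhg$, so the claim that $x$ has the prescribed residues modulo $\hbar^n$ is precisely the point that still needs an argument. It can be repaired either by writing the tail $\sum_{m\geq n}(x_{m+1}-x_m)$ as $\hbar^n$ times a degreewise-convergent series (each fixed degree receives only finitely many contributions), or as in the appendix proof, where one exploits the degree pinning $\Yhg_m\cap\bigoplus_{k>m}\Yhg_k=\{0\}$ together with $\hbar^{m+1}\Yhg\subset\Yhgp^{m+1}$ to force the relevant differences of representatives to vanish identically, not merely to lie in high degree. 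With that repair your argument coincides with the one given in Proposition \ref{P:A-comp}.
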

\begin{proof}
By Lemma \ref{L:deg1}, $\Yhg_0$ and $\Yhg_1$ generate $\Yhg$ as a $\C[\hbar]$-algebra, and consequently, we have 
\begin{equation*}
\Yhgp^n=\bigoplus_{k\geq n} \Yhg_k\quad \text{ for all }\; n\in \N.
\end{equation*}
Lemma \ref{L:whYhg} thus follows from Proposition \ref{P:A-comp} of Appendix \ref{App:A} with the algebra $\mathrm{A}$ taken to be $\Yhg$. Parts \eqref{whYhg:2} and \eqref{whYhg:3} may also be proven as in \cite{GTL1}*{Prop.~6.3}. \qedhere
\end{proof}
%
%

%%%%%%%%%%%%%%%%%%%%%%%%%%%%%%%%%%%%%%%%%%%%%%%%%%%%%%%%%%%%%%%%%%%%%%%%
% Subsection:
%%%%%%%%%%%%%%%%%%%%%%%%%%%%%%%%%%%%%%%%%%%%%%%%%%%%%%%%%%%%%%%%%%%%%%%%

\subsection{Formal series spaces}\label{ssec:FSS}

Let $\Yhg[z;z^{-1}]\!]$ denote the algebra of formal Laurent series in $z^{-1}$ with ceofficients in $\Yhg$:
\begin{equation*}
\Yhg[z;z^{-1}]\!]=\bigcup_{n\in \N}z^n \Yhg[\![z^{-1}]\!]\subset \Yhg[\![z^{\pm 1}]\!].
\end{equation*} 
Define $\Yhgz\subset \Yhg[\![z^{-1}]\!]$ by 
\begin{equation*}
\Yhgz=\prod_{k\in \N}\Yhg_k z^{-k}. 
\end{equation*}
 Let $\LzYhg$ denote the $\C[z^{\pm 1}]$-submodule of $\Yhg[z;z^{-1}]\!]$ generated by $\Yhgz$. 
 The following proposition outputs a set of valuable properties characterizing this space and its 
 $\hbar$-adic completion. 
\begin{proposition}\label{P:whYgz}
Let $v$ be an indeterminate and equip $\cYg{v}[z^{\pm 1}]$ with the $\C[\hbar]$-algebra structure determined by $\hbar\cdot 1=v z$. Then: 
\begin{enumerate}[font=\upshape]
\item\label{whYgz:1} $\LzYhg$ is a $\Z$-graded $\C[\hbar]$-algebra with $\LzYgr{k}=z^k \Yhgz$.  In particular, 
\begin{equation*}
\LzYhg=\bigoplus_{n\in \Z} z^n \Yhgz. 
\end{equation*}
\item\label{whYgz:2} The graded linear map $\LzYhg\to \cYg{v}[z^{\pm 1}]$ given by
\begin{equation*}
z^n f_\hbar(z)\mapsto z^n f_v(1) \quad \forall \quad f_\hbar(z)\in \Yhgz,\; n\in \Z,
\end{equation*}
is an isomorphism of graded $\C[\hbar]$-algebras. 

\item\label{whYgz:3} The $\hbar$-adic completion $\LzhYhg$ of $\LzYhg$ is the subspace of $\cYhg[\![z^{\pm 1}]\!]$ consisting of formal series 
\begin{equation*}
\sum_{k\in \Z} z^k f_k(z), \quad f_k(z)\in \Yhgz
\end{equation*}
with the property that, for each $m\in \N$, $\exists$ $N_m\in \N$ such that 
\begin{equation*}
f_k(z)\in (\hbar/z)^m \Yhgz \quad \forall \quad |k|\geq N_m.
\end{equation*}
\item\label{whYgz:4} For each $c\in \C^\times$, the map 
\begin{equation*}
\mathscr{Ev}_c:\LzhYhg\to \cYhg,\quad f(z)\mapsto f(c),
\end{equation*}
is an epimorphism of $\C[\![\hbar]\!]$-algebras.
\end{enumerate}
\end{proposition}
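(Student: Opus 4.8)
The plan is to prove the four assertions of Proposition~\ref{P:whYgz} in order, as each one builds on its predecessor. For part~\eqref{whYgz:1}, I would first observe that $\Yhg_k z^{-k}\cdot \Yhg_l z^{-l}\subset \Yhg_{k+l}z^{-k-l}$ since multiplication in $\Yhg$ is graded, and use this to check that $\Yhgz=\prod_{k\in\N}\Yhg_kz^{-k}$ is closed under multiplication inside $\Yhg[\![z^{-1}]\!]$: given two such products, the coefficient of $z^{-n}$ in the product is the \emph{finite} sum $\sum_{k+l=n}$ of terms from $\Yhg_k z^{-k}$ times $\Yhg_l z^{-l}$, so it lies in $\Yhg_n z^{-n}$. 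Because $\hbar$ has degree $1$ in $\Yhg$, one has $\hbar\cdot(\Yhg_k z^{-k})\subset \Yhg_{k+1}z^{-(k+1)}\subset z^{-1}\Yhgz$, so $\hbar$ acts on $\Yhgz$, making $\Yhgz$ a $\C[\hbar]$-subalgebra; multiplying by $z^{\pm 1}$ then shows $\LzYhg=\sum_n z^n\Yhgz$ is a $\C[z^{\pm 1}]$-algebra, hence a $\C[\hbar]$-algebra. The directness of the sum $\bigoplus_n z^n\Yhgz$ is immediate from the grading on $\Yhg$: a nonzero element of $z^n\Yhgz$ has $z$-adic expansion supported in degrees $\le n$ with the leading coefficient landing in $\Yhg$-degree that pins down $n$. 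Assigning $z^k\Yhgz$ the degree $k$ gives the claimed $\Z$-grading.

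For part~\eqref{whYgz:2}, the map is defined on each graded piece $z^n\Yhgz$ by sending $f_\hbar(z)=\sum_{k\ge 0} a_k z^{-k}$ (with $a_k\in\Yhg_k$) to $z^n f_v(1)$, where $f_v(1)$ means: write each $a_k$ as a polynomial in $\hbar$ with coefficients in $\Yhg$, substitute $\hbar\mapsto v$ and note that the resulting element of $\cYg{v}=\prod_k (Y_v\mfg)_k$ is well-defined because $a_k z^{-k}$ contributes to degree $0$. The cleanest formulation: there is a degree-preserving $\C$-linear bijection $\Yhgz\iso \cYg{v}$ arising from the isomorphism of graded $\C[\hbar]$-modules $\Yhg\cong Y_v\mfg$ matched up with powers of $z$, and under the substitution $\hbar\leftrightarrow vz$ this is a $\C[\hbar]$-algebra map; I would verify multiplicativity on simple tensors $a_kz^{-k}\cdot b_lz^{-l}$ and check the $\C[\hbar]$-linearity by confirming that $\hbar\cdot(a_k z^{-k})=(\hbar a_k)z^{-k}$ maps to $vz\cdot$ (image), matching the prescribed structure $\hbar\cdot 1=vz$ on the target. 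Tensoring up by $\C[z^{\pm 1}]=\C[z^{\pm 1}]$ extends this to the isomorphism $\LzYhg\iso \cYg{v}[z^{\pm 1}]$ of graded $\C[\hbar]$-algebras.

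For part~\eqref{whYgz:3}, I would compute the $\hbar$-adic filtration on $\LzYhg$ explicitly. Since $\hbar^m\cdot z^n\Yhgz = z^n\hbar^m\Yhgz\subset z^{n-m}\cdot z^m\hbar^m\Yhgz$, and $\hbar^m\Yhgz\subset (\hbar/z)^m\Yhgz\cap \Yhgz$ (the leading $m$ coefficients vanish), the key point is: an element of $\LzhYhg=\varprojlim \LzYhg/\hbar^m\LzYhg$ is a compatible system, which unwinds to a formal sum $\sum_{k\in\Z}z^k f_k(z)$ with $f_k(z)\in\Yhgz$ subject to the stated uniformity condition — namely for each $m$ there is $N_m$ so that $f_k(z)\in(\hbar/z)^m\Yhgz$ whenever $|k|\ge N_m$ — because $\hbar^m\LzYhg$ is precisely $\bigoplus_n z^n\hbar^m\Yhgz$ and membership there means all but finitely many $z^k$-components lie in $z^k\hbar^m\Yhgz\subset z^k(\hbar/z)^m\Yhgz$. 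I would argue both inclusions: every such series defines a coherent system modulo $\hbar^m$, and conversely every coherent system has this form. The containment in $\cYhg[\![z^{\pm1}]\!]$ follows by collecting, for each fixed power of $z$, the (convergent, by the $\hbar$-adic condition) contributions from all the $f_k(z)$. The main obstacle in the whole proposition is precisely this bookkeeping: one must be careful that when several $z^k f_k(z)$ overlap at a given overall $z$-degree, the sum of their coefficients converges in $\cYhg$ (i.e.\ in each fixed $\Yhg$-degree only finitely many terms contribute) and that the two descriptions of $\LzhYhg$ — as an inverse limit and as a space of formal series — genuinely coincide rather than one merely embedding in the other.

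Finally, for part~\eqref{whYgz:4}, evaluation at $c\in\C^\times$ is defined on a series $\sum_k z^k f_k(z)=\sum_k z^k\sum_{j\ge 0} a_{k,j}z^{-j}$ (with $a_{k,j}\in\Yhg_j$) by substituting $z=c$; I would check this is well-defined by noting that, for each fixed $\Yhg$-degree $d$, the coefficient of any element of $\Yhg_d$ in $f(c)$ is $\sum_{k}c^{k}a_{k,k-d}$ wait—more carefully, for fixed $d$ the contribution to $\Yhg_d$ comes from pairs with $j-k=$ appropriate value, and the $\hbar$-adic uniformity of part~\eqref{whYgz:3} forces all but finitely many of these to vanish (since $a_{k,j}\in(\hbar/c\text{-part})$ eventually), so $f(c)\in\prod_d\Yhg_d=\cYhg$ converges. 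That $\mathscr{Ev}_c$ is a $\C[\![\hbar]\!]$-algebra homomorphism is then clear from compatibility of substitution $z\mapsto c$ with multiplication and with the $\hbar$-action ($\hbar\mapsto\hbar$, since $\hbar$ already has degree $1$ matching $z^{-1}$-weight appropriately in $\LzhYhg$—here I should double-check the $\C[\![\hbar]\!]$-structure matches, using that on $\LzhYhg$ the element $\hbar$ is just $\hbar\in\Yhg$, so evaluation commutes with it). Surjectivity is the easy direction: given $g=\sum_d g_d\in\cYhg$ with $g_d\in\Yhg_d$, the series $\sum_d g_d z^{-d}\in\Yhgz\subset\LzhYhg$ (it trivially satisfies the uniformity condition, being a single $f_0$) evaluates at $z=c$ to $\sum_d c^{-d}g_d$; composing with the automorphism of $\cYhg$ rescaling $\Yhg_d$ by $c^{d}$ — which is the gradation automorphism, well-defined on the completion — shows every element of $\cYhg$ is hit, so $\mathscr{Ev}_c$ is an epimorphism.
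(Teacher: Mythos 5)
Your plan is broadly workable, and parts (2) and (4) are essentially sound, but the $\hbar$-degree bookkeeping — which is the actual content of (1) and (3) — is wrong as written. In part (1) you claim $\hbar\cdot(\Yhg_k z^{-k})\subset \Yhg_{k+1}z^{-(k+1)}\subset z^{-1}\Yhgz$ and conclude that $\Yhgz$ is a $\C[\hbar]$-subalgebra. Both inclusions are backwards: since $\hbar\Yhg_k\subset\Yhg_{k+1}$, one has $\hbar\cdot \Yhg_k z^{-k}=\Yhg_{k+1}z^{-k}=z\cdot\Yhg_{k+1}z^{-(k+1)}$, so $\hbar\,\Yhgz\subset z\,\Yhgz$; thus $\hbar$ is homogeneous of degree $+1$ for the grading $\LzYgr{k}=z^k\Yhgz$, and $\Yhgz$ is a $\C$-subalgebra stable under $\hbar z^{-1}$, \emph{not} under $\hbar$. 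This is not cosmetic: in the target of (2) one has $\hbar=vz$, which does not preserve the degree-zero part $\cYg{v}$, so a graded isomorphism in which $\hbar$ acts on the degree-zero piece is impossible; indeed your own verification in (2) (that $\hbar\cdot(a_kz^{-k})$ maps to $vz$ times the image) uses the correct relation and contradicts your part (1) claim. The same slip recurs in part (3): since $\hbar^m\Yhgz=z^m(\hbar/z)^m\Yhgz$, the degree-$k$ component of $\hbar^m\LzYhg$ is $z^k(\hbar/z)^m\Yhgz$ (coming from $\hbar^m\cdot z^{k-m}\Yhgz$), whereas you assert membership in $z^k\hbar^m\Yhgz\subset z^k(\hbar/z)^m\Yhgz$ — the displayed inclusion is false (it would need $z^m\Yhgz\subset\Yhgz$), and with the degree shift unaccounted for your derivation of the stated $(\hbar/z)^m$-condition does not go through as written, even though the condition you quote at the end is the right one. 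Moreover, the two-sided identification of $\varprojlim_m\LzYhg/\hbar^m\LzYhg$ with the space of series in $\cYhg[\![z^{\pm 1}]\!]$ — the step you yourself flag as the main obstacle — is only announced, not carried out.

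All of this is repairable, and the paper's proof shows how to avoid most of the hand bookkeeping: it establishes (1) and (2) much as you do (with the correct statement $\hbar\in z\Yhgz$), and then uses the isomorphism of (2) to replace $(\LzYhg,\hbar)$ by $(\cYg{v}[z^{\pm 1}],vz)$. Since $z$ is a unit there, the $\hbar$-adic and $v$-adic completions coincide, and (3) follows from Lemma \ref{L:whYhg} (separatedness and completeness of $\cYhg$) together with the standard description of the $\hbar$-adic completion of $\mathrm{A}[z^{\pm 1}]$ for a separated, complete $\mathrm{A}$. Likewise, for (4) the paper composes the evaluation epimorphism $\cYg{v}[z^{\pm 1}]\onto\cYhg$ with the isomorphism of (2) and extends to the completion by completeness of $\cYhg$, rather than checking convergence of $f(c)$ coefficient by coefficient as you propose (your convergence sketch and your surjectivity argument via $\Yhgz$ and the gradation rescaling are fine). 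So: fix the direction of the $\hbar$-inclusions in (1) and (3), and either carry out the inverse-limit/series identification honestly or reduce it, as the paper does, to the Laurent-polynomial case over a complete separated coefficient ring.
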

\begin{proof}
As $\Yhgz$ is a $\C$-algebra, $\hbar\in z\Yhgz$ and 
\begin{equation*}
z^n \Yhgz \cdot z^m \Yhgz\subset   z^{n+m} \Yhgz \quad \forall\; n,m\in \Z,
\end{equation*}
$\LzYhg$ is a $\C[\hbar]$-algebra, which will be $\Z$-graded provided the sum $\sum_{n\in \Z} z^n \Yhgz$ is direct. This assertion is readily verified, and hence Part \eqref{whYgz:1} holds.

Part \eqref{whYgz:2} is a consequence of Part \eqref{whYgz:1}, the definition of the $\C[\hbar]$-module structure on  $\cYg{v}[z^{\pm 1}]$, and the fact that 
\begin{equation*}
\Yhgz\to \cYhg, \quad f(z)\mapsto f(1),
\end{equation*}
is an isomorphism of $\C$-algebras.

Consider now Part \eqref{whYgz:3}. Since $z\in \cYg{v}[z^{\pm 1}]$ is a unit, Part \eqref{whYgz:2} yields 
\begin{equation*}
\LzhYhg
=\varprojlim_{n}\left(\LzYhg/\hbar^n\LzYhg\right)
\cong 
\varprojlim_{n}\left(\cYg{v}[z^{\pm 1}]/v^n\cYg{v}[z^{\pm 1}]\right).
\end{equation*}
Part \eqref{whYgz:3} thus follows from the identification of Part \eqref{whYhg:2},  Lemma \ref{L:whYhg}, and the following straightforward general result: 

If $\mathrm{A}$ is a separated and complete $\C[\![\hbar]\!]$-module, then the $\hbar$-adic completion of $\mathrm{A}[z^{\pm 1}]$ is equal to the subspace of $\mathrm{A}[\![z^{\pm 1}]\!]$ consisting of all series 
\begin{equation*}
\sum_{k\in \Z}x_kz^{k} \in \mathrm{A}[\![z^{\pm 1}]\!]
\end{equation*}
satisfying the condition that, for each $m\in \N$, $\exists$ $N_m\in \N$ such that 
\begin{equation*}
x_k\in \hbar^m \mathrm{A} \quad \forall \quad |k|\geq N_m.
\end{equation*}

Let us now turn to Part \eqref{whYgz:4}. Composing the algebra epimorphism $\cYg{v}[z^{\pm 1}]\onto \cYhg$, $f_v(z)\mapsto f_\hbar(c)$ with the isomorphism of \eqref{whYgz:2}, we obtain an epimorphism of $\C[\hbar]$-algebras 
\begin{equation*}
\mathscr{Ev}_c^\prime:\LzYhg\onto \cYhg,  
\end{equation*}
given by evaluating $z\mapsto c$. Since, by Lemma \ref{L:whYhg}, $\cYhg$ is separated and complete, $\mathscr{Ev}_c^\prime$ induces $\mathscr{Ev}_c$ as in the statement of the proposition. \qedhere
\end{proof}
%
%
%%%%%%%%%%%%%%%%%%%%%%%%%%%%%%%%%%%%%%%%%%%%%%%%%%%%%%%%%%%%%%%%%%%%%%%%
% Subsection:
%%%%%%%%%%%%%%%%%%%%%%%%%%%%%%%%%%%%%%%%%%%%%%%%%%%%%%%%%%%%%%%%%%%%%%%%
\subsection{The formal shift operator \texorpdfstring{$\Phi_z$}{Phi\_z}}\label{ssec:Phi_z}
Let $\tau_c$ and $\tau_z$ be the shift homomorphisms of \eqref{shift-c} and \eqref{shift-z}, respectively, and recall that $\jD$ and $\iYh$ are the natural homomorphisms
\begin{equation*}
\jD:\DYg\to \DYhg \quad \text{ and }\quad \iYh:\Yhg\to \DYhg
\end{equation*}
introduced in Section \ref{ssec:DYhg}. 
In addition, we shall set
\begin{equation*}
\partial_z^{(n)}=\frac{1}{n!}(\partial_z)^n \quad \forall \quad n\in \N,
\end{equation*} 
where $\partial_z$ is the formal derivative operator with respect to $z$. With the machinery of Sections \ref{ssec:CY} and \ref{ssec:FSS} at our disposal, we are now prepared to state and prove our first main result.
\begin{theorem}\label{T:Phi}
\leavevmode
\begin{enumerate}[font=\upshape]
\item\label{Phi:1} There is a unique homomorphism of $\C[\![\hbar]\!]$-algebras 
\begin{equation*}
\Phi_z:\DYhg\to \LzhYhg
\end{equation*}
with the property that $\Phi_z\circ \iYh=\tau_z$. It is given by 
\begin{equation}\label{Phi_z}
 \begin{gathered}
 \Phi_{z}(h)=h \quad \forall \; h\in \mfh, \\
 u\Phi_{z}(\mcH_i(u))=\sum_{n\in \N}h_{i,n}\partial_z^{(n)}\!\left(\delta(z/u)\right),
\quad u\Phi_{z}(\mcX_i^\pm(u))=\sum_{n\in \N}x_{i,n}^\pm \partial_z^{(n)}\!\left(\delta(z/u)\right),
  \end{gathered}
 \end{equation}
 for all $i\in \mbI$. 
 \item\label{Phi:2} The compostion $\Phi_z\circ \jD$ is a $\Z$-graded $\C[\hbar]$-algebra homomorphism
 \begin{equation*}
 \Phi_z\circ \jD: \DYg\to \LzYhg=\bigoplus_{n\in \Z}z^n \Yhgz
 \subset \Yhg[z;z^{-1}]\!]. 
 \end{equation*}
 \item\label{Phi:3} Fix $c\in \C^\times$. Then $\Phi_c=\mathscr{Ev}_c\circ \Phi_z$ is the unique homomorphism of $\C[\![\hbar]\!]$-algebras 
\begin{equation*}
\Phi_c:\DYhg\to \cYhg
\end{equation*}
satisfying $\Phi_c\circ \iYh=\tau_c$. 
\end{enumerate}
\end{theorem}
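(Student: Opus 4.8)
The plan is to establish Theorem~\ref{T:Phi} in three steps matching its three parts, with the bulk of the work concentrated in Part~\eqref{Phi:1}: constructing $\Phi_z$ and verifying it respects the defining relations of $\DYhg$.

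\textbf{Existence in Part~\eqref{Phi:1}.} First I would rewrite the proposed formulas \eqref{Phi_z} in a more transparent form. Observe that $\partial_z^{(n)}(\delta(z/u))$ is the generating series in $n$ of the Taylor expansion of $\tau_z$, so that for a fixed $i$ the assignment $u\Phi_z(\mcH_i(u)) = \sum_{n\in\N} h_{in}\partial_z^{(n)}(\delta(z/u))$ is precisely the ``formal Laurent expansion of $\tau_z(h_i(u))$ treated as a distribution in $u$ supported at $u=z$.'' Concretely I would check that, after multiplying by $u$ and collecting the coefficient of $u^{-r-1}$ for $r\in\Z$, the right-hand side of the first displayed formula has coefficients lying in $\LzhYhg$ (using the description in Proposition~\ref{P:whYgz}\eqref{whYgz:3}: the $\hbar$-adic decay as $|r|\to\infty$ is automatic because each $h_{in}$ has degree $n$ and only finitely many $h_{in}$ contribute below any fixed total degree). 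This shows the formulas land in the target algebra. Then, to see the assignment extends to an algebra homomorphism, I would verify that the images of the series $\mcH_i(u), \mcX_i^\pm(u)$ satisfy the relations \eqref{DY:hi=ai}--\eqref{DY:Serre}. Here the key trick is that, because $\tau_z$ \emph{is} an algebra homomorphism $\Yhg \to \Yhg[z]$ satisfying the relations \eqref{Y:hi=ai'}--\eqref{Y:Serre'} of Proposition~\ref{P:Y-op} (in variables shifted by $z$), multiplying those shifted relations by the appropriate delta-function factors and using the identity $(u/v)u^{-1}\delta(v/u) = v^{-1}$-type manipulations (exactly as in the proof of Proposition~\ref{P:ti}) converts the rational-function relations of $\Yhg$ into the delta-function relations of $\DYhg$. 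In particular the factor $u^{-1}\delta(v/u)$ appearing in \eqref{DY:xxh} arises precisely because the difference $h_i(v) - h_i(u)$ in \eqref{Y:xxh'} becomes, under the substitution $u\mapsto u-z$, $v\mapsto v-z$ and re-expansion, a delta function in $u$ and $v$ localized at $z$.

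\textbf{Uniqueness in Part~\eqref{Phi:1}.} This is immediate from Proposition~\ref{P:D-basic}\eqref{D-basic:2}: $\DYhg$ is topologically generated by $\mfh$ and $\{\mcX_{ik}^\pm\}$, and the relation $\Phi_z\circ\iYh = \tau_z$ together with the translation-covariance forces the values of $\Phi_z$ on all the $\mcX_{ik}^\pm$ with $k\in\Z$ — indeed $\Phi_z\circ\iYh=\tau_z$ pins down $\Phi_z(\mcX_{ik}^\pm)$ for $k\ge 0$, and to extend to negative $k$ one must exhibit the $\Z$-graded structure, which is where Part~\eqref{Phi:2} enters. (More precisely, the formulas \eqref{Phi_z} are the unique possible ones, and any homomorphism agreeing with them on a topological generating set agrees everywhere by continuity.)

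\textbf{Parts~\eqref{Phi:2} and \eqref{Phi:3}.} For Part~\eqref{Phi:2}, I would note that $\jD:\DYg\to\DYhg$ lands in the dense $\C[\hbar]$-subalgebra generated by the $\mcX_{ik}^\pm, \mcH_{ik}$, $h$, and that the formula \eqref{Phi_z} sends $\mcX_{ir}^\pm$ and $\mcH_{ir}$ to elements of $z^r\Yhgz$ — directly from the expansion $u\cdot\partial_z^{(n)}(\delta(z/u))$, whose coefficient of $u^{-r-1}$ is $\binom{n}{r}z^{r-n}$ when extended to all $r\in\Z$ — hence $\Phi_z\circ\jD$ is homogeneous of the stated degrees and lands in the \emph{uncompleted} $\LzYhg=\bigoplus_n z^n\Yhgz$ rather than its $\hbar$-adic closure. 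For Part~\eqref{Phi:3}, evaluating at $z=c\in\C^\times$ is legitimate by Proposition~\ref{P:whYgz}\eqref{whYgz:4}, which gives the $\C[\![\hbar]\!]$-algebra epimorphism $\mathscr{Ev}_c:\LzhYhg\to\cYhg$; composing yields $\Phi_c = \mathscr{Ev}_c\circ\Phi_z:\DYhg\to\cYhg$, and since $\mathscr{Ev}_c\circ\tau_z = \tau_c$ (the shift formulas specialize correctly) we get $\Phi_c\circ\iYh = \tau_c$. Uniqueness of $\Phi_c$ follows as in Part~\eqref{Phi:1}, using that $\DYhg$ is topologically generated over $\mfh$ by the $\mcX_{ik}^\pm$ together with the constraint $\Phi_c\circ\iYh=\tau_c$ and $\Z$-graded-ness of $\Phi_z$.

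\textbf{Main obstacle.} The principal difficulty is the relation-checking in Part~\eqref{Phi:1}, and within it the Serre relations \eqref{DY:Serre} and the commutation relation \eqref{DY:xxh} with its delta-function coefficient. The clean way through is to avoid working coefficient-by-coefficient: instead, I would package the formulas \eqref{Phi_z} as saying that $\Phi_z$ composed with the natural ``multiplication'' pairing is the operation ``expand $\tau_z$ of the corresponding $\Yhg$-series as a formal distribution in $u$ centred at $z$,'' and then transport each relation of Proposition~\ref{P:Y-op} through this operation. The genuinely delicate point is ensuring that products of two such distributions (needed for \eqref{DY:xh}, \eqref{DY:xx}, \eqref{DY:xxh}) are well-defined elements of $\LzhYhg[\![u^{\pm1},v^{\pm1}]\!]$ and that the manipulations $(u-v\mp\hbar d_{ij})$-multiplication commute with the delta-function expansions — this is exactly the kind of formal-distribution bookkeeping where sign and expansion-direction errors creep in, so I would handle it by reducing everything to the single identity $f(u)\,u^{-1}\delta(v/u) = f(v)\,u^{-1}\delta(v/u)$ for $f\in\Yhg[\![u^{\pm1}]\!]$ and its $\partial$-derivatives, as in the proof of Proposition~\ref{P:ti}.
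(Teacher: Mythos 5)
Your existence argument for Part \eqref{Phi:1} is essentially the paper's: one verifies the relations \eqref{DY:hi=ai}--\eqref{DY:Serre} directly on the formulas \eqref{Phi_z}, with the conversion from Yangian to Yangian-double relations carried by delta-function identities (the paper's version is the recursion $(u-v)f_{n,m}=f_{n-1,m}-f_{n,m-1}$ for products of derived delta functions, i.e.\ the coefficient form of the identity you propose). One caveat: the paper works with the coefficient relations \eqref{Y:xh}, \eqref{Y:xx}, which have no boundary terms; if you instead multiply the localized series relations \eqref{Y:xh'}, \eqref{Y:xx'} of Proposition \ref{P:Y-op} by delta factors, you still owe an explanation of why the terms $\pm 2d_{ij}x_j^\pm(v)-[h_i(u),x_{j0}^\pm]$ and $[x_{i0}^\pm,x_j^\pm(v)]-[x_i^\pm(u),x_{j0}^\pm]$ disappear, though this is a manageable detail. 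Part \eqref{Phi:2} and the construction of $\Phi_c$ in Part \eqref{Phi:3} are also handled as in the paper, via \eqref{vertex}, Proposition \ref{P:D-basic}\eqref{D-basic:2} and $\mathscr{Ev}_c$.

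The genuine gap is uniqueness. You call it ``immediate'' from topological generation by $\mfh\cup\{\mcX_{ik}^\pm\}_{k\in\Z}$ together with $\Phi_z\circ\iYh=\tau_z$, invoking ``translation-covariance'' and the gradedness of Part \eqref{Phi:2} to pin down the negative modes. But neither of those is a hypothesis in the uniqueness statement: a competitor $\Phi_z^\prime$ is only assumed to be a $\C[\![\hbar]\!]$-algebra homomorphism with $\Phi_z^\prime\circ\iYh=\tau_z$, and this condition says nothing directly about $\Phi_z^\prime(\mcX_{i,-n-1}^\pm)$, since the negative modes do not lie in the image of $\iYh$; appealing to Part \eqref{Phi:2} is circular, as gradedness is a property of the constructed map, not a constraint on the competitor. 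What is actually needed, and what the paper supplies, is to exploit relations inside $\DYhg$ linking negative to non-negative modes: from $[\iYh(t_{i1}),\mcX_{is}^\pm]=\pm 2d_i\mcX_{i,s+1}^\pm$ (relation \eqref{Ti-DY}) any such $\Phi_z^\prime$ must satisfy $(z\pm \mathrm{ad}(\mathrm{T}_i))^{n+1}\Phi_z^\prime(\mcX_{i,-n-1}^\pm)=x_{i0}^\pm$ with $\mathrm{T}_i=t_{i1}/2d_i$, and the crucial point is that $z\pm\mathrm{ad}(\mathrm{T}_i)$ is \emph{invertible} on $\LzhYhg$ because $\mathrm{T}_i$ has degree $1$, so the series \eqref{Phi-invert:2} converges there. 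This forces $\Phi_z^\prime(\mcX_{i,-n-1}^\pm)=(-1)^{n+1}\partial_z^{(n)}x_i^\pm(-z)$, hence uniqueness; the same computation with $z$ specialized to $c\in\C^\times$ gives the uniqueness claim of Part \eqref{Phi:3}. Without this (or an equivalent) argument, your uniqueness claims in Parts \eqref{Phi:1} and \eqref{Phi:3} do not go through.
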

The proof of the theorem will be given in \S\ref{ssec:PfofPhi} below. Let us first examine the formula \eqref{Phi_z} in more detail. 
 Expanding the formal delta function $u^{-1}\delta(z/u)$ as 
\begin{equation*}
u^{-1}\delta(z/u)=\exp(-z \partial_u)(1/u)+ \exp(-u \partial_z)(1/z),
\end{equation*}
we find that 
\begin{align*}
\partial_z^{(n)}\!\left(u^{-1}\delta(z/u)\right)
%
%&=\partial_z^{(n)}\left(\exp(-z \partial_u) (1/u)\right)+ \partial_z^{(n)}\left(\exp(-u \partial_z)(1/z)\right)\\
%
%&= (-1)^n\partial_u^{(n)} \exp (-z \partial_u)(1/u)+(-1)^n\exp(-u \partial_z)(z^{-n-1})\\
%
&=\exp (-z \partial_u)(u^{-n-1})+(-1)^n\exp(-u \partial_z)(z^{-n-1}).
\end{align*}
Hence, from the second line of \eqref{Phi_z}, we obtain
\begin{equation}\label{Phi_z-tau_z}
\begin{gathered}
(\Phi_z \circ \iYh) h_i(u)=\exp (-z \partial_u)h_i(u)=h_i(u-z),\\
(\Phi_z \circ \iYh) x_i^\pm(u)%=\exp (-z \partial_u)x_i^\pm(u)
=x_i^\pm(u-z),
\end{gathered}
\end{equation}
for all $i\in \mbI$. We may thus conclude that $\Phi_z\circ\iYh=\tau_z$ will hold, provided that $\Phi_z$, as given by \eqref{Phi_z}, is an algebra homomorphism. 

The above expansion of $\partial_z^{(n)}\!\left(u^{-1}\delta(z/u)\right)$ also implies that
\begin{equation}\label{vertex}
\Phi_z(\mcX_{i,-n-1}^\pm)=(-1)^{n+1}\partial_z^{(n)} x_i^\pm(-z) \quad \forall \; i\in \mbI, \, n\in \N. 
\end{equation} 
In particular, since $x_i^\pm(z)\in z^{-1}\Yhgz$ and $\partial_z^{(n)}$ is a degree $-n$ operator on $\LzYhg$, 
\begin{equation*}
(-1)^{n+1}\partial_z^{(n)} x_i^\pm(-z)\in z^{-n-1}\Yhgz\subset \LzYhg.
\end{equation*}
Consequently, Part \eqref{Phi:2} of the theorem will follow automatically from Part \eqref{Phi:1} and the second statement of Proposition \ref{P:D-basic}.

%%%%%%%%%%%%%%%%%%%%%%%%%%%%%%%%%%%%%%%%%%%%%%%%%%%%%%%%%%%%%%%%%%%%%%%%
% Subsection: Proof of Theorem \ref{T:Phi}
%%%%%%%%%%%%%%%%%%%%%%%%%%%%%%%%%%%%%%%%%%%%%%%%%%%%%%%%%%%%%%%%%%%%%%%%
\subsection{Proof of Theorem \ref{T:Phi}}\label{ssec:PfofPhi}
Let us begin by establishing that there is at most one homomorphism $\Phi_z:\DYhg\to \LzhYhg$ with the property that $\Phi_z\circ\iYh=\tau_z$. Our argument will also imply the uniqueness of $\Phi_c$, as in the statement of the theorem.

\noindent\textit{Proof of uniqueness.} 
Let $\Phi_z$ be such a homomorphism, and fix $i\in \mbI$. Our starting point is the relation 
\begin{equation}\label{Ti-DY}
[\iYh(t_{i1}),\mcX_{is}^\pm]=\pm 2 d_i \mcX_{i,s+1}^\pm \quad \forall\; s\in \Z,
\end{equation}
where $t_{i1}=h_{i1}-\frac{\hbar}{2}h_{i0}^2$, as in Lemma \ref{L:deg1}. This relation is proven in the same way as its $\Yhg$-counterpart; see \eqref{R:DY-2} of Remark \ref{R:DY} and Lemma \ref{L:deg1}. It implies that 
\begin{equation*}
\mathrm{ad}(\mathrm{T}_i)^k(\mcX_{is}^\pm)=(\pm 1)^k\mcX_{i,s+k}^\pm \quad \forall \; s\in \Z,\;k\in \N, \quad \text{ where }\;  \mathrm{T}_i=\frac{\iota(t_{i1})}{2d_i}.
\end{equation*}
Applying $\Phi_z$, and using that $\tau_z(t_{i1})=t_{i1}+zh_{i0}$, we obtain 
\begin{equation*}
\mathrm{ad}\!\left(\mathrm{T}_i+ \frac{z}{2d_i}\mcH_{i0}\right)^{\!k}\Phi_z(\mcX_{is}^\pm)=(\pm 1)^k\Phi_z(\mcX_{i,s+k}^\pm)  \quad \forall \; s\in \Z,\;k\in \N.
\end{equation*}
By \eqref{DY:hi=ai} and \eqref{DY:h0x}, 
$
[\mcH_{i0},\Phi_z(\mcX_{is}^\pm)]=\pm 2d_i \Phi_z(\mcX_{is}^\pm).
$
It follows that the above is equivalent to 
\begin{equation*}
(z\pm \mathrm{ad}(\mathrm{T}_i))^k\Phi_z(\mcX_{is}^\pm)=\Phi_z(\mcX_{i,s+k}^\pm)  \quad \forall \; s\in \Z,\;k\in \N.  
\end{equation*}
Fixing $n\in \N$ and taking $k=n+1$ and $s=-n-1$, we deduce that  
\begin{equation}\label{Phi-invert}
(z\pm \mathrm{ad}(\mathrm{T}_i))^{n+1}\Phi_z(\mcX_{i,-n-1}^\pm)=\mcX_{i0}^\pm. 
\end{equation}
As $\mathrm{T}_i\in \Yhg_1$,  
\begin{equation}\label{Phi-invert:2}
\left(z\pm \mathrm{ad}(\mathrm{T}_i)\right)^{-n-1}=\sum_{p\geq 0}(-1)^n\mathrm{ad}(\mp\mathrm{T}_i)^p\partial_z^{(n)}(z^{-p-1})
\end{equation}
is a $\C[\![\hbar]\!]$-linear endomorphism of $\LzhYhg$. Applying it to \eqref{Phi-invert} and employing \eqref{Ti-DY}, we recover \eqref{vertex}: 
\begin{equation*}
\Phi_z(\mcX_{i,-n-1}^\pm)=\sum_{p\geq 0}(-1)^{n+p}x_{ip}^\pm\partial_z^{(n)}(z^{-p-1})
=(-1)^{n+1}\partial_z^{(n)} x_i^\pm(-z).
\end{equation*}
By Part \eqref{D-basic:2} of Proposition \ref{P:D-basic}, this identity, together with the requirement $\Phi_z\circ\iYh=\tau_z$, completely determines $\Phi_z$. This proves the uniqueness of $\Phi_z$.

Observe that, since the evaluation of \eqref{Phi-invert:2} at $z=c\in \C^\times$ defines an honest $\C[\![\hbar]\!]$-linear endormorphism of $\cYhg$, the above uniqueness argument is completely valid with $z$ replaced by a fixed scalar $c\in \C^\times$. It thus proves that there is at most one $\C[\![\hbar]\!]$-algebra homomorphism $\Phi_c:\DYhg\to \cYhg$ such that $\Phi_c\circ\iYh = \tau_c$. 

\noindent\textit{Proof of \eqref{Phi:1} and \eqref{Phi:2}.} Next, we prove that the assignment $\Phi_z$ defined by \eqref{Phi_z} preserves the defining relations of $\DYhg$. Since $\LzhYhg$ is separated and complete (by \eqref{whYgz:3} of Proposition \ref{P:whYgz}), this will imply that \eqref{Phi_z} indeed extends to a homomorphism of $\C[\![\hbar]\!]$-algebras 
\begin{equation*}
\Phi_z:\DYhg\to \LzhYhg,
\end{equation*}
which, by the remarks following the statement of the theorem, will complete the proof of both Parts \eqref{Phi:1} and \eqref{Phi:2} of the theorem.

It is clear from \eqref{Y:hi=ai} and \eqref{Y:hh} that $\Phi_z$
preserves the relations \eqref{DY:hi=ai} and \eqref{DY:hh}. 

\medskip

\noindent \textit{The relation \eqref{DY:h0x}.} By \eqref{Y:h0x}, for each $h\in \mfh$ and $j\in \mbI$ we have
\begin{equation*}
 [\Phi_z(h),\Phi_z(\mcX_j^\pm(u))]=\sum_{n\in \mathbb{N}}[h,x_{j,n}^\pm]\partial_z^{(n)}\!\left(u^{-1}\delta(z/u)\right)
 =\pm \alpha_j(h)\Phi_z(\mcX_j^\pm(u)).
\end{equation*}

\medskip 

\noindent \textit{The relations \eqref{DY:xh} and \eqref{DY:xx}.} For each $n\in \mathbb{N}$, we have 
\begin{equation}\label{delta_z}
\begin{gathered}
 (u-v)\partial_z^{(n)}\!\left(u^{-1}\delta(z/u)\right)=\partial_z^{(n-1)}\!\left(u^{-1}\delta(z/u)\right)+(z-v)\partial_z^{(n)}\!\left(u^{-1}\delta(z/u)\right),\\
 (u-z)\partial_z^{(n)}\!\left(u^{-1}\delta(z/u)\right)=\partial_z^{(n-1)}\!\left(u^{-1}\delta(z/u)\right),
 \end{gathered}
\end{equation}
where $\partial_z^{(n-1)}\!\left(u^{-1}\delta(z/u)\right)=0$ if $n=0$. The second relation is obtained from the first by setting $v=z$, and the first relation is proven by induction on $n$.

For each $n,m\in \mathbb{N}$, set 
\begin{equation*}
 f_{n,m}(u,v,z)=\partial_z^{(n)}\!\left(u^{-1}\delta(z/u)\right) \partial_z^{(m)}\!\left(v^{-1}\delta(z/v)\right). 
\end{equation*}
Then \eqref{delta_z} implies that $f_{n,m}(u,v,z)$ satisfies 
\begin{equation}\label{delta_z'}
 (u-v)f_{n,m}(u,v,z)=f_{n-1,m}(u,v,z)-f_{n,m-1}(u,v,z),
\end{equation}
where $f_{-1,m}(u,v,z)=f_{n,-1}(u,v,z)=0$. 

We now apply this to prove that $\Phi_z$ preserves \eqref{DY:xh} and \eqref{DY:xx}. Fix $i,j\in \mbI$, and let $(\mcY_i(u),y_{ir})$
denote $(\mcX_i^\pm(u),x_{ir}^\pm)$ or $(\mcH_i(u),h_{ir})$ for all $r\in \mathbb{N}$. Then, by \eqref{delta_z'}:
\begin{align*}
 (u-v)&\left[\Phi_z(\mcY_i(u)),\Phi_z(\mcX_j^\pm(v))\right]\\
 &=\sum_{n,m\in \mathbb{N}}[y_{i,n},x_{j,m}^\pm](u-v)f_{n,m}(u,v,z)\\
 &=\sum_{n,m\in \mathbb{N}}[y_{i,n},x_{j,m}^\pm](f_{n-1,m}(u,v,z)-f_{n,m-1}(u,v,z)).
\end{align*}
Using \eqref{Y:xh} and \eqref{Y:xx}, we can rewrite the right-hand side as 
\begin{align*}
 \sum_{n,m\in \mathbb{N}}&([y_{i,n+1},x_{j,m}^\pm]-[y_{i,n},x_{j,m+1}^\pm])f_{n,m}(u,v,z)\\
 &=\pm \hbar d_{ij}\sum_{n,m\in \mathbb{N}}\{y_{i,n},x_{j,m}^\pm\}f_{n,m}(u,v,z)
 =\pm \hbar d_{ij}\!\left\{\Phi_z(\mcY_i(u)),\Phi_z(\mcX_j^\pm(v))\right\}\!,
\end{align*}
where $\{x,y\}=xy+yx$. Thus, we have proven that 
\begin{equation*}
 (u-v\mp \hbar d_{ij})\Phi_z(\mcY_i(u))\Phi_z(\mcX_j^\pm(v))=(u-v\pm \hbar d_{ij})\Phi_z(\mcX_j^\pm(v))\Phi_z(\mcY_i(u)),
\end{equation*}
which is precisely \eqref{DY:xh} if $\mcY_i(u)=\mcH_i(u)$, and \eqref{DY:xx} if $\mcY_i(u)=\mcX_i^\pm(u)$.

\medskip  

\noindent \textit{The relation \eqref{DY:xxh}.} Fix $i,j\in \mbI$. Then, by \eqref{Y:xxh}, we have 
\begin{align*}
 [\Phi_z(\mcX_i^+(u)),\Phi_z(\mcX_j^-(v))]=&\delta_{ij}\sum_{n\in \mathbb{N}}h_{i,n}\sum_{k=0}^n \partial_z^{(k)}(u^{-1}\delta(z/u))\partial_z^{(n-k)}(v^{-1}\delta(z/v))\\
 &=\delta_{ij}\sum_{n\in \mathbb{N}}h_{i,n} \partial_z^{(n)}(u^{-1}v^{-1}\delta(z/u)\delta(z/v))\\
 &=\delta_{ij} u^{-1}\delta(v/u)\Phi_z(\mcH_i(v)),
\end{align*}
where in the second equality we have used the generalized Leibniz identity, and in the third equality we have used that 
$u^{-1}v^{-1}\delta(z/u)\delta(z/v)=u^{-1}v^{-1}\delta(v/u)\delta(z/v)$.

\medskip

\noindent \textit{The Serre relations \eqref{DY:Serre}.} Fix $i,j\in \mbI$ with $i\neq j$ and let $m=1-a_{ij}$. For $(n_1,\ldots,n_m,s)\in \mathbb{N}^{m+1}$, set 
\begin{equation*}
 f_{u_1,\ldots,u_m,v}^{n_1,\ldots,n_m,s}(z)=\partial_z^{(n_1)}(u_1^{-1}\delta(z/u_1))\cdots \partial_z^{(n_m)}(u_m^{-1}\delta(z/u_m)) \partial_z^{(s)}(v^{-1}\delta(z/v)).  
\end{equation*}
Then, since $f_{u_1,\ldots,u_m,v}^{n_1,\ldots,n_m,s}(z)$ is symmetric in $\{1,\ldots,m\}$, \eqref{Phi_z} gives 
\begin{align*}
 &\sum_{\pi \in S_{m}} \left[\Phi_z(\mcX_i^{\pm}(u_{\pi(1)})), \left[\Phi_z(\mcX_i^{\pm}(u_{\pi(2)})), \cdots, \left[\Phi_z(\mcX_i^{\pm}(u_{\pi(m)})),\Phi_z(\mcX_j^{\pm}(v))\right] \cdots\right]\right]\\
 &=\sum_{n_1,\ldots,n_m,s\in \mathbb{N}}f_{u_1,\ldots,u_m,v}^{n_1,\ldots,n_m,s}(z)\sum_{\pi\in S_m}\left[x_{i,n_{\pi(1)}}^\pm,\left[x_{i,n_{\pi(2)}}^\pm,\cdots,\left[x_{i,n_{\pi(m)}}^\pm,x_{js}^\pm\right]\cdots\right]\right]=0,
\end{align*}
where the last equality holds by \eqref{Y:Serre}.

\begin{proof}[Proof of \eqref{Phi:3}] Fix $c\in \C^\times$. By Part \eqref{Phi:1} of the theorem and Part \eqref{whYgz:4} of Proposition \ref{P:whYgz}, $\Phi_c=\mathscr{Ev}_c\circ \Phi_z$ is a homomorphism of $\C[\![\hbar]\!]$-algebras satisfying $\Phi_c\circ\iYh=\tau_c$. As we have already established the uniqueness assertion, we are done. 
\end{proof}

%%%%%%%%%%%%%%%%%%%%%%%%%%%%%%%%%%%%%%%%%%%%%%%%%%%%%%%%%%%%%%%%%%%%%%%%
% Subsection: First consequences
%%%%%%%%%%%%%%%%%%%%%%%%%%%%%%%%%%%%%%%%%%%%%%%%%%%%%%%%%%%%%%%%%%%%%%%%
\subsection{Consequences and formulas}\label{ssec:Phi-imp}
 
 As a first, and rather immediate, corollary to Theorem \ref{T:Phi} we obtain the injectivity of the natural homomorphisms from $\Yhg$ to both $\DYg$ and $\DYhg$, and deduce the existence of injective translation endomorphisms on the standard Borel subalgebras of $\Yhg$.

\begin{corollary}\label{C:Phi}  Define $\Yhb{\pm}$ to be the subalgebra of $\Yhg$ generated by the Cartan subalgebra $\mfh$ and $\{x_{ir}^\pm, h_{ir}\}_{i\in \mbI, r\in \mathbb{N}}$. Then:
\leavevmode
\begin{enumerate}[font=\upshape]
\item\label{CPhi:1} The algebra homomorphisms 
\begin{equation*}
\iY:\Yhg\to \DYg \; \text{ and }\; \iYh:\Yhg\to \DYhg
\end{equation*}
are injective.
\item\label{CPhi:2}  For each fixed $i\in \mbI$, the assignment 
 \begin{equation*}
  \sigma_i^\pm:\; x_{jr}^\pm\mapsto x_{j,r+\delta_{ij}}^\pm,\quad h_{jr}\mapsto h_{jr},\quad h\mapsto h \quad \forall \; j\in \mbI,\, r\in \mathbb{N}\; \text{ and }\; h\in \mfh
 \end{equation*}
 determines an injective $\C[\hbar]$-algebra endomorphism $\sigma_i^\pm$ of $\Yhb{\pm}$.
\end{enumerate}
\end{corollary}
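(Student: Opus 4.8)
The plan is to deduce both parts of the corollary from Theorem \ref{T:Phi}, exploiting the fact that $\Phi_z$ and $\Phi_c$ restrict along $\iYh$ to the \emph{known} embeddings $\tau_z$ and $\tau_c$. For Part \eqref{CPhi:1}: since $\tau_z=\Phi_z\circ\iYh$ is injective (it is the formal shift homomorphism \eqref{shift-z}, which is an embedding), the map $\iYh$ must itself be injective. To get injectivity of $\iY$, recall that $\iYh=\jD\circ\iY$, so injectivity of $\iYh$ immediately forces $\iY$ to be injective as well. This handles Part \eqref{CPhi:1} with essentially no computation; the only thing to be careful about is that $\tau_z$ really is injective, which is clear from the explicit binomial formulas following \eqref{shift-z} (the leading term of $\tau_z(x_{ir}^\pm)$ in the $z$-grading is $x_{ir}^\pm$, and similarly for $h_{ir}$, so $\tau_z$ has a one-sided inverse given by $z\mapsto 0$).

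For Part \eqref{CPhi:2}, first I would observe that each $\sigma_i^\pm$ is a well-defined $\C[\hbar]$-algebra endomorphism of $\Yhb{\pm}$: one checks that the assignment preserves the subset of the relations \eqref{Y:hi=ai}--\eqref{Y:Serre} that involve only the generators of $\Yhb{\pm}$, which is straightforward since $\sigma_i^\pm$ acts as a shift $r\mapsto r+\delta_{ij}$ on the $x$-indices and trivially on the $h$-indices, and all the relevant relations are ``homogeneous'' with respect to this shift in the appropriate sense (this is the same verification as in Proposition \ref{P:ti}, restricted to the Borel and to nonnegative indices). Alternatively, one notes that $\sigma_i^\pm$ is the restriction to $\Yhb{\pm}$ of the composite $\iYh$ followed by the translation automorphism $\mbt_i^{\pm 1}$ of $\DYhg$ from Proposition \ref{P:ti}: indeed $\mbt_i$ sends $\mcX_{jr}^\pm\mapsto\mcX_{j,r\pm\delta_{ij}}^\pm$ and fixes $\mcH_{jr}$ and $\mfh$, and $\iYh(\Yhb{+})$ lands in the span of the $\mcX_{jr}^+$, $\mcH_{jr}$ with $r\geq 0$ together with $\mfh$, on which $\mbt_i$ acts by exactly the shift defining $\sigma_i^+$ (and symmetrically $\mbt_i^{-1}$ for $\sigma_i^-$). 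Since $\mbt_i$ is an automorphism of $\DYhg$ and $\iYh$ is injective by Part \eqref{CPhi:1}, the composite $\mbt_i^{\pm1}\circ\iYh|_{\Yhb{\pm}}=\iYh|_{\Yhb{\pm}}\circ\sigma_i^\pm$ is injective, whence $\sigma_i^\pm$ is injective.

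The main subtlety to address carefully is the identification $\iYh|_{\Yhb{\pm}}\circ\sigma_i^\pm=\mbt_i^{\pm1}\circ\iYh|_{\Yhb{\pm}}$: one must verify that $\mbt_i^{\pm1}$ really does map $\iYh(\Yhb{\pm})$ into $\DYhg$ via the intended formula on generators \emph{and} that the two composite homomorphisms agree, which follows because they agree on the algebra generators $\mfh\cup\{x_{i0}^\pm,h_{i1}\}$ of $\Yhb{\pm}$ (using Lemma \ref{L:deg1}, whose recursions are preserved by both $\sigma_i^\pm$ and the relevant translations since $t_{i1}=h_{i1}-\tfrac{\hbar}{2}h_{i0}^2$ is $\sigma_i^\pm$-fixed and $\mbt_i$-fixed). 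If one prefers to avoid Proposition \ref{P:ti}, the direct route is simply to verify the relations: the only point requiring a remark is that the Serre relations and the relations \eqref{Y:xh}, \eqref{Y:xx} are stable under the index shift $r\mapsto r+1$ on the $x$-variables, which is immediate from their shape. I expect neither part to present a genuine obstacle; this corollary is a packaging of Theorem \ref{T:Phi} together with the already-established properties of $\tau_z$, $\tau_c$, and the translation automorphisms.
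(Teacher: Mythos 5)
Your Part \eqref{CPhi:1} is exactly the paper's argument (the paper uses $\Phi_c\circ\iYh=\tau_c$ rather than $\tau_z$, an immaterial difference), and your ``alternative'' route for Part \eqref{CPhi:2} is essentially the paper's proof as well: the paper defines $\sigma_i^\pm:=\tau_{-c}\circ\Phi_c\circ\mbt_i^{\pm1}\circ\iYh|_{\Yhb{\pm}}$, i.e.\ it writes the left inverse of $\iYh$ on its image explicitly as $\tau_{-c}\circ\Phi_c$ instead of invoking $(\iYh|_{\Yhb{\pm}})^{-1}$, and injectivity follows from $\mbt_i^{\pm1}(\iYh(\Yhb{\pm}))\subset\iYh(\Yhb{\pm})$ together with the injectivity of $\mbt_i^{\pm1}$, $\iYh$ and $\tau_{-c}\circ\Phi_c\circ\iYh=\id$, just as you say.

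However, your \emph{primary} route to well-definedness of $\sigma_i^\pm$ --- ``check that the assignment preserves the relations among the generators of $\Yhb{\pm}$'' --- has a genuine gap. $\Yhb{\pm}$ is defined as a subalgebra of $\Yhg$, not by a presentation, and for general symmetrizable $\mfg$ it is not known that $\Yhb{\pm}$ is isomorphic to the abstract algebra on those generators subject to the visible relations (that would require a triangular decomposition/PBW statement, which is available in finite and simply-laced affine type but not in general). So preserving the relations does not by itself produce an endomorphism of the subalgebra $\Yhb{\pm}$; circumventing exactly this issue is the point of the corollary, as the remark following it in the paper makes explicit. The composite definition must therefore carry the whole argument, and it does --- but note also that your verification that the composite agrees with the stated assignment leans on the claim that $\mfh\cup\{x_{i0}^\pm,h_{i1}\}_{i\in\mbI}$ generates $\Yhb{\pm}$, which is false: Lemma \ref{L:deg1} concerns $\Yhg$, and its recursion $h_{i,s+1}=[x_{is}^+,x_{i1}^-]$ uses $x_{i1}^-\notin\Yhb{+}$ (indeed $h_{i2}$ does not lie in the subalgebra generated by $\mfh\cup\{x_{j0}^+,h_{j1}\}$). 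This is easily repaired: the two maps agree on the actual generating set $\mfh\cup\{x_{jr}^\pm,h_{jr}\}_{j\in\mbI,\,r\in\N}$ of $\Yhb{\pm}$ directly from the formula for $\mbt_i^{\pm1}$, so no appeal to Lemma \ref{L:deg1} is needed.
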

\begin{proof}
 Since $\Phi_c\circ \iYh=\tau_c$ is injective, $\iYh$ is injective. As $\iYh=\jD\circ \iY$, we can conclude $\iY$ is also injective. This proves Part \eqref{CPhi:1}.

Consider now Part \eqref{CPhi:2}, and fix $c\in \C^\times$ and  $i\in \mbI$. Define 
\begin{equation*}
\sigma_i^\pm:=\tau_{-c}\circ \Phi_c \circ \mbt_i^{\pm 1} \circ \iYh|_{\Yhb{\pm}}.
\end{equation*}
This is an algebra endomorphism of $\Yhb{\pm}$ which agrees with the assignment in the statement of \eqref{CPhi:2}. It is injective since $\mbt_i^{\pm 1}(\iYh(\Yhb{\pm}))\subset \iYh(\Yhb{\pm})$, $\mbt_i$ is an automorphism, and $\Phi_c\circ \iYh$, $\iYh$ and $\tau_{-c}$ are all injective. \qedhere
\end{proof}
\begin{remark}
When $\mfg$ is of finite type or of simply-laced affine type, one can deduce the existence of the algebra endomorphisms $\sigma_i^\pm$ of $\Yhb{\pm}$ by appealing to the fact that $\Yhg$ is known to admit a triangular decomposition, as in \cite{GTL1}*{\S2.6}. Corollary \ref{C:Phi} circumvents the fact that such a decomposition has not yet been established for general $\mfg$. 
\end{remark}
The next result applies the endomorphisms $\sigma_i^\pm$ of $\Yhb{\pm}$ to obtain a useful set of formulas re-expressing the definition of $\Phi_z$ on each generating series $\mcX_i^\pm(u)$.
\begin{corollary}\label{C:Phi-form}
\leavevmode
\begin{enumerate}[font=\upshape]
 \item\label{CPhi:3}  For each $i\in \mbI$, we have 
  \begin{equation*}
  \Phi_z(\mcX_i^\pm(u))=\mathrm{exp}(\sigma_i^\pm\partial_z)( u^{-1}\delta(z/u) x_{i0}^\pm)= u^{-1}\delta\!\left(\frac{z+\sigma_i^\pm}{u}\right)\!(x_{i0}^\pm).
 \end{equation*}
 In particular, for each $k\in \N$ and $\ell\in \Z$, 
\begin{equation*}
 \Phi_z \circ \mbt_i^{\pm\ell} (\mcX_{ik}^\pm)=\exp(\sigma_i^\pm\partial_z)(z^{k+\ell} x_{i0}^\pm)=(z+\sigma_i^\pm)^\ell \tau_z(x_{ik}^\pm),
\end{equation*}
 where $\mbt_i\in \mathrm{Aut}(\DYhg)$ is as in Proposition \ref{P:ti}. 
\begin{comment}
\begin{equation*}
  \Phi_z(\mcX_{ik}^\pm)=\exp(\sigma_i^\pm\partial_z)(z^k x_{i0}^\pm)=(z+\sigma_i^\pm)^k(x_{i0}^\pm) \quad \forall \; k\in \Z.
\end{equation*}
\end{comment}
 %
\item\label{CPhi:4}  For each $i\in \mbI$, we have 
\begin{gather*}
\exp(-z\partial_u)x_i^\pm(u)=\Phi_z(\mcX_i^\pm(u)_+)=\frac{u^{-1}}{1-u^{-1}(z+\sigma_i^\pm)}(x_{i0}^\pm)\\
\exp(-u\partial_z)x_i^\pm(-z)=\Phi_z(\mcX_i^\pm(u)_-)=-\frac{z^{-1}}{1-z^{-1}(u-\sigma_i^\pm)}(x_{i0}^\pm), 
\end{gather*}
where $\mcX_i^\pm(u)_+=\iYh(x_i^\pm(u))$ and 
$
\mcX_i^\pm(u)_-=\mcX_i^\pm(u)_+-\mcX_i^\pm(u). 
$
\end{enumerate}
\end{corollary}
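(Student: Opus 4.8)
The plan is to derive both parts of Corollary~\ref{C:Phi-form} directly from the explicit formula \eqref{Phi_z} for $\Phi_z$ together with the definition of the endomorphisms $\sigma_i^\pm$ and the translation automorphisms $\mbt_i$. For Part~\eqref{CPhi:3}, I would start from the identity \eqref{vertex}, namely $\Phi_z(\mcX_{i,-n-1}^\pm)=(-1)^{n+1}\partial_z^{(n)}x_i^\pm(-z)$, together with its positive-mode analogue extracted from \eqref{Phi_z-tau_z}: since $\Phi_z\circ\iYh=\tau_z$ acts on $x_i^\pm(u)$ by $u\mapsto u-z$, one has $\Phi_z(\mcX_{ik}^\pm)=\tau_z(x_{ik}^\pm)$ for $k\in\N$. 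The key observation is that the operator $\sigma_i^\pm$, which shifts $x_{ir}^\pm\mapsto x_{i,r+1}^\pm$, satisfies $\sigma_i^\pm(\tau_z(x_{ik}^\pm)) = \tau_z(x_{i,k+1}^\pm)$ because $\tau_z$ is defined by the binomial formula in the $x_{ir}^\pm$ with $r$-independent coefficients — so $\sigma_i^\pm$ and $\tau_z$ commute in the relevant sense. Consequently $\tau_z(x_{ik}^\pm) = (\sigma_i^\pm)^k\tau_z(x_{i0}^\pm) = (\sigma_i^\pm)^k\,\tau_z(x_{i0}^\pm)$, and since $\tau_z(x_{i0}^\pm)=x_{i0}^\pm$, we get $\Phi_z(\mcX_{ik}^\pm)=(\sigma_i^\pm)^k(x_{i0}^\pm)$ for $k\in\N$. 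Packaging this into generating series, $\Phi_z(\mcX_i^\pm(u)) = \sum_{r\in\Z}\Phi_z(\mcX_{ir}^\pm)u^{-r-1}$, and the point is that the right-hand side is exactly the expansion of $u^{-1}\delta((z+\sigma_i^\pm)/u)(x_{i0}^\pm)$ once one checks that the formal power series $\exp(\sigma_i^\pm\partial_z)$ applied to $u^{-1}\delta(z/u)x_{i0}^\pm$ reproduces the coefficient pattern dictated by \eqref{Phi_z}. This last check is a direct comparison with \eqref{Phi_z} using $\exp(\sigma_i^\pm\partial_z)\partial_z^{(n)} = \partial_z^{(n)}\exp(\sigma_i^\pm\partial_z)$ and the fact that $\sigma_i^\pm$ is the shift sending $x_{in}^\pm$ to $x_{i,n+1}^\pm$; equivalently, one matches the Taylor expansion $\delta((z+\sigma_i^\pm)/u) = \exp(\sigma_i^\pm\partial_z)\delta(z/u)$ term by term.

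For the displayed formula $\Phi_z\circ\mbt_i^{\pm\ell}(\mcX_{ik}^\pm) = \exp(\sigma_i^\pm\partial_z)(z^{k+\ell}x_{i0}^\pm) = (z+\sigma_i^\pm)^\ell\,\tau_z(x_{ik}^\pm)$, I would use that $\mbt_i$ sends $\mcX_{jr}^\pm\mapsto\mcX_{j,r\pm\delta_{ij}}^\pm$, so $\mbt_i^{\pm\ell}(\mcX_{ik}^\pm)=\mcX_{i,k+\ell}^\pm$; then the previous paragraph's identity $\Phi_z(\mcX_{i,k+\ell}^\pm)=(\sigma_i^\pm)^{k+\ell}(x_{i0}^\pm)$ rewrites, via $(\sigma_i^\pm)^{k+\ell}(x_{i0}^\pm) = (\sigma_i^\pm)^\ell(\sigma_i^\pm)^k(x_{i0}^\pm) = (\sigma_i^\pm)^\ell\tau_z(x_{ik}^\pm)$. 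The remaining subtlety is identifying $(\sigma_i^\pm)^\ell$ acting on $\tau_z(x_{ik}^\pm)$ with $(z+\sigma_i^\pm)^\ell$; here one uses the definition $\sigma_i^\pm = \tau_{-c}\circ\Phi_c\circ\mbt_i^{\pm1}\circ\iYh|_{\Yhb{\pm}}$ from Corollary~\ref{C:Phi}, but more cleanly one observes at the level of $\Phi_z$ that the operator $z+\sigma_i^\pm$ is precisely what implements the shift $\mathrm{ad}(\mathrm{T}_i+\tfrac{z}{2d_i}\mcH_{i0})$ appearing in the uniqueness proof of Theorem~\ref{T:Phi} — indeed equation \eqref{Phi-invert} and the discussion preceding it show $(z\pm\mathrm{ad}(\mathrm{T}_i))^{n+1}\Phi_z(\mcX_{i,-n-1}^\pm)=\mcX_{i0}^\pm$, and the same computation with positive shifts gives $(z+\sigma_i^\pm)$ as the generating operator. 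So $(z+\sigma_i^\pm)^\ell\tau_z(x_{ik}^\pm)$ and $\exp(\sigma_i^\pm\partial_z)(z^{k+\ell}x_{i0}^\pm)$ are two encodings of the same element, the former being a closed-form resummation of the latter.

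For Part~\eqref{CPhi:4}, I would decompose $\mcX_i^\pm(u) = \mcX_i^\pm(u)_+ - \mcX_i^\pm(u)_-$ where $\mcX_i^\pm(u)_+ = \iYh(x_i^\pm(u)) = \sum_{r\geq0}\mcX_{ir}^\pm u^{-r-1}$ collects the non-negative modes and $\mcX_i^\pm(u)_- = \sum_{r<0}\mcX_{ir}^\pm u^{-r-1}$ the negative ones (note $\mcX_i^\pm(u)_- = \mcX_i^\pm(u)_+ - \mcX_i^\pm(u)$ as stated). Applying $\Phi_z$: on the positive part, $\Phi_z(\mcX_i^\pm(u)_+) = \Phi_z(\iYh(x_i^\pm(u))) = \tau_z(x_i^\pm(u)) = x_i^\pm(u-z) = \exp(-z\partial_u)x_i^\pm(u)$, which by Part~\eqref{CPhi:3} also equals $\sum_{k\geq0}(\sigma_i^\pm)^k(x_{i0}^\pm)u^{-k-1} = u^{-1}\sum_{k\geq0}(u^{-1}\sigma_i^\pm)^k(x_{i0}^\pm) = \frac{u^{-1}}{1-u^{-1}(z+\sigma_i^\pm)}(x_{i0}^\pm)$, where I must be careful that the geometric-series resummation respects the grading (degree of $\sigma_i^\pm$ is $+1$ so $u^{-1}\sigma_i^\pm$ and $u^{-1}z$ lower total degree and the sum converges in $\LzhYhg$). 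On the negative part, $\Phi_z(\mcX_i^\pm(u)_-) = \sum_{n\geq0}\Phi_z(\mcX_{i,-n-1}^\pm)u^n = \sum_{n\geq0}(-1)^{n+1}\partial_z^{(n)}x_i^\pm(-z)u^n = -\exp(-u\partial_z)x_i^\pm(-z)$ by Taylor's formula, and again using \eqref{Phi-invert:2} one re-expands $\Phi_z(\mcX_{i,-n-1}^\pm)$ as $(z\pm\mathrm{ad}\mathrm{T}_i)^{-n-1}(x_{i0}^\pm)$, i.e. $(z+\sigma_i^\pm)^{-n-1}(x_{i0}^\pm)$ in a slight abuse of notation, so $\Phi_z(\mcX_i^\pm(u)_-) = \sum_{n\geq0}u^n(z+\sigma_i^\pm)^{-n-1}(x_{i0}^\pm) = \frac{(z+\sigma_i^\pm)^{-1}}{1-u(z+\sigma_i^\pm)^{-1}}(x_{i0}^\pm) = -\frac{z^{-1}}{1-z^{-1}(u-\sigma_i^\pm)}(x_{i0}^\pm)$ after rearranging — the sign and the form $u-\sigma_i^\pm$ come from factoring $(z+\sigma_i^\pm) - u$ out of the denominator and matching with the $z^{-1}$-adic expansion. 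The main obstacle, and the place where care is genuinely required, is the bookkeeping of \emph{which} expansion (in $u^{-1}$ versus $z^{-1}$) each geometric series represents and verifying that all resummations take place in the completed algebra $\LzhYhg$ of Proposition~\ref{P:whYgz}\eqref{whYgz:3}; the algebra itself is routine once one has the clean identity $\Phi_z(\mcX_{ik}^\pm) = (\sigma_i^\pm)^k(x_{i0}^\pm)$ for all $k\in\Z$ (with $(\sigma_i^\pm)^k$ for $k<0$ interpreted via \eqref{Phi-invert:2}) from Part~\eqref{CPhi:3}, and everything else is assembling generating series and recognizing the formal delta function and its one-sided expansions.
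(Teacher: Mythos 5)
Your central intermediate claim is false: $\sigma_i^\pm$ does not commute with $\tau_z$. From \eqref{shift-c} one has $\tau_z(x_{i1}^\pm)=x_{i1}^\pm+z\,x_{i0}^\pm$, whereas $\sigma_i^\pm\bigl(\tau_z(x_{i0}^\pm)\bigr)=x_{i1}^\pm$; the binomial coefficients in $\tau_z(x_{ir}^\pm)=\sum_{k}\binom{r}{k}x_{ik}^\pm z^{r-k}$ do depend on $r$. The correct relation on these elements is $\tau_z\bigl(\sigma_i^\pm(x_{ik}^\pm)\bigr)=(z+\sigma_i^\pm)\,\tau_z(x_{ik}^\pm)$, so the ``clean identity'' you lean on throughout, $\Phi_z(\mcX_{ik}^\pm)=(\sigma_i^\pm)^k(x_{i0}^\pm)$, is wrong (it would assert $\tau_z(x_{ik}^\pm)=x_{ik}^\pm$); the correct one --- and the one the corollary actually encodes --- is $\Phi_z(\mcX_{ik}^\pm)=(z+\sigma_i^\pm)^k(x_{i0}^\pm)$, the coefficient of $u^{-k-1}$ in $u^{-1}\delta\bigl((z+\sigma_i^\pm)/u\bigr)(x_{i0}^\pm)$. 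The error contaminates what follows: in Part (1) your passage from $(\sigma_i^\pm)^{k+\ell}(x_{i0}^\pm)$ to $(z+\sigma_i^\pm)^\ell\tau_z(x_{ik}^\pm)$ rests on a vague appeal to the uniqueness argument of Theorem \ref{T:Phi} to trade $(\sigma_i^\pm)^\ell$ for $(z+\sigma_i^\pm)^\ell$, which does not repair the false starting point; and in Part (2) the inconsistency is visible on the page, since $\sum_{k\geq 0}(\sigma_i^\pm)^k(x_{i0}^\pm)u^{-k-1}$ resums to $\frac{u^{-1}}{1-u^{-1}\sigma_i^\pm}(x_{i0}^\pm)=x_i^\pm(u)$, not to $x_i^\pm(u-z)$ nor to the stated $\frac{u^{-1}}{1-u^{-1}(z+\sigma_i^\pm)}(x_{i0}^\pm)$. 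There is also a sign slip in the negative part: with the paper's convention $\mcX_i^\pm(u)_-=\mcX_i^\pm(u)_+-\mcX_i^\pm(u)=-\sum_{n\geq 0}\mcX_{i,-n-1}^\pm u^{n}$, so your sums must carry a minus sign, and your final ``after rearranging'' step silently flips a sign to land on the stated formula.

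The repair is short and is essentially the paper's own proof, whose key step you do state correctly at the end of your first paragraph: since $(\sigma_i^\pm)^n(x_{i0}^\pm)=x_{in}^\pm$, substituting directly into \eqref{Phi_z} gives $u\Phi_z(\mcX_i^\pm(u))=\sum_{n}(\sigma_i^\pm)^n\partial_z^{(n)}\bigl(\delta(z/u)\bigr)(x_{i0}^\pm)=\exp(\sigma_i^\pm\partial_z)\bigl(\delta(z/u)\,x_{i0}^\pm\bigr)$, i.e.\ the first display of Part (1). Extracting coefficients yields $\Phi_z(\mcX_{ir}^\pm)=(z+\sigma_i^\pm)^r(x_{i0}^\pm)$ for all $r\in\Z$ (negative powers interpreted via \eqref{Phi-invert:2}), and with this corrected identity the ``in particular'' formula of Part (1) and the rightmost equalities of Part (2) follow by exactly the resummations you outline, all taking place in $\LzhYhg$ as you note; the leftmost equalities of Part (2) follow from \eqref{Phi_z-tau_z} and \eqref{vertex}, as you say. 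So the overall architecture matches the paper's, but as written the pivotal identity is wrong and several of your displayed chains of equalities do not hold.
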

\begin{proof}
Since $(\sigma_i^\pm)^n(x_{i0}^\pm)=x_{in}^\pm$ for all $i\in \mbI$ and $n\in \N$, Part \eqref{CPhi:3} follows directly from \eqref{Phi_z}. As for Part \eqref{CPhi:4}, the leftmost equalities follow from \eqref{Phi_z-tau_z} and \eqref{vertex}, while the rightmost equalites are readily deduced from Part \eqref{CPhi:3}.
\end{proof}
%

%%%%%%%%%%%%%%%%%%%%%%%%%%%%%%%%%%%%%%%%%%%%%%%%%%%%%%%%%%%%%%%%%%%%%%%%
% Subsection: Gradation equivalence
%%%%%%%%%%%%%%%%%%%%%%%%%%%%%%%%%%%%%%%%%%%%%%%%%%%%%%%%%%%%%%%%%%%%%%%%
\subsection{Similarity of \texorpdfstring{$\Phi_c$}{Phi\_c} and \texorpdfstring{$\Phi_a$}{Phi\_a} }\label{ssec:gr-tw}

We conclude Section \ref{sec:Phi} by establishing that, for any $a,c\in \C^\times$, $\Phi_c$ and $\Phi_a$ are equal up to conjugation by a gradation automorphism governed by the ratio $\frac{c}{a}$.
 
For each $a\in \C^\times$, introduce the $\C$-algebra automorphism $\chi_a$ of $\DYg$ by 
\begin{equation*}
\chi_a=\bigoplus_{k\in \Z} a^k \id_k \in \mathrm{Aut}_\C (\DYg),
\end{equation*}
where $\id_{k}$ is the identity map on the $k$-th graded component $\DYg_k$ of $\DYg$. Since $\chi_a(\hbar^n\DYg)=\hbar^n\DYg$ for each $n\in \N$, $\chi_a$ extends to a $\C$-algebra automorphism
of $\DYhg$, which we again denote by $\chi_a$. These gives rise to an action of the multiplicative group $\C^\times$ on $\DYhg$. That is, one has 
\begin{equation*}
\chi_{a}\circ \chi_{c}=\chi_{a\cdot c} \quad \forall\; a,c\in \C^\times.
\end{equation*}
In addition, $\chi_a$ restricts to a $\C$-algebra automorphism of $\Yhg\cong \iYh(\Yhg)$, which extends by continuity to an automorphism $\chi_{a}^{\iYh}$ of the completed Yangian $\cYhg$. 
The next proposition uses these automorphisms to illustrate the precise relation between $\Phi_c$ and $\Phi_a$, for any $a,c\in \C^\times$. 
\begin{proposition}\label{P:gr-tw}
For each pair of points $a,c\in \C^\times$, one has the identity
\begin{equation*}
\Phi_c=\chi_{a/c}^{\iYh}\circ  \Phi_a \circ \chi_{c/a}.
\end{equation*}
\end{proposition}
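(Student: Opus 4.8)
The plan is to verify the identity by checking it on the topological generators of $\DYhg$, namely $h \in \mfh$ and the coefficients $\mcX_{ir}^\pm$ of the currents $\mcX_i^\pm(u)$, since by Part \eqref{D-basic:2} of Proposition \ref{P:D-basic} these generate $\DYhg$ as a topological $\C[\![\hbar]\!]$-algebra, and all four maps $\Phi_c$, $\chi_{c/a}$, $\Phi_a$, $\chi_{a/c}^{\iYh}$ are continuous $\C[\![\hbar]\!]$-algebra homomorphisms. On $h \in \mfh \subset \DYg_0$ everything is trivial: $\chi_{c/a}(h) = h$, $\Phi_a(h) = h$, and $\chi_{a/c}^{\iYh}(h) = h$, matching $\Phi_c(h) = h$. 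So the whole content is the computation on the generating series $\mcX_i^\pm(u)$.

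For the currents, the key observation is that $\chi_a$ acts on $\DYhg$ as a gradation automorphism, and under the substitution $u \mapsto u$ this is exactly rescaling the spectral parameter: since $\deg \mcX_{ir}^\pm = r$ and $\mcX_i^\pm(u) = \sum_{r\in\Z}\mcX_{ir}^\pm u^{-r-1}$, one has $\chi_a(\mcX_i^\pm(u)) = a^{-1}\mcX_i^\pm(u/a)$ (the extra $a^{-1}$ coming from the $u^{-r-1}$ prefactor; equivalently $\chi_a(\mcX_{ir}^\pm) = a^r \mcX_{ir}^\pm$). Likewise, on the image $\iYh(\Yhg)$ inside $\cYhg$, the automorphism $\chi_a^{\iYh}$ satisfies $\chi_a^{\iYh}(x_i^\pm(u)) = a^{-1} x_i^\pm(u/a)$ and, crucially, commutes appropriately with $\tau_z$ — more precisely $\chi_a^{\iYh}\circ \tau_b = \tau_{b/a}\circ \chi_a^{\iYh}$ on $\Yhg$, reflecting that rescaling and shifting the spectral parameter interact by the same rescaling. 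I would record these elementary identities first as a short preliminary computation.

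Now apply $\chi_{a/c}^{\iYh}\circ \Phi_a \circ \chi_{c/a}$ to $\mcX_i^\pm(u)$. First, $\chi_{c/a}(\mcX_i^\pm(u)) = (a/c)\,\mcX_i^\pm(ua/c)$. Next, by Theorem \ref{T:Phi}\eqref{Phi:3}, $\Phi_a$ is characterized by $\Phi_a \circ \iYh = \tau_a$, and one can read off its action on the currents from the defining formula \eqref{Phi_z} specialized at $z = a$; alternatively, and more cleanly, use Corollary \ref{C:Phi-form}\eqref{CPhi:3}, which gives $\Phi_a(\mcX_i^\pm(w)) = w^{-1}\delta\!\left(\tfrac{a + \sigma_i^\pm}{w}\right)(x_{i0}^\pm)$, a formula that is manifestly compatible with rescaling $w$. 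Feeding in $w = ua/c$ and the scalar $a/c$, then applying $\chi_{a/c}^{\iYh}$ — which rescales the $\Yhg$-side spectral behaviour and turns the shift by $a$ into a shift by $a\cdot(c/a) = c$ via the identity $\chi_{a/c}^{\iYh}\circ\tau_a = \tau_c\circ\chi_{a/c}^{\iYh}$ (noting $\sigma_i^\pm$ is a degree-$1$–shifting operator on $\Yhb{\pm}$, so it transforms by the appropriate power of $a/c$ under $\chi_{a/c}^{\iYh}$) — one collapses all the scalar factors and recovers exactly $u^{-1}\delta\!\left(\tfrac{c+\sigma_i^\pm}{u}\right)(x_{i0}^\pm) = \Phi_c(\mcX_i^\pm(u))$ by Corollary \ref{C:Phi-form}\eqref{CPhi:3} again. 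Since both sides are continuous algebra homomorphisms agreeing on the topological generators, they coincide.

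The main obstacle I anticipate is purely bookkeeping: keeping the three rescalings straight — the $u^{-r-1}$ prefactor contributing an extra power of the scaling parameter, the way $\sigma_i^\pm$ (living in $\Yhg_1$, hence degree $1$) transforms under $\chi_{a/c}^{\iYh}$, and the compatibility $\chi_b^{\iYh}\circ\tau_d = \tau_{d/b}\circ\chi_b^{\iYh}$ — so that the exponents of $a/c$ and $c/a$ cancel correctly. There is no conceptual difficulty: once the elementary identities $\chi_a(\mcX_i^\pm(u)) = a^{-1}\mcX_i^\pm(u/a)$ and $\chi_b^{\iYh}\tau_d = \tau_{d/b}\chi_b^{\iYh}$ are in hand, the proposition is a two-line substitution. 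An alternative, essentially equivalent route that avoids even these is to note that $\chi_{a/c}^{\iYh}\circ\Phi_a\circ\chi_{c/a}$ is a $\C[\![\hbar]\!]$-algebra homomorphism $\DYhg\to\cYhg$ whose composite with $\iYh$ equals $\chi_{a/c}^{\iYh}\circ\Phi_a\circ\chi_{c/a}\circ\iYh = \chi_{a/c}^{\iYh}\circ\Phi_a\circ\iYh\circ\chi_{c/a}' = \chi_{a/c}^{\iYh}\circ\tau_a\circ\chi_{c/a}' = \tau_c$ (using that $\chi_{c/a}$ restricts along $\iYh$ to $\chi_{c/a}'$ on $\Yhg$, and the shift–rescaling compatibility), and then invoke the uniqueness clause of Theorem \ref{T:Phi}\eqref{Phi:3}, which says $\Phi_c$ is the unique such homomorphism. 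This uniqueness argument is cleaner and I would present it as the main proof, relegating the explicit current computation to a remark if needed.
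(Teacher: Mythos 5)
Your proposal is correct, but your preferred (uniqueness-based) argument takes a genuinely different route from the paper. The paper proves the identity by direct computation on the generating currents: it first notes that the composition $\chi_{a/c}^{\iYh}\circ \Phi_a\circ\chi_{c/a}$ fixes $\hbar$ and hence is a $\C[\![\hbar]\!]$-algebra homomorphism, records that $\chi_b$ sends $u\mcX_i^\pm(u)$ and $u\mcH_i(u)$ to the same series with $u$ replaced by $u/b$, and then verifies the identity from the explicit formula \eqref{Phi_z} together with the change-of-variables identity $(a/c)^n\partial_z^{(n)}(\delta(zc/au))|_{z=a}=\partial_w^{(n)}(\delta(w/u))|_{w=c}$. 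Your route instead checks that the composition satisfies $\bigl(\chi_{a/c}^{\iYh}\circ\Phi_a\circ\chi_{c/a}\bigr)\circ\iYh=\chi_{a/c}^{\iYh}\circ\tau_a\circ\chi_{c/a}|_{\Yhg}=\tau_c$ (using that $\iYh$ is graded and the elementary compatibility $\chi_b\circ\tau_d=\tau_{d/b}\circ\chi_b$ on $\Yhg$), and then invokes the uniqueness clause of Theorem \ref{T:Phi}\eqref{Phi:3}. This is valid, since that uniqueness statement is indeed established in the proof of Theorem \ref{T:Phi}, and it buys you a shorter argument with no delta-function bookkeeping; the paper's computation, on the other hand, is self-contained and does not lean on the uniqueness clause, making the mechanism (rescaling of the delta-derivatives) explicit, which also explains where the formula comes from.

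One small imprecision to fix: $\chi_{c/a}$ and $\chi_{a/c}^{\iYh}$ are only $\C$-algebra automorphisms, not $\C[\![\hbar]\!]$-algebra maps, since $\chi_b(\hbar)=b\hbar$ (they scale the degree-one element $\hbar$). What is true, and what the paper's proof opens with, is that the \emph{composition} $\chi_{a/c}^{\iYh}\circ\Phi_a\circ\chi_{c/a}$ fixes $\hbar$, because the two scalings cancel, and is therefore a $\C[\![\hbar]\!]$-algebra homomorphism — this is exactly what you need in order to apply the uniqueness clause, so state it that way rather than asserting $\hbar$-linearity of the individual $\chi$'s. With that adjustment, and keeping in mind that your identity $\chi_b\circ\tau_d=\tau_{d/b}\circ\chi_b$ must also be checked on $\hbar$ (where it holds trivially), your argument goes through.
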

\begin{proof}
The composition $\chi_{a/c}^{\iYh}\circ  \Phi_a \circ \chi_{c/a}$ fixes $\hbar$, and is thus a $\C[\![\hbar]\!]$-algebra homomorphism. Next, observe that, for each $b\in \C^\times$, $\chi_b$ satisfies
\begin{equation*}
\chi_b(h)=h,\quad \chi_b(\mcA_i(u))=\mcA_i(u/b) \quad \forall \; h\in \mfh,\,i\in \mbI,
\end{equation*}
where $\mcA_i(u)$ takes value $u\mcX_i^\pm(u)$ or $u\mcH_i(u)$. 
The assertion of the proposition therefore follows from 
\eqref{Phi_z} together with the identity 
\begin{equation*}
(a/c)^n\!\left.\partial_z^{(n)}\!\left( \delta(zc/au)\right)\right|_{z=a}
=\left.\partial_w^{(n)}\!\left( \delta(w/u)\right)\right|_{w=c},
\end{equation*}
which is obtained by making the change of variables $w=zc/a$. \qedhere
\end{proof}
%%%%%%%%%%%%%%%%%%%%%%%%%%%%%%%%%%%%%%%%%%%%%%%%%%%%%%%%%%%%%%%%%%%%%%%%
%%%%%%%%%%%%%%%%%%%%%%%%%%%%%%%%%%%%%%%%%%%%%%%%%%%%%%%%%%%%%%%%%%%%%%%%
% Section: Isomorphism with completed Yangian
%%%%%%%%%%%%%%%%%%%%%%%%%%%%%%%%%%%%%%%%%%%%%%%%%%%%%%%%%%%%%%%%%%%%%%%%
%%%%%%%%%%%%%%%%%%%%%%%%%%%%%%%%%%%%%%%%%%%%%%%%%%%%%%%%%%%%%%%%%%%%%%%% 
\section{Isomorphism with completed Yangian}\label{sec:Iso}

The evaluation ideal $\mcJ$ at $t=1$ is defined to be the kernel of the composite
\begin{equation*}
 \DYhg\xrightarrow{\hbar\mapsto 0} U(\mft\rtimes \ddot\mfh)\xrightarrow{\evg} U(\mfg),
\end{equation*}
where $\hbar\mapsto 0$ denotes reduction modulo $\hbar$, under the identification of Proposition \ref{P:red-h}, and  $\evg$ is the epimorphism of algebras induced by the composition
\begin{equation}\label{evg}
\mft\rtimes\ddot\mfh\xrightarrow{\pi_t\oplus \id} \dot\mfg[t^{\pm 1}]\rtimes \ddot\mfh \xrightarrow{\evdg\oplus \id} \dot\mfg\rtimes \ddot\mfh\cong \mfg,
\end{equation} 
with $\pi_t$ as in \eqref{pi_a} and $\evdg:\dot\mfg[t^{\pm 1}]\to \dot\mfg$ the evaluation morphism given by $t\mapsto 1$. 
In this section, we will prove that the evaluation of $\Phi_z$ at $z=1$ induces an isomorphism of $\C[\![\hbar]\!]$-algebras
\begin{equation*}
 \widehat{\Phi}: \cDYhg\iso\cYhg,
\end{equation*}
where $\cDYhg$ is the completion of $\DYhg$ with respect to the descending filtration 
\begin{equation*}%\label{Des}
 \DYhg=\mcJ^0\supset \mcJ \supset \mcJ^2 \supset \cdots \supset \mcJ^n\supset \cdots
\end{equation*}
%
\begin{comment}
That is, 
 %
 \begin{equation*}
 \cDYhg=\varprojlim_{n}\left(\DYhg/\mcJ^n \right).
 \end{equation*}
 %
\end{comment}
%
This will be achieved in Theorem \ref{T:hatiso} of Section \ref{ssec:wtPhi}. In Section  \ref{ssec:ch-ev}, we will obtain a generalization of this result which holds for an arbitrary evaluation point $c\in \C^\times$. We will then conclude this section with two applications of Theorem \ref{T:hatiso}: In Section \ref{ssec:ev-Yhg}, we will show that the natural inclusion $\iYh$ extends to an isomorphism between the evaluation completions of $\Yhg$ and $\DYhg$ at $t=1$. We will then demonstrate in Section \ref{ssec:degen} that $\Yhg$ can be realized as a degeneration of $\DYhg$, in the same way that $\Yhg$ can be realized as a degeneration of the quantum loop algebra $U_\hbar(L\mfg)$.
 
%%%%%%%%%%%%%%%%%%%%%%%%%%%%%%%%%%%%%%%%%%%%%%%%%%%%%%%%%%%%%%%%%%%%%%%%
% Subsection: The isomorphism $\widehat\Phi$
%%%%%%%%%%%%%%%%%%%%%%%%%%%%%%%%%%%%%%%%%%%%%%%%%%%%%%%%%%%%%%%%%%%%%%%%
\subsection{The isomorphism \texorpdfstring{$\widehat\Phi$}{}}\label{ssec:wtPhi}

In what follows, we shall set $\Phi=\Phi_1$, where $\Phi_1$ is the morphism $\Phi_c$ from Theorem \ref{T:Phi} with $c$ taken to be $1$. Since 
\begin{equation*}
\partial_z^{(n)}(u^{-1}\delta(z/u))=(-1)^n \partial_u^{(n)}(u^{-1}\delta(z/u)) \quad \forall \; n\in \N,
\end{equation*}
we deduce from \eqref{Phi_z} that 
$\Phi$ is given explicitly by the following data: 
\begin{equation}\label{Phi}
\begin{gathered}
\Phi:\DYhg\to \cYhg,\\
 \Phi(h)=h,\quad \Phi(\mcX_i^\pm(u))=\sum_{n\in \mathbb{N}}(-1)^n x_{in}^\pm \partial_u^{(n)}(\delta(u)) \quad \forall \; i\in \mbI \; \text{ and }\; h\in \mfh.
\end{gathered}
\end{equation}

 For each $n\in \N$, introduce the ideal $\cYhgeq{n}\subset \cYhg$ by 
\begin{equation*}
\cYhgeq{n}=\prod_{k\geq n}\Yhg_k.
\end{equation*}
Equivalently, under the identification of Part \eqref{whYgz:1} of Lemma \ref{L:whYhg}, one has 
\begin{equation*}
\cYhgeq{n}=\varprojlim_{k>n}\!\left(\Yhgp^n/\Yhgp^k\right)\!. 
\end{equation*}
\begin{lemma}\label{L:whPhi}
We have 
\begin{equation*}
\Phi(\mcJ^n)\subset \cYhgeq{n} \quad \forall \; n\in \N.
\end{equation*}
Consequently, $\Phi$ induces a homomorphism of $\C[\![\hbar]\!]$-algebras
 \begin{equation*}
  \widehat\Phi:\cDYhg\to \cYhg.
 \end{equation*}
\end{lemma}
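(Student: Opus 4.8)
The plan is to prove the containment $\Phi(\mcJ^n)\subset \cYhgeq{n}$ for all $n\in\N$, from which the existence of $\wh{\Phi}$ follows immediately by passing to the inverse limits, since $\cDYhg=\varprojlim_n(\DYhg/\mcJ^n)$ and $\cYhg=\varprojlim_n(\cYhg/\cYhgeq{n})$ (the latter by Part \eqref{whYgz:1} of Lemma \ref{L:whYhg}), and $\cYhg$ is separated and complete as a $\C[\![\hbar]\!]$-module. Because $\cYhgeq{n}$ is an ideal and the $\mcJ^n$ form a multiplicative filtration, it suffices to treat the case $n=1$, i.e.\ to show $\Phi(\mcJ)\subset \cYhgeq{1}$: indeed $\mcJ^n$ is spanned by products $a_1\cdots a_n$ with $a_k\in\mcJ$, and $\Phi$ being an algebra homomorphism sends such a product to $\Phi(a_1)\cdots\Phi(a_n)\in(\cYhgeq{1})^n\subset\cYhgeq{n}$, the last inclusion holding since $\cYhgeq{1}\cdot\cYhgeq{1}\subset\cYhgeq{2}$ etc.\ (graded multiplication raises degree).

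So the crux is the case $n=1$. Here I would use the definition of $\mcJ$ as the kernel of $\DYhg\xrightarrow{\hbar\mapsto 0}U(\mft\rtimes\ddot\mfh)\xrightarrow{\evg}U(\mfg)$. The composite $\evg\circ(\hbar\mapsto 0)$ sends $h\mapsto h$, $\mcX_{ir}^\pm\mapsto x_i^\pm$, $\mcH_{ir}\mapsto h_i$ for all $r\in\Z$ (using $\pi_\mft(\mcX_{ir}^\pm)=x_i^\pm\otimes t^r$ followed by $t\mapsto 1$). So $\mcJ$ is topologically generated, as a (closed, two-sided) ideal, by $\hbar$ together with the elements $\mcX_{ir}^\pm-\mcX_{i0}^\pm$, $\mcH_{ir}-\mcH_{i0}$ for $r\in\Z$, and in fact it is enough to use a generating set of $\DYhg$: by Part \eqref{D-basic:2} of Proposition \ref{P:D-basic}, $\DYhg$ is topologically generated by $\mfh$ and $\{\mcX_{ik}^\pm\}$, so $\mcJ$ is the closure of the two-sided ideal generated by $\hbar$ and $\{\mcX_{ik}^\pm-\mcX_{i0}^\pm\}_{k\in\Z}$. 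Now I would check directly from \eqref{Phi} that $\Phi$ maps each of these generators into $\cYhgeq{1}$. For $\hbar$ this is clear since $\hbar\in\Yhg_1$. For $\mcX_{ik}^\pm-\mcX_{i0}^\pm$, I would use the formula $\Phi(\mcX_{ik}^\pm)=(1+\sigma_i^\pm)^k(x_{i0}^\pm)$ coming from Corollary \ref{C:Phi-form}\eqref{CPhi:3} (the specialization $z=1$ of $\Phi_z\circ\mbt_i^{\pm\ell}$ with the relevant shift), equivalently $\Phi(\mcX_{ik}^\pm)=\tau_1(x_{ik}^\pm)=\sum_{j=0}^k\binom{k}{j}x_{ij}^\pm$ for $k\geq 0$. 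Then $\Phi(\mcX_{ik}^\pm)-\Phi(\mcX_{i0}^\pm)=\sum_{j=1}^k\binom{k}{j}x_{ij}^\pm\in\cYhgeq{1}$ for $k\geq 0$ since each $x_{ij}^\pm$ with $j\geq 1$ has positive degree. For $k<0$, one uses instead the explicit expansion $\Phi(\mcX_{i,-n-1}^\pm)=(-1)^{n+1}\partial_z^{(n)}x_i^\pm(-z)\big|_{z=1}=\sum_{p\geq 0}(-1)^{n+p}\binom{n+p}{n}x_{ip}^\pm$ from \eqref{vertex}, which again has zero constant term in degree $0$ precisely when... — more carefully, its degree-$0$ part is the $p=0$ term $(-1)^n x_{i0}^\pm$, so $\Phi(\mcX_{i,-n-1}^\pm)-(-1)^n\Phi(\mcX_{i0}^\pm)\in\cYhgeq{1}$, and one should instead match $\mcX_{i,-n-1}^\pm$ against $(-1)^n\mcX_{i0}^\pm$ modulo $\mcJ$ — but in fact $\mcX_{i,-n-1}^\pm\equiv\mcX_{i0}^\pm\bmod\mcJ$ in $\DYhg$ by definition of $\evg$, so I just need $\Phi(\mcX_{i,-n-1}^\pm)-\Phi(\mcX_{i0}^\pm)\in\cYhgeq{1}$; its degree-$0$ component is $(-1)^nx_{i0}^\pm-x_{i0}^\pm$, which vanishes only for $n$ even. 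Hence the correct statement to verify is that $\Phi(a)\in\cYhgeq{1}$ for every $a\in\mcJ$, and the cleanest route is:

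\textbf{Reduction via degree-$0$ part.} Let $\pi_0:\cYhg\onto\Yhg_0$ be the projection onto the degree-zero component, so $\ker\pi_0=\cYhgeq{1}$. Note $\Yhg_0\cong U(\mfg)$ via the map of Section \ref{sec:Def} (it is generated by $\mfh$ and $x_{i0}^\pm=x_i^\pm$). The claim $\Phi(\mcJ)\subset\cYhgeq{1}$ is then equivalent to $\pi_0\circ\Phi$ vanishing on $\mcJ$, i.e.\ to $\pi_0\circ\Phi$ factoring through $\DYhg/\mcJ\xrightarrow{\sim}U(\mfg)$ — and since $\mcJ$ is by construction the kernel of $\evg\circ(\hbar\mapsto0):\DYhg\to U(\mfg)$, it suffices to prove the algebra-map identity $\pi_0\circ\Phi=\evg\circ(\hbar\mapsto 0)$. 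Both sides are $\C[\![\hbar]\!]$-algebra homomorphisms $\DYhg\to U(\mfg)$ (the left side is a homomorphism because $\pi_0:\cYhg\to\Yhg_0\cong U(\mfg)$ is an algebra homomorphism — the degree-$0$ part of a graded algebra, noting $\pi_0(\hbar)=0$), so by the topological generation of $\DYhg$ it is enough to compare them on $h\in\mfh$ and on $\mcX_{ik}^\pm$. On $h$: $\pi_0(\Phi(h))=\pi_0(h)=h=\evg(h)$. On $\mcX_{ik}^\pm$: from \eqref{Phi} (or Corollary \ref{C:Phi-form}), $\Phi(\mcX_{ik}^\pm)$ has degree-$0$ part equal to $x_{i0}^\pm=x_i^\pm$ for every $k\in\Z$ — one reads this off from $u\Phi(\mcX_i^\pm(u))=\sum_n x_{in}^\pm\partial_z^{(n)}(\delta(z/u))|_{z=1}$, whose $x_{i0}^\pm$-coefficient recovers, upon extracting the $u^{-k-1}$-term, exactly the constant term $x_{i0}^\pm$; meanwhile $\evg((\hbar\mapsto 0)(\mcX_{ik}^\pm))=\evdg(x_i^\pm\otimes t^k)=x_i^\pm$. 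They agree.

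\textbf{Main obstacle.} The bookkeeping in the last bullet — namely verifying cleanly that the degree-$0$ component of $\Phi(\mcX_{ik}^\pm)$ is $x_{i0}^\pm$ for all $k\in\Z$, including negative $k$ — is the one spot requiring care, since for $k<0$ one must expand $\partial_z^{(n)}(u^{-1}\delta(z/u))$ into its two pieces (the $\exp(-z\partial_u)(1/u)$ and $\exp(-u\partial_z)(1/z)$ terms from the discussion after \eqref{Phi_z}) and track which monomials land in $\cYhgeq{1}$. But Corollary \ref{C:Phi-form}\eqref{CPhi:4} already packages this: $\Phi(\mcX_i^\pm(u)_-)=-z^{-1}(1-z^{-1}(u-\sigma_i^\pm))^{-1}(x_{i0}^\pm)|_{z=1}=-(1-(u-\sigma_i^\pm))^{-1}(x_{i0}^\pm)$, and expanding in $u^{-1}$ shows every coefficient lies in $x_{i0}^\pm+\cYhgeq{1}$ — indeed modulo $\cYhgeq{1}$ we may set $\sigma_i^\pm\equiv 0$ (since $\sigma_i^\pm$ raises degree by shifting $x_{i0}^\pm\mapsto x_{i1}^\pm$) and $\hbar\equiv 0$. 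With that observation the identity $\pi_0\circ\Phi=\evg\circ(\hbar\mapsto0)$ is immediate on all generators, completing the proof that $\Phi(\mcJ)\subset\cYhgeq{1}$, hence $\Phi(\mcJ^n)\subset\cYhgeq{n}$, hence the induced map $\wh{\Phi}:\cDYhg\to\cYhg$ exists.
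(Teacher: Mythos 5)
Your argument is correct, but it takes a genuinely different route to the key containment $\Phi(\mcJ)\subset\cYhgeq{1}$ than the paper does. The paper works with an explicit generating set of $\mcJ$, namely $\hbar\DYhg\cup\{\mcX_{ir}^\pm-\mcX_{is}^\pm\}_{i\in\mbI,\,r,s\in\Z}$ (relying on the assertion that $\Ker(\evg)$ is generated by the differences $X_{ir}^\pm-X_{is}^\pm$), and then disposes of all modes, positive and negative, in one stroke via the identity $(u^r-u^s)\delta(u)=0$: multiplying \eqref{Phi} by $u^r-u^s$ kills the $x_{i0}^\pm$-term, and applying $\mathrm{Res}_u$ gives $\Phi(\mcX_{ir}^\pm)-\Phi(\mcX_{is}^\pm)\in\cYhgp$ directly. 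Your proof instead shows that the degree-zero truncation $\pi_0\circ\Phi$ agrees with the evaluation map defining $\mcJ$, checked on the topological generators $\mfh\cup\{\mcX_{ik}^\pm\}$; this buys you something the paper's proof does not, namely you never need a generating set for $\mcJ$ (so the unproved claim about $\Ker(\evg)$ is bypassed entirely), at the mild cost of the bookkeeping for negative modes, which you correctly settle via Corollary \ref{C:Phi-form}\eqref{CPhi:4}. Two small points: the identification $\Yhg_0\cong U(\mfg)$ you invoke is stronger than needed (its injectivity rests on the Gabber--Kac theorem); all your argument requires is that $\pi_0\circ\Phi$ factors as the canonical map $U(\mfg)\to\Yhg_0$ composed with $\evg\circ(\hbar\mapsto 0)$, which is exactly what your generator check establishes, so you should phrase it as a factorization rather than an equality of maps into $U(\mfg)$. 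Also, the sign trouble in your abandoned first attempt is spurious: the correct expansion is $\Phi(\mcX_{i,-n-1}^\pm)=\sum_{p\geq 0}(-1)^p\binom{n+p}{n}x_{ip}^\pm$, whose degree-zero term is $x_{i0}^\pm$ for every $n$, so even that direct route would have gone through. The reduction to $n=1$ and the passage to $\wh\Phi$ via inverse limits are the same as in the paper.
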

\begin{proof}
 We proceed analogously to the proof of \cite{GTL1}*{Thm.~6.2 (1)}. To prove the first assertion, it suffices to show that $\Phi(\mcJ)\subset \cYhgp:=\cYhgeq{1}$, as this will imply 
 \begin{equation*}
 \Phi(\mcJ^n)\subset (\cYhgp)^n\subset \cYhgeq{n} \quad \forall \; n\in \N. 
 \end{equation*}
 As the kernel of the evaluation homomorphism $\evg$ is generated as an ideal by $\{X_{ir}^\pm-X_{is}^\pm\}_{i\in \mbI, r,s\in \Z}$, the ideal $\mcJ\subset \DYhg$ is generated by 
 \begin{equation}\label{J:gen}
  \hbar \DYhg\cup \{\mcX_{ir}^\pm-\mcX_{is}^\pm\}_{i\in \mbI, r,s,\in \Z}.
 \end{equation}
 Since $(u^r-u^s)\delta(u)=0$ for all $r,s\in \Z$, \eqref{Phi} yields
 \begin{equation*}
  (u^r-u^s)\Phi(\mcX_i^\pm(u))=\sum_{n>0}(-1)^n x_{in}^\pm (u^r-u^s)\partial_u^{(n)}(\delta(u))\in \cYhgp[\![u^{\pm 1}]\!]\quad \forall \; i\in \mbI. 
 \end{equation*}
 Applying the formal residue $\mathrm{Res}_u: \cYhg[\![u^{\pm 1}]\!]\to \cYhg$ to this identity, we obtain 
 \begin{equation*}
 \Phi(\mcX_{ir}^\pm)-\Phi(\mcX_{is}^\pm)=\mathrm{Res}_u( (u^r-u^s)\Phi(\mcX_i^\pm(u)))\in \cYhgp. 
 \end{equation*}
 As $\hbar\in \cYhgp$, this completes the proof of the first part of the lemma. 
 
 We may thus conclude that $\Phi$ induces a family of $\C[\![\hbar]\!]$-algebra homomorphisms
 \begin{equation}\label{Phi_n}
  \Phi_{n}:\DYhg/\mcJ^n \to \cYhg/\cYhgeq{n}\cong \Yhg/\Yhgp^n \quad \forall\; n\in \mathbb{N}.
 \end{equation}
 Taking the inverse limit of this family, we obtain 
 \begin{equation*}
 \widehat\Phi=\varprojlim_n \Phi_{n}: \cDYhg\to \cYhg.\qedhere
 \end{equation*}
\end{proof}
%
%
%%%%%%%%%%%%%%%%%%%%%%%%%%%%%%%%%%%%%%%%%%%%%%%%%%%%%%%%%%%%%%%%%%%%%%%%
% Subsection: The inverse $\widehat\Gamma$
%%%%%%%%%%%%%%%%%%%%%%%%%%%%%%%%%%%%%%%%%%%%%%%%%%%%%%%%%%%%%%%%%%%%%%%%
%\subsection{The inverse $\widehat\Gamma$}\label{ssec:Ga}
We will show that $\widehat \Phi$ is an isomorphism by constructing its inverse explicitly. Set 
\begin{equation}\label{Ga}
 \Gamma=\iYh\circ \tau_{-1}: \Yhg\into \DYhg,
\end{equation}
where we recall that $\tau_{-1}\in \mathrm{Aut}(\Yhg)$ is defined in \eqref{shift-c}, and $\iYh$ is the natural homomorphism 
$\Yhg\to \DYhg$, which by Corollary \ref{C:Phi} is an embedding. 
\begin{lemma}\label{L:whGa} We have 
\begin{equation*}
\Gamma(\Yhgp)\subset \mcJ.
\end{equation*}
Consequently, $\Gamma$ induces a homomorphism of $\C[\![\hbar]\!]$-algebras 
 \begin{equation*}
  \widehat{\Gamma}: \cYhg\to \cDYhg.
 \end{equation*}
\end{lemma}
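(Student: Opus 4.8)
The plan is to reduce the inclusion $\Gamma(\Yhgp)\subset\mcJ$ to a check on generators of the ideal $\Yhgp$, and then to produce $\widehat\Gamma$ by passing to inverse limits exactly as in the proof of Lemma \ref{L:whPhi}. First I would record that, because $\Yhg$ is $\N$-graded and, by Lemma \ref{L:deg1}, generated as a $\C[\hbar]$-algebra by its degree-zero subspace together with $\{h_{i1}\}_{i\in\mbI}$, the ideal $\Yhgp=\bigoplus_{k>0}\Yhg_k$ is generated as a two-sided ideal by the degree-one generators $\hbar$ and $\{h_{i1}\}_{i\in\mbI}$: every homogeneous element of positive degree is a sum of monomials in the generators, each of which must contain at least one generator of positive degree. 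Since $\Gamma$ is an algebra homomorphism and $\mcJ$ is a two-sided ideal, it therefore suffices to prove $\Gamma(\hbar)\in\mcJ$ and $\Gamma(h_{i1})\in\mcJ$ for each $i\in\mbI$.

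The first containment is immediate: $\Gamma$ is $\C[\hbar]$-linear, so $\Gamma(\hbar)=\hbar\in\hbar\DYhg\subseteq\mcJ$, the last inclusion holding because $\mcJ$ is by definition the kernel of a composite map that begins with reduction modulo $\hbar$. For the second, I would compute from \eqref{shift-c} that $\tau_{-1}(h_{i1})=h_{i1}-h_{i0}$, so that $\Gamma(h_{i1})=\iYh(h_{i1}-h_{i0})=\mcH_{i1}-\mcH_{i0}$; and $\mcH_{i1}-\mcH_{i0}\in\mcJ$ because under the composite $\DYhg\xrightarrow{\hbar\mapsto 0}U(\mft\rtimes\ddot\mfh)\xrightarrow{\evg}U(\mfg)$ every coefficient $\mcH_{ir}$ is sent to $h_i$, independently of $r$. (Equivalently, one may invoke the description of $\mcJ$ as generated by $\hbar\DYhg$ together with the differences $\{\mcX_{ir}^\pm-\mcX_{is}^\pm\}$ recalled in the proof of Lemma \ref{L:whPhi}, combined with the identity $\mcH_{ir}=[\mcX_{i0}^+,\mcX_{ir}^-]$ from Part \eqref{D-basic:2} of Proposition \ref{P:D-basic}.) This establishes $\Gamma(\Yhgp)\subset\mcJ$.

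For the final assertion I would argue verbatim as in Lemma \ref{L:whPhi}: since $\Gamma$ is a homomorphism, $\Gamma(\Yhgp^n)\subseteq\mcJ^n$ for every $n\in\N$, so $\Gamma$ descends to a compatible family of $\C[\hbar]$-algebra maps $\Yhg/\Yhgp^n\to\DYhg/\mcJ^n$; taking the inverse limit and using the identification $\cYhg\cong\varprojlim_n\Yhg/\Yhgp^n$ of Part \eqref{whYhg:1} of Lemma \ref{L:whYhg} yields a $\C[\![\hbar]\!]$-algebra homomorphism $\widehat\Gamma:\cYhg\to\cDYhg$, continuity guaranteeing $\C[\![\hbar]\!]$-linearity.

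No serious obstacle is anticipated. The one point worth care is that $\tau_{-1}$ does \emph{not} preserve the grading of $\Yhg$---indeed $\tau_{-1}(h_{i1})=h_{i1}-h_{i0}$ has a nonzero degree-zero component---so $\Gamma$ is not a graded map and the inclusion $\Gamma(\Yhgp)\subset\mcJ$ is not formal. It genuinely relies on the observation that, although neither $\mcH_{i1}$ nor $\mcH_{i0}$ lies in $\mcJ$, they have the same image $h_i$ in $U(\mfg)$, so their difference lies in the evaluation ideal. Everything else is bookkeeping with the definitions of $\Gamma$ and of $\mcJ$.
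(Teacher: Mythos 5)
Your proposal is correct and takes essentially the same route as the paper: the inclusion $\Gamma(\Yhgp)\subset \mcJ$ is reduced to the degree-one generators, with the key computation $\Gamma(h_{i1})=\mcH_{i1}-\mcH_{i0}\in\mcJ$ (together with $\hbar\in\mcJ$), and $\widehat{\Gamma}$ is then obtained from the induced maps $\Yhg/\Yhgp^n\to \DYhg/\mcJ^n$ by passing to inverse limits, exactly as in the paper's proof.
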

\begin{proof}
  On the generating set $\mfh\cup\{x_{i0}^\pm,h_{i1}\}_{i\in \mbI}$, $\Gamma$ is given by 
 \begin{equation*}
  \Gamma(h)=h,\quad  \Gamma(x_{i0}^\pm) =\mcX_{i0}^\pm,\quad
  \Gamma(h_{i1})=\mcH_{i1}-\mcH_{i0}\in \mcJ \quad \forall\; i\in \mbI \; \text{ and }\; h\in \mfh. 
 \end{equation*}
 Since $\mcJ^0=\DYhg$, this implies that $\Gamma(\Yhg_k)\subset \mcJ^k$ for all $k\in \N$, and therefore that 
 $\Gamma(\Yhgp)\subset \mcJ$. Consequently, $\Gamma$ induces a family of $\C[\![\hbar]\!]$-algebra homomorphisms
 \begin{equation}\label{Ga_n}
  \Gamma_{n}:\Yhg/\Yhgp^n\to \DYhg/\mcJ^n \quad \forall \; n\in \N.
 \end{equation}
Taking the inverse limit of this system, we obtain 
\begin{equation*}
 \widehat\Gamma=\varprojlim_n \Gamma_{n}: \cYhg\to \cDYhg. \qedhere
\end{equation*}
\end{proof}
%
%
\begin{comment}
\begin{remark} 
Note that, for any sequence $(y_k)_{k\in \N}$ in $\DYhg$ which tends to zero in the $\mcJ$-adic topology (for each $n\in \N$, $y_k\in \mcJ^n$ for $k\gg 0$), we may define
%
\begin{equation*}
\sum_{k\in \N} y_k:= \bigg(\sum_{k\in \N}\mathrm{q}_n(y_k) \bigg)_{\!n\in \N}\in \cDYhg,
\end{equation*} 
%
where 
$\mathrm{q}_n:
\DYhg\onto \DYhg/\mcJ^n
$ is the canonical quotient homomorphism. 
%
In particular, if $\sum_k x_k \in \cYhg$ then, by Lemma \ref{L:whGa}, $(\Gamma(x_k))_{k\in \N}$ has this property and
%
\begin{equation*}
\widehat{\Gamma}\! \left( \sum_{k\in \N} x_k\right)=\sum_{k\in \N} \Gamma(x_k) \in \cDYhg.
\end{equation*}
%
\end{remark}
\end{comment}
%
%
In order to prove that $\widehat\Gamma={\widehat\Phi}^{-1}$, we will first analyze how the family of automorphisms $\{\mbt_i\}_{\in \mbI}\subset \mathrm{Aut}(\DYhg)$ of Proposition \ref{P:ti} interact with the ideal $\mcJ\subset \DYhg$. 
\begin{lemma}\label{L:t} Fix $i\in \mbI$. Then
\begin{equation}\label{t-vanish}
\begin{gathered}
\mbt_i(\mcJ)=\mcJ,\\
(\id-\mbt_i^{\pm 1})\mcJ^n \subset \mcJ^{n+1} \quad \forall \; n\in \N.
\end{gathered}
\end{equation}
Consequently, $\mbt_i$ induces $\mbt_{i,n}\in \mathrm{Aut}(\DYhg/\mcJ^n)$ satisfying
\begin{equation}\label{t-1}
\mbt_{i,n}^{\mp 1}=\sum_{k=0}^{n-1}(\id-\mbt_{i,n}^{\pm 1})^k\quad \forall \; n\in \N. 
\end{equation}
\end{lemma}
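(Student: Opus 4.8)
The plan is to verify the two assertions in \eqref{t-vanish} directly on generators, then deduce the consequences formally. For the first identity $\mbt_i(\mcJ)=\mcJ$: since $\mbt_i$ is an automorphism of $\DYhg$ (Proposition \ref{P:ti}) and $\mcJ$ is by definition the kernel of the composite $\DYhg\to U(\mft\rtimes\ddot\mfh)\xrightarrow{\evg}U(\mfg)$, it suffices to check that $\mbt_i$ descends modulo $\hbar$ to an automorphism of $U(\mft\rtimes\ddot\mfh)$ which is compatible with $\evg$. Modulo $\hbar$, $\mbt_i$ acts on the generators of $\mft$ by $X_{jr}^\pm\mapsto X_{j,r\pm\delta_{ij}}^\pm$ and $H_{jr}\mapsto H_{jr}$; under the surjection $\pi_\mft:\mft\onto\dot\mfg[t^{\pm1}]$ this corresponds to the automorphism sending $x_j^\pm\otimes t^r\mapsto x_j^\pm\otimes t^{r\pm\delta_{ij}}$, which is visibly compatible with the evaluation $t\mapsto 1$ only after composing with $\evdg$ — but in fact what we need is simply that $\mbt_i$ permutes the generators $\{\mcX_{ir}^\pm-\mcX_{is}^\pm\}$ of $\mcJ$ listed in \eqref{J:gen} and fixes $\hbar\DYhg$: indeed $\mbt_i(\mcX_{ir}^\pm-\mcX_{is}^\pm)=\mcX_{i,r\pm1}^\pm-\mcX_{i,s\pm1}^\pm\in\mcJ$ and $\mbt_i(\mcX_{jr}^\pm-\mcX_{js}^\pm)=\mcX_{jr}^\pm-\mcX_{js}^\pm$ for $j\neq i$, while $\mbt_i(\hbar\DYhg)=\hbar\DYhg$. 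Hence $\mbt_i(\mcJ)\subseteq\mcJ$, and applying the same argument to $\mbt_i^{-1}$ gives equality.

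For the second identity, $(\id-\mbt_i^{\pm1})\mcJ^n\subseteq\mcJ^{n+1}$, I would first treat the case $n$ arbitrary by reducing to $n=0$ via the multiplicativity of $\mbt_i^{\pm1}$: if $(\id-\mbt_i^{\pm1})\DYhg\subseteq\mcJ$, then for $a\in\mcJ^n$ and $b\in\DYhg$ one writes $(\id-\mbt_i^{\pm1})(ab)=a\,(\id-\mbt_i^{\pm1})b+(\id-\mbt_i^{\pm1})(a)\,\mbt_i^{\pm1}(b)$, and both summands lie in $\mcJ^{n+1}$ using $\mbt_i^{\pm1}(\mcJ^n)=\mcJ^n$ (from the first part) together with an induction on $n$ applied to $a$. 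So it remains to show $(\id-\mbt_i^{\pm1})(y)\in\mcJ$ for every generator $y$ of $\DYhg$, i.e. for $y\in\mfh$ and $y\in\{\mcX_{jk}^\pm\}$; for $h\in\mfh$ and for $\mcX_{jk}^\pm$ with $j\neq i$ this is zero, and for $y=\mcX_{ik}^\pm$ we get $(\id-\mbt_i^{\pm1})(\mcX_{ik}^\pm)=\mcX_{ik}^\pm-\mcX_{i,k\pm1}^\pm\in\mcJ$ by \eqref{J:gen}. Since $\id-\mbt_i^{\pm1}$ is a $\mbt_i^{\pm1}$-derivation, the same Leibniz argument as above propagates this from generators to all of $\DYhg$.

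Finally, the consequences. From $\mbt_i(\mcJ^n)=\mcJ^n$ (which follows by raising the first identity to the $n$-th power, using that $\mbt_i$ is an algebra automorphism) we get that $\mbt_i$ descends to a well-defined automorphism $\mbt_{i,n}$ of $\DYhg/\mcJ^n$. The relation \eqref{t-1} is then pure algebra in this finite-length quotient: write $N=\id-\mbt_{i,n}^{\pm1}$, so that by the second identity $N$ maps $\mcJ^k/\mcJ^n$ into $\mcJ^{k+1}/\mcJ^n$, hence $N^n=0$ on $\DYhg/\mcJ^n$ (it shifts the filtration up $n$ times, landing in $\mcJ^n/\mcJ^n=0$). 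Therefore $\mbt_{i,n}^{\pm1}=\id-N$ is invertible with inverse the terminating geometric series $\mbt_{i,n}^{\mp1}=\sum_{k=0}^{n-1}N^k=\sum_{k=0}^{n-1}(\id-\mbt_{i,n}^{\pm1})^k$, which is exactly \eqref{t-1}.

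I expect no serious obstacle here; the only point requiring a little care is the bookkeeping in the Leibniz/derivation arguments showing that the generator-level containments propagate to the whole algebra, and keeping the two sign choices $\pm$ consistent throughout. Everything else is formal once the behaviour of $\mbt_i^{\pm1}$ on the explicit generators of $\mcJ$ from \eqref{J:gen} is recorded.
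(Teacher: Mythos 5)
Your first identity ($\mbt_i$ permutes the generating set \eqref{J:gen}, hence $\mbt_i(\mcJ)=\mcJ$) and your final deduction of \eqref{t-1} from nilpotency of $\id-\mbt_{i,n}^{\pm 1}$ via the terminating geometric series are exactly the paper's argument and are fine. The gap is in the middle step, where you claim that $(\id-\mbt_i^{\pm 1})\mcJ^n\subset\mcJ^{n+1}$ follows from the generator-level containment $(\id-\mbt_i^{\pm 1})\DYhg\subset\mcJ$ by the twisted Leibniz rule and ``induction on $n$ applied to $a$''. That reduction does not close. With your decomposition $a\in\mcJ^n$, $b\in\DYhg$, the second Leibniz term $(\id-\mbt_i^{\pm1})(a)\,\mbt_i^{\pm1}(b)$ requires precisely the statement being proven for $a$ (taking $b=1$ shows nothing has been gained), and if instead you factor $a=a'c$ with $a'\in\mcJ^{n-1}$, $c\in\mcJ$, the term $a'(\id-\mbt_i^{\pm1})(c)$ only lands in $\mcJ^{n-1}\cdot\mcJ=\mcJ^{n}$ unless you already know the case $n=1$, namely $(\id-\mbt_i^{\pm1})\mcJ\subset\mcJ^2$. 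That case is genuinely not formal: for a generator $g=\mcX_{ir}^\pm-\mcX_{is}^\pm$ of $\mcJ$, the element $(\id-\mbt_i)(g)=\mcX_{ir}^\pm-\mcX_{i,r\pm1}^\pm-\mcX_{is}^\pm+\mcX_{i,s\pm1}^\pm$ is a sum of two generators of $\mcJ$, hence visibly in $\mcJ$ but not visibly in $\mcJ^2$.

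The paper supplies exactly this missing ingredient. Using \eqref{Ti-DY} one rewrites the second difference as a commutator of two elements of $\mcJ$:
\begin{equation*}
(\id-\mbt_i)(\mcX_{jr}^+-\mcX_{js}^+)=\frac{\delta_{ij}}{2d_i}\bigl[\iYh(t_{i1}-h_{i0}),\,\mcX_{is}^+-\mcX_{ir}^+\bigr]\in\mcJ^2,
\end{equation*}
and similarly in the minus case with indices shifted by $-1$; here $\iYh(t_{i1}-h_{i0})\in\mcJ$ because $\hbar\in\mcJ$ and $\mcH_{i1}-\mcH_{i0}\in\mcJ$. Once $(\id-\mbt_i)\DYhg\subset\mcJ$ and $(\id-\mbt_i)\mcJ\subset\mcJ^2$ are both known, the Leibniz identity \eqref{t_i-der} (together with $\mbt_i(\mcJ)=\mcJ$, and the reduction of the $\mbt_i^{-1}$ case via $(\id-\mbt_i^{-1})=(\mbt_i-\id)\mbt_i^{-1}$) does give all $n$ at once, essentially as you intended. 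So your proposal is structurally close to the paper's proof but omits its key computation; as written, the inductive step fails at $n=1$.
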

\begin{proof}
As $\mbt_i$ permutes the generating set \eqref{J:gen} of $\mcJ$, we have $\mbt_i(\mcJ)=\mcJ$. 
It follows that, for each $n\in \N$, $\mbt_i$ induces  $\mbt_{i,n}\in \mathrm{Aut}(\DYhg/\mcJ^n)$ uniquely determined by 
\begin{equation*}
\mathrm{q}_n\circ \mbt_i = \mbt_{i,n}\circ \mathrm{q}_n, 
\end{equation*}
where $\mathrm{q}_n: \DYhg\onto \DYhg/\mcJ^n$ is the natural quotient map. 

If the second relation of \eqref{t-vanish} holds, then $(\id-\mbt_i^{\mp 1})^n\DYhg\subset \mcJ^n$ for each $n\in \N$. In particular, $(\id-\mbt_{i,n}^{\mp 1})^n=0$, from which \eqref{t-1} follow readily. 

We are thus left to prove that $(\id-\mbt_i^{\pm 1})\mcJ^n \subset \mcJ^{n+1}$ for all $n\in \N$. Since 
\begin{equation*}
(\id-\mbt_i^{-1})=(\mbt_i-\id)\mbt_i^{-1} \quad \text{ and }\quad \mbt_i^{-1}(\mcJ)=\mcJ,
\end{equation*}
we need only prove this for $\mbt_i$. Moreover, as $\mbt_i(\mcJ)=\mcJ$ and 
\begin{equation}\label{t_i-der}
(\id-\mbt_i)(x_1x_2\cdots x_n)= \sum_{j=1}^n x_1\cdots x_{j-1} (\id-\mbt_i)(x_j) \mbt_i(x_{j+1})\cdots \mbt_i(x_n)
\end{equation}
for any $x_1,\ldots,x_n\in \DYhg$ and $n>0$, it suffices to prove that
\begin{equation*}
(\id-\mbt_i)\DYhg\subset \mcJ \quad \text{ and }\quad (\id-\mbt_i)\mcJ\subset \mcJ^2.
\end{equation*}
As $\id-\mbt_i$ is $\C[\![\hbar]\!]$-linear and annihilates $1$, \eqref{J:gen} and \eqref{t_i-der} imply that these inclusions will follow from 
\begin{equation*}
(\id-\mbt_i)\mcX_{jr}^\pm \in \mcJ \quad \text{ and }\quad (\id-\mbt_i)(\mcX_{jr}^\pm-\mcX_{js}^\pm)\in \mcJ^2  \quad \forall \;  r,s\in \Z,\; j\in \mbI. 
\end{equation*}
By definition, we have 
\[
(\id-\mbt_i)\mcX_{jr}^\pm=\delta_{ij}(\mcX_{ir}^\pm-\mcX_{i,r\pm 1}^\pm)\in \mcJ.
\] 
In addition, since $\hbar\in \mcJ$ and $\mcH_{i1}-\mcH_{i0}\in \mcJ$, we have 
\begin{align*}
(\id-\mbt_i)(\mcX_{jr}^+-\mcX_{js}^+)&=\delta_{ij}(\mcX_{ir}^+- \mcX_{i,r+1}^+ -\mcX_{is}^+ + \mcX_{i,s+ 1}^+)\\
&=\frac{\delta_{ij}}{2d_i}[\iYh(t_{i1}-h_{i0}),\mcX_{is}^+-\mcX_{ir}^+]\in \mcJ^2,
\end{align*}
where we have used \eqref{Ti-DY} in the second equality. Similarly, 
\begin{equation*}
(\id-\mbt_i)(\mcX_{jr}^--\mcX_{js}^-)=\frac{\delta_{ij}}{2d_i}[\iYh(t_{i1}-h_{i0}),\mcX_{i,s- 1}^--\mcX_{i,r-1}^-]\in \mcJ^2. \qedhere
\end{equation*}
\end{proof} 
\begin{remark}\label{R:tr-ext}
The lemma implies that, for each $i\in \mbI$, $\mbt_i$ extends to an automorphism $\widehat{\mbt}_i$ of the $\C[\![\hbar]\!]$-algebra $\cDYhg$. More precisely, 
\begin{equation*}
\widehat{\mbt}_{i}:=\varprojlim_{n}\mbt_{i,n}\in \mathrm{Aut}(\cDYhg). 
\end{equation*}
In addition, $\widehat{\mbt}_{i}$ satisfies the relation 
\begin{equation*}
\widehat{\mbt}_{i}^{\,\mp 1}=\sum_{k\in \N}(\id-\widehat{\mbt}_i^{\pm 1})^k=\varprojlim_{n} \sum_{k=0}^{n-1}(\id-\mbt_{i,n}^{\pm 1})^k.
\end{equation*}
\end{remark}

With the above lemma at our disposal, we are now prepared to prove the main result of this section. Let $\widehat{\Phi}$ and $\widehat{\Gamma}$ be as in Lemmas \ref{L:whPhi} and \ref{L:whGa}. 
\begin{theorem}\label{T:hatiso}
 The $\C[\![\hbar]\!]$-algebra homomorphisms
 \begin{equation*}
  \widehat\Phi:\cDYhg\to \cYhg \quad \text{ and }\quad \widehat \Gamma:\cYhg\to \cDYhg
 \end{equation*}
 are mutual inverses. In particular, $\widehat\Phi$ is an isomorphism of $\C[\![\hbar]\!]$-algebras. 
  
\end{theorem}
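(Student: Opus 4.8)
The plan is to show that $\widehat{\Gamma}\circ\widehat{\Phi}=\id_{\cDYhg}$ and $\widehat{\Phi}\circ\widehat{\Gamma}=\id_{\cYhg}$ by checking these equalities on a set of topological generators. Both completed algebras are separated and complete, so it suffices to verify the identities on generators and extend by continuity. For $\cYhg$ the natural choice of generators is $\mfh\cup\{x_{i0}^\pm,h_{i1}\}_{i\in\mbI}$ (Lemma \ref{L:deg1}); for $\cDYhg$ one may use the images of $\mfh\cup\{\mcX_{ik}^\pm\}_{i\in\mbI,k\in\Z}$ under the quotient maps, or alternatively reduce to the subalgebra $\iYh(\Yhg)$ together with the translation automorphisms $\widehat{\mbt}_i$.

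First I would check $\widehat{\Phi}\circ\widehat{\Gamma}=\id_{\cYhg}$, which should be the easy direction. On the Cartan both maps fix $\mfh$. On $x_{i0}^\pm$ we have $\Gamma(x_{i0}^\pm)=\mcX_{i0}^\pm$ by the formula in the proof of Lemma \ref{L:whGa}, and $\Phi(\mcX_{i0}^\pm)=\mathrm{Res}_u\,\Phi(\mcX_i^\pm(u))$; from \eqref{Phi} this residue picks out the $n=0$ term, giving $x_{i0}^\pm$. Since $\Phi\circ\iYh=\tau_1$ and $\Gamma=\iYh\circ\tau_{-1}$, in fact $\Phi\circ\Gamma=\tau_1\circ\tau_{-1}=\id$ already on all of $\Yhg$ before any completion, so $\widehat{\Phi}\circ\widehat{\Gamma}$ is the identity on the dense subalgebra $\Yhg\subset\cYhg$ and hence everywhere by continuity.

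The harder direction is $\widehat{\Gamma}\circ\widehat{\Phi}=\id_{\cDYhg}$. Writing $\widehat{\Psi}=\widehat{\Gamma}\circ\widehat{\Phi}$, we know $\widehat{\Psi}$ restricts to the identity on $\widehat{\iYh(\Yhg)}$ because $\Gamma\circ\Phi\circ\iYh=\Gamma\circ\tau_1=\iYh\circ\tau_{-1}\circ\tau_1=\iYh$. So it remains to pin down $\widehat{\Psi}$ on the remaining generators $\mcX_{i,-n-1}^\pm$ for $n\in\N$, equivalently on the negative-mode parts of the currents. The key tool is Lemma \ref{L:t}: each $\mbt_i$ extends to $\widehat{\mbt}_i\in\mathrm{Aut}(\cDYhg)$ and $\mcX_{i,s+k}^\pm=\widehat{\mbt}_i^{\pm k}(\mcX_{is}^\pm)$. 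I would show $\widehat{\Psi}$ commutes with each $\widehat{\mbt}_i$ — this follows by transporting the identity $\mathscr{Ev}_1\circ(\text{shift structure})$ through $\Phi_z$ and $\Gamma$, using that $\Phi_z\circ\mbt_i^{\pm 1}$ is computed in Corollary \ref{C:Phi-form}\eqref{CPhi:3}, namely $\Phi_z\circ\mbt_i^{\pm\ell}(\mcX_{ik}^\pm)=(z+\sigma_i^\pm)^\ell\tau_z(x_{ik}^\pm)$, and that $\sigma_i^\pm$ is exactly the shift induced by $\mbt_i$ on $\iYh(\Yhg)$ after conjugating by $\tau_{-1}$. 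Then, since $\widehat{\Psi}$ fixes $\mcX_{i0}^\pm\in\iYh(\Yhg)$ and commutes with $\widehat{\mbt}_i^{\mp 1}=\sum_{k}(\id-\widehat{\mbt}_i^{\pm1})^k$ (Remark \ref{R:tr-ext}), it fixes $\widehat{\mbt}_i^{\mp k}(\mcX_{i0}^\pm)=\mcX_{i,\mp k}^\pm$ for all $k$, hence all $\mcX_{ir}^\pm$ with $r\in\Z$. Combined with $\widehat{\Psi}|_{\mfh}=\id$ and Proposition \ref{P:D-basic}\eqref{D-basic:2}, which says $\mfh\cup\{\mcX_{ik}^\pm\}$ topologically generates, this gives $\widehat{\Psi}=\id_{\cDYhg}$.

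The main obstacle I anticipate is the bookkeeping needed to justify that $\widehat{\Psi}$ genuinely commutes with $\widehat{\mbt}_i$ at the level of completions — one must be careful that the infinite sums $\sum_k(\id-\widehat{\mbt}_i^{\pm1})^k$ converge in $\cDYhg$ (guaranteed by the second inclusion in \eqref{t-vanish}) and that $\widehat{\Gamma},\widehat{\Phi}$ are continuous for the relevant filtrations (guaranteed by Lemmas \ref{L:whPhi} and \ref{L:whGa}), so that applying $\widehat{\Psi}$ commutes with passing to the limit. An alternative, perhaps cleaner, route avoiding the commutation argument: compute $\widehat{\Psi}(\mcX_{i,-n-1}^\pm)$ directly. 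By \eqref{vertex}, $\Phi(\mcX_{i,-n-1}^\pm)=(-1)^{n+1}\partial_z^{(n)}x_i^\pm(-z)|_{z=1}=(-1)^{n+1}\sum_{p\geq 0}(-1)^{p}\binom{p}{n}x_{ip}^\pm$ viewed in $\cYhg$; applying $\widehat{\Gamma}$ term-by-term (legitimate since $\Gamma(\Yhg_p)\subset\mcJ^p$, so the image series converges $\mcJ$-adically) and using $\Gamma(x_{ip}^\pm)=(\tau_{-1}\text{-expansion of }\mcX_{ip}^\pm)$, one resums to recover $\mcX_{i,-n-1}^\pm$ exactly; this is the inverse of the uniqueness computation in \S\ref{ssec:PfofPhi} read backwards. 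Either way the resummation identity is where the real work lies.
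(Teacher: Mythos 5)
Your strategy is sound and, in substance, rests on exactly the same two ingredients as the paper's proof: the identity $\Phi\circ\Gamma=\tau_1\circ\tau_{-1}=\id_{\Yhg}$, which handles the composite $\widehat\Phi\circ\widehat\Gamma$ (and, at each finite level, gives $\Phi_n\circ\Gamma_n=\id$), and Lemma \ref{L:t}, i.e.\ the finite geometric expansion $\mbt_{i,n}^{\mp 1}=\sum_{k=0}^{n-1}(\id-\mbt_{i,n}^{\pm 1})^k$, which is what lets one reach the negative modes $\mcX_{i,-k-1}^\pm$ from $\iYh(\Yhg)$. The difference is organizational: the paper never evaluates $\Gamma\circ\Phi$ on the negative modes at all. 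Working in $\DYhg/\mcJ^n$, it observes that $\mbt_i^{\pm1}$ preserves $\iYh(\Yhb{\pm})$, so the expansion of $\mbt_{i,n}^{\mp1}$ shows $\mathrm{q}_n(\mcX_{i,-k-1}^\pm)\in(\mathrm{q}_n\circ\iYh)\Yhg\subset\mathrm{Im}(\Gamma_n)$; thus $\Gamma_n$ is surjective, and a surjective right-invertible map ($\Phi_n\circ\Gamma_n=\id$) is bijective, giving $\Gamma_n=\Phi_n^{-1}$ for every $n$ and hence the theorem after taking inverse limits. This buys a shorter argument with no computation on the negative currents.

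The one step of your primary route I would not accept as written is the claim that $\widehat\Psi=\widehat\Gamma\circ\widehat\Phi$ commutes with $\widehat{\mbt}_i$: verifying this on generators already requires knowing $\widehat\Psi$ on the negative modes, which is what you are trying to determine, and the intertwining suggested via Corollary \ref{C:Phi-form}\eqref{CPhi:3} only makes sense on the Borel part, since $\sigma_i^\pm$ is not defined on all of $\Yhg$ and conjugating $\widehat{\mbt}_i$ by $\widehat\Phi$ presupposes invertibility. But the commutation is also unnecessary: precisely because $\mbt_i^{\pm1}$ preserves $\iYh(\Yhb{\pm})$, the series $\mcX_{i,\mp k}^\pm=\widehat{\mbt}_i^{\mp k}(\mcX_{i0}^\pm)$ is, by \eqref{t-1}, a $\mcJ$-adically convergent sum of elements of $\iYh(\Yhg)$, on which $\widehat\Psi$ is already known to be the identity; continuity then gives $\widehat\Psi(\mcX_{i,\mp k}^\pm)=\mcX_{i,\mp k}^\pm$, and this is the paper's surjectivity step in disguise. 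Your alternative route (direct resummation of $\widehat\Gamma(\Phi(\mcX_{i,-n-1}^\pm))$) does work and is carried out in the remark following the theorem in the paper, but fix the coefficient: expanding \eqref{vertex} at $z=1$ gives $\Phi(\mcX_{i,-n-1}^\pm)=\sum_{p\geq 0}(-1)^p\binom{p+n}{n}x_{ip}^\pm$, not $(-1)^{n+1}\sum_p(-1)^p\binom{p}{n}x_{ip}^\pm$.
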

\begin{proof}
Let $\{\Phi_n\}_{n\in \mathbb{N}}$ and $\{\Gamma_n\}_{n\in \mathbb{N}}$ be as in \eqref{Phi_n} and \eqref{Ga_n}, respectively. Since $\widehat \Phi=\varprojlim_n \Phi_n$ and $\widehat \Gamma=\varprojlim_n \Gamma_n$, it suffices to prove that 
\begin{equation}\label{n-inv}
 \Gamma_n=\Phi_n^{-1} \quad \forall\; n\in \N.
\end{equation}
Fix $n\in \N$. As $\Phi\circ \Gamma=\Phi \circ (\iYh \circ \tau_{-1})=\tau_1\circ\tau_{-1}=\id_{\Yhg}$, we have 
\begin{equation*}
 \Phi_{n}\circ \Gamma_{n}=\id\in \End(\Yhg/\Yhgp^n).
\end{equation*}
Hence, \eqref{n-inv} will hold provided that $\Gamma_n$ is surjective, which we prove below.

Since $\tau_{-1}$ is an automorphism we have $\iYh(\Yhg)\subset \mathrm{Im}(\Gamma)$, and thus 
\begin{equation}\label{J-Yhg-gen}
(\mathrm{q}_n\circ\iYh)\Yhg \subset \mathrm{Im}(\Gamma_n), 
\end{equation}
where we recall that $\mathrm{q}_n:\DYhg\onto \DYhg/\mcJ^n$ is the natural quotient map. 
As $\mbt_i^{\pm 1}\iYh(\Yhb{\pm})\subset \iYh(\Yhb{\pm})$, the identity 
\eqref{t-1} of Lemma \ref{L:t} implies that
\begin{equation*}
(\mbt_{i,n}^{\mp 1}\circ \mathrm{q}_n\circ \iYh)\Yhb{\pm}\subset (\mathrm{q}_n\circ \iYh)\Yhb{\pm}.
\end{equation*} 
Therefore, 
\begin{equation*}
 \mathrm{q}_n(\mcX_{i,-k-1}^\pm)=(\mbt_{i,n}^{\mp 1})^{k+1}\mathrm{q}_n(\mcX_{i0}^\pm) \subset (\mathrm{q}_n\circ\iYh)\Yhg \quad \forall\;  k\in \N,\;i\in \mbI.
\end{equation*}
As $\DYhg$ is generated by $\{\mcX_{i-k-1}^\pm\}_{i\in \mbI, k\in \N}\cup \iYh(\Yhg)$, combining the above with \eqref{J-Yhg-gen} yields
\begin{equation*} 
\DYhg/\mcJ^n\subset(\mathrm{q}_n\circ\iYh)\Yhg\subset\mathrm{Im}(\Gamma_n)\subset \DYhg/\mcJ^n. \qedhere
\end{equation*}
\end{proof}
\begin{remark}
Fix $n\in \N$ and $i\in \mbI$. The relation 
\begin{equation*}
(\Gamma_n\circ \Phi_n\circ \mathrm{q}_n)\mcX_{i,-k-1}^\pm=\mathrm{q}_n(\mcX_{i,-k-1}^\pm) \quad \forall\; k\in \N
\end{equation*}
may also be proven directly as follows. Set 
\begin{equation*}
\mcX_i^\pm(u)_{n}^{\Phi}:= -\sum_{k=0}^{n-1} \frac{(-\sigma_i^\pm)^k\partial_u^k}{k!}\left(\frac{x_{i0}^\pm}{1-u}\right)\in \Yhg[\![u]\!].
\end{equation*}
By \eqref{CPhi:4} of Corollary \ref{C:Phi-form}, 
$
\Phi(\mcX_i^\pm(u)_-)\equiv \mcX_i^\pm(u)_{n}^{\Phi}$ modulo $\cYhgeq{n}[\![u]\!]$. 
By \eqref{CPhi:3} of Corollary \ref{C:Phi-form}, 
\begin{equation*}
\tau_c \circ (\sigma_i^\pm)^k (x_{i0}^\pm)=(c+\sigma_i^\pm)^kx_{i0}^\pm \quad \forall \; k\in \N,\; c\in \C^\times. 
\end{equation*} 
It follows that 
\begin{equation*}
\Gamma(\mcX_i^\pm(u)_{n}^{\Phi})=\sum_{k=0}^{n-1} \frac{(\id-\mbt_i^{\pm 1})^k\partial_u^k}{k!}\!\left(\frac{\mcX_{i0}^\pm}{u-1}\right)
=-\sum_{p\geq 0} u^p \sum_{k=0}^{n-1} \binom{p+k}{p}  (\id-\mbt_i^{\pm 1})^k \mcX_{i0}^\pm.
\end{equation*}
Applying $\mathrm{q}_n$ and using \eqref{t-1} together with $(\id-\mbt_{i,n}^{\pm 1})^n=0$, we obtain
\begin{equation*}
(\Gamma_n\circ \Phi_n\circ \mathrm{q}_n)\mcX_i^\pm(u)_-=
\mathrm{q}_n\circ \Gamma (\mcX_i^\pm(u)_{n}^{\Phi})=-\sum_{p\geq 0} \mbt_{i,n}^{\mp (p+1)}(\mcX_{i0}^\pm)u^p=\mathrm{q}_n(\mcX_i^\pm(u)_-). 
\end{equation*}
\end{remark}
%
%
%%%%%%%%%%%%%%%%%%%%%%%%%%%%%%%%%%%%%%%%%%%%%%%%%%%%%%%%%%%%%%%%%%%%%%%%
% Subsection: The evaluation completion of the Yangian
%%%%%%%%%%%%%%%%%%%%%%%%%%%%%%%%%%%%%%%%%%%%%%%%%%%%%%%%%%%%%%%%%%%%%%%%
\subsection{Change of evaluation point}\label{ssec:ch-ev}
We now apply Proposition \ref{P:gr-tw} to illustrate that the evaluation point $c=1=t$ can be replaced by any $c\in \C^\times$ in the statement of Theorem \ref{T:hatiso}. 
Let 
$\mcJ_c\subset \DYhg$ be the evaluation ideal at $t=c$. That is, $\mcJ_c$ is the kernel of the composite 
\begin{equation*}
 \DYhg\xrightarrow{\hbar\mapsto 0} U(\mft\rtimes \ddot\mfh)\xrightarrow{\evg^c} U(\mfg),
\end{equation*}
where $\evg^c$ is defined as in \eqref{evg}, but with $\evdg$ replaced by the evaluation morphism $\evdg^c:\dot\mfg[t^{\pm 1}]\to \dot\mfg$ given by $t\mapsto c$. Note that the above composition coincides with 
\begin{equation*}
 \DYhg\xrightarrow{\chi_c}\DYhg\xrightarrow{\hbar\mapsto 0} U(\mft\rtimes \ddot\mfh)\xrightarrow{\evg} U(\mfg),
\end{equation*}
where $\chi_c$ is as in Proposition \ref{P:gr-tw}. In particular, we have the equality $\chi_c(\mcJ_c)=\mcJ$ and may thus that $\chi_c$ induces an isomorphism of $\C$-algebras 
\begin{equation*}
\wh{\chi}_c:\cDYhg_c\iso \cDYhg,
\end{equation*}
where $\cDYhg_c$ is the completion of $\DYhg$ with respect to its descending filtration given by powers of $\mcJ_c$. The following corollary then provides the desired generalization of Theorem \ref{T:hatiso}. 
\begin{corollary}\label{C:hatiso}
Fix $c\in \C^\times$. Then:
\begin{enumerate}[font=\upshape]
\item\label{Chatiso:1} $\Phi_c$ satisfies 
$
\Phi_c(\mcJ_c)\subset \cYhg_+
$
and thus induces a $\C[\![\hbar]\!]$-algebra homomorphism
\begin{equation*}
  \widehat{\Phi}_c:\cDYhg_c\to \cYhg.
\end{equation*}
\item\label{Chatiso:2} $\widehat{\Phi}_c$ is an isomorphism satisfying the relations
\begin{equation*}
\widehat{\Phi}_c=\chi_{1/c}^{\iYh}\circ  \wh{\Phi} \circ \wh{\chi}_c \quad 
\text{ and }
\quad 
\widehat{\Phi}_c^{-1}=\wh{\chi}_{1/c}\circ \wh{\Gamma} \circ \chi_{c}^{\iYh}. 
\end{equation*}
\end{enumerate}
\end{corollary}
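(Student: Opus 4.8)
The plan is to derive both parts from Theorem~\ref{T:hatiso} together with Proposition~\ref{P:gr-tw} specialized to $a=1$, which gives the factorization $\Phi_c=\chi_{1/c}^{\iYh}\circ\Phi\circ\chi_c$, where $\Phi=\widehat{\Phi}\circ$ (inclusion) as in Section~\ref{ssec:wtPhi}. For Part~\eqref{Chatiso:1} I would first recall, from the discussion preceding the corollary, that $\chi_c(\mcJ_c)=\mcJ$; since $\chi_c$ is an algebra automorphism this upgrades to $\chi_c(\mcJ_c^{\,n})=\mcJ^n$ for every $n\in\N$. On the target side, $\chi_{1/c}^{\iYh}$ is by construction the continuous extension of the gradation automorphism $\bigoplus_k(1/c)^k\id_k$ of $\iYh(\Yhg)\cong\Yhg$, so it rescales $\Yhg_k$ and therefore preserves each ideal $\cYhgeq{n}=\prod_{k\geq n}\Yhg_k$. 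Combining these two facts with Lemma~\ref{L:whPhi} (which gives $\Phi(\mcJ^n)\subset\cYhgeq{n}$) and the factorization above yields $\Phi_c(\mcJ_c^{\,n})\subset\cYhgeq{n}$ for all $n$, and in particular $\Phi_c(\mcJ_c)\subset\cYhgp$. Consequently $\Phi_c$ descends to a compatible family $\Phi_{c,n}\colon\DYhg/\mcJ_c^{\,n}\to\Yhg/\Yhgp^n$ whose inverse limit is the desired map $\widehat{\Phi}_c\colon\cDYhg_c\to\cYhg$.

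For Part~\eqref{Chatiso:2} I would check the identity $\widehat{\Phi}_c=\chi_{1/c}^{\iYh}\circ\widehat{\Phi}\circ\widehat{\chi}_c$ levelwise and then pass to the limit. Indeed $\chi_c$ induces $\chi_{c,n}\colon\DYhg/\mcJ_c^{\,n}\iso\DYhg/\mcJ^n$, $\chi_{1/c}^{\iYh}$ induces $\chi_{1/c,n}^{\iYh}\in\mathrm{Aut}(\Yhg/\Yhgp^n)$, and with $\Phi_n$ as in \eqref{Phi_n} the factorization of Proposition~\ref{P:gr-tw}, reduced modulo the relevant ideals, reads $\Phi_{c,n}=\chi_{1/c,n}^{\iYh}\circ\Phi_n\circ\chi_{c,n}$. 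Taking $\varprojlim_n$ and using $\widehat{\chi}_c=\varprojlim_n\chi_{c,n}$, $\widehat{\Phi}=\varprojlim_n\Phi_n$ gives $\widehat{\Phi}_c=\chi_{1/c}^{\iYh}\circ\widehat{\Phi}\circ\widehat{\chi}_c$. Since $\widehat{\chi}_c$ is an isomorphism with inverse $\widehat{\chi}_{1/c}$ (because $\chi_c\circ\chi_{1/c}=\chi_1=\id$), $\chi_{1/c}^{\iYh}$ is an isomorphism with inverse $\chi_c^{\iYh}$, and $\widehat{\Phi}$ is an isomorphism by Theorem~\ref{T:hatiso}, it follows that $\widehat{\Phi}_c$ is an isomorphism and
\[
\widehat{\Phi}_c^{-1}=\widehat{\chi}_c^{-1}\circ\widehat{\Phi}^{-1}\circ(\chi_{1/c}^{\iYh})^{-1}=\widehat{\chi}_{1/c}\circ\widehat{\Gamma}\circ\chi_c^{\iYh},
\]
where the last equality uses $\widehat{\Phi}^{-1}=\widehat{\Gamma}$ from Theorem~\ref{T:hatiso}, with $\widehat{\Gamma}$ as in Lemma~\ref{L:whGa}.

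The argument is essentially bookkeeping on inverse limits, so I do not expect a genuine obstacle; the one point requiring care is the compatibility of the levelwise maps, namely that $\chi_c$ carries the filtration $\{\mcJ_c^{\,n}\}$ onto $\{\mcJ^n\}$ and that $\chi_{1/c}^{\iYh}$ respects $\{\cYhgeq{n}\}$, so that Proposition~\ref{P:gr-tw} descends to each quotient. Once this is verified, functoriality of $\varprojlim$ does the rest.
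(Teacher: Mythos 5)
Your proposal is correct and follows essentially the same route as the paper: both parts rest on the factorization $\Phi_c=\chi_{1/c}^{\iYh}\circ\Phi\circ\chi_c$ from Proposition~\ref{P:gr-tw}, the equality $\chi_c(\mcJ_c)=\mcJ$, Lemma~\ref{L:whPhi}, and Theorem~\ref{T:hatiso}. The only difference is cosmetic: you verify the identity $\widehat{\Phi}_c=\chi_{1/c}^{\iYh}\circ\widehat{\Phi}\circ\widehat{\chi}_c$ levelwise on the quotients and pass to the inverse limit, whereas the paper deduces it by continuity from the fact that both maps agree on the image of $\DYhg$.
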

\begin{proof}
Since $\chi_c(\mcJ_c)=\mcJ$, Proposition \ref{P:gr-tw} and Lemma \ref{L:whPhi} yield 
\begin{equation*}
\Phi_c(\mcJ_c)=(\chi_{1/c}^{\iYh}\circ \Phi)(\mcJ)\subset \cYhg_+.
\end{equation*}
As in the proof of Lemma \ref{L:whPhi}, this implies that $\Phi_c$ gives rise to $\wh{\Phi}_c$ as in the statement of Part \eqref{Chatiso:1}. As for Part \eqref{Chatiso:2}, it follows by continuity that $\wh{\Phi}_c$ and $\chi_{1/c}^{\iYh}\circ  \wh{\Phi} \circ \wh{\chi}_c$ are both determined by their values on the image of $\DYhg$, and therefore coincide by Proposition \ref{P:gr-tw}. The assertion that $\wh{\Phi}_c$ is invertible with inverse $\wh{\chi}_{1/c}\circ \wh{\Gamma} \circ \chi_{c}^{\iYh}$ now follows immediately from Theorem \ref{T:hatiso}. 
\end{proof}
%
%
%%%%%%%%%%%%%%%%%%%%%%%%%%%%%%%%%%%%%%%%%%%%%%%%%%%%%%%%%%%%%%%%%%%%%%%%
% Subsection: The evaluation completion of the Yangian
%%%%%%%%%%%%%%%%%%%%%%%%%%%%%%%%%%%%%%%%%%%%%%%%%%%%%%%%%%%%%%%%%%%%%%%%
\subsection{The evaluation completion of \texorpdfstring{$\Yhg$}{Yg}}\label{ssec:ev-Yhg}

A simplification of the classical result underlying Theorem \ref{T:hatiso} is provided by the observation that the translation $w\mapsto t+1$ induces an isomorphism 
\begin{equation*}
\wh{\C[w^{\pm 1}]}\iso \C[\![t]\!], \quad \text{ where }\quad  \wh{\C[w^{\pm 1}]}=\varprojlim_n(\C[w^{\pm 1}]/\mathbb{J}^n)
\end{equation*}
and $\mathbb{J}=(w-1)$ is the evaluation ideal of $\C[w^{\pm 1}]$ at $w=1$. On the other hand, the natural inclusion $\C[t]\into \C[w^{\pm 1}]$ sending $t$ to $w$ extends to an isomorphism
\begin{equation*}
\wh{\C[t]}\iso \wh{\C[w^{\pm 1}]},
\end{equation*}
where $\C[t]$ is completed with respect to its evaluation ideal $\mathbb{J}_{\scriptscriptstyle{+}}=(t-1)$. In this subsection, we prove the Yangian analogue of this fact.
To begin, we define the evaluation ideal $\mcJY\subset \Yhg$ at $t=1$ to be the kernel of the composite 
\begin{equation*}
 \Yhg\xrightarrow{\hbar\mapsto 0} U(\mfs\rtimes \ddot\mfh)\to U(\mfg),
\end{equation*}
where the second arrow is the epimorphism induced by the composition
\begin{equation*}
\mfs\rtimes\ddot\mfh\xrightarrow{\pi_s\oplus \id} \dot\mfg[t]\rtimes \ddot\mfh \xrightarrow{\evdg\oplus \id} \dot\mfg\rtimes \ddot\mfh\cong \mfg,
\end{equation*} 
with $\pi_s$ as in \eqref{pi_a} and $\evdg$ as defined at the beginning of Section \ref{sec:Iso}. Since the above coincides with the composite 
\begin{equation*}
\Yhg\xrightarrow{\iYh} \DYhg\xrightarrow{\hbar\mapsto 0} U(\mft\rtimes \ddot\mfh)\xrightarrow{\evg} U(\mfg),
\end{equation*}
one has the equality $\iYh(\mcJY)=\mcJ\cap \iYh(\Yhg) \subset \mcJ$. Consequently, $\iYh$ induces a  homomorphism of $\C[\![\hbar]\!]$-algebras 
\begin{equation*}
\wh{\iYh}:\eYhg\to \cDYhg,
\end{equation*}
where $\eYhg$ is the completion of $\Yhg$ with respect to the descending filtration
\begin{equation*}
\Yhg=\mcJY^0\supset \mcJY\supset \cdots \supset \mcJY^n\supset \cdots 
\end{equation*}
Our goal is to prove that $\wh{\iYh}$ is an isomorphism using our work in the previous section. We begin with the following analogue of Lemma \ref{L:whGa}, which asserts that the gradation and evaluation completions of $\Yhg$ are isomorphic. 
\begin{lemma} Setting $\mathsf{\Gamma}=\tau_{-1}$, we have the equality
\begin{equation*}
\mathsf{\Gamma}(\Yhgp)= \mcJY.
\end{equation*}
Consequently, $\mathsf{\Gamma}$ induces an isomorphism of $\C[\![\hbar]\!]$-algebras 
 \begin{equation*}
  \widehat{\mathsf{\Gamma}}: \cYhg\iso \eYhg.
 \end{equation*}
\end{lemma}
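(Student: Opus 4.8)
The plan is to reduce everything to the single ideal-theoretic equality $\mathsf{\Gamma}(\Yhgp)=\mcJY$; the claimed isomorphism $\widehat{\mathsf{\Gamma}}$ will then be formal. Indeed, $\mathsf{\Gamma}=\tau_{-1}$ is a $\C[\![\hbar]\!]$-algebra automorphism of $\Yhg$, so once $\tau_{-1}(\Yhgp)=\mcJY$ is known we automatically get $\tau_{-1}(\Yhgp^n)=\mcJY^n$ for every $n\in\N$; hence $\tau_{-1}$ descends to isomorphisms $\Yhg/\Yhgp^n\iso\Yhg/\mcJY^n$ compatible with the canonical projections, and passing to inverse limits, while invoking the identification $\cYhg\cong\varprojlim_n\Yhg/\Yhgp^n$ of Lemma \ref{L:whYhg} and the definition of $\eYhg$, yields $\widehat{\mathsf{\Gamma}}\colon\cYhg\iso\eYhg$. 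Thus the task is to prove the two inclusions $\tau_{-1}(\Yhgp)\subseteq\mcJY$ and $\mcJY\subseteq\tau_{-1}(\Yhgp)$. The inputs I will use are: $\iYh$ is injective (Corollary \ref{C:Phi}); $\mcJY=\iYh^{-1}(\mcJ)$, because by construction the composite defining $\mcJY$ factors through $\Yhg\xrightarrow{\iYh}\DYhg\to U(\mfg)$; the map $\Gamma=\iYh\circ\tau_{-1}$ of \eqref{Ga} satisfies $\Gamma(\Yhgp)\subseteq\mcJ$ (Lemma \ref{L:whGa}); and $\Phi=\Phi_1$ satisfies $\Phi\circ\iYh=\tau_1=\tau_{-1}^{-1}$ (Theorem \ref{T:Phi}) together with $\Phi(\mcJ)\subseteq\cYhgeq{1}$ (Lemma \ref{L:whPhi}).

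For the inclusion $\tau_{-1}(\Yhgp)\subseteq\mcJY$ I would argue as follows: given $x\in\Yhgp$, one has $\iYh(\tau_{-1}(x))=\Gamma(x)\in\Gamma(\Yhgp)\subseteq\mcJ$ by Lemma \ref{L:whGa}, whence $\tau_{-1}(x)\in\iYh^{-1}(\mcJ)=\mcJY$. (Equivalently, this can be checked directly on the generating set $\mfh\cup\{x_{i0}^\pm,h_{i1}\}_{i\in\mbI}$ of Lemma \ref{L:deg1}: by \eqref{shift-c}, $\tau_{-1}$ fixes the degree-zero generators and sends $\hbar\mapsto\hbar$ and $h_{i1}\mapsto h_{i1}-h_{i0}$, each of which manifestly lies in $\mcJY$, while $\Yhgp$ is generated as a two-sided ideal by the degree-one generators.)

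For the reverse inclusion $\mcJY\subseteq\tau_{-1}(\Yhgp)$, I would take $y\in\mcJY$, so that $\iYh(y)\in\mcJ$, and apply $\Phi$ to get $\tau_1(y)=\Phi(\iYh(y))\in\Phi(\mcJ)\subseteq\cYhgeq{1}=\prod_{k\geq1}\Yhg_k$. Since $\tau_1(y)$ already lies in $\Yhg=\bigoplus_k\Yhg_k$, and an element of $\Yhg$ whose image in $\cYhg$ lies in $\cYhgeq{1}$ has vanishing degree-zero component, we conclude $\tau_1(y)\in\Yhgp$, i.e. $y=\tau_{-1}(\tau_1(y))\in\tau_{-1}(\Yhgp)$. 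Together with the first inclusion this gives $\tau_{-1}(\Yhgp)=\mcJY$, completing the argument.

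I do not expect a genuine obstacle here, as the proof is a short assembly of already-established facts; the only point requiring care is that one should resist trying to verify $\mcJY\subseteq\tau_{-1}(\Yhgp)$ by directly exhibiting ideal generators of $\mcJY$ inside $\tau_{-1}(\Yhgp)$. When $\mfg$ is infinite-dimensional, $\mfs$ need not coincide with $\dot\mfg[t]$, and the kernel of $\mfs\rtimes\ddot\mfh\to\mfg$ is not visibly generated by the elements $H_{i1}-H_{i0}$; routing through $\DYhg$ and the completed morphism $\Phi$ as above bypasses this difficulty entirely.
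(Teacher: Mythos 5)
Your proposal is correct and follows essentially the same route as the paper: the inclusion $\mathsf{\Gamma}(\Yhgp)\subseteq\mcJY$ is extracted from the generator computation of Lemma \ref{L:whGa} via $\iYh(\mcJY)=\mcJ\cap\iYh(\Yhg)$, and the reverse inclusion is exactly the paper's chain $\tau_1(\mcJY)=\Phi(\iYh(\mcJY))\subseteq\Yhg\cap\cYhgeq{1}=\Yhgp$ using Lemma \ref{L:whPhi}. The passage to completions is the same formal inverse-limit argument the paper leaves implicit.
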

\begin{proof}
The proof that $\Gamma=\imath\circ \mathsf{\Gamma}$ satisfies $\Gamma(\Yhgp)\subset \mcJ$, given in Lemma \ref{L:whGa}, shows that $ \mathsf{\Gamma}(\Yhgp)\subset \mcJY$. Conversely, by Lemma \ref{L:whPhi}, we have $\Phi(\mcJ)\subset \cYhgp$, and thus 
\begin{equation*}
\mathsf{\Gamma}^{-1}(\mcJY)=\tau_1(\mcJY)=\Phi(\iYh(\mcJY))\subset \Yhg\cap \cYhgp=\Yhgp. \qedhere
\end{equation*}
\end{proof}
Combining this lemma with Theorem \ref{T:hatiso} yields an isomorphism of $\C[\![\hbar]\!]$-algebras 
\begin{equation*}
\widehat{\mathsf{\Gamma}}\circ\widehat{\Phi}: \cDYhg\iso \eYhg.
\end{equation*}
It follows from the identity $\mathsf{\Gamma}\circ \Phi\circ \iYh =\tau_{-1}\circ \tau_1=\id_{\Yhg}$ that $\wh{\iYh}$ is a right inverse, and therefore the unique inverse, of this isomorphism. We have thus proven the following corollary, which realizes our current goal. 
\begin{corollary}\label{C:YJ->DJ}
The $\C[\![\hbar]\!]$-algebra homomorphisms
\begin{equation*}
\wh{\iYh}:\eYhg\to \cDYhg \quad \text{ and }\quad \widehat{\mathsf{\Gamma}}\circ\widehat{\Phi}: \cDYhg\iso \eYhg
\end{equation*}
are mutual inverses. In particular, $\wh{\iYh}$ is an isomorphism of $\C[\![\hbar]\!]$-algebras. 
\end{corollary}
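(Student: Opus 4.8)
The plan is to deduce the corollary from the preceding lemma together with Theorem~\ref{T:hatiso}, treating it as an essentially formal consequence. The lemma furnishes an isomorphism $\widehat{\mathsf{\Gamma}}\colon\cYhg\iso\eYhg$, while Theorem~\ref{T:hatiso} furnishes an isomorphism $\widehat{\Phi}\colon\cDYhg\iso\cYhg$; composing them produces an isomorphism of $\C[\![\hbar]\!]$-algebras $\widehat{\mathsf{\Gamma}}\circ\widehat{\Phi}\colon\cDYhg\iso\eYhg$. It therefore suffices to show that $\wh{\iYh}$ is a one-sided inverse of this composite, as a one-sided inverse of an isomorphism is automatically its two-sided inverse; the equality $\wh{\iYh}\circ(\widehat{\mathsf{\Gamma}}\circ\widehat{\Phi})=\id$ then follows as well, giving the asserted mutual invertibility.

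To establish the one-sided inverse property, I would exploit that $\wh{\iYh}$, $\widehat{\Phi}$ and $\widehat{\mathsf{\Gamma}}$ are each realized as an inverse limit of maps between finite quotients. Concretely, the inclusion $\iYh(\mcJY^n)\subset\mcJ^n$ (a consequence of $\iYh(\mcJY)\subset\mcJ$), the fact that $\Phi$ induces the maps $\Phi_n$ of \eqref{Phi_n} via $\Phi(\mcJ^n)\subset\cYhgeq{n}$ (Lemma~\ref{L:whPhi}), and the inclusion $\mathsf{\Gamma}(\Yhgp^n)\subset\mcJY^n$ (the lemma above) show that for each $n\in\N$ the three maps descend to $\C[\![\hbar]\!]$-algebra homomorphisms
\begin{equation*}
\Yhg/\mcJY^n\xrightarrow{\iYh_n}\DYhg/\mcJ^n\xrightarrow{\Phi_n}\cYhg/\cYhgeq{n}\xrightarrow{\mathsf{\Gamma}_n}\Yhg/\mcJY^n,
\end{equation*}
under the identification $\cYhg/\cYhgeq{n}\cong\Yhg/\Yhgp^n$ of \eqref{Phi_n}. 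Their composite is the endomorphism of $\Yhg/\mcJY^n$ induced by $\mathsf{\Gamma}\circ\Phi\circ\iYh$; by Part~\eqref{Phi:3} of Theorem~\ref{T:Phi} with $c=1$ one has $\Phi\circ\iYh=\tau_1$, so $\mathsf{\Gamma}\circ\Phi\circ\iYh=\tau_{-1}\circ\tau_1=\id_{\Yhg}$. Hence $\mathsf{\Gamma}_n\circ\Phi_n\circ\iYh_n=\id$ for every $n$, and passing to the inverse limit yields $(\widehat{\mathsf{\Gamma}}\circ\widehat{\Phi})\circ\wh{\iYh}=\id_{\eYhg}$.

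Since $\widehat{\mathsf{\Gamma}}\circ\widehat{\Phi}$ is a bijection admitting $\wh{\iYh}$ as a right inverse, it is the two-sided inverse of $\wh{\iYh}$, so the two maps are mutually inverse isomorphisms of $\C[\![\hbar]\!]$-algebras, which is precisely the corollary. The only point requiring genuine care --- and thus the main, if modest, obstacle --- is the bookkeeping in the middle step: one must check that all three maps respect the relevant filtrations termwise so that the inverse-limit argument is legitimate rather than a loose appeal to density. Since the requisite inclusions have all been established above, this reduces to assembling them correctly.
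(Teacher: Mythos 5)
Your proposal is correct and follows essentially the same route as the paper: the paper likewise composes the lemma's isomorphism $\widehat{\mathsf{\Gamma}}$ with $\widehat{\Phi}$ from Theorem \ref{T:hatiso} and then uses the identity $\mathsf{\Gamma}\circ\Phi\circ\iYh=\tau_{-1}\circ\tau_1=\id_{\Yhg}$ to see that $\wh{\iYh}$ is a right inverse, hence the unique two-sided inverse, of this isomorphism. The only difference is that you spell out the termwise compatibility with the filtrations and the passage to the inverse limit, which the paper leaves implicit.
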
 
This corollary affords the evaluation completion of $\DYhg$ a rather explicit description. Namely, it coincides with the gradation completion of $\Yhg$ with respect to the shifted $\N$-grading 
\begin{equation*}
\Yhg=\bigoplus_{n\in \N}\mathsf{\Gamma}(\Yhg_{n}),
\end{equation*}
where $\mathsf{\Gamma}=\tau_{-1}$, as above. More precisely, one has the equality 
\begin{equation*}
\cDYhg=\prod_{n\in \N}(\iYh \circ \mathsf{\Gamma})(\Yhg_{n})=\prod_{n\in \N}\Gamma(\Yhg_{n}).
\end{equation*}
The natural homomorphism $\DYhg\to \cDYhg$ can be expressed in terms of these coordinates using Corollary \ref{C:YJ->DJ}. Alternatively, using Remark \ref{R:tr-ext} we find that, for each $i\in \mbI$, one has the identity
\begin{equation}\label{eq-deltas}
u^{-1}\delta\!\left(\widehat{\mbt}_i^{\pm 1}/u\right)=\left.\delta(u+z)\right|_{z=1-\widehat{\mbt}_i^{\pm 1}} 
\end{equation}
in $\End(\cDYhg)[\![u^{\pm 1}]\!]$.
The right-hand side may be rewritten as 
\begin{equation*}
\left.\delta(u+z)\right|_{z=1-\widehat{\mbt}_i^{\pm 1}}= \exp((1-\widehat{\mbt}_i^{\pm 1})\partial_u)\delta(u) =\sum_{n\in \N}  (1-\widehat{\mbt}_i^{\pm 1})^n \partial_u^{(n)}(\delta(u)).
\end{equation*}
Applying both sides of \eqref{eq-deltas} to $\mcX_{i0}^\pm=\iYh(x_{i0}^\pm)$ therefore yields 
\begin{align*}
\mcX_i^\pm(u)=u^{-1}\delta\!\left(\widehat{\mbt}_i^{\pm 1}/u\right)\!\mcX_{i0}^\pm&=\sum_{n\in \N}  (1-\widehat{\mbt}_i^{\pm 1})^n(\mcX_{i0}^\pm) \partial_u^{(n)}(\delta(u))\\
&=\sum_{n\in \N}  (-1)^n\Gamma(x_{in}^\pm) \partial_u^{(n)}(\delta(u))
\end{align*}
in $\cDYhg$, where in the last equality we have applied the second identity of Part \eqref{CPhi:3} of Corollary \ref{C:Phi-form} with $z=-1$, $\ell=n$ and $k=0$. Note that, by \eqref{Phi}, the above computation recovers the identity $\mcX_i^\pm(u)=(\wh{\iYh}\circ \wh{\mathsf{\Gamma}}\circ \wh{\Phi})(\mcX_i^\pm(u))$ of Corollary \ref{C:YJ->DJ}.

%%%%%%%%%%%%%%%%%%%%%%%%%%%%%%%%%%%%%%%%%%%%%%%%%%%%%%%%%%%%%%%%%%%%%%%%
% Subsection: Degeneration
%%%%%%%%%%%%%%%%%%%%%%%%%%%%%%%%%%%%%%%%%%%%%%%%%%%%%%%%%%%%%%%%%%%%%%%%
\subsection{Degeneration}\label{ssec:degen}

It was observed by Drinfeld \cite{DrQG}*{\S6} and later proven by Guay and Ma \cite{GM} that the Yangian of a finite-dimensional simple Lie algebra $\mfg$ may be viewed as a degeneration of the corresponding quantum loop algebra $U_\hbar(L\mfg)$. In the form presented in \cite{GM} and \cite{GTL1} this result can be stated as follows. The quantum loop algebra $U_\hbar(L\mfg)$ admits a descending filtration given by powers of its evaluation ideal $\mathbf{J}$ at $t=1$, and there is an isomorphism of graded $\C[\hbar]$-algebras 
\begin{equation*}
\mathrm{gr}_{\mathbf{J}}(U_\hbar(L\mfg))\iso \Yhg.
\end{equation*}
It was shown in \cite{GTL1}*{Prop.~6.5} that this isomorphism can be realized 
as the associated graded map $\mathrm{gr}(\Phi_{\scriptscriptstyle{\mathsf{GTL}}})$ of the algebra homomorphism 
\begin{equation*}
 \Phi_{\scriptscriptstyle{\mathsf{GTL}}}:U_\hbar(L\mfg)\to \cYhg
\end{equation*}
of geometric type constructed in \cite{GTL1}. In this section, we present a $\DYhg$-analogue of this result  in which $U_\hbar(L\mfg)$ and $\Phi_{\scriptscriptstyle{\mathsf{GTL}}}$ are replaced by $\DYhg$ and $\Phi$, and $\mfg$ is taken to be an arbitrary symmetrizable Kac--Moody algebra.

In what follows, we view  $\DYhg$, $\Yhg$ and $\cYhg$ as $\N$-filtered algebras, with descending filtrations 
\begin{equation*}
\{\mcJ^n\}_{n\in \N}, \quad \{\Yhgp^n\}_{n\in \N}\quad \text{ and }\quad \{\cYhgeq{n}\}_{n\in \N},
\end{equation*}
respectively. Note that
\begin{equation*}
 \mathrm{gr}(\cYhg)\cong \mathrm{gr}\Yhg=\bigoplus_{n\in \N}\left.\Yhgp^n\right/{\Yhgp^{n+1}}\cong \bigoplus_{n\in \N} \Yhg_n=\Yhg, 
\end{equation*}
as graded $\C[\hbar]$-algebras. Let $\Phi$ and $\Gamma$ be as in \eqref{Phi} and \eqref{Ga}, respectively. 
\begin{corollary}\label{C:degen}
 $\Gamma:\Yhg\to \DYhg$ is filtered and the induced homomorphism
 \begin{equation*}
  \mathrm{gr}(\Gamma): \Yhg\to \mathrm{gr}(\DYhg).
 \end{equation*}
 is an isomorphism of graded $\C[\hbar]$-algebras with inverse given by 
 \begin{equation*}
  \mathrm{gr}(\Phi):\mathrm{gr}(\DYhg)\to \mathrm{gr}(\cYhg)\cong \Yhg.
 \end{equation*}

\end{corollary}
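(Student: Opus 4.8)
The plan is to leverage Theorem \ref{T:hatiso} together with the filtration bounds already established in Lemmas \ref{L:whPhi} and \ref{L:whGa}. First I would observe that $\Gamma(\Yhgp^n)\subset \mcJ^n$ for all $n\in \N$: this was shown in the proof of Lemma \ref{L:whGa}, where it was checked that $\Gamma(\Yhg_k)\subset \mcJ^k$, so $\Gamma$ is indeed a filtered homomorphism. Dually, $\Phi(\mcJ^n)\subset \cYhgeq{n}$ is exactly the content of Lemma \ref{L:whPhi}, so $\Phi$ is filtered for the filtrations $\{\mcJ^n\}$ on $\DYhg$ and $\{\cYhgeq{n}\}$ on $\cYhg$. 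Hence both $\Gamma$ and $\Phi$ pass to the associated graded level, yielding graded $\C[\hbar]$-algebra homomorphisms $\mathrm{gr}(\Gamma):\Yhg\to \mathrm{gr}(\DYhg)$ and $\mathrm{gr}(\Phi):\mathrm{gr}(\DYhg)\to \mathrm{gr}(\cYhg)\cong\Yhg$, where the last identification is the standard one recalled just before the statement.

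The next step is to check that these two graded maps are mutually inverse. On the one hand, $\Phi\circ\Gamma = \Phi_1\circ\iYh\circ\tau_{-1} = \tau_1\circ\tau_{-1} = \id_{\Yhg}$, so $\mathrm{gr}(\Phi)\circ\mathrm{gr}(\Gamma) = \mathrm{gr}(\id_{\Yhg})$, which under the identification $\mathrm{gr}(\cYhg)\cong\Yhg$ is just the identity of $\Yhg$. This already shows $\mathrm{gr}(\Gamma)$ is injective and $\mathrm{gr}(\Phi)$ is surjective. For the reverse composition, the cleanest route is to pass to completions: the maps $\Phi_n:\DYhg/\mcJ^n\to \Yhg/\Yhgp^n$ and $\Gamma_n:\Yhg/\Yhgp^n\to \DYhg/\mcJ^n$ from \eqref{Phi_n} and \eqref{Ga_n} are mutually inverse by \eqref{n-inv} in the proof of Theorem \ref{T:hatiso}. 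Taking associated graded of these isomorphisms of filtered quotients (or equivalently, applying the graded pieces $\mcJ^n/\mcJ^{n+1}$ and $\Yhgp^n/\Yhgp^{n+1}$ directly) shows that $\mathrm{gr}(\Gamma)$ and $\mathrm{gr}(\Phi)$ are inverse bijections in each degree $n$. Therefore $\mathrm{gr}(\Gamma)$ is an isomorphism of graded $\C[\hbar]$-algebras with inverse $\mathrm{gr}(\Phi)$.

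Concretely, in degree $n$ one has $\mathrm{gr}_n(\Gamma):\Yhg_n\to \mcJ^n/\mcJ^{n+1}$ and $\mathrm{gr}_n(\Phi):\mcJ^n/\mcJ^{n+1}\to \cYhgeq{n}/\cYhgeq{n+1}\cong\Yhg_n$, and the fact that $\Phi_n\Gamma_n = \id$ and $\Gamma_n\Phi_n=\id$ (from the proof of Theorem \ref{T:hatiso}) forces $\mathrm{gr}_n(\Phi)\mathrm{gr}_n(\Gamma)=\id_{\Yhg_n}$ and $\mathrm{gr}_n(\Gamma)\mathrm{gr}_n(\Phi)=\id$ on $\mcJ^n/\mcJ^{n+1}$. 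I do not anticipate a genuine obstacle here; the only point requiring a little care is purely bookkeeping, namely to verify that applying $\mathrm{gr}$ to the isomorphisms $\Gamma_n$, $\Phi_n$ of the finite quotients recovers the associated graded maps of $\Gamma$, $\Phi$ themselves — this is immediate because $\mcJ^n/\mcJ^{n+1}$ is canonically the degree-$n$ graded piece of $\mathrm{gr}(\DYhg)$ and likewise for $\Yhg$, and the maps $\Gamma_n$, $\Phi_n$ restrict to $\Gamma$, $\Phi$ on representatives. Thus the corollary follows directly from Theorem \ref{T:hatiso}.
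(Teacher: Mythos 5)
Your proposal is correct and follows essentially the same route as the paper: filteredness of $\Gamma$ and $\Phi$ from Lemmas \ref{L:whGa} and \ref{L:whPhi}, then the mutual inverse property of the finite-quotient maps $\Gamma_n$, $\Phi_n$ from the proof of Theorem \ref{T:hatiso}, restricted to the graded pieces $\Yhgp^{n}/\Yhgp^{n+1}$ and $\mcJ^{n}/\mcJ^{n+1}$. The only cosmetic point is that the degree-$n$ statement should be read off from $\Gamma_{n+1}$ and $\Phi_{n+1}$ (quotients by the $(n+1)$-st filtration step), which your appeal to the full family $\{\Gamma_n,\Phi_n\}_{n\in\N}$ already covers.
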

\begin{proof}
 By Lemma \ref{L:whGa}, $\Gamma(\Yhgp)\subset \mcJ$, and hence $\Gamma$ is filtered. Similarly, Lemma \ref{L:whPhi} implies that 
 $\Phi$ is a filtered morphism. 
 
 In the proof of Theorem \ref{T:hatiso} we showed that
 \begin{equation*}
  \Gamma_{n}:\Yhg/\Yhgp^n\to \DYhg/\mcJ^n \quad \text{ and }\quad
  \Phi_n:\DYhg/\mcJ^n\to \Yhg/\Yhgp^n
 \end{equation*}
 are mutual inverses for each $n\in \mathbb{N}$. By Lemmas \ref{L:whPhi} and \ref{L:whGa}, we have 
 \begin{equation*}
  \Gamma_{n+1}(\Yhgp^{n}/\Yhgp^{n+1})=\mcJ^{n}/\mcJ^{n+1} \quad \forall\; n\in \N.
 \end{equation*}
 Letting $\Gamma_{(n+1)}$ and $\Phi_{(n+1)}$ denote the restrictions of $\Gamma_n$ and $\Phi_n$ to $\Yhgp^{n}/\Yhgp^{n+1}$ and $\mcJ^n/\mcJ^{n+1}$, respectively, we find that
 \begin{equation*}
  \mathrm{gr}(\Gamma)=\oplus_{n\in \N} \Gamma_{(n+1)}: \Yhg\cong\bigoplus_{n\in \N}\Yhgp^{n}/\Yhgp^{n+1}\to \mathrm{gr}(\DYhg)=
  \bigoplus_{n\in \N}\mcJ^{n}/\mcJ^{n+1}
 \end{equation*}
 is an isomorphism with inverse $\mathrm{gr}(\Phi)=\oplus_{n\in \mathbb{N}} \Phi_{(n+1)}$. \qedhere
\end{proof}

The above result can be rephrased in the language of one-parameter deformations as follows. Let $Y_{\hbar,v}(\mfg)$ be the $\C[\hbar,v]$-subalgebra of $\DYhg[v^{\pm 1}]$ generated by $v^{-1}\mcJ$ and $\DYhg$. Equivalently, $Y_{\hbar,v}(\mfg)$ is the extended Rees algebra 
\begin{equation*}
Y_{\hbar,v}(\mfg)=\bigoplus_{n\in \Z}v^{-n}\mcJ^n \subset \DYhg[v^{\pm 1}],
\end{equation*}
where $\mcJ^{-n}=\DYhg$ for all $n\in \N$. Then, by Corollary \ref{C:degen}, $Y_{\hbar,v}(\mfg)$ is a flat deformation of the Yangian $\Yhg$ over $\C[v]$. Indeed, $Y_{\hbar,v}(\mfg)\subset \DYhg[v^{\pm 1}]$ is a  torsion free $\C[v]$-module and one has 
\begin{equation*}
Y_{\hbar,v}(\mfg)/v Y_{\hbar,v}(\mfg)\cong \bigoplus_{n\in \Z}\mcJ^n/\mcJ^{n+1}=\mathrm{gr}(\DYhg)\underset{\mathrm{gr}(\Phi)}{\cong }\Yhg.
\end{equation*}
%

%%%%%%%%%%%%%%%%%%%%%%%%%%%%%%%%%%%%%%%%%%%%%%%%%%%%%%%%%%%%%%%%%%%%%%%%
%%%%%%%%%%%%%%%%%%%%%%%%%%%%%%%%%%%%%%%%%%%%%%%%%%%%%%%%%%%%%%%%%%%%%%%%
% Section: DYhg as a flat deformation
%%%%%%%%%%%%%%%%%%%%%%%%%%%%%%%%%%%%%%%%%%%%%%%%%%%%%%%%%%%%%%%%%%%%%%%%
%%%%%%%%%%%%%%%%%%%%%%%%%%%%%%%%%%%%%%%%%%%%%%%%%%%%%%%%%%%%%%%%%%%%%%%%
\section{\texorpdfstring{$\DYhg$}{DYg} as a flat deformation}\label{sec:PBW}
We now apply our construction to prove a Poincar\'{e}--Birkhoff--Witt theorem for $\DYhg$, applicable when $\mfg$ is of finite type or of simply-laced affine type. The precise statement of this result, given in Theorem \ref{T:PBW}, simultaneously establishes the injectivity of both $\Phi_z$ and $\Phi_c$  for all such $\mfg$, and therefore that $\DYhg$ can be viewed as both a subalgebra of $\LzhYhg\subset \cYhg[\![z^{\pm 1}]\!]$ and of $\cYhg$. Here we note that the injectivity of $\Phi$ is not an immediate consequence of Theorem \ref{T:hatiso}, since the natural homomorphism 
\begin{equation*}
\DYhg\to \cDYhg
\end{equation*}
has kernel equal to the intersection of all powers $\mcJ^n$ of $\mcJ$, which need not vanish. Theorem \ref{T:PBW} nevertheless implies that this intersection is indeed trivial, at least in the finite and simply-laced affine cases. 

%%%%%%%%%%%%%%%%%%%%%%%%%%%%%%%%%%%%%%%%%%%%%%%%%%%%%%%%%%%%%%%%%%%%%%%%
% Subsection: The classical limit
%%%%%%%%%%%%%%%%%%%%%%%%%%%%%%%%%%%%%%%%%%%%%%%%%%%%%%%%%%%%%%%%%%%%%%%%
\subsection{The classical limit of \texorpdfstring{$\Phi$}{Phi}}\label{ssec:cl-Phi}
Since the quotient map $\Yhg\to \Yhg/\hbar\Yhg\cong U(\mfs\rtimes \ddot\mfh)$ is $\N$-graded, it induces an isomorphism
\begin{equation*}
\cYhg/\hbar\cYhg\iso \cUsh,%=\prod_{n\in \N}U(\mfs_\mfh)_n
\end{equation*}
where $\mfs_\mfh:=\mfs\rtimes\ddot\mfh$ and $\cUsh$
is the formal completion of $U(\mfs_\mfh)$ with respect to its $\N$-grading: 
\begin{equation*}
\cUsh=\prod_{n\in \N}U(\mfs_\mfh)_n.
\end{equation*}
The classical limit $\bar\Phi$ of $\Phi$ is then the homomorphism $U(\mft\rtimes \ddot\mfh)\to\cUsh$ uniquely determined by the requirement that the following diagram commute:
\tikzcdset{every label/.append style = {font = \small}}
\begin{equation}\label{dia:barPhi}
\begin{tikzcd}[column sep=15ex]
 \DYhg \arrow[two heads]{d} \arrow{r}{\Phi} &  \cYhg\arrow[two heads]{d}\\
 U(\mft\rtimes \ddot\mfh) \arrow{r}{\bar\Phi}       & \cUsh
\end{tikzcd}
\end{equation}
where the vertical arrows are given by reducing modulo $\hbar$. By \eqref{Phi}, $\bar\Phi$ is given explicitly by the formulas 
\begin{gather*}
 \bar\Phi(h)=h,\quad \bar\Phi(X_i^\pm(u))=\sum_{n\in \mathbb{N}}(-1)^n X_{in}^\pm \partial_u^{(n)}(\delta(u)) \quad \forall \; i\in \mbI \; \text{ and }\; h\in \mfh,\\
 \text{ where }\; X_i^\pm(u)=\sum_{r\in \Z}X_{ir}^\pm u^{-r-1}\in \mft[\![u^{\pm 1}]\!].
\end{gather*}
\begin{remark}
Since $\Phi\circ \iYh=\tau_1$ admits an invertible classical limit, it follows that the classical limit of $\iYh$, which coincides with the natural homomorphism 
\begin{equation*}
U(\mfs\rtimes\ddot\mfh)\to U(\mft\rtimes\ddot\mfh),
\end{equation*}
is injective. This justifies our use of the same notation for generators of $\mfs$ and $\mft$.
\end{remark}
The above formulas for $\bar\Phi$ imply that $\bar\Phi(\mft)\subset \wh{\mfs}$, where $\wh{\mfs}$ is the Lie algebra  
\begin{equation*}
\wh{\mfs}=\prod_{n\in \N}\mfs_{n}\subset \cUs. 
\end{equation*}
We may therefore define $\phi$ to be the homomorphism of Lie algebras 
\begin{equation*}
\phi:=\bar\Phi|_\mft:\mft \to \wh{\mfs}.
\end{equation*}

Consider now the injection $\C[w]\into \C[\![t]\!]$ given by $w\mapsto t+1$. As $t+1$ is invertible in $\C[\![t]\!]$ with inverse
\begin{equation*}
 (t+1)^{-1}=\sum_{k\geq 0}(-1)^k t^k,
\end{equation*}
this morphism uniquely extends to $\gamma:\C[w^{\pm 1}]\into \C[\![t]\!]$. We thus obtain an injective homomorphism of Lie algebras
\begin{equation*}%\label{gamma}
 \id\otimes \gamma:\dot\mfg[w^{\pm 1}]=\dot\mfg\otimes \C[w^{\pm 1}]\into \dot\mfg[\![t]\!]=\dot\mfg\otimes \C[\![t]\!].
\end{equation*}
The homomorphism $\phi$ then satisfies the  commutative diagram
\tikzcdset{every label/.append style = {font = \small}}
\begin{equation}\label{ext-limit}
\begin{tikzcd}[column sep=15ex]
 \mft \arrow[two heads]{d}{\pi_\mft} \arrow{r}{\phi} &  \wh{\mfs}\arrow[two heads]{d}{\widehat{\pi}_\mfs}\\
  \dot{\mfg}[w^{\pm 1}] \arrow{r}{\id\otimes \gamma}       & \dot\mfg[\![t]\!]
\end{tikzcd}
\end{equation}
where $\pi_\mfs$ and $\pi_\mft$ are the graded homomorphisms defined in \eqref{pi_a}, and $\widehat{\pi}_\mfs$ is obtained from $\pi_\mfs$ by extending by continuity. 
We end this subsection by noting that, when $\mfg$ is of finite type, the vertical arrows in \eqref{ext-limit} are isomorphisms and, consequently, $\phi$ is injective. We will see below that this holds in a much more general context.

%%%%%%%%%%%%%%%%%%%%%%%%%%%%%%%%%%%%%%%%%%%%%%%%%%%%%%%%%%%%%%%%%%%%%%%%
% Subsection: DYhg as a flat deformation
%%%%%%%%%%%%%%%%%%%%%%%%%%%%%%%%%%%%%%%%%%%%%%%%%%%%%%%%%%%%%%%%%%%%%%%%
\subsection{\texorpdfstring{$\DYhg$}{DYg} as a flat deformation}
The following theorem is the main result of this section. 
\begin{theorem}\label{T:PBW}
Suppose that $\phi: \mft\to \widehat{\mfs}$ is injective and that $\Yhdg$ is a torsion free $\C[\hbar]$-module. Then: 
\begin{enumerate}[font=\upshape]
\item\label{PBW:1} For any fixed $c\in \C^\times$, the algebra homomorphisms 
\begin{equation*}
\Phi_z:\DYhg\to \LzhYhg\quad \text{ and }\quad \Phi_c:\DYhg\to \cYhg
\end{equation*} 
are injective. 
\item\label{PBW:2} $\DYhdg$ and $\DYhg$ are flat deformations of $U(\mft)$ and $U(\mft\rtimes \ddot\mfh)$, respectively, over $\C[\![\hbar]\!]$. In particular, there are isomorphisms of $\C[\![\hbar]\!]$-modules 
\begin{equation*}
\DYhdg\cong U(\mft)[\![\hbar]\!]\quad \text{ and }\quad \DYhg\cong U(\mft\rtimes\ddot\mfh)[\![\hbar]\!].
\end{equation*}
\end{enumerate}
Moreover, the hypotheses on $\phi$ and $\Yhdg$ are satisfied whenever $\mfg$ is of finite type or simply-laced affine type. 
\end{theorem}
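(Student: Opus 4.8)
The plan is to deduce everything from the interplay between the three filtered/graded pictures established so far: the $\hbar$-adic structure, the grading on $\DYg$, and the evaluation filtration $\{\mcJ^n\}$. First I would observe that Part \eqref{PBW:1} and Part \eqref{PBW:2} are tightly linked: if $\DYhdg\cong U(\mft)[\![\hbar]\!]$ as $\C[\![\hbar]\!]$-modules, then $\DYhg\cong U(\mft\rtimes\ddot\mfh)[\![\hbar]\!]$ follows from Proposition \ref{P:Dhdg-x-ddh} together with Proposition \ref{P:red-h}, since the crossed-product construction is exact; and flatness over $\C[\![\hbar]\!]$ is equivalent to $\DYhdg$ being torsion free and $\hbar$-adically separated, which in turn is equivalent to the natural map $\DYhdg\to \varprojlim_n \DYhdg/\hbar^n\DYhdg$ being an isomorphism onto a topologically free module. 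So the core analytic input is: \emph{$\DYhdg$ (equivalently $\DYhg$) is topologically free over $\C[\![\hbar]\!]$}, and the classical limit is $U(\mft)$ (resp. $U(\mft\rtimes\ddot\mfh)$), which is already identified in Proposition \ref{P:red-h}.

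The mechanism I would use to prove topological freeness is the classical limit of $\Phi$. Recall from \S\ref{ssec:cl-Phi} that $\Phi$ reduces mod $\hbar$ to $\bar\Phi:U(\mft\rtimes\ddot\mfh)\to \cUsh$, which restricts to the Lie algebra map $\phi:\mft\to\widehat{\mfs}$. Under the hypothesis that $\phi$ is injective, $\bar\Phi$ is injective: indeed $\bar\Phi$ is a filtered algebra map for the PBW filtration, and $\mathrm{gr}(\bar\Phi)$ is the map $S(\mft\rtimes\ddot\mfh)\to \widehat{S(\mfs\rtimes\ddot\mfh)}$ induced by $\phi\oplus\id$, which is injective because $\phi$ is. Now, still under the torsion-freeness hypothesis on $\Yhdg$, Lemma \ref{L:whYhg}\eqref{whYhg:3} gives that $\cYhg$ is torsion free over $\C[\![\hbar]\!]$. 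Combining: the homomorphism $\Phi:\DYhg\to\cYhg$ has a classical limit $\bar\Phi$ which is injective, and its target is $\hbar$-torsion free and $\hbar$-adically separated; a standard lifting argument (pick a section of $U(\mft\rtimes\ddot\mfh)\cong\DYhg/\hbar\DYhg$ as a $\C$-vector space with basis $\{b_\alpha\}$, consider the $\C[\![\hbar]\!]$-module map $U(\mft\rtimes\ddot\mfh)[\![\hbar]\!]\to\DYhg$ sending $b_\alpha\mapsto$ lift, then check it is an isomorphism by reducing mod $\hbar$ and using $\hbar$-adic completeness of $\DYhg$ from Proposition \ref{P:D-basic}\eqref{D-basic:1}) shows $\DYhg$ is topologically free provided it is $\hbar$-torsion free and $\hbar$-adically separated. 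The injectivity of $\Phi$ itself then also follows, since $\ker\Phi$ would be an $\hbar$-divisible submodule and its image in $\DYhg/\hbar\DYhg$ sits in $\ker\bar\Phi=0$. To get $\hbar$-torsion-freeness of $\DYhg$: if $\hbar y=0$ then $\bar y:=y\bmod \hbar$... this is exactly the subtle point, so let me phrase the argument via the Rees-algebra/deformation formalism: set $\DYhdg$'s $\hbar$-adic associated graded; the relations of Definition \ref{D:DYdotg} show $\mathrm{gr}_\hbar(\DYhdg)$ is a quotient of $U(\mft)[\hbar]$, and topological freeness is equivalent to this being an equality, which is detected by the existence of an $\hbar$-torsion-free, separated, complete algebra receiving $\DYhdg$ with injective classical limit — and $\cYhg$ (composed through $\Yhdg\hookrightarrow\Yhg$, via the derived-subalgebra versions $\DYhdg\to\cYhdg$) is exactly such an algebra. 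I would spell this out as a short lemma: \emph{a topologically generated $\C[\![\hbar]\!]$-algebra $A$ which is complete and for which $A/\hbar A$ is a deformation-compatible quotient of a fixed enveloping algebra $U$, is topologically free over $\C[\![\hbar]\!]$ iff there is a $\C[\![\hbar]\!]$-algebra map $A\to B$ with $B$ torsion-free and separated and $A/\hbar A\to B/\hbar B$ injective}.

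Finally I would verify the two hypotheses in the stated cases. Torsion-freeness of $\Yhdg$ over $\C[\hbar]$ in finite type is classical (PBW for the Yangian, e.g. via \cite{GTL1}); in simply-laced affine type it is the content of \cite{YaGuPBW}. For injectivity of $\phi:\mft\to\widehat{\mfs}$: in finite type, \eqref{ext-limit} shows $\phi$ is the composite of isomorphisms $\mft\cong\dot\mfg[w^{\pm1}]$, $\id\otimes\gamma$, and $\dot\mfg[\![t]\!]\hookrightarrow\widehat{\mfs}\cong\widehat{\dot\mfg[t]}$ (the last using $\pi_\mfs$ is an isomorphism in finite type), hence injective; this is already remarked at the end of \S\ref{ssec:cl-Phi}. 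In simply-laced affine type I would appeal to the identifications \eqref{st-uce}: $\mfs\cong\mathfrak{uce}(\dot\mfg[t])$ and $\mft_\kappa\cong\mathfrak{uce}(\dot\mfg[t^{\pm1}])$ from \cite{MRY90}, with Kassel's explicit description \cite{Kas84} of these universal central extensions. The diagram \eqref{ext-limit} shows $\phi$ lifts $\id\otimes\gamma$, so $\ker\phi$ maps to $0$ in $\dot\mfg[\![t]\!]$, i.e. $\ker\phi\subset$ (the completion of) the center $\mathrm{gr}$-piece; one then checks on the Kassel cocycle that $\phi$ is compatible with the $2$-cocycles — $\gamma$ pulls the residue cocycle on $\dot\mfg[w^{\pm1}]$ back to something that injects into the Kähler-differential description of the cocycle on $\dot\mfg[\![t]\!]$-currents — so $\phi$ restricted to the central part is also injective, forcing $\ker\phi=0$. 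I expect \textbf{this last verification — injectivity of $\phi$ on the central extension in the affine case, via the concrete Kassel/\cite{MRY90} models — to be the main obstacle}, since it requires matching the abstract cocycle $\kappa$ against the universal one under the non-graded substitution $w\mapsto t+1$; the module-theoretic lifting argument, by contrast, is routine once the injectivity of $\bar\Phi$ is in hand.
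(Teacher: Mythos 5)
Your deformation-theoretic core is essentially the paper's argument. The paper likewise deduces parts \eqref{PBW:1} and \eqref{PBW:2} from three inputs: injectivity of the classical limit $\bar\Phi$ (obtained from $\phi\oplus\id$ and the PBW theorem for enveloping algebras, rather than via the symmetric-algebra associated graded, but this is the same idea), torsion-freeness of $\cYhg$ supplied by Lemma \ref{L:whYhg}\eqref{whYhg:3} under the hypothesis on $\Yhdg$, and the fact that $\DYhg$ is separated and complete by construction. The paper proves injectivity of $\Phi$ first (write $x=\hbar^k y$ with $y\notin\hbar\DYhg$, use injectivity of $\bar\Phi$ and torsion-freeness of $\cYhg$) and then deduces torsion-freeness of $\DYhg$, whereas you package the same inputs as a general lemma; the content is identical. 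One small omission: part \eqref{PBW:1} concerns $\Phi_z$ and every $\Phi_c$, and you never reduce these to $\Phi=\Phi_1$; the paper does this in one line via $\mathscr{Ev}_1\circ\Phi_z=\Phi$ and $\Phi_c=\chi^{\iYh}_{1/c}\circ\Phi\circ\chi_c$ (Proposition \ref{P:gr-tw}).

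The genuine gap is exactly where you predicted it: the injectivity of $\phi:\mft\to\wh\mfs$ in the untwisted affine case, which occupies Sections \ref{ssec:aff-I}--\ref{ssec:aff-III} of the paper. Your reduction is correct --- by \eqref{ext-limit} the kernel of $\phi$ lies in the image of the centre --- but the assertion that ``cocycle compatibility'' makes $\phi$ injective on this central part is not an argument, and as stated it hides the essential phenomenon: the map $\phi_\gamma:\uce(\dot\mfg[w^{\pm1}])\to\wh{\uce(\dot\mfg[t])}$ of Lemma \ref{L:uA-fun} is \emph{not} injective on the centre. In Kassel's model one has $\phi_\gamma(v^rw^sd(v))=v^r(t+1)^sd(v)$ and $\phi_\gamma(v^rw^{-1}d(w))=-rv^{r-1}\log(t+1)d(v)$, so $c_w=w^{-1}d(w)$ is killed; the real work is to show the kernel is \emph{exactly} $\C c_w$, which rests on the linear independence of $\{(t+1)^s\}_{s\in\Z^\times}\cup\{\log(t+1)\}$ in $\C[\![t]\!]$ (Proposition \ref{P:CL-expl}), and then to observe via the Moody--Rao--Yokonuma isomorphism (Proposition \ref{P:MRY}) that $\C c_w$ is precisely the line quotiented out in passing from $\mft_\kappa\cong\uce(\dot\mfg[t^{\pm1}])$ to $\mft$, so that $\phi=(\wh{\psi|_\mfs})^{-1}\circ\bar\phi_\gamma\circ\psi_\mft$ is injective. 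Without this computation matching $\Ker(\phi_\gamma)$ with $\C c_w$, your claim that ``$\phi$ restricted to the central part is injective, forcing $\ker\phi=0$'' is unsupported --- and this verification is the bulk of the paper's proof of the theorem.
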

\begin{remark}
In fact, we will show that $\phi$ is injective whenever $\mfg$ is of untwisted affine type with underlying finite-dimensional simple Lie algebra $\bar\mfg\ncong \mfsl_2$.  Although $\Yhdg$ is expected to be torsion free for all such $\mfg$, this remains a conjecture. 
\end{remark}
\begin{proof}
Let us first clarify why the hypotheses on $\phi$ and $\Yhdg$ are satisfied in the claimed cases. 
That $\Yhdg$ is torsion free when $\mfg$ is of finite type is due to Levendorskii \cite{LevPBW}
(see also \cite{FiTs19}*{Thm.~B.6} and \cite{GRWEquiv}*{Prop.~2.2}).
We have seen that $\phi$ is injective in this case at the end of Section \ref{ssec:cl-Phi}.  

It has recently been proven independently in \cite{GRWvrep} and \cite{YaGuPBW} that $\Yhdg$ is a torsion free $\C[\hbar]$-module when $\mfg$ is of simply-laced affine type. 
We will prove that $\phi$ is injective when $\mfg$ is of untwisted affine type 
in Section \ref{ssec:aff-III} using the identifications $\mfs\cong \mathfrak{uce}(\dot\mfg[t])$ and $\mft_{\kappa}\cong \mathfrak{uce}(\dot\mfg[t^{\pm 1}])$, which are made concrete in Sections \ref{ssec:aff-I}--\ref{ssec:aff-II}. 

Now let us turn to proving \eqref{PBW:1} and \eqref{PBW:2} under the assumption that $\mfg$ is such that 
the hypotheses on $\Yhdg$ and $\phi$ hold. 

\begin{proof}[Proof of \eqref{PBW:1}]\let\qed\relax
By  Part \eqref{Phi:3} of Theorem \ref{T:Phi}  and Proposition \ref{P:gr-tw}, we have 
\begin{equation*}
\mathscr{Ev}_1\circ \Phi_z=\Phi \quad \text{ and }\quad \Phi_c=\chi_{1/c}^\iYh \circ \Phi \circ \chi_c \quad \forall\; c\in \C^\times. 
\end{equation*}
Therefore, it suffices to show that $\Phi$ is injective.
 Taking the direct sum of  $\phi$ with the identity map $\id$ on $\ddot\mfh$, we obtain an injective homomorphism of Lie algebras 
\begin{equation*}
\phi\oplus \id: \mft\rtimes \ddot\mfh\to \wh{\mfs}\rtimes \ddot\mfh,
\end{equation*}
where the action of $\ddot\mfh$ on $\wh{\mfs}$ is obtained from that of $\ddot\mfh$ on $\mfs$ by extending by continuity. By the Poincar\'{e}--Birkhoff--Witt theorem for enveloping algebras, the above map induces an injective homomorphism of algebras 
\begin{equation*}
U(\mft\rtimes \ddot\mfh)\to U(\wh{\mfs}\rtimes \ddot\mfh)\subset \cUsh,
\end{equation*} 
which is precisely the classical limit $\bar\Phi$ of $\Phi$ introduced in Section \ref{ssec:cl-Phi}. In particular, $\bar\Phi$ is injective. 

Suppose now that $x\in \DYhg$ is nonzero. We will employ a standard argument to show that $x\notin\mathrm{Ker}(\Phi)$. Since $\DYhg$ is a separated $\C[\![\hbar]\!]$-module, there is $k\in \N$ such that 
\begin{equation*}
x=\hbar^ky, \quad \text{ where }\;y\notin \hbar\DYhg.
\end{equation*}
Since the image of $y$ in $\DYhg/\hbar\DYhg$ is nonzero and $\bar\Phi$ is injective, the commutativity of the diagram \eqref{dia:barPhi} implies that $y\notin\mathrm{Ker}(\Phi)$. 
Moreover,
as $\Yhg=\Yhdg\rtimes U(\ddot\mfh)$ is torsion free, Part \eqref{whYhg:3} of Lemma \ref{L:whYhg} implies that $\cYhg$ is torsion free. We may thus conclude that 
\begin{equation*}
\Phi(x)=\hbar^k\Phi(y)\neq 0,
\end{equation*}
and therefore that $\Phi$ is injective, as desired. \qedhere
\end{proof}

\begin{proof}[Proof of \eqref{PBW:2}]
By Proposition \ref{P:red-h}, $\DYhdg$ and $\DYhg$ are deformations of $U(\mft)$ and $U(\mft\rtimes \ddot\mfh)$, respectively, over $\C[\![\hbar]\!]$.

 To prove that they are flat deformations, it suffices to show that they are separated, complete and torsion free  $\C[\![\hbar]\!]$-modules (see \cite{KasBook95}*{Prop.~XVI.2.4}, for instance). They are separated and complete by definition, having been defined topologically in terms of generators and relations. They are torsion free since $\Phi$ is injective and 
$\cYhg$ is torsion free, as explained in the proof of \eqref{PBW:1}. \qedhere
\end{proof}

\let\qed\relax

\end{proof}

%%%%%%%%%%%%%%%%%%%%%%%%%%%%%%%%%%%%%%%%%%%%%%%%%%%%%%%%%%%%%%%%%%%%%%%%
% Subsection: Affine Setting I
%%%%%%%%%%%%%%%%%%%%%%%%%%%%%%%%%%%%%%%%%%%%%%%%%%%%%%%%%%%%%%%%%%%%%%%%
\subsection{Kassel's realization}\label{ssec:aff-I}

For the remainder of Section \ref{sec:PBW}, we assume that $\mfg$ is an untwisted affine Kac--Moody algebra with underlying simple Lie algebra $\bar\mfg\ncong\mfsl_2$. We then have 
\begin{equation*}
\dot\mfg\cong \gfin[v^{\pm 1}]\oplus \C c
\end{equation*}
as a vector space, with Lie bracket determined by $[c,\dot\mfg]=0$ and 
\begin{equation*}
[x\otimes v^r,y\otimes v^s]=[x,y]\otimes v^{r+s}+r\delta_{r,-s}(x,y)c
\end{equation*}
for all $x,y\in \gfin$ and $r,s\in \Z$. 
By \cite{GRWvrep}*{Prop.~4.7}, there are isomorphisms 
\begin{equation}\label{GRW-4.7}
\uce(\dot\mfg[t])\cong \uce(\gfin[v^{\pm 1},t]) \quad \text{ and }\quad
\uce(\dot\mfg[t^{\pm 1}])\cong \uce(\gfin[v^{\pm 1},t^{\pm 1}]). 
\end{equation}

To prove that $\phi$ is injective, we shall make use of isomorphisms 
\begin{equation*}
\mft_\kappa\iso \uce(\gfin[v^{\pm 1},t^{\pm 1}])%\cong\uce(\dot\mfg[t^{\pm 1}]) 
\quad \text{ and }\quad \mfs\iso \uce(\gfin[v^{\pm 1},t])
%\cong\uce(\dot\mfg[t])
\end{equation*}
 obtained in the work of Moody--Rao--Yokonuma \cite{MRY90}, which coupled with \eqref{GRW-4.7} yield \eqref{st-uce}. Their construction is partly based on a general result due to Kassel \cite{Kas84}, which provides an explicit realization of $\uce(\gfin\otimes A)$, where $A$ is an arbitrary commutative, associative algebra over the complex numbers. For the sake of completeness, we recall some of the relevant general theory below, beginning with Kassel's realization.
 After briefly discussing some auxiliary properties of this realization in Section \ref{ssec:aff-I.5}, we will review the relevant results from \cite{MRY90}  in Section \ref{ssec:aff-II}.

Let $(\Omega(A),d)$ be the module of K\"{a}hler differentials associated to $A$. That is, $\Omega(A)$ is the $A$-module
\begin{equation*}
\Omega(A)=(A\otimes A)/M,
\end{equation*}
where $A$ acts on $A\otimes A$ by left multiplication in the first tensor factor, and $M$ is the submodule of $A\otimes A$ generated by $1\otimes ab-a\otimes b -b\otimes a$ for all $a,b\in A$. The differential map $d$ is then the derivation
\begin{equation*}
d:A\to \Omega(A), \quad d(a)=1\otimes a \mod  M \quad \forall \; a\in A. 
\end{equation*}
In particular, we can (and will) write $ad(b)$ for the equivalence class of the tensor $a\otimes b$ in $\Omega(A)$. We shall use the same notation for generators of the quotient 
\[
\mfz(A):=\Omega(A)/d(A).
\] 
Consider the alternating bilinear map  $\veps:(\gfin\otimes A)\times (\gfin\otimes A)\to \mfz(A)$ determined by 
\[
\veps(x\otimes a, y\otimes b)=(x,y)bd(a) \quad \forall\; x,y\in \gfin,\, a,b\in A. 
\]
Using the invariance of $(\cdot,\cdot)$ and the fact that in $\mfz(A)$ we have 
\begin{equation*}
d(abe)=0=ab \cdot d(e)+ae\cdot d(b)+be \cdot d(a)\quad \forall \; a,b,e\in A,  
\end{equation*}
one readily concludes that $\veps$ satisfies the cocycle equation
\begin{equation*}
\veps(x\otimes a,[y\otimes b,z\otimes e])+\veps(y\otimes b,[z\otimes e,x\otimes a])+\veps(z\otimes e,[x\otimes a,y\otimes b])=0,
\end{equation*} 
for all $x,y,z\in \gfin$ and $a,b,e\in A$. It follows that the vector space 
\begin{equation*}
\mathfrak{u}(A):= (\gfin\otimes A) \oplus \mfz(A)
\end{equation*}
admits the structure of a Lie algebra with bracket given by $[\mathfrak{u}(A),\mfz(A)]=0$ and
\begin{equation}\label{LB-uA}
[x\otimes a,y\otimes b]=[x,y]\otimes ab + (x,y)b d(a) \quad \forall \; x,y\in \gfin,\, a,b\in A.  
\end{equation}
It is clear that $\mathfrak{u}(A)$ is a central extension of $\gfin\otimes A$. In fact, we have the following remarkable result due to Kassel \cite{Kas84}*{Thm.~3.3} (see also \cite{MRY90}*{Prop.~2.2}). 
\begin{proposition}\label{P:Kassel} 
$\mathfrak{u}(A)$ is isomorphic to the universal central extension of $\gfin\otimes A$: 
\begin{equation*}
\mathfrak{u}(A)\cong \uce(\gfin\otimes A).
\end{equation*}
\end{proposition}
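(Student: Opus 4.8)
The plan is to show that the canonical projection $\pi:\mathfrak{u}(A)\to\gfin\otimes A$, $x\otimes a+\omega\mapsto x\otimes a$, realizes $\mathfrak{u}(A)$ as the universal central extension of $\gfin\otimes A$. I would use the standard characterization: a central extension $(\mathfrak{e},\pi)$ of a perfect Lie algebra $\mathfrak{a}$ is the universal one exactly when $\mathfrak{e}$ is perfect and, for every central extension $(\mathfrak{f},\rho)$ of $\mathfrak{a}$, there exists a (necessarily unique) morphism of central extensions $(\mathfrak{e},\pi)\to(\mathfrak{f},\rho)$. Accordingly I would verify three things: that $\gfin\otimes A$ is perfect, that $\mathfrak{u}(A)$ is perfect, and the universal mapping property.

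Perfectness of $\gfin\otimes A$ is immediate from $\gfin=[\gfin,\gfin]$. For $\mathfrak{u}(A)$, I would fix an $\mfsl_2$-triple $(e,h,f)\subset\gfin$; since $(e,f)\neq 0$ and $a\,d(b)=-b\,d(a)$ in $\mfz(A)$, formula \eqref{LB-uA} gives $[e\otimes a,f\otimes b]-[e\otimes b,f\otimes a]=2(e,f)\,b\,d(a)$, so $\mfz(A)\subseteq[\mathfrak{u}(A),\mathfrak{u}(A)]$. Combined with the perfectness of the quotient $\mathfrak{u}(A)/\mfz(A)\cong\gfin\otimes A$, this gives $\mathfrak{u}(A)=[\mathfrak{u}(A),\mathfrak{u}(A)]$.

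For the universal mapping property I would take an arbitrary central extension $(\mathfrak{f},\rho)$ of $\gfin\otimes A$ with central kernel $\mathfrak{z}$, choose a linear section $s:\gfin\otimes A\to\mathfrak{f}$, and consider the obstruction cocycle $\beta(X,Y)=[s(X),s(Y)]-s([X,Y])\in\mathfrak{z}$. Since $H_2(\gfin;\C)=0$, the extension restricted to $\gfin\otimes 1\cong\gfin$ splits, so $s$ may be taken to be a homomorphism there; this makes $\mathfrak{f}$ a $\gfin$-module with $\rho$ equivariant, and as $\gfin\otimes A\cong\gfin\otimes_{\C}A$ is a sum of copies of the adjoint module while $H^1(\gfin;\gfin^{*})=0$, the $\gfin$-module extension $0\to\mathfrak{z}\to\mathfrak{f}\to\gfin\otimes A\to 0$ splits equivariantly, so $s$ may even be chosen $\gfin$-equivariant. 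Then $\beta\in\Hom_{\gfin}(\Lambda^2(\gfin\otimes A),\mathfrak{z})$; decomposing $\Lambda^2(\gfin\otimes A)\cong(\Lambda^2\gfin\otimes S^2A)\oplus(S^2\gfin\otimes\Lambda^2A)$ and using that $(\Lambda^2\gfin^{*})^{\gfin}=0$ while the invariant symmetric forms on $\gfin$ are the multiples of $(\cdot,\cdot)$, I would conclude $\beta(x\otimes a,y\otimes b)=(x,y)\,\omega(a,b)$ for a unique alternating bilinear map $\omega:A\times A\to\mathfrak{z}$. Imposing the cocycle identity for $\beta$ on triples $x\otimes a,y\otimes b,z\otimes c$ with $([x,y],z)\neq 0$ turns into the cyclic relation $\omega(ab,c)+\omega(bc,a)+\omega(ca,b)=0$, which together with the antisymmetry of $\omega$ forces the map $a\otimes b\mapsto\omega(b,a)$ on $A\otimes A$ to annihilate both the K\"ahler relations $1\otimes ab-a\otimes b-b\otimes a$ and $1\otimes A$, hence to descend to a linear map $\mu:\mfz(A)\to\mathfrak{z}$ with $\mu(b\,d(a))=\omega(a,b)$. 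Finally $\theta:=s\oplus\mu:\mathfrak{u}(A)\to\mathfrak{f}$ satisfies $\rho\circ\theta=\pi$, and comparing $[\theta(x\otimes a),\theta(y\otimes b)]$ with $\theta([x\otimes a,y\otimes b])$ using \eqref{LB-uA} and the definition of $\veps$ shows $\theta$ is a Lie algebra homomorphism; uniqueness follows from perfectness of $\mathfrak{u}(A)$, completing the proof.

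I expect the main obstacle to be the last step, and in particular the representation-theoretic normalization of the section together with the reduction $\beta(x\otimes a,y\otimes b)=(x,y)\,\omega(a,b)$ and the observation that the cocycle identity for $\beta$ is exactly the relation needed for $\omega$ to factor through $\Omega(A)/dA=\mfz(A)$; everything else (both perfectness statements and the verification that $\theta$ is a homomorphism) is routine bookkeeping. An alternative to this normalization would be to compute $H_2(\gfin\otimes A;\C)$ directly via the Chevalley--Eilenberg complex and identify it with $\mfz(A)$, the extension $\mathfrak{u}(A)$ being represented by the cocycle $\veps$ — this is essentially Kassel's original argument.
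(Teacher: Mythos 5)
Your proposal is correct, but there is nothing in the paper to compare it with: the paper does not prove Proposition \ref{P:Kassel} at all, it simply quotes it as Kassel's theorem, citing \cite{Kas84}*{Thm.~3.3} (see also \cite{MRY90}*{Prop.~2.2}). What you have written is a sound, self-contained reconstruction of the standard argument behind that citation: you use the characterization of the universal central extension as a perfect central extension admitting a morphism to every central extension of the base; perfectness of $\mathfrak{u}(A)$ via the $\mfsl_2$-triple computation $[e\otimes a,f\otimes b]-[e\otimes b,f\otimes a]=2(e,f)\,b\,d(a)$ (valid since $a\,d(b)=-b\,d(a)$ in $\mfz(A)$); Whitehead's lemmas to normalize the section to be a homomorphism on $\gfin\otimes 1$ and then $\gfin$-equivariant; the invariant-theoretic reduction $\beta(x\otimes a,y\otimes b)=(x,y)\,\omega(a,b)$, which uses that $\gfin$ is simple so that $(\Lambda^2\gfin^{*})^{\gfin}=0$ and $(S^2\gfin^{*})^{\gfin}=\C(\cdot,\cdot)$; and the 2-cocycle identity to force $\omega$ to factor through $\mfz(A)$. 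Two small points deserve explicit mention when you write this up. First, $M\subset A\otimes A$ is an \emph{$A$-submodule}, so descending $\tilde\mu(a\otimes b)=\omega(b,a)$ to $\Omega(A)$ requires killing $c\otimes ab-ca\otimes b-cb\otimes a$ for all $c\in A$; this is exactly the cyclic identity $\omega(ab,c)+\omega(bc,a)+\omega(ca,b)=0$ you derive (and $b=c=1$ in the same identity gives $\omega(a,1)=0$, killing $d(A)$), so your mechanism is right but should be stated for a general coefficient $c$, not just the generators with $c=1$. Second, the splitting arguments involve possibly infinite-dimensional coefficient modules ($\mathfrak{z}$ trivial, $\gfin\otimes A$ an infinite direct sum of adjoint copies); this is harmless because $\Lambda^{n}\gfin$ is finite dimensional, so Chevalley--Eilenberg cohomology commutes with such direct sums and Whitehead's lemmas still apply, but it is worth saying. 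Your closing alternative (computing $H_2(\gfin\otimes A;\C)\cong\mfz(A)$ directly and exhibiting $\veps$ as a representing cocycle) is indeed closer in spirit to Kassel's original cyclic/K\"ahler-differential computation, whereas the route you carried out is the usual ``universal property'' proof; either is acceptable as a replacement for the paper's citation.
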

%
%

%%%%%%%%%%%%%%%%%%%%%%%%%%%%%%%%%%%%%%%%%%%%%%%%%%%%%%%%%%%%%%%%%%%%%%%%
% Subsection: Affine Setting I.5
%%%%%%%%%%%%%%%%%%%%%%%%%%%%%%%%%%%%%%%%%%%%%%%%%%%%%%%%%%%%%%%%%%%%%%%%
\subsection{Gradings on \texorpdfstring{$\mathfrak{u}(A)$}{u(A)}}\label{ssec:aff-I.5}
If in addition $A=\bigoplus_{k\in \Z}A_k$ is a $\Z$-graded algebra, 
then the Lie algebra $\gfin\otimes A$ is naturally $\Z$-graded, with $k$-th graded component
\begin{equation*}
(\gfin\otimes A)_k=\gfin\otimes A_k \quad \forall\; k\in \Z. 
\end{equation*}

The grading on $A$ also naturally induces a $\Z$-graded $A$-module structure on $\Omega(A)$, compatible with that on $A\otimes A$. As the subspace $d(A)$ is itself graded, $\mfz(A)$ is a graded 
$\C$-vector space. By \eqref{LB-uA}, it follows that $\mathfrak{u}(A)$ inherits the structure of a $\Z$-graded Lie algebra, with $k$-th graded component 
\begin{equation*}
\mathfrak{u}(A)_k=(\gfin\otimes A)_k\oplus \mfz(A)_k, 
\end{equation*}
where $\mfz(A)_k$ is the $k$-th graded component of $\mfz(A)$. 
If $A_k=\{0\}$ for all $k<0$ (that is, if $A$ is $\N$-graded), then we may complete $\mathfrak{u}(A)$ with respect to its induced $\N$-grading to obtain a Lie algebra 
\begin{equation*}
\widehat{\mathfrak{u}(A)}=(\gfin\otimes \prod_{k\in \N}A_k) \oplus \prod_{k\in \N}\mfz(A)_k,
\end{equation*}
with  Lie bracket determined by \eqref{LB-uA}, for $a,b\in \wh A=\prod_{k\in \N}A_k$, together with the requirement that
$\prod_{k\in \N}\mfz(A)_k$ be central. 

The next result we will need concerns the functorial nature of $\mathfrak{u}(A)$ and its compatibility with the above completion process. In what follows, $A$ and $B$ are associative, commutative $\C$-algebras, with $A$ taken to be $\N$-graded, as above. Additionally, let us assume we are given an algebra 
homomorphism 
\begin{equation*}
\gamma: B\to \wh{A}.
\end{equation*}
\begin{lemma}\label{L:uA-fun} 
There is a unique Lie algebra homomorphism
\begin{equation*}
\phi_\gamma:\mathfrak{u}(B)\to \widehat{\mathfrak{u}(A)}
\end{equation*}
with the property that $\phi_\gamma|_{\gfin \otimes B}=\id\otimes \gamma$. Explicitly, $\phi_\gamma|_{\mfz(B)}$ is given by 
\begin{equation}\label{g-phi:gen}
\phi_\gamma(bd(e))=\gamma(b)d(\gamma(e)) \quad \forall \; b,e\in B.
\end{equation}
\end{lemma}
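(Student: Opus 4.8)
The assertion is that a graded algebra homomorphism $\gamma : B \to \wh A$ lifts canonically to a Lie algebra homomorphism $\phi_\gamma : \mathfrak u(B) \to \widehat{\mathfrak u(A)}$ extending $\id \otimes \gamma$ on $\gfin \otimes B$, and that on the central part $\mfz(B)$ the lift is forced to be \eqref{g-phi:gen}. The natural strategy is to invoke the universal property of $\mathfrak u(B) \cong \uce(\gfin \otimes B)$ (Proposition \ref{P:Kassel}): since $\widehat{\mathfrak u(A)}$ is a central extension of $\gfin \otimes \wh A$, the composite $\gfin \otimes B \xrightarrow{\id \otimes \gamma} \gfin \otimes \wh A$ followed by any chosen set-theoretic section into $\widehat{\mathfrak u(A)}$ is — after the universal property of $\uce$ is applied — covered by a unique Lie algebra map out of $\uce(\gfin \otimes B)$. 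However, this requires a small amount of care: the universal property of $\uce(\mfp)$ says maps of $\mfp$ into central extensions lift uniquely to $\uce(\mfp)$, but here the target $\widehat{\mathfrak u(A)}$ is a central extension of $\gfin \otimes \wh A$, not of $\gfin \otimes B$; so one must first pull back the central extension $\widehat{\mathfrak u(A)} \to \gfin \otimes \wh A$ along $\id \otimes \gamma$ to get a central extension of $\gfin \otimes B$, and then apply universality. Because $\gfin \otimes B$ is perfect (as $\gfin$ is perfect and $B$ is unital), such a lift is automatically unique, which gives the uniqueness clause for free.

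\textbf{The explicit formula.} Rather than (or in addition to) the abstract argument, I would verify directly that the assignment
\begin{equation*}
\phi_\gamma(x \otimes b) = x \otimes \gamma(b), \qquad \phi_\gamma(b\, d(e)) = \gamma(b)\, d(\gamma(e)) \qquad (x \in \gfin,\ b,e \in B)
\end{equation*}
is well-defined and a Lie algebra homomorphism. Well-definedness on the central part amounts to checking that $\phi_\gamma$ kills the defining relations of $\mfz(B) = \Omega(B)/d(B)$: the relation $1 \otimes b_1 b_2 - b_1 \otimes b_2 - b_2 \otimes b_1$ maps to $1 \otimes \gamma(b_1)\gamma(b_2) - \gamma(b_1) \otimes \gamma(b_2) - \gamma(b_2) \otimes \gamma(b_1)$, which vanishes in $\Omega(\wh A)$ since $\gamma$ is an algebra map, and $d(b) \mapsto d(\gamma(b))$, which is zero in $\mfz(\wh A) = \Omega(\wh A)/d(\wh A)$. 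Compatibility with the bracket \eqref{LB-uA} then follows termwise: $[x \otimes b, y \otimes e] = [x,y] \otimes be + (x,y)\, e\, d(b)$ maps to $[x,y] \otimes \gamma(b)\gamma(e) + (x,y)\, \gamma(e)\, d(\gamma(b))$, which is exactly $[\phi_\gamma(x\otimes b), \phi_\gamma(y \otimes e)]$ computed in $\widehat{\mathfrak u(A)}$ using that $\gamma$ is multiplicative; and both sides are zero on $[\,\cdot\,, \mfz(B)]$. One must also note that $\phi_\gamma$ lands in the completion $\widehat{\mathfrak u(A)}$ rather than $\mathfrak u(\wh A)$: since $\gamma$ need not have image in any finite sum of graded pieces, $\gamma(b)$ and $d(\gamma(b))$ genuinely live in $\gfin \otimes \wh A$ and $\prod_k \mfz(A)_k$, and the bracket formula \eqref{LB-uA} still makes sense there by hypothesis on $\widehat{\mathfrak u(A)}$.

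\textbf{Uniqueness.} For the uniqueness statement, the point is that $\mathfrak u(B)$ is generated as a Lie algebra by $\gfin \otimes B$: indeed $[\gfin \otimes B, \gfin \otimes B]$ already surjects onto $\mfz(B)$ because every generator $b\,d(e)$ of $\mfz(B)$ with $(x,y) = 1$ arises as the central component of $[x \otimes b, y \otimes e]$, while the non-central component lies in $\gfin \otimes B$. Hence any Lie algebra map $\mathfrak u(B) \to \widehat{\mathfrak u(A)}$ restricting to $\id \otimes \gamma$ on $\gfin \otimes B$ is determined on brackets, hence everywhere, and must agree with $\phi_\gamma$; in particular it must satisfy \eqref{g-phi:gen}.

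\textbf{Expected main obstacle.} The only genuinely delicate point is bookkeeping at the level of the completion: one needs to confirm that the infinite sums implicit in $\gamma(b) \in \gfin \otimes \wh A$ and $d(\gamma(b)) \in \prod_k \mfz(A)_k$ interact correctly with the Lie bracket of $\widehat{\mathfrak u(A)}$ — i.e.\ that the bracket of two such "profinite" elements is again well-defined and computed coordinatewise in each graded degree. This is guaranteed by the way $\widehat{\mathfrak u(A)}$ was defined as the $\N$-graded completion with bracket extended from \eqref{LB-uA}, but it is worth stating explicitly; everything else is a routine diagram chase or a direct check on generators.
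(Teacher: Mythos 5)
Your proposal is correct and follows essentially the same route as the paper: uniqueness because the bracket \eqref{LB-uA} forces \eqref{g-phi:gen} on the central part, and existence by checking that this formula gives a well-defined map $\mfz(B)\to\wh{\mfz(A)}$ compatible with the bracket (the paper does the well-definedness by noting the graded quotient $A\otimes A\to\mfz(A)$ induces $A\widehat{\otimes}A\to\wh{\mfz(A)}$, so that $B^{\otimes 2}\xrightarrow{\gamma\otimes\gamma}\wh{A}^{\otimes 2}\into A\widehat{\otimes}A\to\wh{\mfz(A)}$ factors through $\mfz(B)$, which is exactly the completion bookkeeping you flag). The only cosmetic slip is that the target of the central map is $\wh{\mfz(A)}=\prod_k\mfz(A)_k$ rather than $\mfz(\wh{A})$; your relation check goes through there verbatim since the Leibniz relations and $d(A)$ are graded.
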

\begin{proof}
The bracket relation \eqref{LB-uA} implies that, if $\phi_\gamma$ exists, then it must satisfy \eqref{g-phi:gen}. Conversely, if \eqref{g-phi:gen} determines a well-defined map 
\begin{equation*}
\mfz(B)\to \wh{\mfz(A)}:=\prod_{k\in \N}\mfz(A)_k,
\end{equation*}
 then \eqref{LB-uA} implies that $\phi_\gamma$, uniquely determined by $\phi_\gamma|_{\gfin \otimes B}=\id\otimes \gamma$ and \eqref{g-phi:gen}, will be a Lie algebra homomorphism. 

Since the natural quotient map $A\otimes{A}\to \mfz(A)$ is $\N$-graded, it induces a linear map 
$A\widehat\otimes A\to  \wh{\mfz(A)}$, where $A\widehat\otimes A$ is the completion of $A\otimes A$ with respect to its $\N$-grading. The composition 
\begin{equation*}
B^{\otimes 2} \xrightarrow{\gamma\otimes \gamma} \wh{A}^{\otimes 2}\into A\widehat\otimes A \to \wh{\mfz(A)}
\end{equation*}
then sends $b\otimes e$ to $\gamma(b)d(\gamma(e))$ and factors through $\Omega(B)$ and $\mfz(B)=\Omega(B)/d(B)$, as desired. \qedhere
\end{proof}
%%%%%%%%%%%%%%%%%%%%%%%%%%%%%%%%%%%%%%%%%%%%%%%%%%%%%%%%%%%%%%%%%%%%%%%%
% Subsection: Affine Setting II
%%%%%%%%%%%%%%%%%%%%%%%%%%%%%%%%%%%%%%%%%%%%%%%%%%%%%%%%%%%%%%%%%%%%%%%%
\subsection{The Moody--Rao--Yokonuma isomorphism}\label{ssec:aff-II}

We now narrow our focus to the special case where $A$ is the $\Z$-graded algebra $\C[v^{\pm 1},t^{\pm 1}]$ or the $\N$-graded algebra $\C[v^{\pm 1},t]$, where $\deg t=1$ and $\deg v=0$. 

 As in \cite{MRY90}*{\S2}, one finds that $A$-module $\Omega(A)$ is freely generated by $d(v)$ and $d(t)$. Using that, in $\mfz(A)$, we have
\[ 
0=d(v^rt^s)=sv^rt^{s-1}d(t)+rt^sv^{r-1}d(v),
\]
one deduces (\textit{cf}. \cites{MRY90,Enr03}) that $\mfz(A)$ admits the vector space decomposition
\begin{equation*}
\mfz({\C[v^{\pm 1},t^{\pm 1}]})=\!\bigoplus_{(r,s)\in \Z\times \Z}\hspace{-.75em}\C \mathrm{K}_{r,s}\oplus \C c_v \oplus \C c_t, 
\quad
\mfz(\C[v^{\pm 1},t])=\!\bigoplus_{(r,s)\in \Z\times \N_+}\hspace{-.75em}\C \mathrm{K}_{r,s}\oplus \C c_v,
\end{equation*}
where $\mathrm{K}_{0,0}:=0$, and for $(r,s)\in \Z\times \Z^\times$ we have
\begin{equation}\label{z-basis}
\mathrm{K}_{r,s}=\frac{1}{s}v^{r-1}t^sd(v), \quad \mathrm{K}_{s,0}=-\frac{1}{s}v^s t^{-1}d(t),\quad c_v=v^{-1}d(v),\quad c_t=t^{-1}d(t). 
\end{equation} 
By \eqref{GRW-4.7} and Proposition \ref{P:Kassel}, we can (and will) identify $\uce(\dot\mfg[t^{\pm 1}])$ and $\uce(\dot\mfg[t])$ with the Lie algebras $\mathfrak{u}(\C[v^{\pm 1},t^{\pm 1}])$ and $\mathfrak{u}(\C[v^{\pm 1},t])$, respectively. In particular, 
\begin{equation*}
\uce(\dot\mfg[t^{\pm 1}])= \gfin[v^{\pm 1},t^{\pm 1}]\oplus \mfz(\C[v^{\pm 1},t^{\pm 1}])
\end{equation*}
as a vector space, with Lie structure such that $\mfz(\C[v^{\pm 1},t^{\pm 1}])$ is central and
\begin{equation*}
[x\otimes v^r t^s, y\otimes v^kt^\ell]=[x,y]\otimes v^{r+k}t^{s+\ell}+(x,y)v^kt^\ell d(v^r t^s)
\end{equation*}
for all $x,y\in \gfin$ and $r,s,k,\ell\in \Z$. Moreover, in terms of the basis \eqref{z-basis}, we have 
\begin{equation*}
v^kt^\ell d(v^r t^s)=\delta_{r,-k}\delta_{s,-\ell}(rc_v+sc_t)+(r\ell-sk)\mathrm{K}_{r+k,s+\ell}, 
\end{equation*}
By definition, this is equivalent to
\begin{equation*}
v^kt^\ell d(v^r t^s)=\begin{cases} 
  \left(\frac{r\ell-sk}{s+\ell}\right)v^{r+k-1}t^{s+\ell}d(v) \; &\text{ if }\; s\neq -\ell, \\ 
  \delta_{r,-k}r v^{-1} d(v) + s v^{r+k} t^{-1}d(t) \; &\text{ if }\; s=-\ell. 
  \end{cases}
\end{equation*}
The Lie algebra $\uce(\dot\mfg[t])$ may then be characterized as the Lie subalgebra 
\begin{equation*}
\uce(\dot\mfg[t])=\gfin[v^{\pm 1},t]\oplus \hspace{-1em} \bigoplus_{(r,s)\in \Z\times \N_+}\hspace{-.75em}\C \mathrm{K}_{r,s}\oplus \C c_v \subset \uce(\dot\mfg[t^{\pm 1}]).
\end{equation*}
%
\begin{comment}
Maybe something here about how this is the same as 
%
\begin{equation*}
\dot\mfg[t]\oplus \bigoplus_{(r,s)}\C\mathrm{K}_{r,s},
\end{equation*}
where $(r,s)$ takes values in $\Z^\times \times \N_+$, and how brackets are compatible.  
%
\end{comment}
%
As a consequence of the general discussion in  \S\ref{ssec:aff-I.5}, $\uce(\dot\mfg[t^{\pm 1}])$  and $\uce(\dot\mfg[t])$ are $\Z$ and $\N$-graded Lie algebras, respectively, with gradings determined
by $\deg t=1$.  

In order to make precise the isomorphisms of \eqref{st-uce}, let us specify $\mfg$ to be rank $\ell+1$, with $\mbI$ taken to be $\{0,\ldots,\ell\}$ so that $\bar\mbI=\{1,\ldots,\ell\}$ 
labels the simple roots of $\gfin$. Let $x_\theta^\pm\in \gfin_{\pm\theta}$ be such that $(x_\theta^+,x_\theta^-)=1$, where $\theta$ is the highest root of $\gfin$. 

The following result is a translation of \cite{MRY90}*{Prop.~3.5}. It appears in the form below in \cite{GRWvrep}; see Propositions 4.4 and 4.7 therein. Recall that $\mft_\kappa$ is the one-dimensional central extension of $\mft$ introduced in Definition \ref{D:t-kappa}. 
\begin{proposition}\label{P:MRY}
The assignment 
\begin{gather*}
\mathrm{K}\mapsto c_t,\quad 
X_{ir}^\pm\mapsto x_i^\pm \otimes t^r, 
\quad 
X_{0r}^\pm \mapsto x^\mp_{\theta}\otimes v^{\pm 1} t^r 
\quad \forall\; i\in \bar\mbI,\; r\in \Z
\end{gather*}
uniquely extends to an isomorphism of $\Z$-graded Lie algebras 
\begin{equation*}
\psi:\mft_\kappa\iso \uce(\dot\mfg[t^{\pm 1}]). %\uce(\gfin[v^{\pm 1},t^{\pm 1}]).
\end{equation*} 
Moreover, $\psi$ induces isomorphisms of graded Lie algebras 
\begin{equation*}
\psi|_\mfs: \mfs\iso \uce(\dot\mfg[t])
\quad \text{ and }\quad \psi_\mft: \mft\iso \uce(\dot\mfg[t^{\pm 1}])/\C c_t.
\end{equation*}

\end{proposition}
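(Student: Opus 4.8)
The plan is to build $\psi$ explicitly on generators and then deduce bijectivity from the universal property of universal central extensions; alternatively the statement can be read off from \cite{MRY90}*{Prop.~3.5} once one has the identifications \eqref{GRW-4.7} from \cite{GRWvrep}. Throughout I would work with the models $\uce(\dot\mfg[t^{\pm 1}])=\mathfrak{u}(\C[v^{\pm 1},t^{\pm 1}])$ and $\uce(\dot\mfg[t])=\mathfrak{u}(\C[v^{\pm 1},t])$ provided by Proposition~\ref{P:Kassel} and \eqref{GRW-4.7}, together with the bracket \eqref{LB-uA} and the basis \eqref{z-basis} of the central parts; here the standing hypothesis $\gfin\ncong\mfsl_2$ is exactly what makes \eqref{GRW-4.7} available.

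I would first pin $\psi$ down on the remaining generators. Since $H_{ir}=[X_{i0}^+,X_{ir}^-]$ in $\mft$ and $\psi(\mathrm{K})=c_t$ is prescribed, the values on the $X_{ir}^\pm$ force
\[
\psi(H_{ir})=h_i\otimes t^r\quad(i\in\bar\mbI),\qquad \psi(H_{0r})=-h_\theta\otimes t^r+v^{-1}t^r d(v),
\]
where $h_i=[x_i^+,x_i^-]$, $h_\theta=[x_\theta^+,x_\theta^-]$, and $v^{-1}t^rd(v)$ equals $r\,\mathrm{K}_{0,r}$ for $r\neq 0$ and $c_v$ for $r=0$. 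The heart of the matter is then to check that these elements satisfy every defining relation of $\mft_\kappa$, i.e.\ the relations \eqref{s-HH}--\eqref{s-serre} of $\mft$ together with the cocycle prescription of Definition~\ref{D:t-kappa}, as identities in $\mathfrak{u}(\C[v^{\pm 1},t^{\pm 1}])$. Relations not involving the affine node $0$ reduce at once to identities in $\gfin$, and the Serre relations at the node $0$ reduce to $\mathrm{ad}(x_\theta^\mp)^{1-a_{0i}}(x_i^\pm)=0$ in $\gfin$ once one observes that invariance of $(\,,\,)$ kills the central terms that would otherwise appear. The delicate relations are those involving $X_{0r}^\pm$: using the displayed expansion of $v^kt^\ell d(v^rt^s)$ one computes
\[
[\psi(X_{0r}^+),\psi(X_{0s}^-)]=-h_\theta\otimes t^{r+s}+\delta_{r,-s}(c_v+r\,c_t)+(r+s)\,\mathrm{K}_{0,r+s}=\psi(H_{0,r+s})+r\,\delta_{r,-s}\,c_t,
\]
which is precisely $[X_{0r}^+,X_{0s}^-]_{\mft_\kappa}=H_{0,r+s}+\kappa(X_{0r}^+,X_{0s}^-)\mathrm{K}$, since Definition~\ref{D:t-kappa} and $(x_0^+,x_0^-)=1$ give $\kappa(X_{0r}^+,X_{0s}^-)=r\,\delta_{r,-s}$; the relations $[H_{i0},X_{0r}^\pm]=\pm 2d_{i0}X_{0r}^\pm$ and those involving $H_{00}$ are treated the same way and come down to the entries of the affine Cartan matrix (for instance $2d_{i0}=-(\alpha_i,\theta)=-\theta(h_i)$). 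Gradedness is then automatic, since each generator of $\mft_\kappa$ is mapped to a homogeneous element of the same degree.

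For bijectivity I would appeal to the universal property of $\uce$. The map $\pi_\mft$ of \eqref{pi_a} has central kernel, and $\C\mathrm{K}\oplus\ker\pi_\mft$ is central in $\mft_\kappa$ because $\kappa$ factors through $\pi_\mft^{\otimes 2}$; hence $\mft_\kappa$ is a central extension of the perfect Lie algebra $\dot\mfg[t^{\pm 1}]$, and it is perfect itself because $\kappa\not\equiv 0$. Thus there is a unique Lie homomorphism $\chi\colon\uce(\dot\mfg[t^{\pm 1}])\to\mft_\kappa$ over $\mathrm{id}_{\dot\mfg[t^{\pm 1}]}$; checking on generators that $\psi$ also lies over $\mathrm{id}_{\dot\mfg[t^{\pm 1}]}$, the composite $\psi\circ\chi$ is the unique endomorphism of $\uce(\dot\mfg[t^{\pm 1}])$ over the identity and hence equals $\mathrm{id}$, while $\chi\circ\psi$ agrees with $\mathrm{id}$ on $[\mft_\kappa,\mft_\kappa]=\mft_\kappa$ and differs from it by a map into the central kernel, hence also equals $\mathrm{id}$. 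This yields the $\Z$-graded isomorphism $\psi$, and with it \eqref{st-uce}. For the restrictions, note that the defining relations of $\mfs$ involve only $X_{ir}^\pm,H_{ir}$ with $r\in\N$ and receive no cocycle correction in $\mft_\kappa$ (since $\kappa(X_{0r}^+,X_{0s}^-)=r\,\delta_{r,-s}=0$ for $r,s\geq 0$), so there is a natural homomorphism $\mfs\to\mft_\kappa$; composing with $\psi$ lands in $\mathfrak{u}(\C[v^{\pm 1},t])$, and the same universal-property argument --- now with $\mfs$, a perfect central extension of $\dot\mfg[t]$ --- shows this to be an isomorphism $\mfs\iso\uce(\dot\mfg[t])$. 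Finally, since $\psi(\mathrm{K})=c_t$, $\C\mathrm{K}=\ker(\mft_\kappa\onto\mft)$ and $\C c_t$ is a central ideal of $\uce(\dot\mfg[t^{\pm 1}])$, $\psi$ descends to the asserted isomorphism $\psi_\mft\colon\mft\iso\uce(\dot\mfg[t^{\pm 1}])/\C c_t$.

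The step I expect to be the main obstacle is the relation-checking described above: what requires genuine care is matching the central class $c_t$ produced by the alternating form $\veps$ in the bracket \eqref{LB-uA} with the direction of the $2$-cocycle $\kappa$ of Definition~\ref{D:t-kappa}, together with the bookkeeping of the affine Cartan data attached to the node $0$. Once those identities are in hand, gradedness, bijectivity, and the two restriction statements all follow formally from the universal property of universal central extensions.
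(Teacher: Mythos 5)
The paper offers no argument for this proposition: it is quoted as a translation of \cite{MRY90}*{Prop.~3.5}, in the form given in \cite{GRWvrep}*{Props.~4.4 and 4.7}, so the fallback you mention in your first sentence is exactly the paper's route. Your direct argument is a genuinely different route, and its computational core is correct: the bracket $[\psi(X_{0r}^+),\psi(X_{0s}^-)]$ in $\mathfrak{u}(\C[v^{\pm 1},t^{\pm 1}])$ and the value $\kappa(X_{0r}^+,X_{0s}^-)=r\delta_{r,-s}$ do match as you compute, the forced formula for $\psi(H_{0r})$ is right, and the relations \eqref{s-xx} and \eqref{s-serre} indeed pick up no central corrections for weight reasons.

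There is, however, a genuine gap. Your bijectivity argument rests on the bald assertion that $\ker\pi_\mft$ is central in $\mft$ (and likewise $\ker\pi_\mfs$ in $\mfs$); this is what allows you to view $\mft_\kappa$ as a central extension of $\dot\mfg[t^{\pm 1}]$ and invoke the universal property of $\uce$. Centrality of this kernel is not a formal consequence of the presentation \eqref{s-HH}--\eqref{s-serre}: one has to show, for instance, that the weight spaces of $\mft$ attached to real roots of $\mfg$ are at most one-dimensional (an $\mfsl_2$/Serre-relation argument), and this is precisely the non-formal content behind \cite{MRY90}*{Prop.~3.5}. Assuming it makes your proof essentially circular, and it is notable that you flag the relation bookkeeping as the main obstacle when that is in fact the routine part. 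A second, smaller, defect: $\mft_\kappa$ is defined in Definition \ref{D:t-kappa} as $\mft\oplus\C\mathrm{K}$ with a twisted bracket, not by generators and relations, so ``checking the defining relations of $\mft_\kappa$'' does not by itself produce the homomorphism $\psi$. This is repairable --- use the presentation of $\mft$ to construct $\mft\to\mathfrak{u}(\C[v^{\pm 1},t^{\pm 1}])/\C c_t$, then identify the $2$-cocycle obtained by pulling back the extension by $\C c_t$ along the evident linear section with $\kappa$ --- but as written the existence of $\psi$ is not established. If these two points are supplied (or cited), the remainder of your universal-property argument, including surjectivity via perfectness, the restriction to $\mfs$, and the passage to the quotient by $\C c_t$, goes through.
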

%
%

%

%%%%%%%%%%%%%%%%%%%%%%%%%%%%%%%%%%%%%%%%%%%%%%%%%%%%%%%%%%%%%%%%%%%%%%%%
% Subsection: Affine Setting III
%%%%%%%%%%%%%%%%%%%%%%%%%%%%%%%%%%%%%%%%%%%%%%%%%%%%%%%%%%%%%%%%%%%%%%%%
\subsection{Injectivity of \texorpdfstring{$\phi$}{phi}}\label{ssec:aff-III} We now combine the results collected in  Sections \ref{ssec:aff-I}--\ref{ssec:aff-II} to prove that $\phi:\mft\to \wh{\mfs}$ is injective when $\mfg$ is of untwisted affine type. 

Since $\psi|_\mfs$ from Proposition \ref{P:MRY} is graded, it extends to an isomorpism 
\begin{equation*}
\wh{\psi|_\mfs}: \wh\mfs\iso \uce\wh{(\dot\mfg[t])}=\wh{\mathfrak{u}(\C[v^{\pm 1},t])}.
\end{equation*}
As illustrated in Section \ref{ssec:aff-I.5}, the right-hand side above may be identified with 
\begin{gather*}
(\gfin\otimes \C[v^{\pm 1}][\![t]\!])\oplus \prod_{s\in \N}\mfz(\C[v^{\pm 1},t])_s,\\
\text{where} \quad \mfz(\C[v^{\pm 1},t])_s=
\begin{cases}
\C c_v \; &\text{ if }\; s=0,\\
 \bigoplus_{r\in \Z}\C\mathrm{K}_{r,s} \; &\text{ if }\; s\in \N_+.
\end{cases} 
\end{gather*}

Next, let us us extend $\gamma:\C[w^{\pm 1}]\into \C[\![t]\!]$ of \eqref{ext-limit} to a homomorphism  
\begin{equation*}
\gamma: \C[v^{\pm 1},w^{\pm 1}]\into \widehat{\C[v^{\pm 1},t]}=\C[v^{\pm 1}][\![t]\!] 
\end{equation*}
by setting $\gamma(v)=v$. By Lemma \ref{L:uA-fun}, $\id \otimes \gamma:\gfin[v^{\pm 1},w^{\pm 1}]\to \gfin\otimes \C[v^{\pm 1}][\![t]\!]$ extends uniquely to a homomorphism of Lie algebras 
\begin{gather*}
\phi_\gamma:\uce(\dot\mfg[w^{\pm 1}])\to \uce\widehat{(\dot\mfg[t])}\\
\text{with }\; \phi_\gamma(fdg)=\gamma(f)d(\gamma(g)) \quad \forall \; f,g\in \C[v^{\pm 1},w^{\pm 1}].
\end{gather*}
The following proposition completes our proof that $\phi:\mft\to \wh{\mfs}$ is injective when $\mfg$ is of untwisted affine type, and therefore completes the proof of Theorem \ref{T:PBW}. 
\begin{proposition}\label{P:CL-expl} 
Let $\wh{\psi|_\mfs}$ and $\phi_\gamma$ be as above. Then:
\begin{enumerate}[font=\upshape]

\item\label{CL-expl:2}  $\Ker(\phi_\gamma)=\C c_w$. Consequently, $\phi_\gamma$ induces an injection
\begin{equation*}
\bar\phi_\gamma: \uce(\dot\mfg[w^{\pm 1}])/\C c_w \into \uce\wh{(\dot\mfg[t])}. 
\end{equation*} 

\item\label{CL-expl:3}  $\phi:\mft\to \wh{\mfs}$ is injective, and satisfies $\phi=(\wh{\psi|_\mfs})^{-1}\circ \bar\phi_\gamma\circ \psi_\mft$.
\end{enumerate}
\end{proposition}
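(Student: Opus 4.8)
The plan is to prove the two assertions of Proposition \ref{P:CL-expl} in order, first computing $\Ker(\phi_\gamma)$ explicitly and then deducing the injectivity of $\phi$ by unwinding the identifications of the previous subsections.

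\textbf{Step 1: Computing $\Ker(\phi_\gamma)$.} The Lie algebra $\uce(\dot\mfg[w^{\pm 1}])=\mathfrak{u}(\C[v^{\pm1},w^{\pm1}])$ decomposes as $\gfin[v^{\pm1},w^{\pm1}]\oplus \mfz(\C[v^{\pm1},w^{\pm1}])$, and by \eqref{z-basis} the central part has basis $\{\mathrm{K}_{r,s}^w\}_{(r,s)\in \Z\times \Z^\times}\cup\{c_v,c_w\}$ (using superscript $w$ to distinguish the generators of $\mfz(\C[v^{\pm1},w^{\pm1}])$ from those of $\mfz(\C[v^{\pm1},t])$). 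Since $\id\otimes\gamma$ is injective on $\gfin[v^{\pm1},w^{\pm1}]$ — because $\gamma:\C[v^{\pm1},w^{\pm1}]\to \C[v^{\pm1}][\![t]\!]$ is injective, as $w\mapsto t+1$ is invertible in $\C[\![t]\!]$ — the kernel of $\phi_\gamma$ is entirely contained in the central part $\mfz(\C[v^{\pm1},w^{\pm1}])$, and I must compute $\phi_\gamma$ on each basis element. Using $\phi_\gamma(f\,dg)=\gamma(f)\,d(\gamma(g))$ and the relations in $\mfz(\C[v^{\pm1},t])$ from \S\ref{ssec:aff-II}, one finds $\phi_\gamma(c_v)=v^{-1}d(v)=c_v$ and must evaluate $\phi_\gamma(\mathrm{K}_{r,s}^w)=\tfrac1s\gamma(v^{r-1}w^s)d(\gamma(v))=\tfrac1s v^{r-1}(t+1)^s d(v)$. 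Expanding $(t+1)^s$ and using $v^{r-1}t^k\,d(v)=k\,\mathrm{K}_{r,k}$ in $\mfz(\C[v^{\pm1},t])$, we get for $s\in\Z^\times$ a nonzero combination involving $\mathrm{K}_{r,1},\mathrm{K}_{r,2},\dots$ (the coefficient of $\mathrm{K}_{r,1}$ being $\tfrac1s\binom{s}{1}\cdot 1=1$ for each such $s$), so these images are linearly independent across distinct $r$ and, for fixed $r$, a triangular change of basis shows the family $\{\phi_\gamma(\mathrm{K}_{r,s}^w)\}_{s\in\Z^\times}$ together with $\phi_\gamma(c_v)$ is linearly independent. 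The delicate point is $c_w$: we have $\phi_\gamma(c_w)=\phi_\gamma(w^{-1}d(w))=\gamma(w)^{-1}d(\gamma(w))=(t+1)^{-1}d(t+1)=(t+1)^{-1}d(t)$; but in $\mfz(\C[v^{\pm1},t])$ the class $d(t)$ equals $d(\,\cdot\,)$ of an element of $A$, hence vanishes modulo $d(A)$ — that is, $d(t)=0$ in $\mfz(\C[v^{\pm1},t])$ since $\mfz(A)=\Omega(A)/d(A)$. Therefore $\phi_\gamma(c_w)=0$, and a careful bookkeeping of the triangular system shows nothing else maps to zero, giving $\Ker(\phi_\gamma)=\C c_w$. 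The quotient map $\uce(\dot\mfg[w^{\pm1}])/\C c_w\hookrightarrow \uce\wh{(\dot\mfg[t])}$ is then the induced injection $\bar\phi_\gamma$.

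\textbf{Step 2: Deducing injectivity of $\phi$.} By Proposition \ref{P:MRY}, $\psi_\mft:\mft\iso \uce(\dot\mfg[t^{\pm1}])/\C c_t$ is an isomorphism; composing with the identification $\uce(\dot\mfg[t^{\pm1}])/\C c_t\cong \uce(\dot\mfg[w^{\pm1}])/\C c_w$ (reading $t^{\pm1}$ as $w^{\pm1}$), I apply $\bar\phi_\gamma$, landing in $\uce\wh{(\dot\mfg[t])}$, and finally apply $(\wh{\psi|_\mfs})^{-1}$ to reach $\wh\mfs$. The content of part \eqref{CL-expl:3} is that this composite equals $\phi$. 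To verify this, it suffices to check equality on the generators $X_{ir}^\pm$, $H_{ir}$ of $\mft$, using the explicit formulas: on the one hand $\phi=\bar\Phi|_\mft$ is given by $\bar\Phi(X_i^\pm(u))=\sum_n(-1)^nX_{in}^\pm\partial_u^{(n)}(\delta(u))$, which (as in \S\ref{ssec:ev-Yhg} with $z=-1$ or via \eqref{CPhi:3}) translates to $X_{ir}^\pm\mapsto$ the component of $\mathsf{\Gamma}(X_{ik}^\pm)=\tau_{-1}$-image expanded in the gradation of $\wh\mfs$ — concretely $\phi(X_{ir}^\pm)$ is, under $\pi_\mfs$, the expansion of $x_i^\pm\otimes(t+1)^r$ (or, for $i=0$, $x^\mp_\theta\otimes v^{\pm1}(t+1)^r$) together with the appropriate central corrections. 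On the other hand, chasing $X_{ir}^\pm$ through $\psi_\mft$ sends it to $x_i^\pm\otimes w^r$ (resp.\ $x^\mp_\theta\otimes v^{\pm1}w^r$), then $\bar\phi_\gamma$ sends this to $x_i^\pm\otimes(t+1)^r$ (resp.\ $x^\mp_\theta\otimes v^{\pm1}(t+1)^r$) by definition of $\gamma$, and then $(\wh{\psi|_\mfs})^{-1}$ carries it back into $\wh\mfs$. The central-element bookkeeping matches because $\psi$ in Proposition \ref{P:MRY} is set up precisely so that $\mathrm{K}=\psi^{-1}(c_t)$ and the cocycle $\kappa$ on $\mft$ pulls back $\bar\kappa$; and $c_t$, $c_w$ play parallel roles, so modding out by $\C\mathrm{K}$ on the $\mft$-side is compatible with modding out by $\C c_t=\C c_w$ on the $\uce$-side. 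Hence the two Lie algebra homomorphisms $\mft\to\wh\mfs$ agree on a generating set, so $\phi=(\wh{\psi|_\mfs})^{-1}\circ\bar\phi_\gamma\circ\psi_\mft$; being a composite of three injections, $\phi$ is injective.

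\textbf{Main obstacle.} The crux is the explicit determination of $\Ker(\phi_\gamma)$ in Step 1 — in particular verifying that $c_w$ lies in the kernel while the $\mathrm{K}_{r,s}^w$ and $c_v$ do not, which requires handling the infinite triangular system obtained by expanding $(t+1)^s$ in the completed space $\wh{\mfz(\C[v^{\pm1},t])}=\prod_{s}\mfz(\C[v^{\pm1},t])_s$ and confirming that no nontrivial (possibly infinite) combination of central basis elements is killed. The vanishing $\phi_\gamma(c_w)=0$ is the conceptually important observation: it reflects the fact that the universal central extension of $\dot\mfg[w^{\pm1}]$ has a two-dimensional center while that of $\dot\mfg[t]$ (after completion) has only the $c_v$-direction surviving, the $c_t$-direction being precisely what $c_w$ maps into and then gets annihilated in the quotient $\Omega/d(A)$. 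Once this is pinned down, Step 2 is a routine — if notation-heavy — verification on generators.
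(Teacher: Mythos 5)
Your overall strategy coincides with the paper's: restrict attention to the centre (the Lie part being handled by injectivity of $\id\otimes\gamma$), show $c_w$ is killed, show nothing else is, and then verify the factorization $\phi=(\wh{\psi|_\mfs})^{-1}\circ\bar\phi_\gamma\circ\psi_\mft$ on the generators $X_{ir}^\pm$; your Step 2 is essentially identical to the paper's argument and is fine. However, Step 1 contains two genuine gaps. First, your justification that $\phi_\gamma(c_w)=0$ is invalid as written: you argue that $d(t)=0$ in $\mfz(\C[v^{\pm1},t])$ and conclude that $(t+1)^{-1}d(t)=0$. But $\mfz(A)=\Omega(A)/d(A)$ is a quotient by a subspace which is \emph{not} an $A$-submodule, so one may not multiply the relation $d(a)=0$ by elements of $A$ or $\wh{A}$; the same reasoning applied to $v^{-1}d(v)$ would give $c_v=0$, which is false. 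The correct computation expands $\gamma(w^{-1})=\sum_{k\geq0}(-1)^kt^k$ and uses that $t^kd(t)=\tfrac{1}{k+1}d(t^{k+1})\in d(A)$ for every $k\geq 0$, so the series vanishes term by term; this is how the paper arrives at the formula $\phi_\gamma(v^rw^{-1}d(w))=-rv^{r-1}\log(t+1)d(v)$, of which $\phi_\gamma(c_w)=0$ is the $r=0$ instance.

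Second, and more seriously, your basis of $\mfz(\C[v^{\pm1},w^{\pm1}])$ omits the elements $v^rw^{-1}d(w)$ with $r\in\Z^\times$ (the classes $\mathrm{K}^w_{r,0}$ of \eqref{z-basis}); you list only $\{\mathrm{K}^w_{r,s}\}_{s\in\Z^\times}\cup\{c_v,c_w\}$. Their images under $\phi_\gamma$ are exactly the elements $-rv^{r-1}\log(t+1)d(v)$, and the heart of the kernel computation is to show that no nontrivial finite combination of these, the elements $v^r(t+1)^sd(v)$ with $s\in\Z^\times$, and $c_v$ vanishes in $\wh{\mfz(\C[v^{\pm1},t])}$ — equivalently, that $\{(t+1)^s\}_{s\in\Z^\times}\cup\{\log(t+1)\}$ is linearly independent in $\C[\![t]\!]$. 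Your "triangular change of basis" remark does not address the $\log$ family at all (it is absent from your basis), and even for the $(t+1)^s$ family alone it is only an assertion: all of these series have constant term $1$ and first coefficient $s$, so there is no naive triangularity in the $t$-adic filtration, and for $s<0$ the expansions are genuinely infinite. The paper disposes of both points at once by differentiating: $\gamma(w^{s-1})=\tfrac1s\,\partial_t(t+1)^s$ for $s\neq0$ and $\gamma(w^{-1})=\partial_t\log(t+1)$, so the required independence follows from the injectivity of $\gamma$ on $\C[v^{\pm1},w^{\pm1}]$. Until the $\mathrm{K}^w_{r,0}$ elements are incorporated and this independence is actually proved, the equality $\Ker(\phi_\gamma)=\C c_w$ — and hence the injectivity of $\phi$ — is not established.
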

\begin{proof}

Let us begin by establishing that the kernel of $\phi_\gamma$ coincides with $\C c_w$. 

\begin{proof}[Proof of \eqref{CL-expl:2}]\let\qed\relax

Since $\phi_\gamma|_{\gfin[v^{\pm 1},w^{\pm 1}]}=\id \otimes \gamma$ is injective and 
\begin{equation*}
\phi_\gamma(\mfz(\C[v^{\pm 1},w^{\pm 1}]))\subset \wh{\mfz(\C[v^{\pm 1},t])},
\end{equation*}
 it suffices to show that the restriction of $\phi_\gamma$ to $\mfz(\C[v^{\pm 1},w^{\pm 1}])$ has kernel $\C c_w$. 

 In $\wh{\mfz(\C[v^{\pm 1},t])}$, we have the relations 
\begin{equation*}
\gamma(v^r w^{-1})d(\gamma(w))=%(t+1)^{-1}d(t)=
\sum_{k\geq 0} (-1)^k v^r t^k d(t),\quad 
v^rt^k d(t)= -\frac{r}{k+1}v^{r-1}t^{k+1}d(v). 
\end{equation*}
Hence, 
$\phi_\gamma$ is determined on 
$\mfz(\C[v^{\pm 1},w^{\pm 1}])$ by 
\begin{gather*}
\phi_\gamma(v^rw^s d(v))= v^r(t+1)^s d(v), \\
\phi_\gamma(v^rw^{-1}d(w))=-rv^{r-1}\sum_{k\geq 0} (-1)^k\frac{t^{k+1}}{k+1}d(v)
=
-rv^{r-1}\log(t+1)d(v),
\end{gather*}
for all $r,s\in \Z$. In particular, $\phi_\gamma(c_w)=\phi_\gamma(w^{-1}d(w))=0$. 

Since $\mfz(\C[v^{\pm 1},w^{\pm 1}])$ has basis 
\begin{equation*}
\{v^r w^s d(v)\}_{(r,s)\in\Z\times \Z^\times} \cup \{v^r w^{-1} d(w)\}_{r\in \Z}\cup\{v^{-1}d(v)\},
\end{equation*}
to show that $\Ker(\phi_\gamma)=\C c_w$, it suffices to prove that the set 
\begin{equation*} 
\{v^r(t+1)^sd(v)\}_{(r,s)\in \Z\times \Z^\times}\cup \{v^{r-1}\log(t+1)d(v)\}_{r\in \Z^\times}\cup \{c_v\}
\end{equation*}
is linearly independent in 
\begin{equation*}
\wh{\mfz(\C[v^{\pm 1},t])}=\prod_{s\in \N_+}\mfz(\C[v^{\pm 1},t])_s \oplus \C c_v\cong t\C[v^{\pm 1}][\![t]\!]\oplus \C v^{-1} \subset \C[v^{\pm 1}][\![t]\!],
\end{equation*}
where the embedding of vector spaces $\wh{\mfz(\C[v^{\pm 1},t])}\subset \C[v^{\pm 1}][\![t]\!]$ alluded to above is determined by identifying $v^r t^s d(v)$ with $v^r t^s$.

This follows readily from the observation that $\{(t+1)^s\}_{s\in \Z^\times}\cup\{\log(t+1)\}$ is a linearly independent set in $\C[\![t]\!]$, which can be deduced using the injectivity of $\gamma$ and the observation that
\begin{equation*}
\gamma(w^{s-1})=\begin{cases}
                 \frac{1}{s}\partial_t (t+1)^s \; &\text{ if } s\neq 0,\\
                 \partial_t \log(t+1)\; &\text{ if } s=0.
                \end{cases}
\end{equation*}
\end{proof}

\begin{proof}[Proof of \eqref{CL-expl:3}] It suffices to verify the identity 
$\phi=(\wh{\psi|_\mfs})^{-1}\circ \bar\phi_\gamma\circ \psi_\mft$ on the generating set $\{X_{ir}^\pm\}_{i\in \mbI, r\in \Z}$ of $\mft$. This is easily done directly, using the explicit formulas for $\psi_\mft$ and $\psi_\mfs$ given in Proposition \ref{P:MRY} (see also \eqref{ext-limit}). \qedhere
\end{proof}
\let\qed\relax
\end{proof}

%%%%%%%%%%%%%%%%%%%%%%%%%%%%%%%%%%%%%%%%%%%%%%%%%%%%%%%%%%%%%%%%%%%%%%%%
%%%%%%%%%%%%%%%%%%%%%%%%%%%%%%%%%%%%%%%%%%%%%%%%%%%%%%%%%%%%%%%%%%%%%%%%
% Section: Appendix A
%%%%%%%%%%%%%%%%%%%%%%%%%%%%%%%%%%%%%%%%%%%%%%%%%%%%%%%%%%%%%%%%%%%%%%%%
%%%%%%%%%%%%%%%%%%%%%%%%%%%%%%%%%%%%%%%%%%%%%%%%%%%%%%%%%%%%%%%%%%%%%%%%
\appendix

\section{Grading completions} \label{App:A}

In this appendix we prove Proposition \ref{P:A-comp} which serves to clarify a number of properties satisfied by the grading completion of any $\N$-graded $\C[\hbar]$-algebra. This proposition has been applied to prove Lemma \ref{L:whYhg} and, though it is elementary, has been included for sake of completeness. 
\begin{proposition}\label{P:A-comp}
Let $\mathrm{A}=\bigoplus_{n\in \N}\mathrm{A}_n$ be a $\N$-graded $\C[\hbar]$-algebra satisfying:
\begin{enumerate}[label=(\alph*), font=\upshape]
\item\label{A-comp:1} $\hbar \mathrm{A}\subset \mathrm{A}_+=\bigoplus_{n>0}\mathrm{A}_n$, 
\item\label{A-comp:2} $\mathrm{A}_+^n=\bigoplus_{k\geq n}\mathrm{A}_k$ for each $n\in \N$.  
\end{enumerate}
Then the formal completion of $\mathrm{A}$ with respect to its $\N$-grading,
\begin{equation*}
\widehat{\mathrm{A}}=\prod_{n\in \N}\!\mathrm{A}_n,
\end{equation*}
is a unital, associative $\C[\![\hbar]\!]$-algebra. Moreover:
\begin{enumerate}[font=\upshape]
\item\label{A:1} The canonical $\C[\hbar]$-algebra homomorphism $\Upsilon:\mathrm{A}\to \varprojlim_n\!\left(\mathrm{A}/\mathrm{A}_+^n\right)$ extends to an isomorphism of $\C[\![\hbar]\!]$-algebras 
\begin{equation*}
\widehat\Upsilon:\widehat{\mathrm{A}}=\prod_{n\in \N}\!\mathrm{A}_n\iso \varprojlim_n\left(\mathrm{A}/\mathrm{A}_+^n\right).
\end{equation*}
\item\label{A:2} $\widehat{\mathrm{A}}$ is separated and complete as a $\C[\![\hbar]\!]$-module.
\item\label{A:3} $\widehat{\mathrm{A}}$ is a torsion free $\C[\![\hbar]\!]$-module, provided $\mathrm{A}$ is a torsion free $\C[\hbar]$-module. 
\end{enumerate}
\end{proposition}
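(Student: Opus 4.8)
The goal is Proposition~\ref{P:A-comp}, a basic structural statement about the grading completion $\widehat{\mathrm{A}}=\prod_{n\in\N}\mathrm{A}_n$ of an $\N$-graded $\C[\hbar]$-algebra satisfying \ref{A-comp:1} and \ref{A-comp:2}. The plan is to first note that multiplication on $\widehat{\mathrm{A}}$ is well-defined: given $x=(x_n)$ and $y=(y_n)$, the $k$-th component of $xy$ is the \emph{finite} sum $\sum_{i+j=k}x_iy_j$, so $\widehat{\mathrm{A}}$ is a unital associative $\C$-algebra containing $\mathrm{A}$. To see it is a $\C[\![\hbar]\!]$-algebra, observe that by \ref{A-comp:1} the element $\hbar$ lies in $\widehat{\mathrm{A}}_{+}:=\prod_{n>0}\mathrm{A}_n$, hence $\hbar^m\in\prod_{n\ge m}\mathrm{A}_n$ by \ref{A-comp:2}; therefore any formal series $\sum_{m\ge0}c_m\hbar^m x^{(m)}$ with $c_m\in\C$, $x^{(m)}\in\widehat{\mathrm{A}}$ converges componentwise in $\widehat{\mathrm{A}}$, which defines the $\C[\![\hbar]\!]$-module (indeed algebra) structure.

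For part~\ref{A:1}, I would use hypothesis \ref{A-comp:2} directly: it identifies $\mathrm{A}_{+}^n=\bigoplus_{k\ge n}\mathrm{A}_k$, so that $\mathrm{A}/\mathrm{A}_{+}^n\cong\bigoplus_{k<n}\mathrm{A}_k$ as $\C[\hbar]$-modules, with the transition maps $\mathrm{A}/\mathrm{A}_{+}^{n+1}\to\mathrm{A}/\mathrm{A}_{+}^n$ being the obvious truncations. Consequently $\varprojlim_n(\mathrm{A}/\mathrm{A}_{+}^n)\cong\prod_{n\in\N}\mathrm{A}_n=\widehat{\mathrm{A}}$ as $\C[\hbar]$-modules, and one checks this identification is multiplicative (the product in the inverse limit is computed level by level, matching the componentwise product just described) and sends $\hbar$ to $\hbar$. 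The canonical map $\Upsilon$ is the diagonal inclusion, and $\widehat\Upsilon$ is its unique extension; that $\widehat\Upsilon$ is an isomorphism of $\C[\![\hbar]\!]$-algebras is then immediate, once one notes that the $\C[\![\hbar]\!]$-module structure on $\varprojlim_n(\mathrm{A}/\mathrm{A}_{+}^n)$ constructed from the $\C[\hbar]$-structure and the $\mathrm{A}_{+}$-adic limit agrees with the one on $\widehat{\mathrm{A}}$ described above (this uses \ref{A-comp:1} to guarantee $\hbar$ acts topologically nilpotently).

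For part~\ref{A:2}, separatedness amounts to $\bigcap_{m\ge0}\hbar^m\widehat{\mathrm{A}}=0$: if $x=(x_n)\in\hbar^m\widehat{\mathrm{A}}$ for all $m$, then by the computation above $x\in\prod_{n\ge m}\mathrm{A}_n$ for all $m$, forcing every $x_n=0$. Completeness means the natural map $\widehat{\mathrm{A}}\to\varprojlim_m\widehat{\mathrm{A}}/\hbar^m\widehat{\mathrm{A}}$ is an isomorphism; here I would argue that $\hbar^m\widehat{\mathrm{A}}$ is squeezed between $\prod_{n\ge N(m)}\mathrm{A}_n$ and $\prod_{n\ge m}\mathrm{A}_n$ for suitable indices (using torsion-freeness is \emph{not} needed for this, only that $\hbar$ raises degree by at least one and $\mathrm{A}$ is generated in degrees so that \ref{A-comp:2} holds), so the $\hbar$-adic topology and the grading topology are comparable, and the grading completion is automatically $\hbar$-adically complete — a standard cofinality argument between the two filtrations. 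For part~\ref{A:3}, suppose $\mathrm{A}$ is $\C[\hbar]$-torsion free and $\hbar x=0$ for some $x=(x_n)\in\widehat{\mathrm{A}}$; applying the projection to each graded piece and using that $\hbar\colon\mathrm{A}_{k}\to\mathrm{A}_{k+1}$ is injective (which follows from torsion-freeness of $\mathrm{A}$, since $\hbar$ is homogeneous of degree one), one concludes each $x_n=0$.

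The routine parts are the bookkeeping in \ref{A:1} and \ref{A:3}; the one place requiring a little care is the cofinality argument in \ref{A:2} showing that the $\hbar$-adic and grading filtrations on $\widehat{\mathrm{A}}$ are intertwined closely enough that grading-completeness implies $\hbar$-adic completeness. I expect this to be the main (though still elementary) obstacle, and it is precisely where hypotheses \ref{A-comp:1} and \ref{A-comp:2} are both used in an essential way.
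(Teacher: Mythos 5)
Your treatment of the algebra structure, of part (1), of part (3), and of separatedness in part (2) is essentially the paper's own argument and is fine. The genuine problem is exactly where you flag it: the completeness half of part (2). The "squeeze" you propose, namely that $\prod_{n\geq N(m)}\mathrm{A}_n\subset \hbar^m\widehat{\mathrm{A}}$ for suitable $N(m)$, is false in general, so the two filtrations are \emph{not} mutually cofinal and no cofinality argument applies. Only one inclusion holds, $\hbar^m\widehat{\mathrm{A}}\subset\prod_{n\geq m}\mathrm{A}_n$ (this is what hypotheses (a) and (b) give). For the reverse you would need every element of sufficiently high degree to be divisible by $\hbar^m$, which fails already for $\mathrm{A}=\C[\hbar][x]$ with $\deg x=\deg\hbar=1$ (the element $x^N$ has arbitrarily large degree but does not lie in $\hbar\widehat{\mathrm{A}}$), and fails for the intended application $\mathrm{A}=\Yhg$ for the same reason (high-degree monomials in the $x_{i1}^\pm$ are not divisible by $\hbar$). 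Completeness for a strictly finer filtration does not follow formally from completeness for a coarser one: given an $\hbar$-adically Cauchy sequence, the grading topology supplies a limit, but one still has to show this limit is an $\hbar$-adic limit, i.e.\ that a grading-convergent series of elements of $\hbar^m\widehat{\mathrm{A}}$ again lies in $\hbar^m\widehat{\mathrm{A}}$; equivalently, that $\hbar^m\widehat{\mathrm{A}}$ is closed in the grading topology. That is the non-formal content of part (2) and your proposal does not supply it.

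The paper closes this gap by a direct stabilization argument, which is what your cofinality step should be replaced by. Given an element of $\varprojlim_m\bigl(\widehat{\mathrm{A}}/\hbar^m\widehat{\mathrm{A}}\bigr)$, represent it by graded components $x_{k,n}\in\mathrm{A}_n$ (the class modulo $\hbar^k$ being $\sum_n x_{k,n}$), with coherence expressed as $x_{k,n}-x_{\ell,n}\in\hbar^{\min(k,\ell)}\mathrm{A}$ for all $k,\ell,n$. Since $\hbar^{n+1}\mathrm{A}\subset\bigoplus_{j\geq n+1}\mathrm{A}_j$ by (a) and (b), while $x_{k,n}-x_{\ell,n}\in\mathrm{A}_n$, the degree-$n$ components stabilize: $x_{k,n}=x_{n+1,n}$ for all $k\geq n+1$. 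Setting $x_n:=x_{n+1,n}$, one checks $x_n-x_{k,n}\in\hbar^k\mathrm{A}$ for every $k$ and $n$ (for $k\leq n+1$ this is coherence, for $k>n+1$ the difference is zero), so the diagonal element $\sum_n x_n\in\widehat{\mathrm{A}}$ maps onto the given coherent system, proving surjectivity of $\widehat{\mathrm{A}}\to\varprojlim_m\bigl(\widehat{\mathrm{A}}/\hbar^m\widehat{\mathrm{A}}\bigr)$. With this substitution your outline matches the paper's proof; as a minor aside, your part (3) assumes $\hbar$ is homogeneous of degree one, which is not literally among the hypotheses (only $\hbar\mathrm{A}\subset\mathrm{A}_+$ is), though it holds in the application and the paper's own proof of (3) is no more careful on this point.
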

\begin{proof}
First note that the condition \ref{A-comp:1} guarantees that both completions of $\mathrm{A}$ appearing above are unital, associative $\C[\![\hbar]\!]$-algebras. 

To prove \eqref{A:1}, note that \ref{A-comp:2} implies that, for each $n\in \N$, we have 
\begin{equation*}
\widehat{\mathrm{A}}/\widehat{\mathrm{A}}_{\geq n} \cong \mathrm{A}/\mathrm{A}_+^n, \quad \text{ where }\quad \widehat{\mathrm{A}}_{\geq n}=\prod_{k\geq n}\mathrm{A}_k.
\end{equation*}
We thus have a canonical homomorphism of $\C[\![\hbar]\!]$-algebras 
\begin{equation*}
\widehat{\Upsilon}:\widehat{\mathrm{A}}\to \varprojlim_{n}\left(\widehat{\mathrm{A}}/\widehat{\mathrm{A}}_{\geq n}\right)\cong \varprojlim_n\left(\mathrm{A}/\mathrm{A}_+^n\right). 
\end{equation*}
Moreover, the composite of $\widehat\Upsilon$ with the inclusion $\mathrm{A}\into \widehat{\mathrm{A}}$ coincides with $\Upsilon$.

Under the identifying $\mathrm{A}/\mathrm{A}_+^n\cong \bigoplus_{k=0}^{n-1}\mathrm{A}_k$, the projection $\mathrm{p}_{n+1}:\mathrm{A}/\mathrm{A}_+^{n+1}\to \mathrm{A}/\mathrm{A}_+^{n}$ coincides with the truncation operator  $x_0+\ldots + x_{n}\mapsto x_0+\ldots + x_{n-1}$. We may therefore identify
\begin{equation*}
\varprojlim_n\left(\mathrm{A}/\mathrm{A}_+^n\right)=\{(x_0+\ldots+x_{n-1})_{n\in \N}\,:\, x_k\in \mathrm{A}_k\}\subset \prod_{n\in \N}\left(\bigoplus_{k=0}^{n-1}\mathrm{A}_k\right)
\end{equation*}
Under this identification, we have 
\begin{equation*}
\widehat\Upsilon:\sum_{k\geq 0}x_k\mapsto (x_0+\ldots+x_{n-1})_{n\in \N},
\end{equation*}
from which the bijectivity of $\widehat\Upsilon$ follows immediately.

Consider Part \eqref{A:3}. If $x=\sum_k x_k\in \widehat{\mathrm{A}}$ is such that $\hbar x=0$, then we must have $\hbar x_k=0$ for each $k$. As $\mathrm{A}$ is assumed to be torsion free, we can conclude that each $x_k$, and thus $x$ itself, vanishes. 

It remains to prove \eqref{A:2}. By \ref{A-comp:1}, $\hbar^n\widehat{\mathrm{A}}\subset \widehat{\mathrm{A}}_{\geq n}$, and thus 
\begin{equation*}
\bigcap_{n\in\N}\hbar^n\widehat{\mathrm{A}}\subset \bigcap_{n\in \N}\widehat{\mathrm{A}}_{\geq n}=\{0\}. 
\end{equation*}
Therefore, $\widehat{\mathrm{A}}$ is a separated $\C[\![\hbar]\!]$-module. To show it is also complete, we must argue that the natural homomorphism 
\begin{equation*}
\Theta:\widehat{\mathrm{A}}\to \varprojlim_n\left(\widehat{\mathrm{A}}/\hbar^n\widehat{\mathrm{A}}\right)
\end{equation*}
is surjective. To this end, note that any $x\in\varprojlim_n\left(\widehat{\mathrm{A}}/\hbar^n\widehat{\mathrm{A}}\right)$ may be represented as 
$
x=\left(\sum_{n\geq 0}\mathrm{q}_k(x_{k,n})\right)_{k\in \N}, 
$
where 
\begin{enumerate}[label=(\roman*)]
\item\label{Ah:1} $x_{k,n}\in \mathrm{A}_n$ satisfy 
$
x_{k,n}-x_{\ell,n}\in \hbar^{\min(k,\ell)} \mathrm{A}$ for all $k,\ell,n\in \N.
$
\item $\mathrm{q}_k:\mathrm{A}\to \mathrm{A}/\hbar^k \mathrm{A}$ is the natural quotient map.
\end{enumerate}

Set $x_n=x_{n+1,n}\in \mathrm{A}_n$ for all $n\in \N$. We claim that $x$ is equal to the image of $\sum_{n}x_n$ under $\Theta$. To prove this, it suffices to show that 
\begin{equation*}
x_n-x_{k,n}\in \hbar^k\mathrm{A} \quad \forall \; k,n\in \N. 
\end{equation*}
By \ref{Ah:1}, $x_n-x_{k,n}\in \hbar^{\min(n+1,k)}\mathrm{A}$ for all $k$ and $n$, hence the assertion is true for $k<n$. If $k\geq n$ then, by \ref{A-comp:1} and \ref{A-comp:2}, we have 
\begin{equation*}
x_n-x_{k,n}\in \hbar^{n+1}\mathrm{A}\subset \bigoplus_{k>n} \mathrm{A}_k,
\end{equation*}
which implies that $x_n-x_{k,n}=0\in \hbar^k\mathrm{A}$, as desired. 
\end{proof}
%
%

%%%%%%%%%%%%%%%%%%%%%%%%%%%%%%%%%%%%%%%%%%%%%%%%%%%%%%%%%%%%%%%%%%%%%%%%
%%%%%%%%%%%%%%%%%%%%%%%%%%%%%%%%%%%%%%%%%%%%%%%%%%%%%%%%%%%%%%%%%%%%%%%%
% Bibliography
%%%%%%%%%%%%%%%%%%%%%%%%%%%%%%%%%%%%%%%%%%%%%%%%%%%%%%%%%%%%%%%%%%%%%%%%
%%%%%%%%%%%%%%%%%%%%%%%%%%%%%%%%%%%%%%%%%%%%%%%%%%%%%%%%%%%%%%%%%%%%%%%%

\begin{bibdiv}
\begin{biblist}

\bib{BerTsy19}{article}{
      author={Bershtein, M.},
      author={Tsymbaliuk, A.},
       title={Homomorphisms between different quantum toroidal and affine
  {Y}angian algebras},
        date={2019},
        ISSN={0022-4049},
     journal={J. Pure Appl. Algebra},
      volume={223},
      number={2},
       pages={867\ndash 899},
}

\bib{Dr}{article}{
      author={Drinfel'd, V.},
       title={Hopf algebras and the quantum {Y}ang-{B}axter equation},
        date={1985},
     journal={Soviet Math. Dokl.},
      volume={32},
      number={1},
       pages={254\ndash 258},
}

\bib{DrQG}{inproceedings}{
      author={Drinfel'd, V.},
       title={Quantum groups},
        date={1987},
   booktitle={Proceedings of the {I}nternational {C}ongress of
  {M}athematicians, {V}ol. 1, 2 ({B}erkeley, {C}alif., 1986)},
   publisher={Amer. Math. Soc., Providence, RI},
       pages={798\ndash 820},
}

\bib{Enr03}{article}{
      author={Enriquez, B.},
       title={P{BW} and duality theorems for quantum groups and quantum current
  algebras},
        date={2003},
        ISSN={0949-5932},
     journal={J. Lie Theory},
      volume={13},
      number={1},
       pages={21\ndash 64},
}

\bib{EnQHA}{article}{
      author={Enriquez, B.},
       title={Quasi-{H}opf algebras associated with semisimple {L}ie algebras
  and complex curves},
        date={2003},
        ISSN={1022-1824},
     journal={Selecta Math. (N.S.)},
      volume={9},
      number={1},
       pages={1\ndash 61},
}

\bib{FiTs19}{article}{
      author={Finkelberg, M.},
      author={Tsymbaliuk, A.},
       title={Shifted quantum affine algebras: integral forms in type {$A$}},
        date={2019},
        ISSN={2199-6792},
     journal={Arnold Math. J.},
      volume={5},
      number={2-3},
       pages={197\ndash 283},
}

\bib{GTL1}{article}{
      author={Gautam, S.},
      author={Toledano~Laredo, V.},
       title={Yangians and quantum loop algebras},
        date={2013},
        ISSN={1022-1824},
     journal={Selecta Math. (N.S.)},
      volume={19},
      number={2},
       pages={271\ndash 336},
}

\bib{GTL2}{article}{
      author={Gautam, S.},
      author={Toledano~Laredo, V.},
       title={Yangians, quantum loop algebras, and abelian difference
  equations},
        date={2016},
        ISSN={0894-0347},
     journal={J. Amer. Math. Soc.},
      volume={29},
      number={3},
       pages={775\ndash 824},
}

\bib{GTLW19}{article}{
      author={Gautam, S.},
      author={Toledano~Laredo, V.},
      author={Wendlandt, C.},
       title={The meromorphic {R}-matrix of the {Y}angian},
     journal={Progr. Math.},
        date={2021},
      status={to appear},
        note={\tt arXiv:1907.03525},
}

\bib{GWPoles}{unpublished}{
      author={Gautam, S.},
      author={Wendlandt, C.},
       title={Poles of finite-dimensional representations of {Y}angians},
        date={2020},
        note={\tt arXiv:2009.06427},
}

\bib{Gu05}{article}{
      author={Guay, N.},
       title={Cherednik algebras and {Y}angians},
        date={2005},
        ISSN={1073-7928},
     journal={Int. Math. Res. Not. IMRN},
      number={57},
       pages={3551\ndash 3593},
}

\bib{Gu07}{article}{
      author={Guay, N.},
       title={Affine {Y}angians and deformed double current algebras in type
  {A}},
        date={2007},
        ISSN={0001-8708},
     journal={Adv. Math.},
      volume={211},
      number={2},
       pages={436\ndash 484},
}

\bib{Gu09}{article}{
      author={Guay, N.},
       title={Quantum algebras and quivers},
        date={2009},
        ISSN={1022-1824},
     journal={Selecta Math. (N.S.)},
      volume={14},
      number={3-4},
       pages={667\ndash 700},
}

\bib{GM}{article}{
      author={Guay, N.},
      author={Ma, X.},
       title={From quantum loop algebras to {Y}angians},
        date={2012},
        ISSN={0024-6107},
     journal={J. Lond. Math. Soc. (2)},
      volume={86},
      number={3},
       pages={683\ndash 700},
}

\bib{GNW}{article}{
      author={Guay, N.},
      author={Nakajima, H.},
      author={Wendlandt, C.},
       title={Coproduct for {Y}angians of affine {K}ac-{M}oody algebras},
        date={2018},
        ISSN={0001-8708},
     journal={Adv. Math.},
      volume={338},
       pages={865\ndash 911},
}

\bib{GRWEquiv}{article}{
      author={Guay, N.},
      author={Regelskis, V.},
      author={Wendlandt, C.},
       title={Equivalences between three presentations of orthogonal and
  symplectic {Y}angians},
        date={2019},
        ISSN={0377-9017},
     journal={Lett. Math. Phys.},
      volume={109},
      number={2},
       pages={327\ndash 379},
}

\bib{GRWvrep}{article}{
      author={Guay, N.},
      author={Regelskis, V.},
      author={Wendlandt, C.},
       title={Vertex representations for {Y}angians of {K}ac-{M}oody algebras},
        date={2019},
     journal={J. \'{E}c. polytech. Math.},
      volume={6},
       pages={665\ndash 706},
}

\bib{Io96}{article}{
      author={Iohara, K.},
       title={Bosonic representations of {Y}angian double
  {$\mathcal{D}Y_{\hslash}(\mathfrak{g})$} with
  {$\mathfrak{g}=\mathfrak{gl}_N,\mathfrak{sl}_N$}},
        date={1996},
        ISSN={0305-4470},
     journal={J. Phys. A},
      volume={29},
      number={15},
       pages={4593\ndash 4621},
}

\bib{JKMY18}{article}{
      author={Jing, N.},
      author={Ko\v{z}i\'{c}, S.},
      author={Molev, A.},
      author={Yang, F.},
       title={Center of the quantum affine vertex algebra in type {$A$}},
        date={2018},
        ISSN={0021-8693},
     journal={J. Algebra},
      volume={496},
       pages={138\ndash 186},
}

\bib{JiYaLi20}{article}{
      author={Jing, N.},
      author={Yang, F.},
      author={Liu, M.},
       title={Yangian doubles of classical types and their vertex
  representations},
        date={2020},
        ISSN={0022-2488},
     journal={J. Math. Phys.},
      volume={61},
      number={5},
       pages={051704, 39},
}

\bib{KacBook90}{book}{
      author={Kac, V.},
       title={Infinite-dimensional {L}ie algebras},
     edition={Third edition},
   publisher={Cambridge University Press, Cambridge},
        date={1990},
        ISBN={0-521-37215-1; 0-521-46693-8},
}

\bib{Kas84}{inproceedings}{
      author={Kassel, C.},
       title={K\"{a}hler differentials and coverings of complex simple {L}ie
  algebras extended over a commutative algebra},
        date={1984},
   booktitle={Proceedings of the {L}uminy conference on algebraic {$K$}-theory
  ({L}uminy, 1983)},
      volume={34},
       pages={265\ndash 275},
}

\bib{KasBook95}{book}{
      author={Kassel, C.},
       title={Quantum groups},
      series={Graduate Texts in Mathematics},
   publisher={Springer-Verlag, New York},
        date={1995},
      volume={155},
        ISBN={0-387-94370-6},
}

\bib{Khor95}{article}{
author={Khoroshkin, S.},
title={Central extension of the {Y}angian double},
conference={
title={Alg\`ebre non commutative, groupes quantiques et invariants},
address={Reims},
date={1995},
},
book={
series={S\'{e}min. Congr.},
volume={2},
publisher={Soc. Math. France, Paris},
date={1997},
},
pages={119--135},
}

\bib{KT96}{article}{
      author={Khoroshkin, S.},
      author={Tolstoy, V.},
       title={Yangian double},
        date={1996},
        ISSN={0377-9017},
     journal={Lett. Math. Phys.},
      volume={36},
      number={4},
       pages={373\ndash 402},
}

\bib{Kod18}{article}{
      author={Kodera, R.},
       title={Higher level {F}ock spaces and affine {Y}angian},
        date={2018},
        ISSN={1083-4362},
     journal={Transform. Groups},
      volume={23},
      number={4},
       pages={939\ndash 962},
}

\bib{Kod19}{article}{
      author={Kodera, R.},
       title={Affine {Y}angian action on the {F}ock space},
        date={2019},
        ISSN={0034-5318},
     journal={Publ. Res. Inst. Math. Sci.},
      volume={55},
      number={1},
       pages={189\ndash 234},
}

\bib{Kod19b}{article}{
      author={Kodera, R.},
       title={Braid group action on affine {Y}angian},
        date={2019},
     journal={SIGMA Symmetry Integrability Geom. Methods Appl.},
      volume={15},
       pages={Paper No. 020, 28},
}

\bib{LevPBW}{article}{
      author={Levendorski\u{\i}, S.},
       title={On {PBW} bases for {Y}angians},
        date={1993},
        ISSN={0377-9017},
     journal={Lett. Math. Phys.},
      volume={27},
      number={1},
       pages={37\ndash 42},
}

\bib{Mont}{book}{
      author={Montgomery, S.},
       title={Hopf algebras and their actions on rings},
      series={CBMS Regional Conference Series in Mathematics},
   publisher={Published for the Conference Board of the Mathematical Sciences,
  Washington, DC; by the American Mathematical Society, Providence, RI},
        date={1993},
      volume={82},
        ISBN={0-8218-0738-2},
}

\bib{MRY90}{article}{
      author={Moody, R.~V.},
      author={Rao, S.~E.},
      author={Yokonuma, T.},
       title={Toroidal {L}ie algebras and vertex representations},
        date={1990},
        ISSN={0046-5755},
     journal={Geom. Dedicata},
      volume={35},
      number={1-3},
       pages={283\ndash 307},
}

\bib{Naz20}{article}{
      author={Nazarov, M.},
       title={Double {Y}angian and the universal {$R$}-matrix},
        date={2020},
        ISSN={0289-2316},
     journal={Jpn. J. Math.},
      volume={15},
      number={1},
       pages={169\ndash 221},
}

\bib{Tsy17b}{article}{
      author={Tsymbaliuk, A.},
       title={The affine {Y}angian of {$\mathfrak{gl}_1$} revisited},
        date={2017},
        ISSN={0001-8708},
     journal={Adv. Math.},
      volume={304},
       pages={583\ndash 645},
}

\bib{Tsy17}{article}{
      author={Tsymbaliuk, A.},
       title={Classical limits of quantum toroidal and affine {Y}angian
  algebras},
        date={2017},
        ISSN={0022-4049},
     journal={J. Pure Appl. Algebra},
      volume={221},
      number={10},
       pages={2633\ndash 2646},
}

\bib{WRQD}{unpublished}{
      author={Wendlandt, C.},
       title={The restricted quantum double of the {Y}angian},
        note={In preparation},
}

\bib{YaGu1}{article}{
      author={Yang, Y.},
      author={Zhao, G.},
       title={The cohomological {H}all algebra of a preprojective algebra},
        date={2018},
        ISSN={0024-6115},
     journal={Proc. Lond. Math. Soc. (3)},
      volume={116},
      number={5},
       pages={1029\ndash 1074},
}

\bib{YaGu3}{article}{
      author={Yang, Y.},
      author={Zhao, G.},
       title={Cohomological {H}all algebras and affine quantum groups},
        date={2018},
        ISSN={1022-1824},
     journal={Selecta Math. (N.S.)},
      volume={24},
      number={2},
       pages={1093\ndash 1119},
}

\bib{YaGuPBW}{article}{
   author={Yang, Y.},
   author={Zhao, G.},
   title={The PBW theorem for affine Yangians},
   journal={Transform. Groups},
   volume={25},
   date={2020},
   number={4},
   pages={1371--1385},
   issn={1083-4362},
}

\end{biblist}
\end{bibdiv}
\end{document}